\documentclass[11pt,reqno]{amsart}

    \usepackage[utf8]{inputenc}
\usepackage[left=3.5cm,right=3.5cm,top=3cm,bottom=3cm]{geometry}
    \usepackage{graphicx}
    \usepackage[normalem]{ulem}
    \usepackage{amsmath, amsfonts, amssymb, amsthm}
    \usepackage{mathtools}
    \usepackage[mathscr]{eucal}
    \usepackage[colorlinks=true,citecolor=blue]{hyperref}
    \usepackage{tikz}
    \usepackage{tikz-cd}
    \usepackage{xparse}
    \usepackage{ifthen}
    \usepackage{xifthen}
    \usepackage{cleveref}

    \theoremstyle{plain}
    \newtheorem{prop}{Proposition}[section]
    \newtheorem{thm}[prop]{Theorem}
    \newtheorem{lemma}[prop]{Lemma}
    \newtheorem{cor}[prop]{Corollary}
    \theoremstyle{remark}
    
    \theoremstyle{definition}
    \newtheorem{rmrk}[prop]{Remark}
    \newtheorem{example}[prop]{Example}
    \newtheorem{mydef}[prop]{Definition}

    \numberwithin{equation}{section}
    \newcommand{\ipic}[3]{\raisebox{#1\height}{\scalebox{#3}{\includegraphics{./#2}}}}
    \allowdisplaybreaks
    \newcommand{\unnumberedsubsection}[1]{\medskip \paragraph{\bf #1}}
    \Crefname{thm}{Theorem}{Theorems}

    \newcommand{\defined}{\vcentcolon=}
    \newcommand{\inv}{^{-1}}
    \newcommand{\tensor}{\otimes}
    \DeclareMathOperator{\id}{id}
    \newcommand{\placeholder}{?}
    \NewDocumentCommand{\field}{ O{} }{
        \ifthenelse{ \equal{#1}{} }{\ensuremath{k}}{\ensuremath{\mathbb{#1}}}
    }
    \newcommand{\oversetEq}[1][ ]{
        \stackrel{\mathmakebox[\widthof{=}]{#1}}{=}
    }

    \newcommand{\cat}[1][C]{{\mathcal{#1}}}
    \newcommand{\pivotalStruct}{\delta}
    \newcommand{\tensUnit}{\mathbf{1}}
    \newcommand{\distInvObj}{D}
    \newcommand{\counitAdj}{\widetilde{\varepsilon}}
    \newcommand{\unitAdj}{\widetilde{\eta}}
    \newcommand{\To}{\Rightarrow}

    \DeclareMathOperator{\End}{End}
    %
    \newcommand{\dualsymbol}{\vee}
    \newcommand{\dualsymbolR}{\vee}
    \newcommand{\dualL}[1]{{#1^{\dualsymbol}}}
    \newcommand{\ddualL}[1]{{#1^{\dualsymbol\dualsymbol}}}
    \NewDocumentCommand{\dualR}{ m O{-0.1} }{
            {\prescript{\dualsymbolR}{}{\hspace*{#2em}#1}}
        }
    
    \DeclareMathOperator{\ev}{ev}
    \DeclareMathOperator{\coev}{coev}
    \DeclareMathOperator{\evL}{\overset{\xleftarrow{\phantom{ev}}}{\ev}}
    \DeclareMathOperator{\coevL}{\overset{\xleftarrow{\phantom{coev}}}{\coev}}
    \DeclareMathOperator{\evR}{\overset{\xrightarrow{\phantom{ev}}}{\ev}}
    \DeclareMathOperator{\coevR}{\overset{\xrightarrow{\phantom{coev}}}{\coev}}
    %
    
    \newcommand{\Vect}{\mathsf{Vect}}
    \NewDocumentCommand{\hmodM}{O{H}O{}O{}O{}}{\prescript{#4}{#1}{\mathcal{M}}_{#2}^{#3}}

    \newcommand{\hmod}[1][H]{#1\text{-mod}}
    \newcommand{\sweedler}[1]{_{(#1)}}
    \newcommand{\op}{^{\text{op}}}
    \newcommand{\cop}{^{\text{cop}}}
    
    %
    \newcommand{\elQ}[1]{\mathsf{#1}}
    \newcommand{\elQbold}[1]{\boldsymbol{#1}}
    \newcommand{\Dt}{\elQbold{f}}
    \newcommand{\modulus}{\boldsymbol{\gamma}}
    \newcommand{\counit}{\varepsilon}
    \newcommand{\coint}{\boldsymbol{\lambda}}
    \newcommand{\oneQ}{\elQbold{1}}
    \newcommand{\pivotQ}{\elQbold{g}}
    \newcommand{\alphaQ}{\elQbold{\alpha}}
    \newcommand{\betaQ}{\elQbold{\beta}}
    \newcommand{\elDelta}{\elQbold{\delta}}
    \newcommand{\pR}{p^R}
    \newcommand{\qR}{q^R}
    \newcommand{\pL}{p^L}
    \newcommand{\qL}{q^L}
    \newcommand{\elU}{\elQ{U}}
    \newcommand{\elV}{\elQ{V}}
    \newcommand{\elu}{\textsf{u}}
    %
    %
    \newcounter{alphabet}
    \setcounter{alphabet}{1}
    \expandafter\newcommand\csname alphList\the\value{alphabet} \endcsname{x}
    \stepcounter{alphabet}
    \expandafter\newcommand\csname alphList\the\value{alphabet} \endcsname{y}
    \stepcounter{alphabet}
    \expandafter\newcommand\csname alphList\the\value{alphabet} \endcsname{z}
    \NewDocumentCommand{\coassQ}{ O{} }
        {\ifthenelse{\equal{#1}{}}
            {\Phi}
            {\MakeUppercase{\csname alphList#1 \endcsname}}
        }
    \NewDocumentCommand{\invCoassQ}{ O{} }
        {\ifthenelse{\equal{#1}{}}
            {\Psi}
            {\csname alphList#1 \endcsname}
        }
    %

    \NewDocumentCommand{\dinatEnd}{ O{i} O{} O{} }{
    \ifthenelse{ \equal{#3}{} }{
            \ifthenelse{ \equal{#2}{} }{\pi_{#1}}{\pi_{#1}(#2)} 
        }{ \pi_{#1}(#2)_{#3} }
        }
    \NewDocumentCommand{\comultEnd}{ O{i} O{} O{} }{
        \ifthenelse{ \equal{#3}{} }{ \Delta^{#1}({#2}) }{ \left( \Delta^{#1}({#2})
        \right)_{#3} }
        }
    \NewDocumentCommand{\counitEnd}{ O{i} O{} O{} }{
        \ifthenelse{ \equal{#3}{} }{ \varepsilon^{#1}({#2}) }{ \left(
        \varepsilon^{#1}({#2}) \right)_{#3} }
        }
    \NewDocumentCommand{\multEnd}{ O{i} O{} O{} }{
        \ifthenelse{ \equal{#3}{} }{ \mu^{#1}({#2}) }{ \left( \mu^{#1}({#2}) \right)_{#3} }
        }
    \NewDocumentCommand{\unitEnd}{ O{i} }{ \ensuremath{ u^{#1} } }
    \NewDocumentCommand{\hopfComonad}{ O{i} O{} }{
        \ifthenelse{ \equal{#2}{} }{\ensuremath{ Z^{#1} }}{\ensuremath{ Z^{#1}\left( #2
        \right) }}
        }
    \NewDocumentCommand{\hopfComonadComp}{ m O{X} O{V} }{
        \ifthenelse{ #1 = 1 }{ \ensuremath{ \dualR{#2} \left( #3 #2 \right) } }{} 
        \ifthenelse{ #1 = 2 }{ \ensuremath{ \dualL{#2} \left( #3 #2 \right) } }{} 
        \ifthenelse{ #1 = 3 }{ \ensuremath{ \left( #2 #3 \right) \dualR{#2} } }{} 
        \ifthenelse{ #1 = 4 }{ \ensuremath{ \left( #2 #3 \right) \dualL{#2} } }{} 
        }
    \NewDocumentCommand{\dinatCoend}{ O{i} O{} O{} }{
        \ifthenelse{ \equal{#3}{} }{ 
            \ifthenelse{ \equal{#2}{} }{\iota_{#1}}{\iota_{#1}(#2)} 
        }{ \iota_{#1}(#2)_{#3} }
        }
    \NewDocumentCommand{\multCoend}{ O{i} O{} }{
        \ifthenelse{ \equal{#2}{} }{ \mu_{#1} }{ \mu_{#1}({#2}) }
        }
    \NewDocumentCommand{\unitCoend}{ O{i} O{} }{
        \ifthenelse{ \equal{#2}{} }{ \eta_{#1} }{ 
            \eta_{#1}({#2}) }
        }
    \NewDocumentCommand{\comultCoend}{ O{i} O{} }{
        \ifthenelse{ \equal{#2}{} }{ \Delta_{#1} }{ \Delta_{#1}({#2}) }
        }
    \NewDocumentCommand{\counitCoend}{ O{i} }{ \ensuremath{ \epsilon_{#1} } }
    \NewDocumentCommand{\hopfMonad}{ O{i} O{} }{
        \ifthenelse{ \equal{#2}{} }{\ensuremath{ A_{#1} }}{\ensuremath{ A_{#1}\left( #2
        \right) }}
        }
    \NewDocumentCommand{\hopfMonadComp}{ m O{X} O{V} }{
        \ifthenelse{ #1 = 1 }{ \ensuremath{ \dualR{#2} \left( #3 #2 \right) } }{} 
        \ifthenelse{ #1 = 2 }{ \ensuremath{ \dualL{#2} \left( #3 #2 \right) } }{} 
        \ifthenelse{ #1 = 3 }{ \ensuremath{ \left( #2 #3 \right) \dualR{#2} } }{} 
        \ifthenelse{ #1 = 4 }{ \ensuremath{ \left( #2 #3 \right) \dualL{#2} } }{} 
        }
    \NewDocumentCommand{\actionEnd}{ O{i} O{} O{} }{
        \ifthenelse{ \equal{#2}{} }
            {\ensuremath{\overset{#1}{\cdot}}}
            {\ensuremath{#2\overset{#1}{\cdot}(#3)}}
        }
    \NewDocumentCommand{\leftAntCoend}{ O{i} O{} }{
        \ifthenelse{ \equal{#2}{} }
            {S^l_{#1}}
            {S^l_{#1}(#2)}
        }
    \NewDocumentCommand{\actionCoend}{ O{i} O{} O{} }{
        \ifthenelse{ \equal{#2}{} }
            {\ensuremath{\overset{#1}{\curvearrowright}}}
            {\ensuremath{#2\overset{#1}{\curvearrowright}(#3)}}
    }

    \newcommand{\braiding}{c}
    \newcommand{\coend}{\mathcal{L}}
    \newcommand{\intLyu}{\Lambda}
    \newcommand{\objectOfInts}[1][A]{\operatorname{Int}#1}
    \newcommand{\rMatrix}{R}
    \newcommand{\rMatrixInv}{\overline{R}}
    \newcommand{\slTwoZ}{SL(2,\mathbb{Z})}
    \newcommand{\elRibbon}{\boldsymbol{v}}
    \newcommand{\elOmega}{\widehat{\omega}}
    \newcommand{\modS}{\mathscr{S}}
    \newcommand{\modT}{\mathscr{T}}
    \newcommand{\hopfPair}{\omega}
    \newcommand{\ribTwist}{\vartheta}
    \newcommand{\dinatLyu}{j}

    %
        \newcommand{\elTh}{\elQbold{\vartheta}}
        \newcommand{\symRightCoint}{\widehat{\coint}^r}
        \newcommand{\symLeftCoint}{\widehat{\coint}^l}
        \newcommand{\Ad}{\mathcal{A}}
        \newcommand{\Adtwist}{\Ad}
        \newcommand{\elX}{\boldsymbol{\xi}}
        \newcommand{\cointCat}{\coint^{\cat}}

        \newcommand{\eQ}{\elQbold{e}}
    %
        \newcommand{\spaceLeftCoint}{\begingroup \textstyle \int^l_H \endgroup}
        \newcommand{\spaceRightCoint}{\begingroup \textstyle \int^r_H \endgroup}
        \newcommand{\spaceLeftSymCoint}
            {\begingroup \textstyle \int^{l,\modulus}_H \endgroup}
        \newcommand{\spaceRightSymCoint}
            {\begingroup \textstyle \int^{r,\modulus}_H \endgroup}
        \newcommand{\spaceRightMonCoint}{\begingroup \textstyle
            \int^{r,\textup{mon}}_{\cat} \endgroup}
        \newcommand{\spaceLeftMonCoint}{\begingroup \textstyle
            \int^{l,\textup{mon}}_{\cat} \endgroup}
        \newcommand{\spaceRightSymMonCoint}{\begingroup \textstyle
            \int^{r,\distInvObj-\textup{sym}}_{\cat} \endgroup}
        \newcommand{\spaceLeftSymMonCoint}{\begingroup \textstyle
            \int^{l,\distInvObj-\textup{sym}}_{\cat} \endgroup}
        \newcommand{\gSSSymbol}{\mathcal{X}}
        \newcommand{\gammaSSymRS}{\gSSSymbol_1}
        \newcommand{\gammaSSymR}{\gSSSymbol_2}
        \newcommand{\gammaSSym}{\gSSSymbol_2}
        \newcommand{\gammaSSymL}{\gSSSymbol_3}
        \newcommand{\gammaSSymLS}{\gSSSymbol_4}
    %
        \newcommand{\isoXC}{\mathbb{A}}
        \newcommand{\monCointRight}[1]{{#1}^{\textup{mon}}}
        \newcommand{\monCointLeft}[1]{{}^{\textup{mon}}{#1}}
        \newcommand{\monCointRightSym}[1]{{#1}^{\text{$\modulus$-sym}}}
        \newcommand{\monCointLeftSym}[1]{{}^{\textup{$\modulus$-sym}}#1}
    %
        \newcommand{\sympFerm}{\mathsf{Q}}

\title{Monadic cointegrals and\\
	applications to quasi-Hopf algebras}

    \usepackage{fancyhdr}
    \pagestyle{fancy}
    \fancyhf{}
    \fancyhead[CE]{Monadic cointegrals and applications to quasi-Hopf algebras}
    \fancyhead[LE]{\thepage}
    \fancyhead[CO]{J.~Berger, A.~M.~Gainutdinov, I.~Runkel}
    \fancyhead[RO]{\thepage}
    \setlength{\headsep}{15pt}
    \setlength{\headheight}{20pt}

    \newcommand\asteriskfootnote[1]{
        \begingroup
        \renewcommand\thefootnote{}\footnote{#1}
        \addtocounter{footnote}{-1}
        \endgroup
    }

\begin{document}
\setcounter{tocdepth}{1}
    \maketitle
    \thispagestyle{empty}

\begin{center} 
    Johannes Berger\,$^{a,*}$, Azat M.\ Gainutdinov\,$^{b}$~~and~~Ingo
    Runkel\,$^a$
    \\[1.5em]
	{\sl\small $^a$ Fachbereich Mathematik, Universit\"at Hamburg\\
	Bundesstra\ss e 55, 20146 Hamburg, Germany}
\\[0.5em]
    {\sl\small $^b$ Institut Denis Poisson, CNRS, Universit\'e de Tours, Universit\'e
    d'Orl\'eans,\\ Parc de Grandmont, 37200 Tours, France}
\end{center}
\asteriskfootnote{\raggedright 
    * Corresponding Author.

    Emails: 
        {\tt johannes.berger@uni-hamburg.de},
        {\tt azat.gainutdinov@lmpt.univ-tours.fr}, 
        {\tt ingo.runkel@uni-hamburg.de}
}

\begin{abstract}
For $\cat$ a finite tensor category we consider four versions  of the central monad,
$\hopfMonad[1], \dots, \hopfMonad[4]$ on $\cat$.
Two of them are Hopf monads, and for $\cat$ pivotal, so are the remaining two.  
In that case all $\hopfMonad[i]$ are isomorphic as Hopf monads.
We define a {\em monadic cointegral for $\hopfMonad[i]$} to be an $\hopfMonad[i]$-module
morphism $\tensUnit \to \hopfMonad[i][D]$, where $D$ is the distinguished invertible
object of $\cat$.

We relate monadic cointegrals to the categorical cointegral introduced 
by Shimizu (2019),
and, in case $\cat$ is braided, to an integral for the braided Hopf algebra
$\coend = \int^X \dualL{X} \otimes X$ in $\cat$ studied by Lyubashenko (1995).

Our main motivation stems from the application to finite dimensional quasi-Hopf algebras
$H$.
For the category of finite-dimensional $H$-modules, we relate the four monadic cointegrals
(two of which require $H$ to be pivotal) to four existing notions of cointegrals for
quasi-Hopf algebras:
the usual left/right cointegrals of Hausser and Nill (1994), as well as so-called
$\modulus$-symmetrised cointegrals in the pivotal case, for $\modulus$ the modulus of $H$.

For (not necessarily semisimple) modular tensor categories $\cat$, Lyubashenko gave
actions of surface mapping class groups on certain Hom-spaces of $\cat$, in particular of
$SL(2,\mathbb{Z})$ on $\cat(\coend,\tensUnit)$.
In the case of a factorisable ribbon quasi-Hopf algebra, we give a simple expression for
the action of $S$ and $T$ which uses the monadic cointegral.
\end{abstract}

\smallskip
\textit{Keywords}: Finite tensor categories, Hopf monads, quasi-Hopf algebras,
cointegrals.

\textit{MSC}: 16T99, 18M05, 18M15.

\tableofcontents

\section{Introduction}\label{SEC:Introduction}

\subsection{Monadic cointegrals}

Let $H$ be a finite-di\-men\-sio\-nal Hopf algebra over an algebraically closed field
$\field$.
There are two notions of cointegrals for $H$, namely left cointegrals and right
cointegrals.
These are elements $\coint \in H^*$ which satisfy, for all $h \in H$,
\begin{align*}
&2) \qquad
(\coint \otimes \id) \circ \Delta(h) = \coint(h) 1
&& \text{(right cointegral)} \ ,
\\
&3)\qquad
(\id \otimes \coint) \circ \Delta(h) = \coint(h) 1
&& \text{(left cointegral)} \ .
\end{align*}
The unusual numbering will be explained below.

If $H$ is a pivotal Hopf algebra, that is, if it is equipped with a group-like
element~$\pivotQ$ expressing the square of the antipode via conjugation, one can introduce
two more notions of cointegrals, so-called $\modulus$-symmetrised left and right
cointegrals \cite{BBGa,FOG}.
Here $\modulus \in H^*$ denotes the modulus of $H$, which encodes the difference between
left and right integrals for $H$.
The defining equation for $\coint \in H^*$ to be a $\modulus$-symmetrised left/right
cointegral is
\begin{align*}
    &1) \qquad
    (\coint \otimes \id) \circ \Delta(h) = \coint(h) \pivotQ^{-1}
    && \text{(right $\modulus$-symmetrised cointegral)} \ ,
    \\
    &4) \qquad
    (\id \otimes \coint) \circ \Delta(h) = \coint(h) \pivotQ
    && \text{(left $\modulus$-symmetrised cointegral)} \ .
\end{align*}
Such $\modulus$-symmetrised cointegrals, and in particular the special case where
$\modulus$ is given by the counit, have applications to modified traces and to quantum
invariants of links and three-manifolds~\cite{BBGa, BBGe-LogHenning, DeRenzi-GP}.
Furthermore, $\modulus$-symmetrised cointegrals are an example of $g$-cointegrals for a
group-like $g$ as introduced in~\cite{Radford-integrals}.

The category $\hmodM$ of finite-dimensional left $H$-modules is a finite tensor
category in the sense of \cite{EGNO}, in particular it has left and right duals.
If $H$ is in addition pivotal, then $\hmodM$ becomes a pivotal tensor category
\cite{AAGTV}.
One can now ask if it is possible to describe the above two (or four, in the pivotal case)
notions of cointegrals in terms of the category $\hmodM$ in a such a way that it
generalises to arbitrary (pivotal) finite tensor categories $\cat$.
This is indeed possible, and has been done for left cointegrals in \cite{Sh-integrals}
using Hopf comonads on $\cat$.
In the present paper we define analogous notions for all four variants of cointegrals,
working instead with Hopf monads.

\medskip

Hopf monads on a rigid monoidal category $\cat$ were introduced in \cite{BV-hopfmonads}.
These are monads $M$ on $\cat$, equipped with extra structure:
First, the functor 
$M\colon \cat \to \cat$
 is equipped with a lax comonoidal structure, i.e.\ there are morphisms
 \begin{align}
 &\text{(comultiplication)} &&M(X\otimes Y) \to M(X) \otimes M(Y) \ ,
 &\text{natural in } X,Y \ ,
\nonumber \\
&\text{(counit)} &&M(\tensUnit) \to \tensUnit \ ,
\end{align}
satisfying certain conditions.
The multiplication and the unit of the monad $M$ have to be comonoidal as well.
Finally, $M$ has (unique) left and right antipodes, again given by certain natural
transformations, see \Cref{sec:HopfMonads} for details.

A module over a monad $M$ is an object $V \in \cat$ together with an action $M(V) \to V$
of the monad.
For a Hopf monad, the category $\cat_M$ of $M$-modules is again a rigid monoidal category
\cite{BV-hopfmonads}.

\medskip

Let now $\cat$ be a finite tensor category.
The four monads we are interested in are all given in terms of coends. Namely, for $V \in
\cat$ we set
\begin{align*}
	\hopfMonad[1][V] &= \int^{X\in \cat} \dualR{X} \otimes (V \otimes X) 
	\ , 
	&
	\hopfMonad[2][V] &= \int^{X\in \cat}  \dualL{X} \otimes (V \otimes X)  \ ,
	\\
	\hopfMonad[3][V] &= \int^{X\in \cat}  (X \otimes V) \otimes \dualR{X}  
	\ , 
	&
	\hopfMonad[4][V] &= \int^{X\in \cat}  (X \otimes V) \otimes \dualL{X}  \ .
\end{align*}
Here $\dualR{X}$ denotes the right
	dual
 and $\dualL{X}$ the left dual of an object $X \in
\cat$ (see \Cref{SEC:Conventions} below for our conventions).
Finiteness of $\cat$ ensures existence of these coends.
The index $1,\dots,4$ indicates the ``position'' of the duality symbol
$\dualsymbol$.

$\hopfMonad[2]$ and $\hopfMonad[3]$ are Hopf monads \cite[Sec.~5.4]{BV-Double}, called
{\em central monads}, and are isomorphic as Hopf monads.
If $\cat$ is pivotal, then $\hopfMonad[1]$ and $\hopfMonad[4]$ are also Hopf monads, and
all four $\hopfMonad[i]$ are isomorphic as Hopf monads, see
\Cref{prop:iso_as_HC_pivotal}.
In the following, if we discuss properties of $\hopfMonad[1]$ and $\hopfMonad[4]$, we will
implicitly assume that $\cat$ is in addition pivotal.

We define four types of {\em monadic cointegrals} as follows.
Denote by $\distInvObj \in \cat$ the distinguished invertible object of $\cat$.\footnote{For $\cat=\hmodM$, the object $\distInvObj$ is the one-dimensional representation with the
$H$-action given by $\modulus^{-1}$, and $\modulus$ is the modulus of~$H$.}
The tensor unit $\tensUnit \in \cat$ is an $\hopfMonad[i]$-module via the counit.
$\hopfMonad[i](\distInvObj)$ is an $\hopfMonad[i]$-module by the monad multiplication. 
    A \emph{monadic cointegral for $\hopfMonad[i]$} is an intertwiner of
    $\hopfMonad[i]$-modules $\coint_i : \tensUnit \to \hopfMonad[i](\distInvObj)$.
    This means that $\coint_i$ is a morphism in $\cat$ which satisfies
\begin{equation}\label{intro:mon-coint-def}
    \begin{tikzcd}[row sep=large,column sep=large]
        \hopfMonad[i][\tensUnit] \ar[r, "{\hopfMonad[i](\coint_i)}"] 
        \ar[d,swap,"\counitCoend"]
        & \hopfMonad[i]^2(\distInvObj)
        \ar[d,"{\multCoend[i][\distInvObj]}"]
        \\
        \tensUnit \ar[r,swap,"\coint_i"]
        & \hopfMonad[i][\distInvObj] 
    \end{tikzcd}\qquad ,
\end{equation}
where $\counitCoend$ and $\multCoend$ denote the counit and multiplication of
$\hopfMonad$, respectively.

In the case $\distInvObj=\tensUnit$, this definition of monadic cointegral agrees with the
definition of a cointegral for a Hopf monad that first appeared in
\cite[Sec.~6.3]{BV-hopfmonads}.
For $i=2$ (and general $\distInvObj$), the above definition is related by an isomorphism
to the notion of ``categorical cointegral'' defined in \cite{Sh-integrals}
(\Cref{cor:OurCointIsShimizus}).

\begin{thm}
    For a finite tensor category $\cat$, non-zero monadic cointegrals for
$\hopfMonad[2], \hopfMonad[3]$ (and for $\hopfMonad[1], \hopfMonad[4]$ if $\cat$ is
pivotal) exist and are unique up to scalar multiples.
\end{thm}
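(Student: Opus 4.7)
The plan is to reduce all four cases to a single one and then invoke a known result on categorical cointegrals.

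First, I would argue that it suffices to treat the single case $i=2$. By \Cref{prop:iso_as_HC_pivotal}, the monads $\hopfMonad[2]$ and $\hopfMonad[3]$ are isomorphic as Hopf monads, and in the pivotal case all four $\hopfMonad[i]$ are isomorphic as Hopf monads. Any such Hopf monad isomorphism $\phi\colon \hopfMonad[i] \To \hopfMonad[j]$ induces a bijection between $\hopfMonad[i]$-intertwiners $\tensUnit \to \hopfMonad[i][\distInvObj]$ and $\hopfMonad[j]$-intertwiners $\tensUnit \to \hopfMonad[j][\distInvObj]$ by post-composition with $\phi_{\distInvObj}$; compatibility of $\phi$ with the unit and multiplication of the monad ensures that the intertwining square \eqref{intro:mon-coint-def} is preserved, and in particular that non-vanishing is preserved. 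Hence the statement for $i = 2$ implies the statement for the other three values (with the pivotality hypothesis where required).

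Second, for $i = 2$ I would invoke the forthcoming \Cref{cor:OurCointIsShimizus}, which identifies the space of monadic cointegrals for $\hopfMonad[2]$ with the space of Shimizu's \emph{categorical cointegrals} for $\cat$. Shimizu has proved in \cite{Sh-integrals} that, for any finite tensor category, the space of categorical cointegrals is one-dimensional, which gives at once both the existence of a non-zero monadic cointegral for $\hopfMonad[2]$ and its uniqueness up to a scalar multiple.

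The main technical point is therefore Step 2: one must unwind the diagram \eqref{intro:mon-coint-def} using the coend formula $\hopfMonad[2][V] = \int^X \dualL{X} \otimes (V \otimes X)$, together with the explicit form of the monad multiplication $\multCoend[2]$ and counit $\counitCoend[2]$ induced on the coend, and match the resulting condition with Shimizu's end-based defining equation, all in a way that tracks non-vanishing. Once this identification has been set up in \Cref{cor:OurCointIsShimizus}, the existence and uniqueness asserted in the theorem follow immediately from Shimizu's result and the reduction of Step 1.
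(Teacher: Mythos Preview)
Your proposal is correct and follows essentially the same route as the paper: reduce to $i=2$ via the Hopf monad isomorphisms of \Cref{prop:iso_as_HC_pivotal} (this is exactly \Cref{remark:intertw-diagram-coint-transport}\,(2)), then identify monadic cointegrals for $\hopfMonad[2]$ with Shimizu's categorical cointegrals via \Cref{cor:OurCointIsShimizus} and invoke \cite[Thm.~4.8]{Sh-integrals}. The paper's proof of \Cref{prop:MonCoint_uniq_ex} does precisely this, in the same order of ideas.
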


This follows from a corresponding result in \cite{Sh-integrals}, see
\Cref{prop:MonCoint_uniq_ex}.

\medskip

We now come to a key observation, which explains the reason for introducing four slightly
different Hopf monads $\hopfMonad[i]$, even though they are all isomorphic.
Namely, for $H$ a finite-dimensional Hopf algebra and	$\cat = \hmodM$, each monad
$\hopfMonad[i]$ has a particularly natural realisation
(\Cref{example:HopfAlgebra-monads}).
With respect to this realisation one finds, firstly, that as a vector space
$\hopfMonad[i](\distInvObj) = H^*$, so that a $\coint$ as in \eqref{intro:mon-coint-def}
is an element of $H^*$, and, secondly, the equivalences
\begin{equation}\label{intro:Hopf-4types}
    \text{$\coint$ is a mon.\ coint.\ for }
    \begin{cases}
        \hopfMonad[1] \\
        \hopfMonad[2] \\
        \hopfMonad[3] \\
        \hopfMonad[4] 
    \end{cases}
    \Leftrightarrow
    \quad
    \text{$\coint$ is a }
    \begin{cases}
    \text{right $\modulus$-sym.} \\
    \text{right} \\
    \text{left} \\
    \text{left $\modulus$-sym.}
    \end{cases}
    \hspace{-1.5em}\text{coint.~for $H$ ,}
\end{equation}
see \Cref{example:HopfAlgebra-catcoint}.
As before, in cases 1 and 4 we require $H$ to be pivotal.

The universal properties of the coends $\hopfMonad[i]$ give isomorphisms between the
various spaces of monadic cointegrals, and hence also between the various spaces of
cointegrals for $H$. 
This will be useful in our application to quasi-Hopf algebras.

\subsection{Application to quasi-Hopf algebras}

For Hopf algebras, the relation between integrals and cointegrals is very simple: passing
to the dual Hopf algebra exchanges the two notions.
For quasi-Hopf algebras, this is no longer the case as the definition of a quasi-Hopf
algebra is	not
symmetric under duality.
While integrals for a quasi-Hopf algebra $H$ are defined in the same way as for Hopf
algebras, cointegrals $\coint \in H^*$ are markedly more complicated.

The definition of left/right cointegrals for a quasi-Hopf algebra $H$ is given
in~\cite{HN-integrals}, and that of $\modulus$-symmetrised left/right cointegrals
in~\cite{Shi-Shi,BGR1} and in \Cref{sec:D-sym-coint-qHopf} below.

\medskip

Fix a finite-dimensional quasi-Hopf algebra $H$ over an algebraically closed field.
As for Hopf algebras, $\hmodM$ is  also a finite tensor category, and each of the monads
$\hopfMonad[i]$ on $\hmodM$ has a natural realisation such that the underlying vector
space of $\hopfMonad[i](\distInvObj)$ is $H^*$ in all four cases, cf.\
\Cref{subseq:HopfMonad-qHopf}.
With these realisations, we describe the monadic cointegrals for $H$ via equations
involving quasi-Hopf data.
For example, an element $\coint\in H^*$ which is an $H$-module intertwiner
$\tensUnit \to A_2(D)$ is a monadic cointegral for $\hopfMonad[2]$ if and only if the
equation~\eqref{eq:qHopf-rmco-lin} holds.

Our main result is a generalisation of the relations in~\eqref{intro:Hopf-4types} for
quasi-Hopf algebras, namely the precise relation between the monadic cointegrals and the
quasi-Hopf cointegrals from~\cite{HN-integrals,Shi-Shi,BGR1}: 

\begin{table}[t]
	{\renewcommand{\arraystretch}{2}
	\begin{center}
		\begin{tabular}{c|c|c}
			If the quasi-Hopf 
			& then the element of 
			& is a monadic \\[-1em]
			cointegral $\coint$ is \dots 
			& $H^*$ given by \dots 
			& cointegral for \dots \\ 
			\hline
			right $\modulus$-sym. & 
			$\coint\Big( S(\betaQ) \,\placeholder\, S\inv(\elTh) \Big)$
			& $\hopfMonad[1]$
			\\ \hline
			right & 
			$\coint\Big( S(\betaQ) \,\placeholder\, S\inv(\elX) \Big)$
			& $\hopfMonad[2]$
			\\ \hline
			left & 
			$\coint\Big( S^{-2}(\betaQ) \,\placeholder\, S(\hat{\elX}) \Big)$
			& $\hopfMonad[3]$
			\\ \hline
			left $\modulus$-sym. & 
			$\coint\Big( \betaQ \,\placeholder\, S(\hat{\elTh}) \Big)$
			& $\hopfMonad[4]$
		\end{tabular}
\end{center}}

\caption{
Relation between various notions of cointegrals.
Here, ``$\,\placeholder\,$'' is a place holder for the function argument, $\betaQ$ is the
coevaluation element and $\elTh$, $\elX$, $\hat{\elX}$, $\hat{\elTh}$ are certain elements
of $H$ defined in \Cref{SEC:MainThm}.
For cases 1 and 4, $H$ is required to be pivotal.
If $H$ is a (non-quasi) Hopf algebra, then $\betaQ=\elTh=\elX=\hat{\elX}=\hat{\elTh}=1$
and one recovers the simple relation in \eqref{intro:Hopf-4types}.}
\label{tab:intro-main-thm}
\end{table}

\begin{thm}\label{intro:MainThm-qHopf}
	We have the bijections from the various types of cointegrals $\coint \in H^*$
	for the finite-dimensional quasi-Hopf algebra $H$ to the corresponding 
	types of monadic cointegrals in
	$\hmodM$ as shown in \Cref{tab:intro-main-thm}.
\end{thm}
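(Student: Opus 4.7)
The plan is to reduce the theorem, for each $i$, to the verification of a single identity in $H^*$, by first making the vector space realisation of $\hopfMonad[i](\distInvObj)$ completely explicit and then translating the monadic cointegral condition \eqref{intro:mon-coint-def} into quasi-Hopf data. By \Cref{subseq:HopfMonad-qHopf}, for $\cat = \hmodM$ with $H$ quasi-Hopf, each $\hopfMonad[i](\distInvObj)$ is canonically identified with $H^*$ as a vector space, equipped with an explicit $H$-action and explicit structure maps $\multCoend[i]$ and $\counitCoend[i]$ written in terms of $(\coassQ,\alphaQ,\betaQ,S)$. Thus a morphism $\coint_i\colon\tensUnit\to\hopfMonad[i](\distInvObj)$ in $\hmodM$ is an $H$-invariant element of $H^*$, and being a monadic cointegral amounts to one additional equation extracted from the commutative diagram \eqref{intro:mon-coint-def}.

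The main case, which I would treat first, is $i=2$. Unpacking \eqref{intro:mon-coint-def} via the realisation of $\hopfMonad[2]$ yields an identity in $H^*$, namely equation \eqref{eq:qHopf-rmco-lin}. I would then substitute $\coint_2 = \coint\bigl(S(\betaQ)\,\placeholder\,S\inv(\elX)\bigr)$ and, using the quasi-Hopf identities for $\coassQ,\alphaQ,\betaQ$, transform the identity into the defining equation of a Hausser--Nill right cointegral from \cite{HN-integrals}, and conversely. The appearance of $S(\betaQ)$ is forced by the coevaluation morphism of the duality in $\hmodM$, while $S\inv(\elX)$ encodes the associator contribution built into the coend defining $\hopfMonad[2]$. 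As an alternative, one may compare with Shimizu's categorical cointegral via \Cref{cor:OurCointIsShimizus} and specialise Shimizu's general condition on $\hmodM$ to this same substitution.

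The remaining three rows of \Cref{tab:intro-main-thm} are then handled by transport. For $i=3$ one either repeats the unpacking to produce the substitution $\coint\bigl(S^{-2}(\betaQ)\,\placeholder\,S(\hat{\elX})\bigr)$ and matches the Hausser--Nill left cointegral equation, or invokes the canonical Hopf monad isomorphism $\hopfMonad[2]\cong\hopfMonad[3]$ from \cite[Sec.~5.4]{BV-Double} and tracks it through the vector space realisation to deduce row 3 from row 2. For the pivotal cases $i=1,4$, \Cref{prop:iso_as_HC_pivotal} provides Hopf monad isomorphisms $\hopfMonad[1]\cong\hopfMonad[2]$ and $\hopfMonad[4]\cong\hopfMonad[3]$ built from the pivotal element $\pivotQ\in H$. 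Tracing these isomorphisms through the realisation on $H^*$ produces precisely the substitutions $\coint\bigl(S(\betaQ)\,\placeholder\,S\inv(\elTh)\bigr)$ and $\coint\bigl(\betaQ\,\placeholder\,S(\hat{\elTh})\bigr)$, and converts the Hausser--Nill equations of cases 2 and 3 into the $\modulus$-symmetrised cointegral equations of \cite{Shi-Shi,BGR1} and \Cref{sec:D-sym-coint-qHopf}. The factors of $\pivotQ$ and $\pivotQ\inv$ appearing in the symmetrised equations arise naturally from the $H$-action on the distinguished invertible object $\distInvObj$ combined with the pivotal twist.

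The main obstacle is the $i=2$ case: the explicit formula for $\multCoend[2]$ on $H^*$ involves the associator $\coassQ=\Phi$ and its inverse $\invCoassQ=\Psi$ in several places, and bringing the resulting identity into the precise form of the Hausser--Nill right cointegral equation requires patient bookkeeping with the mixed pentagon together with the axioms relating $\alphaQ,\betaQ$ to $\coassQ$ and $S$. Once this is in place, the other three rows follow with comparatively little calculation, and bijectivity in each row is automatic since the substitutions $\coint\mapsto\coint_i$ are all given by left and right multiplication in $H$ by elements that are invertible in $H$ (after applying $S$ or $S\inv$), hence induce linear isomorphisms of $H^*$.
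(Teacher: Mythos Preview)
Your transport strategy for $i=1,3,4$ via the Hopf monad isomorphisms $\kappa_{i,j}$ is exactly what the paper does (\Cref{app:ProofMainThPart2} and the pivotal case thereafter), so that part is fine.

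For the base case $i=2$, however, the paper does \emph{not} proceed by substituting into \eqref{eq:qHopf-rmco-lin} and simplifying. Instead it passes to the $\hopfComonad[4]$-comodule picture via \Cref{lem:monCointViaCatCoaction}: the monadic cointegral condition becomes $R_{\dualL{\modulus}}(\coint)=\betaQ\otimes\coint$, where $R$ is the categorical coaction of \eqref{eq:categoricalCoaction}. The key step (\Cref{prop:CatCoaction_is_coaction}) identifies $R_{\dualL{\modulus}}$ with $\varphi_{\dualL{H}}\circ\rho^l$, where $\rho^l$ is precisely the coaction on $\dualL{H}$ in $\hmodM[H][H]$ that Hausser--Nill and Bulacu--Caenepeel use to \emph{define} right cointegrals via \eqref{eq:RightCointEq_withoutArgument}. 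The comparison is then between two coaction-type equations rather than between \eqref{eq:qHopf-rmco-lin} and the Hausser--Nill equation directly; this is what makes the calculation tractable (it still needs the technical \Cref{lem:weird_identity}). Your direct route through \eqref{eq:qHopf-rmco-lin} may in principle work, but the element $\tau\in H^{\otimes 5}$ appearing there makes it considerably harder to match against \eqref{eq:RightCointEq}, and you would additionally have to verify the $H$-intertwiner condition separately.

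There is also a genuine gap in your bijectivity argument. You claim the substitutions are given by left and right multiplication by invertible elements of $H$, but $\betaQ$ is \emph{not} assumed invertible in a quasi-Hopf algebra, and $\elX=(\id\otimes\modulus)(\Dt\inv)$ is not obviously invertible either. The paper handles this by first identifying the subspace $\gammaSSymR\subset H^*$ containing all right cointegrals (via \eqref{eq:symmetryPropertiesCoint}), and then proving in \Cref{prop:X2=C2} that $\isoXC_2\colon f\mapsto\betaQ.f.\elX$ restricts to a linear isomorphism $\gammaSSymR\xrightarrow{\sim}\cat(\tensUnit,\hopfMonad[2](\dualL{\modulus}))$, with explicit inverse $f\mapsto\qL_1.f.S(\qL_2\leftharpoonup\modulus\inv)\elX\inv$ built from the element $\qL$ of \eqref{eq:q,\pL}. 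The zig-zag identity $\qL_1\betaQ S(\qL_2)=\oneQ$ is what replaces invertibility of $\betaQ$. Without an argument of this kind you cannot conclude that a nonzero cointegral is sent to a nonzero monadic cointegral.
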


This is shown in \Cref{thm:MainThmSectionStatement,thm:MainThmPivotalCase}. 

\begin{rmrk}
    Analogous to \cite{Sh-integrals} one
	can introduce
monadic integrals by noting that
    $\hopfMonad(\tensUnit)$ is a coalgebra and defining left/right monadic integrals for
    $\hopfMonad$ to be left/right comodule morphisms in $\cat(\hopfMonad(D),
    \tensUnit)$.
In the quasi-Hopf case, monadic integrals are elements of $H^{**} \cong H$, and one finds
that the left/right monadic integrals for $\hopfMonad$ are the usual left/right integrals
for the quasi-Hopf algebra $H$ (for $i=1,2$), and the right/left integrals (for $i=3,4$).
        In the Hopf case, a similar result was also noted in \cite{Sh-integrals}.
    
    A slightly different notion of integrals for Hopf monads has already appeared earlier
    in the literature \cite{BV-hopfmonads}.
If $\cat$ is braided, then by \cite[Ex.~5.4]{BV-hopfmonads} this notion agrees with the
definition of cointegrals for Hopf algebras in braided categories as in 
\cite{KerlerLyubashenko}.
By arguments analogous to those given in \Cref{SEC:braided_case}, it in turn also
agrees with the monadic integrals from the beginning of this remark.
    Here we will not go into the details of either of these points
    and focus instead on the study of monadic cointegrals.
\end{rmrk}

\medskip

\subsection{Application to braided tensor categories}

Let $\cat$ now be a braided finite tensor category.
In \cite{LyuMaj,Lyubashenko-mod-transfs} the coend 
    $\coend = \int^{X \in \cat} \dualL{X} \otimes X$
was studied and shown to be a Hopf algebra in $\cat$.
One can use the braiding of $\cat$ to construct a natural family of isomorphisms
\begin{equation*}
	\xi_V: \hopfMonad[2](V) \to \coend \otimes V \ .
\end{equation*}
This provides an isomorphism $\hopfMonad[2] \cong \coend \otimes \placeholder$ of Hopf
monads.

For Hopf algebras $H$ in braided categories, one can define integrals and cointegrals just
as for Hopf algebras over a field, up to one additional subtlety.
Namely, integrals are certain morphisms from a so-called {\em object of integrals}
$\objectOfInts[H] \in \cat$ to $H$.
Conversely, cointegrals are certain morphisms $H \to \objectOfInts[H]$, see
\cite{KerlerLyubashenko} and \Cref{SEC:braided_case}.
For example, a left integral for $H$ is a morphism $\intLyu : \objectOfInts[H] \to H$ such
that 
$$
\begin{tikzcd}[column sep=large]
    H \otimes \objectOfInts[H]
    \ar[rr,"\id\otimes \intLyu"] \ar[d,swap,"\counit \otimes \id"] 
    && H\otimes H \ar[d,"m"] \\
    \tensUnit \otimes \objectOfInts[H] \ar[r,"\sim"] & \objectOfInts[H] \ar[r,"\intLyu"]
    & H 
\end{tikzcd}
$$
commutes. Here $\counit$ and $m$ denote the counit and multiplication of $H$.

We find that monadic cointegrals for $\hopfMonad[2]$ are related to left integrals for
$\coend$, and that the object of integrals for $\coend$ is
$\objectOfInts[\coend]=\dualR{\distInvObj}$, the right dual of the distinguished
invertible object of $\cat$ (\Cref{prop:MonadicCointIsLyuInt}):

\begin{prop}
    Let $\cat$ be a braided finite tensor category.
    Then $\intLyu : \dualR{\distInvObj} \to \coend$ is a left integral for $\coend$ if and
    only if
    \begin{align*}
        \coint =
        \big[
        \tensUnit \xrightarrow{ \coevR_{\distInvObj} }
        \dualR{\distInvObj} \otimes \distInvObj
        \xrightarrow{ \intLyu \otimes \id}
        \coend \otimes \distInvObj
        \xrightarrow{\xi_{\distInvObj}\inv}
        \hopfMonad[2][\distInvObj]
        \big]
    \end{align*}
    is a monadic cointegral for $\hopfMonad[2]$.
\end{prop}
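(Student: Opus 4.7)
The plan is to transport the monadic cointegral equation \eqref{intro:mon-coint-def} across the isomorphism $\xi \colon \hopfMonad[2] \xrightarrow{\sim} \coend \otimes (\placeholder)$ of Hopf monads, and then use the duality between $\distInvObj$ and $\dualR{\distInvObj}$ to recognise the resulting identity as Lyubashenko's left integral equation for $\intLyu$.

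First I would set $\coint' \defined \xi_{\distInvObj} \circ \coint \colon \tensUnit \to \coend \otimes \distInvObj$. Invertibility of $\xi_{\distInvObj}$ makes the passage $\coint \leftrightarrow \coint'$ bijective, and the formula in the proposition reads $\coint' = (\intLyu \otimes \id_{\distInvObj}) \circ \coevR_{\distInvObj}$. Because $\xi$ is an isomorphism of Hopf monads, it carries $\counitCoend$ of $\hopfMonad[2]$ to the counit $\counit$ of the Hopf algebra $\coend$, the multiplication $\multCoend[2][\distInvObj]$ to $m \otimes \id_{\distInvObj}$ (where $m$ is the product on $\coend$), and $\hopfMonad[2](f)$ to $\id_\coend \otimes f$ for any morphism $f$ by naturality. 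Substituting these in \eqref{intro:mon-coint-def} converts the commuting square into the identity
\begin{equation*}
    (m \otimes \id_{\distInvObj}) \circ (\id_\coend \otimes \coint')
    \;=\; \coint' \circ \counit
\end{equation*}
of morphisms $\coend \to \coend \otimes \distInvObj$, where the unit constraint $\coend \otimes \tensUnit \cong \coend$ is left implicit.

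Next I would push this equation along the duality bijection $\Hom_\cat(\dualR{\distInvObj}, \coend) \xrightarrow{\sim} \Hom_\cat(\tensUnit, \coend \otimes \distInvObj)$ given by $\intLyu \mapsto (\intLyu \otimes \id_{\distInvObj}) \circ \coevR_{\distInvObj}$, with inverse $\varphi \mapsto (\id_\coend \otimes \evR_{\distInvObj}) \circ (\varphi \otimes \id_{\dualR{\distInvObj}})$. Tensoring both sides of the displayed equation with $\id_{\dualR{\distInvObj}}$ on the right and postcomposing with $\id_\coend \otimes \evR_{\distInvObj}$, the triangle identity for the duality pair $(\coevR_{\distInvObj}, \evR_{\distInvObj})$ collapses the left-hand side to $m \circ (\id_\coend \otimes \intLyu)$ and the right-hand side to $\intLyu \circ (\counit \otimes \id_{\dualR{\distInvObj}})$. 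The resulting equation
\begin{equation*}
    m \circ (\id_\coend \otimes \intLyu) \;=\; \intLyu \circ (\counit \otimes \id_{\dualR{\distInvObj}})
\end{equation*}
is precisely the left integral condition for $\intLyu \colon \dualR{\distInvObj} \to \coend$ recalled in the text (with the identification $\objectOfInts[\coend] = \dualR{\distInvObj}$). Since every step in the translation is a bijection, the two conditions are equivalent, which proves the proposition.

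The main technical point is checking the compatibility statements of $\xi$ with multiplication and counit; this rests on the explicit form of $\xi$ constructed from the braiding together with the universal dinatural transformations of the coends defining $\hopfMonad[2]$ and $\coend$. Once $\xi$ is in place as an isomorphism of Hopf monads, as recalled before the statement, the remainder is a routine adjunction computation using the triangle identities for $\coevR_{\distInvObj}$ and $\evR_{\distInvObj}$.
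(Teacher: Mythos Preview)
Your proposal is correct and follows essentially the same strategy as the paper's proof: both transport the monadic cointegral equation across the bimonad isomorphism $\xi$ (using precisely the compatibility of $\xi$ with multiplication and counit, recorded in the paper as equations~\eqref{eq:xi-is-monad-iso-MULT} and~\eqref{eq:xi-is-monad-iso-COUNIT}), and then invoke the duality between $\distInvObj$ and $\dualR{\distInvObj}$ via the zigzag identities. The paper writes out the chain of equalities $\multCoend[2][\dualL{X}] \circ \hopfMonad[2][\coint] = \dots = \coint \circ \counitCoend[2]$ step by step with explicit associators and unitors, whereas you compress this into the single transported identity $(m \otimes \id_{\distInvObj}) \circ (\id_\coend \otimes \coint') = \coint' \circ \counit$ and then apply the adjunction $\Hom(\dualR{\distInvObj},\coend) \cong \Hom(\tensUnit,\coend\otimes\distInvObj)$; the content is the same.
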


\Cref{prop:MonadicCointIsLyuInt} also contains a similar statement for the
relation of right integrals of $\coend$ with  monadic cointegrals for $\hopfMonad[2]$.

\medskip

An important application of $\coend$ and its integrals arises in the case that $\cat$ is
modular, that is, a (not necessarily semisimple) finite ribbon category whose braiding
satisfies a non-degeneracy condition called {\em factorisability} (see
\Cref{SEC:Sl2Z}).

In this case, one can define a projective representation of the genus-$g$ surface mapping
class group on the Hom-space $\cat(\coend^{\otimes g},1)$ \cite{Lyubashenko-mapClGroups},
as well as a non-semisimple variant of the Reshetikhin-Turaev topological field theory
\cite{DeRenzi-GP,DeRenzi-GR}.
The integral for $\coend$ (which is two-sided for $\cat$ modular) enters in both
constructions.

\medskip

Let us specialise to the case that $\cat = \hmodM$ for $H$ a finite-dimensional
quasi-triangular quasi-Hopf algebra which is in addition ribbon (and so in particular
pivotal). 
In \Cref{sec:qtriang-qHopf} we explicitly relate left integrals for $\coend$, right
monadic cointegrals 	in $\hmodM$,
and right cointegrals for $H$.
In \Cref{sec:SL2Z-qHopf} we assume furthermore that $H$ is factorisable (as defined
in \cite{BT-Factorizable}), which is equivalent to $\hmodM$ being factorisable
\cite{FGR1}. 
In this case, $\distInvObj = \tensUnit$ and both integrals and cointegrals for $\coend$ are two-sided.
We may take $\hopfMonad[2](\tensUnit) = \coend$ and by the above proposition, integrals
for $\coend$ are precisely the same as monadic cointegrals for $\hopfMonad[2]$.

We present the projective representation of $\slTwoZ$ on $\cat(\coend,\tensUnit)$ as an
example for the mapping class group actions mentioned above 
by giving the action of the $\mathbf{S}$ and $\mathbf{T}$ 
generators,
simplifying the corresponding expressions in \cite{FGR1}.
Denote by $Z(H)$ the centre of $H$ and write $\alphaQ Z = \{\alphaQ z \mid z \in Z(H)\}$,
where $\alphaQ$ is the evaluation element of $H$.
Recall that $\coend=H^*$ as a vector space.
One finds that via the natural isomorphism $H \cong H^{**}$ one has $\alphaQ Z \cong
\cat(\coend,\tensUnit)$.
In \Cref{prop:SL2Z-action-via-monadic-qHopf} we compute the action of 
    $\mathbf{S}$ and $\mathbf{T}$
on $\alphaQ Z$ to be, for $z \in Z(H)$,
\begin{align*}
	\mathbf{S} \, . \, (\alphaQ z) 
    = \langle 
    \coint \mid 
    \elOmega_1 z 
    \rangle
    \ \elOmega_2
    \qquad , \qquad
    \mathbf{T} \, . \, (\alphaQ z) 
    = \elRibbon\inv \alphaQ z \ .
\end{align*}
Here, $\coint$ is a monadic cointegral for $\hopfMonad[2]$,
$\elOmega_{1,2}$ are the components of the Hopf-pairing $\hopfPair : \coend \otimes \coend
\to \tensUnit$,
and $\elRibbon$ is the ribbon element of $H$, see \Cref{SEC:Sl2Z} for details.

\medskip

Finally, let us note that the construction in \cite{DeRenzi-GR} of a three-dimensional
topological field theory
from a modular category $\cat$ uses the integral for $\coend$ and the modified trace on
(the projective ideal in) $\cat$.
For $\cat = \hmodM$ with $H$ a factorisable ribbon quasi-Hopf algebra, monadic cointegrals
for $\hopfMonad[2], \hopfMonad[3]$ provide the integral for $\coend$, and monadic
cointegrals for $\hopfMonad[1], \hopfMonad[4]$ provide the modified trace via the
construction using symmetrised cointegrals in \cite{Shi-Shi,BGR1}. 
An important class of factorizable quasi-Hopf algebras as inputs for such topological
field theories comes from the fundamental examples of logarithmic conformal field
theories \cite{Gainutdinov:2015lja, FGR2,CGR, GLO18,N18}.

\medskip

The fact that monadic cointegrals provide integrals for $\coend$ and modified traces in a
uniform setting was one of the motivations to carry out the present investigation.
Another motivation was that because of their direct categorical interpretation, in certain
situations monadic cointegrals for quasi-Hopf algebras may be easier to use than those in
the left column of \Cref{tab:intro-main-thm}.

\subsection{Comparison to the approach of Shibata-Shimizu}

For quasi-Hopf algebras, a relation between right cointegrals for $H$ and categorical
cointegrals in the sense of~\cite{Sh-integrals} was derived in~\cite{Shi-Shi}.
The main theorem in the present paper (\Cref{intro:MainThm-qHopf}) is an analogous
result for the four types of monadic cointegrals.
Comparing to~\cite{Shi-Shi}, we work in a dual setting that uses central Hopf monads
instead of comonads.
This last choice of monads over comonads is merely conventional. 
However, our approach to the application of monadic cointegrals to quasi-Hopf cointegrals
is quite different from the one in~\cite{Shi-Shi}:
the latter uses a detour via the category of Yetter-Drinfeld modules, while we follow a
more direct route.  
In our approach, we found the monadic setting better suited to make the connection to
\cite{HN-integrals} than the comonadic picture.
     
A more conceptual relation between these
two pictures is described in \Cref{rem:H*mod-Hcomod}.

\subsection*{Outline of the paper}

In \Cref{SEC:Monadic_Cointegrals} we 
	start by reviewing
the definition of a Hopf monad.
Then, the central monads and their Hopf structures are described. 
We define four versions of \emph{monadic cointegrals}, and show that
they are related by isomorphisms
to the categorical cointegral considered in \cite{Sh-integrals}.

\smallskip

\Cref{SEC:Coints_qHopf} contains our conventions for (pivotal) quasi-Hopf algebras.
We specialise the definition of the various monadic cointegrals to the category of modules
over a quasi-Hopf algebra, and we review the definition and some properties of left and
right cointegrals for quasi-Hopf algebras from \cite{HN-integrals, BC2-2011}.

\smallskip

Our main theorem showing that quasi-Hopf cointegrals are equivalent to mo\-na\-dic
cointegrals is formulated in \Cref{SEC:MainThm}.
The main ideas of the proof are outlined, while technical details are deferred to
\Cref{sec:proofs-sec-4}.

\smallskip

Examples of monadic cointegrals for quasi-Hopf algebras are given in
\Cref{SEC:Examples}.

\smallskip

In \Cref{SEC:braided_case} we consider integrals for Hopf algebras in braided
finite tensor categories, and we relate left
	and right
integrals for $\coend$ to monadic cointegrals
for $\hopfMonad[2]$.
As an example, we treat finite-dimensional quasi-triangular quasi-Hopf algebras.

\smallskip

With $\cat = \hmodM$ the category of finite-dimensional modules over a
finite-dimen\-sional factorisable ribbon quasi-Hopf algebra $H$, in \Cref{SEC:Sl2Z}
we express the $\slTwoZ$-action on the centre of $H$ using the monadic and the quasi-Hopf
cointegral.

\bigskip

\subsection{Conventions}\label{SEC:Conventions}
    Throughout this paper we fix an algebraically closed field $\field$.
    Following~\cite{EGNO}, by a \emph{finite tensor category} we mean a
    $\field$-linear abelian category that 
    \begin{itemize}
        \item
            has finite-dimensional Hom-spaces, and every object is of finite length,
        \item
            possesses a finite set of isomorphism classes of simple objects,
        \item
            has enough projectives,
        \item
            is rigid monoidal, such that the tensor product functor is $k$-bilinear             
            and the monoidal unit $\tensUnit$ is simple.
    \end{itemize}
    We denote the left and the right dual of an object $X$ by $\dualL{X}$ and
    $\dualR{X}[-0.2]$, respectively.
    The corresponding evaluations and coevaluations are
    \begin{align}
        &\evL_X: \dualL{X} \otimes X \to \tensUnit
        \ ,\quad 
        \coevL_X : \tensUnit \to X \otimes \dualL{X}\ ,
        \notag \\
        &\evR_X: X\otimes \dualR{X}[-0.2] \to \tensUnit
        \ , \quad
        \coevR_X : \tensUnit \to \dualR{X}[-0.2] \otimes X\ ,
    \end{align}
    satisfying the familiar zig-zag equalities.
    We do not assume that $\cat$ is strict monoidal, and (compositions of) coherence
    isomorphisms will be indicated.

    \smallskip

    Our conventions for string diagrams are as follows.
    We read them from bottom to top, and coherence isomorphisms will usually not be
    drawn.

    Left and right coevaluation and evaluation for the object $X\in \cat$ are drawn as
    \begin{equation}
        \ipic{-0.4}{./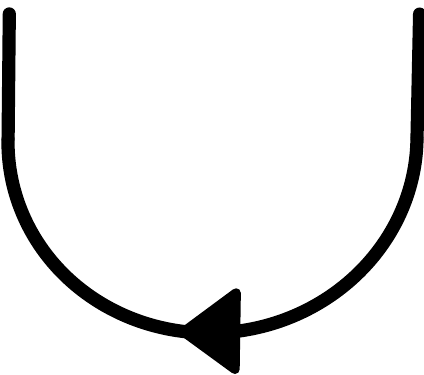}{0.3}
        \put (-40,23) {$X$}
        \put (-05,23) {$\dualL{X}$}
        \quad, \quad \quad
        \ipic{-0.4}{./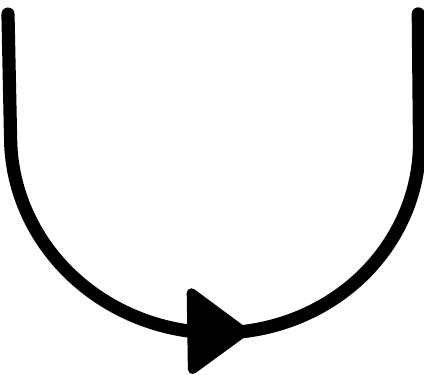}{0.3}
        \put (-46,23) {$\dualR{X}$}
        \put (-05,23) {$X$}
        \quad, \quad \quad
        \ipic{-0.0}{./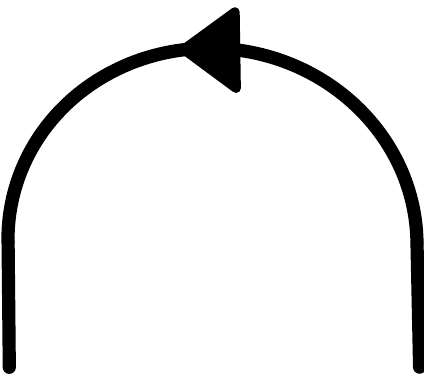}{0.3}
        \put (-42,-13) {$\dualL{X}$}
        \put (-07,-13) {$X$}
        \quad, \quad \quad
        \ipic{-0.0}{./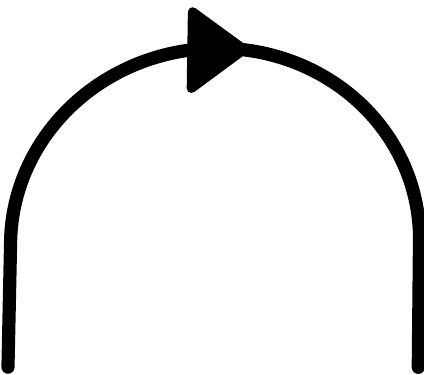}{0.3}
        \put (-11,-13) {$\dualR{X}$}
        \put (-42,-13) {$X$}
        \quad,
    \end{equation}
    respectively, so that in our conventions for duals and string diagrams, arrows on the
    duality maps for left (right) duals point to the left (right).

    \medskip
    
    A functor $F:\cat \to \cat[D]$ between monoidal categories is \emph{lax comonoidal} if
    there is a natural transformation $F_2$ and a morphism $F_0$,\footnote{
    Here and below we sometimes abbreviate $F(X)$ by $FX$ when applying functors to
    objects, and similarly for functors on morphisms.}
    \begin{align}
        F_2(X,Y): F(X\otimes Y) \to FX\otimes FY, \quad 
        F_0: F\tensUnit \to \tensUnit,
    \end{align}
    satisfying certain coherence conditions so that coalgebras in $\cat$ are mapped to
    coalgebras in $\cat[D]$.
    For that reason we will commonly refer to $F_2$ and $F_0$ as the comultiplication and
    the counit of the lax comonoidal functor $F$.
    If $F_2$ and $F_0$ are isomorphisms then $F$ is called a strong comonoidal functor.

    Similarly, a functor $F:\cat \to \cat[D]$ between monoidal categories is
    \emph{lax/strong monoidal} if $F\op : \cat\op \to \cat[D]\op$ is lax/strong
    comonoidal.
    The corresponding natural transformation $F_2$ and the morphism $F_0$ we call the
    multiplication and the unit, respectively.

    A natural transformation $\varphi: F \Rightarrow G$ between two comonoidal functors is
    called \emph{comonoidal} if it commutes with the comonoidal structures.
    That is, if
    \begin{align}
        G_2(X,Y) \circ \varphi_{X\otimes Y} 
        = (\varphi_X \otimes \varphi_Y) \circ F_2(X,Y)
        \quad \text{and} \quad 
        F_0 = G_0 \circ \varphi_{\tensUnit} 
    \end{align}
    is true for all objects $X,Y$.

    Monoidal natural transformations between monoidal functors are defined similarly.

    \medskip

    A rigid category $\cat$ is called \emph{pivotal} if there is a monoidal natural
    isomorphism $\pivotalStruct\colon \id_{\cat} \Rightarrow \ddualL{(\placeholder)}$,
    i.e.\ from the identity functor on $\cat$ to the double dual functor.
    The monoidal structure of the double dual is given in terms of the natural isomorphism
    \begin{align}\label{eq:gammaCanonicalIso}
        \gamma_{V,W}: \dualL{V} \otimes \dualL{W} \to \dualL{(W\otimes V)},
        \quad
        \gamma_{V,W}
        =
        \ipic{-0.5}{./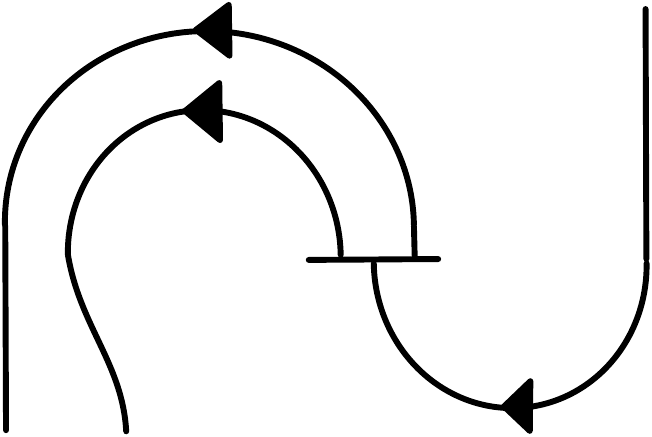}{0.5}
        \put (-98,-43) {$\dualL{V}$}
        \put (-80,-43) {$\dualL{W}$}
        \put (-25,35) {$\dualL{(W \otimes V)}$}
        \put (-30,-10) {$\id$}
    \qquad ,
    \end{align}
    as the composition
    \begin{align}
        \ddualL{V} \otimes \ddualL{W} 
        \xrightarrow{\gamma_{\dualL{V},\dualL{W}}}
        \dualL{\left( \dualL{W} \otimes \dualL{V} \right) }
        \xrightarrow{ \dualL{ \left( \gamma_{W,V} \inv \right) } }
        \ddualL{(V\otimes W)}.
    \end{align}

    Note that the existence of the pivotal structure $\pivotalStruct$ is equivalent to
    requiring that the left and the right dual functor be isomorphic as monoidal functors.
    Indeed, given $\pivotalStruct$ we can form the isomorphism
    \begin{align}\label{eq:RightDualIsLeft-pivotal}
        \ipic{-0.5}{./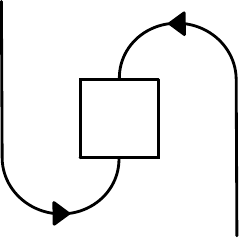}{1.0}
        \put (-78,38) {$\dualR{X}$}
        \put (-40,-03) {$\pivotalStruct_X$}
        \put (-06,-47) {$\dualL{X}$}
        \quad .
    \end{align}
    Conversely, given a natural monoidal isomorphism $\dualR{X}\cong \dualL{X}$, we have
    \begin{align}
        \ddualL{X} \cong \dualL{(\dualR{X})} \cong X
    \end{align}
    where the second isomorphism is
    \begin{align}
        \omega_X =
        \bigg[
            &\dualL{(\dualR{X})} 
            \xrightarrow{\sim} 
            \dualL{(\dualR{X})} \otimes \tensUnit 
            \xrightarrow{\id \otimes \coevR_X}
            \dualL{(\dualR{X})} \otimes (\dualR{X} X)
            \xrightarrow{\sim} 
            (\dualL{(\dualR{X})} \otimes \dualR{X}) X
        \notag \\
            &\xrightarrow{\evL_{\dualR{X}} \otimes \id }
            \tensUnit X \xrightarrow{\sim} X
        \bigg] \ .
        \label{eq:definitionOmegaIso}
    \end{align}
    We will suppress some of the tensor product symbols to
    shorten expressions, e.g.\ in the above expression we only left those tensor
    symbols between objects necessary to make the assignment of duals unambiguous.
	As a string diagram \eqref{eq:definitionOmegaIso} simply reads
    \begin{align}
        \omega_{X} = \quad
        \ipic{-0.5}{./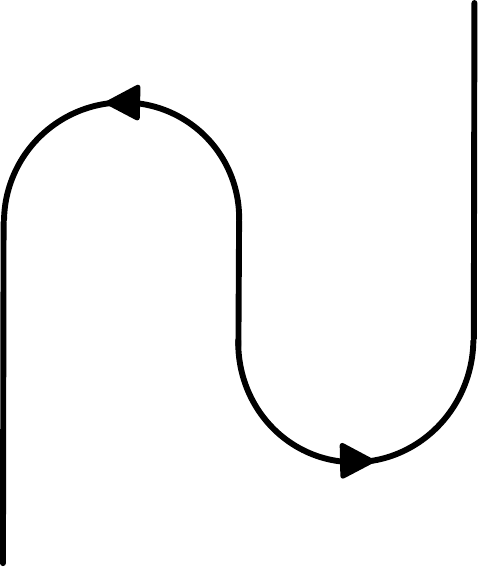}{0.5}
        \put (-82,-53) {$\dualL{(\dualR{X})}$}
        \put (-05,043) {$X$}
        \quad .
    \end{align}

\medskip

\section{Monadic cointegrals}\label{SEC:Monadic_Cointegrals}

This section contains the main definition of this paper, namely that of the four types of
monadic cointegrals (\Cref{def:main-def}).
To state the definition we first briefly review Hopf monads~\cite{BV-hopfmonads}, the
central Hopf monad, and the distinguished invertible object.
Finally we realise monadic cointegrals via Hopf comonads to establish existence and
uniqueness via results in~\cite{Sh-integrals}.

\subsection{Hopf monads}
\label{sec:HopfMonads}
We first recall the basic notions from the theory of Hopf monads on rigid categories.
Throughout, our conventions will closely follow \cite{BV-hopfmonads}.
\unnumberedsubsection{Monads.}
    Recall \cite[Sec.~ VI]{catWorkMath} that a monad $M$ on a category $\cat$ is an
    algebra in $\End(\cat)$, the category of
     endofunctors of $\cat$, which is a monoidal
    category under composition.
    This means that there are natural transformations
    \begin{align}
        \mu:M^2 \Rightarrow M, \quad \eta: \id_{\cat} \Rightarrow M,
    \end{align}
    the \emph{multiplication} and \emph{unit} of $M$, respectively, satisfying
    \begin{equation}
        \begin{tikzcd}
            M^3 V \arrow[r,"M \mu_V"] \arrow[d,"\mu_{MV}",swap] 
            & M^2 V  \arrow[d,"\mu_V"] \\
            M^2 V  \arrow[r,"\mu_{V}"] 
            & M V  
        \end{tikzcd}
        \qquad \text{and} \qquad
        \begin{tikzcd}
            M^2 V  
            \arrow[r, leftarrow, "\eta_{M V}"] 
            \arrow[rd, swap, "\mu_V"]
                &M V  
                \arrow[r, "M \eta_V"]
                \arrow[d, equal]
                    &M^2 V 
            \arrow[ld, "\mu_V"]
            \\
                &M V  &
        \end{tikzcd} ,
    \end{equation}
    for each $V\in\cat$.

    A \emph{module} over a monad $M$ is a tuple $(V,\rho)$, consisting of an object
    $V\in \cat$ together with a morphism $\rho:MV \to V$, called the action, such
    that 
    \begin{equation}
        \begin{tikzcd}
            M^2 V \arrow[r,"M \rho"] \arrow[d,"\mu_{V}"] 
            & M V  \arrow[d,"\rho"] \\
            M V  \arrow[r,"\rho"] 
            & V 
        \end{tikzcd}
        \qquad \text{and} \qquad
        \begin{tikzcd}
            V \arrow[r,"\eta_V"] \arrow[rd,equal]
            & MV  \arrow[d, "\rho"]
            \\
            & V
        \end{tikzcd}
    \end{equation}
    commute.
    A morphism of $M$-modules from $(V,\rho)$ to $(W,\sigma)$ is a morphism $f:V\to W$ in
    $\cat$ which commutes with the action, i.e.\
    \begin{align}
        \sigma \circ Mf = f \circ \rho.
    \end{align}
    The category of $M$-modules is denoted by $\cat_M$.
    The forgetful functor from $\cat_M$ to $\cat$ has a left adjoint which sends an
    object $V\in \cat$ to the free $M$-module $\left( MV, \mu_V \right)$.\footnote{
        For any $M$-module $(B,\nu)$ and any object $A$ in $\cat$,
        the adjunction can be seen from the natural isomorphism 
        $\cat_M \big( (MA, \mu_A), (B, \nu) \big) \cong \cat(A, B)$,
        given by sending an $M$-module morphism $f$ to $f \circ \eta_A$, and  conversely,
        a morphism $g:A\to B$ is sent to $\nu \circ Mg$.
    }

    A morphism of monads is a morphism $\phi: M \Rightarrow M'$ of algebras in
    $\End(\cat)$.
    It therefore induces a functor $\phi^*:\cat_{M'} \to \cat_{M}$ via pullback, cf.\
    \cite[Lem.~1.6]{BV-hopfmonads}.

    If $\cat$ is monoidal, and $A\in \cat$ is an algebra, then $A\otimes \placeholder$ is
    a monad. 
    A comonad on $\cat$ is a monad on $\cat\op$.

\unnumberedsubsection{Bimonads}
    If $\cat$ is a monoidal category, then a \emph{bimonad} on $\cat$ is a 
    monad $M$ such that the functor $M$ is lax comonoidal and the multiplication and unit
    of $M$ are co\-mon\-oi\-dal natural transformations.

    The name ``bimonad'' is in analogy to algebras and bialgebras:
    The category of modules over a bimonad $(M,M_0,M_2)$ is monoidal, and a lax comonoidal
    structure on $M$ is the same as a monoidal structure on $\cat_M$ such that the
    forgetful functor to $\cat$ is strong monoidal, cf.\ \cite[Thm.~7.1]{Moerdijk}.
    Given two $M$-modules $(V,\rho), (W,\sigma)$, their tensor product is defined by
    \begin{align}
        (V,\rho)\otimes (W,\sigma) = (V\otimes W,(\rho \otimes \sigma)\circ M_2(V,W)),
    \end{align}
    and the monoidal unit of $\cat_M$ is the $M$-module $(\tensUnit, M_0)$, which we will
    also denote by $\tensUnit$.

    A morphism of bimonads is a comonoidal natural transformation which is a morphism of
    the underlying monads.
	We will later need the following lemma.

    \begin{lemma}[{\cite[Lem.~2.7]{BV-hopfmonads}}]
        \label{lem:BimonadIsosGiveFunctors}
        Let $M, M'$ be bimonads on $\cat$. 
        Then there is a one-to-one correspondence between morphisms $f\colon M \To M' $ of
        bimonads and strict monoidal functors $F\colon \cat_{M'} \to \cat_{M}$ whose
        underlying functor on $\cat$ is the identity functor.
    \end{lemma}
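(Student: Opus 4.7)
The plan is to exhibit mutually inverse maps in both directions.

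\textbf{Forward direction.} Given a bimonad morphism $f \colon M \To M'$, I would define $F_f \colon \cat_{M'} \to \cat_{M}$ by pullback: send $(V, \rho')$ to $(V, \rho' \circ f_V)$ and act as the identity on morphisms. The monad-morphism axioms on $f$ guarantee that $(V, \rho' \circ f_V)$ is an $M$-module, and any morphism of $M'$-modules is automatically a morphism of the pulled-back $M$-modules. Strict monoidality of $F_f$ is then exactly the content of $f$ being comonoidal: the identities $M'_2(V,W) \circ f_{V \otimes W} = (f_V \otimes f_W) \circ M_2(V,W)$ and $M_0 = M'_0 \circ f_{\tensUnit}$ force the pulled-back $M$-actions on tensor products and on $\tensUnit$ to match on the nose with those coming from the bimonad structure of $M$ on $\cat_{M}$. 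By construction the underlying endofunctor on $\cat$ is the identity.

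\textbf{Reverse direction.} Given $F \colon \cat_{M'} \to \cat_{M}$ as in the hypothesis, apply $F$ to the free $M'$-module $(M'V, \mu'_V)$ to obtain an $M$-module $(M'V, a_V)$, and define $f^F_V \defined a_V \circ M(\eta'_V) \colon MV \to M'V$. Using the identity $U_M \circ F = U_{M'}$ of underlying functors on $\cat$, this is equivalent to sending $\eta'_V \in \cat(V, M'V)$ through the free/forgetful adjunction bijection $\cat_{M}((MV, \mu_V), F(M'V, \mu'_V)) \cong \cat(V, M'V)$. Naturality of $f^F$ is inherited from that of $\eta'$ and of the adjunction; compatibility with the units is immediate from the $M$-module unit axiom for $a_V$; compatibility with the multiplications uses the associativity of the $M$-action $a_V$ together with the identity $a_V = \mu'_V \circ f^F_{M'V}$ read off from freeness. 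The comonoidal compatibility is forced by strict monoidality of $F$ applied to $(M'V, \mu'_V) \otimes (M'W, \mu'_W)$, where both sides of the required equation appear directly.

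\textbf{Mutual inverseness and main obstacle.} For one composite: the $M$-action $F_f$ puts on $(M'V, \mu'_V)$ is $\mu'_V \circ f_{M'V}$, so $f^{F_f}_V = \mu'_V \circ f_{M'V} \circ M(\eta'_V) = \mu'_V \circ M'(\eta'_V) \circ f_V = f_V$ by naturality of $f$ and the unit axiom of $M'$. For the other: $F_{f^F}$ agrees with $F$ on morphisms trivially, and on a general $(V, \rho')$ the actions agree because $\rho' \colon (M'V, \mu'_V) \to (V, \rho')$ is an $M'$-module morphism, hence under $F$ an $M$-module morphism, which pins the $F$-induced action as $\rho' \circ f^F_V$. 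The hard part, I expect, lies in the reverse direction: verifying that $f^F$ is a morphism of \emph{bimonads} rather than merely a natural transformation. One must coherently relate $a_V$ at different ``levels'' --- on $(M'V, \mu'_V)$, on the iterated free module $(M'M'V, \mu'_{M'V})$, and on tensor products $(M'V, \mu'_V) \otimes (M'W, \mu'_W)$ --- which is done by applying naturality of $F$ to the canonical $M'$-module morphisms built from $\mu'$, $M'_2$, and $M'_0$.
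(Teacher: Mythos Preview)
Your proposal is correct, and its approach matches exactly the correspondence the paper describes: the paper does not give its own proof of this lemma (it is cited from \cite{BV-hopfmonads}), but the remark immediately following it spells out the bijection precisely as you do --- pullback $F = f^*$ in one direction, and $f_V = (F\mu'_V) \circ M\eta'_V$ in the other, where $F\mu'_V$ is your $a_V$. Your additional verification of the bimonad axioms and mutual inverseness is sound and fills in what the paper leaves to the reference.
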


    \begin{rmrk}\label{rem:BimonadIsosGiveFunctors}
    Given a bimonad morphism $f$, the corresponding functor determined via
    \Cref{lem:BimonadIsosGiveFunctors} is just the pullback of the $M'$-module
    structure along $f$, i.e.\ $F=f^*$.
    Conversely, the monad morphism $f:M\To M'$ corresponding to the functor $F:\cat_{M'}
    \to \cat_M$ is given as follows.
    Let $(M'V,F\mu'_V)\in \cat_M$ be the image of the free $M'$-module on $V$ under $F$.
    Then $f_V = F\mu'_V \circ M\eta'_V$.
    In particular, $f^* = F$.
    \end{rmrk}

    If $\cat$ is braided monoidal, and $B\in \cat$ is a bialgebra, then $B\otimes
    \placeholder$ is a bimonad. 
    A bicomonad on $\cat$ is a bimonad on $\cat\op$.

\unnumberedsubsection{Hopf monads}
    A bimonad $M$ on a rigid category $\cat$ is called a \emph{Hopf monad} if $\cat_M$ is
    rigid, following again the familiar nomenclature of algebras, bialgebras, and Hopf
    algebras.
    For a Hopf algebra, the rigid structure of its category of modules is encoded in the
    antipode.
    For Hopf monads, the corresponding concept is as follows, cf.\ \cite{BV-hopfmonads}.
    A natural transformation $S^l$ with components
    \begin{align}
        S^l_V: M\big(\dualL{(MV)}\big) \to \dualL{V} 
    \end{align}
    is called a \emph{left antipode} for $M$ if it satisfies
    \begin{equation}\label{comdiag:antip_coev}
        \begin{tikzcd}[sep=large]
            M(\dualL{(MV)}\otimes V) \arrow[r, "M_2"] 
                \arrow[dd, swap, "M(\dualL{\eta_V}\otimes \id)"]
            & M(\dualL{(MV)}) \otimes MV  \arrow[r, "M\dualL{\mu_V} \otimes \id"]
            & M(\dualL{(M^2V)}) \otimes MV \arrow[d, "S^l_{MV} \otimes \id"] \\
            & & \dualL{(MV)} \otimes MV \arrow[d,"\evL_{MV}"] \\
            M(\dualL{V} \otimes V) \arrow[r,swap,"M
            \evL_{V}"]            
            &M\tensUnit \arrow[r,swap,"M_0"]
            & \tensUnit
        \end{tikzcd}
    \end{equation}
    and
    \begin{equation}\label{comdiag:antip_ev}
        \begin{tikzcd}[sep=huge]
            M\tensUnit \arrow[r,"M_0"] 
                \arrow[d,swap,"M(\coevL_{MV})"]
            & \tensUnit \arrow[r,"\coevL_V"]
            & V \otimes \dualL{V} \arrow[d,"\eta_V \otimes \id"] \\
            M(MV \otimes \dualL{(MV)}) \arrow[r,swap,"M_2"]
            & M^2V \otimes M(\dualL{(MV)}) \arrow[r,swap,"\mu_V\otimes S^l_V"]
            & MV \otimes \dualL{V}
        \end{tikzcd}.
    \end{equation}
    Given an $M$-module $(V,\rho)$, the antipode allows us to define a morphism
    \begin{align}\label{eq:rho-dual-def}
        \tilde{\rho}
        =
        \left[
            M(\dualL{V})
            \xrightarrow{M(\dualL{\rho})} 
            M\big(\dualL{(MV)}\big) 
            \xrightarrow{S^l_V} \dualL{V}
        \right],
    \end{align}
    which turns $(\dualL{V},\tilde{\rho})$ into an $M$-module
    \cite[Thm.~3.8]{BV-hopfmonads}.
    The evaluation and coevaluation are those in $\cat$,
    \begin{align}
        \evL_{(V,\rho)} = \evL_V
        ,\quad 
        \coevL_{(V,\rho)} = \coevL_V,
    \end{align}
    and that they are indeed $M$-module intertwiners is guaranteed by the two commuting
    diagrams \eqref{comdiag:antip_coev} and \eqref{comdiag:antip_ev}.
    Right duals via the right antipode are defined similarly.
    It was also shown in \cite[Thm.~3.8]{BV-hopfmonads} that $\cat_M$ is rigid if and only
    if the left and right antipodes exist, and that the antipodes are unique.

    A morphism of Hopf monads is a morphism of the underlying bimonads.
    It automatically commutes with the antipodes, \cite[Lem.~3.13]{BV-hopfmonads}.

    A Hopf comonad on $\cat$ is a Hopf monad on $\cat\op$.
    
    If $\cat$ is braided rigid monoidal, and $H\in \cat$ is a Hopf algebra with invertible
    antipode, then $H\otimes \placeholder$ is a Hopf monad, see
    \cite[Ex.~3.10]{BV-hopfmonads}.
    This example will be important in \Cref{SEC:braided_case}.    

\subsection{The central Hopf monad}
    Throughout the rest of this section $\cat$ will denote a finite tensor category.
    Recall the notion of a coend from e.g.\ \cite{catWorkMath,KerlerLyubashenko} or
    \cite[Sec.\,4.2]{Fuchs-Schweigert}.
    It follows from \cite[Cor.~5.1.8]{KerlerLyubashenko} that the coends 
    \begin{align}
        \hopfMonad[1][V] = \int^{X\in \cat} \hopfMonadComp{1} , 
        \quad \quad \quad
        \hopfMonad[2][V] = \int^{X\in \cat} \hopfMonadComp{2}
        \notag \\
        \hopfMonad[3][V] = \int^{X\in \cat} \hopfMonadComp{3} , 
        \quad \quad \quad
        \hopfMonad[4][V] = \int^{X\in \cat} \hopfMonadComp{4}
    \end{align}
    exist for all $V\in \cat$.
    A different and more detailed proof of existence is given in
    \cite[Thm.~3.6]{Sh-unimodFinTens}.
    Note that the subscript $i$ indicates the `position' of the dual symbol
    ${}^\dualsymbol$.
    We write $\dinatCoend[i][V]$ for the universal dinatural transformation of
    $\hopfMonad[i][V]$ so that for example
    \begin{align}
        \dinatCoend[2][V][X]: \hopfMonadComp{2} \to \hopfMonad[2][V].
    \end{align}

    In particular, $\hopfMonad:V \mapsto \hopfMonad[i][V]$ is an endofunctor, and the
    universal dinatural transformations $\dinatCoend[i][V]$ are natural in $V\in
    \cat$.
    In our graphical notation the dinatural transformation $\dinatCoend[2][V]$ is
    drawn as
    \begin{equation}
        \ipic{-0.5}{./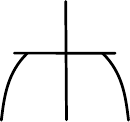}{1.4}
        \put (-40,030) {$\hopfMonad[2][V]$}
        \put (-20,009) {$\dinatCoend[2][V][X]$}
        \put (-59,-36) {$\dualL{X}$}
        \put (-31,-36) {$V$}
        \put (-07,-36) {$X$}
    \end{equation}
    for all $V,X\in \cat$.
    Functors like $\hopfMonad[i]$, and in particular the functor $\hopfMonad[2]$, were
    already studied in e.g.\ \cite{BV-Double}. 
    The latter is known as the \emph{central Hopf monad} \cite{Sh-unimodFinTens}.

    \smallskip

    \newcommand{\varI}{2}
    We will now describe the monad structures in more detail, restricting our exposition
    to the case $i=\varI$. 
    The monad structure is similar for all other cases.

    Recall the natural isomorphism $\gamma_{X,Y}: (\dualL{X}) (\dualL{Y}) \to
    \dualL{(YX)}$ from \eqref{eq:gammaCanonicalIso}.
    The multiplication $\multCoend[2]:(\hopfMonad[2])^2 \Rightarrow \hopfMonad[2]$ with
    components $\multCoend[2][V]:\hopfMonad[2]\hopfMonad[2](V) \to \hopfMonad[2][V]$ is
    determined by the universal property of coends via
    \begin{equation}
        \label{eq:DefMultCoend}
        \ipic{-0.5}{./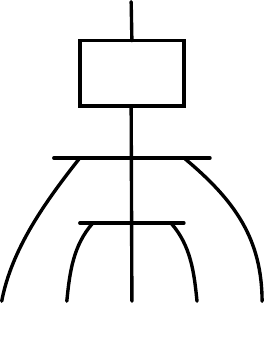}{1.2}
        \put (-59,065) {$\hopfMonad[2][V]$}
        \put (-60,33) {$\multCoend[2][V]$}
        \put (-102,13) {$\dinatCoend[2][\hopfMonad[2]V][Y]$}
        \put (-44,-12) {\footnotesize $\dinatCoend[2][V][X]$}
        \put (-098,-055) {$\dualL{Y}$}
        \put (-075,-055) {$\dualL{X}$}
        \put (-051,-055) {$V$}
        \put (-029,-055) {$X$}
        \put (-006,-055) {$Y$}
        \quad = \quad
        \ipic{-0.4}{./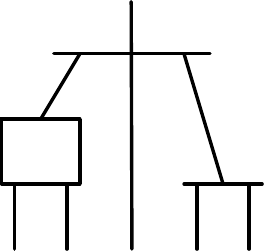}{1.3}
        \put (-62,061) {$\hopfMonad[2][V]$}
        \put (-30,041) {$\dinatCoend[2][V][X\otimes Y]$}
        \put (-95,-02) {$\gamma_{Y,X}$}
        \put (-12,-08) {$\id_{X\otimes Y}$}
        \put (-99,-49) {$\dualL{Y}$}
        \put (-80,-49) {$\dualL{X}$}
        \put (-54,-49) {$V$}
        \put (-33,-49) {$X$}
        \put (-09,-49) {$Y$}
        \put (30,0) {.}
    \end{equation}
    Here we used what is known as the `Fubini theorem' for ends and coends, cf.\
    \cite[Sec.~IX.8]{catWorkMath}, see also \cite[Rem.~1.9]{fosco}, to express the
    dinatural transformation of the iterated coend $\hopfMonad[2]\hopfMonad[2](V)$
    in terms of $\dinatCoend[2][V]$ and $\dinatCoend[2][\hopfMonad[2]V]$.

    The unit of $\hopfMonad[2]$, i.e.\ the natural transformation $\unitCoend[\varI]:
    \id_{\cat} \Rightarrow \hopfMonad[\varI]$, is defined as
    \begin{equation}
        \unitCoend[\varI][V] \defined
        \left[            
            V \xrightarrow{\sim} \hopfMonadComp{\varI}[\tensUnit] 
            \xrightarrow{ \dinatCoend[\varI][V][\tensUnit] } \hopfMonad[\varI][V] 
        \right]
        \ .
    \end{equation}

    \renewcommand{\varI}{2}
    For $i=2,3,$ \hopfMonad is always a Hopf monad \cite[Sec.~5.4]{BV-Double}.
    As an example we again consider $i=\varI$, the other case is similar. 
    The lax comonoidal structure is defined by\footnote{
        Here and in similar places below, we often omit spelling out all components and
        arguments of the dinatural transformations, e.g.\ on the LHS we have
        $\dinatCoend[2](U\otimes V)_X$, etc.
    }
    \begin{equation}
        \ipic{-0.5}{./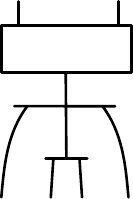}{1.4}
        \put (-60,045) {$\hopfMonad[2][U]$}
        \put (-16,045) {$\hopfMonad[2][V]$}
        \put (-50,018) {$\comultCoend[2][U,V]$}
        \put (-03,-03) {$\dinatCoend[2]$}
        \put (-20,-22) {$\id$}
        \put (-60,-53) {$\dualL{X}$}
        \put (-39,-53) {$U$}
        \put (-25,-53) {$V$}
        \put (-07,-53) {$X$}
        \quad = \quad  
        \ipic{-0.5}{./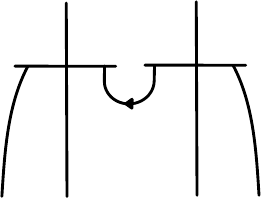}{1.4}
        \put (-090,045) {$\hopfMonad[2][U]$}
        \put (-040,045) {$\hopfMonad[2][V]$}
        \put (-075,018) {$\dinatCoend[2]$}
        \put (-004,011) {$\dinatCoend[2]$}
        \put (-112,-51) {$\dualL{X}$}
        \put (-083,-51) {$U$}
        \put (-030,-51) {$V$}
        \put (-007,-51) {$X$}
        , \quad \quad \quad 
        \ipic{-0.5}{./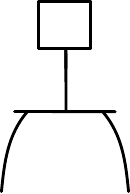}{1.2}
        \put (-28,023) {$\counitCoend[2]$}
        \put (-03,001) {$\dinatCoend[2]$}
        \put (-51,-54) {$\dualL{X}$}
        \put (-07,-54) {$X$}
        \quad = \quad  
        \ipic{-0.4}{./img/evalL.pdf}{0.3}
        \put (-42,-26) {$\dualL{X}$}
        \put (-07,-26) {$X$}
        \ .
        \label{eq:CoendComonoidalStructure}
    \end{equation}
    These are the comultiplication and counit of $\hopfMonad[2]$.
	The left antipode of $\hopfMonad[2]$ is defined by
    \begin{align}
        \ipic{-0.5}{./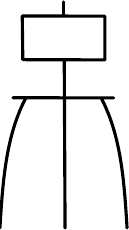}{1.4}
        \put (-30,052) {$\dualL{U}$}
        \put (-41,028) {$\leftAntCoend[2][U]$}
        \put (-04,004) {$\dinatCoend[2]$}
        \put (-65,-58) {$\dualL{X}$}
        \put (-45,-58) {$\dualL{(\hopfMonad[2]U)}$}
        \put (-05,-58) {$X$}
        \quad \quad 
        &= 
        \quad  \quad
        \ipic{-0.5}{./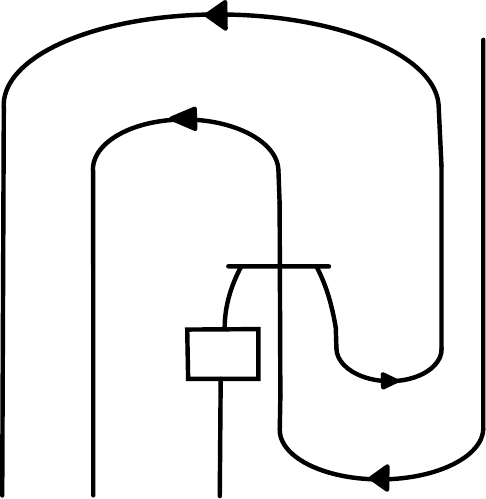}{0.9}
        \put (-004,057) {$\dualL{U}$}
        \put (-050,-00) {{\footnotesize$\dinatCoend[2][U][\dualR{X}]$}}
        \put (-073,-30) {$\sim$}
        \put (-134,-77) {$\dualL{X}$}
        \put (-115,-77) {$\dualL{(\hopfMonad[2]U)}$}
        \put (-074,-77) {$X$}
        \ ,
    \end{align}
    following \cite[Thm.~5.6]{BV-Double}.
    Here, by $\sim$ we mean the canonical isomorphism $X \cong \dualL{(\dualR{X})}$,
    defined similarly to $\omega_X: \dualR{(\dualL{X})}\to X$ from
    \eqref{eq:definitionOmegaIso}.
    The right antipode
    is obtained analogously.    

    For $i=1,4$, the above definition of a bimonad structure on $\hopfMonad[i]$ does not
    work.
    If $\cat$ is pivotal, however, the natural isomorphism $\dualL{X}\cong \dualR{X}$
    from \eqref{eq:RightDualIsLeft-pivotal} can be used when the duals do not match up in
    the comultiplication and counit.
    For example, the counit of $\hopfMonad[1]$ is given by
    \begin{equation}
        \ipic{-0.5}{./img/counitCoend.pdf}{1.2}
        \put (-28,022) {$\counitCoend[1]$}
        \put (-03,-05) {$\dinatCoend[1]$}
        \put (-56,-45) {$\dualR{X}$}
        \put (-07,-45) {$X$}
        \quad \quad = \quad
        \ipic{-0.5}{./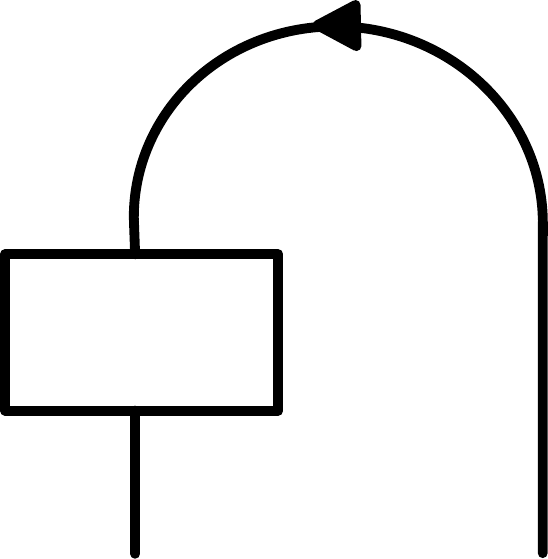}{0.4}
        \put (-62,-09) {$\eqref{eq:RightDualIsLeft-pivotal}$}
        \put (-60,-44) {$\dualR{X}$}
        \put (-07,-44) {$X$}
        \quad .
    \end{equation}
    One checks that in this way one obtains a Hopf monad structure on $\hopfMonad[1]$ and
    $\hopfMonad[4]$.
    \medskip    

    We summarise the preceding discussion in the following proposition.

    \begin{prop}\label{prop:all-are-HM}
        The functors $\hopfMonad[2]$ and $\hopfMonad[3]$ are Hopf monads. 
        If $\cat$ is pivotal, then $\hopfMonad[1]$ and $\hopfMonad[4]$ are also Hopf
        monads.
    \end{prop}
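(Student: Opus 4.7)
The overall plan is to dispose of the cases $i=2,3$ by appealing directly to the construction of central Hopf monads in \cite[Sec.~5.4]{BV-Double}, which matches the formulas \eqref{eq:DefMultCoend} and \eqref{eq:CoendComonoidalStructure} stated above, and then to obtain the pivotal cases $i=1,4$ by transport of structure along a natural isomorphism induced by the pivotal structure $\pivotalStruct$.

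For $i=2$ (the case $i=3$ being entirely parallel with left and right duals interchanged), the multiplication $\multCoend[2]$, unit $\unitCoend[2]$, comultiplication $\comultCoend[2]$, counit $\counitCoend[2]$ and left antipode $\leftAntCoend[2]$ were specified above by their defining equations with the universal dinatural cone $\dinatCoend[2]$. Verifying each bimonad axiom (associativity and unitality of $\multCoend[2]$, coassociativity and counitality of $\comultCoend[2]$, and comonoidality of $\multCoend[2]$ and $\unitCoend[2]$) amounts to showing that both sides agree after precomposition with $\dinatCoend[2]$ and then invoking the universal property; each such equality reduces to a straightforward identity in $\cat$ coming either from naturality of $\gamma_{X,Y}$ or from coherence. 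For instance, associativity of $\multCoend[2]$ reduces, after applying \eqref{eq:DefMultCoend} twice and using Fubini for coends, to the pentagon-type identity for $\gamma$; comonoidality of $\multCoend[2]$ follows from naturality of $\gamma$ together with \eqref{eq:CoendComonoidalStructure}.

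The main technical step will be the verification of the antipode axioms \eqref{comdiag:antip_coev} and \eqref{comdiag:antip_ev}. Here one precomposes both sides with $\dinatCoend[2]$, pushes $\leftAntCoend[2]$ through using its definition in terms of $\dinatCoend[2][U][\dualR{X}]$, and then collapses the resulting diagram using the zig-zag identities for $\evL$ and $\coevR$ together with the canonical isomorphism $X \cong \dualL{(\dualR{X})}$. This is exactly the computation performed in \cite[Thm.~5.6]{BV-Double}, which I would cite rather than reproduce.

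For the pivotal cases $i=1,4$ the plan is to avoid repeating these verifications and instead use transport of structure. The pivotal isomorphism \eqref{eq:RightDualIsLeft-pivotal} provides, for each $X\in\cat$, a natural monoidal isomorphism $\dualR{X}\to\dualL{X}$. Inserting this isomorphism on the $\dualR{X}$ strand of the dinatural family for $\hopfMonad[1]$ yields a dinatural family landing in $\hopfMonad[2]$, and the universal property produces a natural isomorphism $\hopfMonad[1]\xrightarrow{\sim}\hopfMonad[2]$ of endofunctors. Pulling back the Hopf monad structure from $\hopfMonad[2]$ along this isomorphism endows $\hopfMonad[1]$ with a Hopf monad structure, and monoidality of $\pivotalStruct$ ensures that the transported structure agrees with the one indicated in the excerpt (where $\pivotalStruct$ is inserted on dual strands whenever the duals do not match). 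The analogous argument via $\hopfMonad[4]\cong\hopfMonad[3]$ treats $i=4$, completing the proof.
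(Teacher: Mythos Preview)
Your proposal is correct. For $i=2,3$ it coincides with the paper's treatment: both simply invoke \cite[Sec.~5.4, Thm.~5.6]{BV-Double} for the bimonad and antipode verifications, the latter being a summary proposition of the preceding discussion rather than a proof written out in the paper.

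For $i=1,4$ there is a genuine, if minor, organisational difference. The paper defines the Hopf monad structure on $\hopfMonad[1]$ and $\hopfMonad[4]$ directly, by inserting the pivotal isomorphism \eqref{eq:RightDualIsLeft-pivotal} wherever the duals fail to match, and then asserts that ``one checks'' the axioms hold; only afterwards (Proposition~\ref{prop:iso_as_HC_pivotal}) does it show that the canonical isomorphisms $\kappa_{i,j}$ respect these independently defined structures. You instead obtain the Hopf monad structure on $\hopfMonad[1]$ by transport along $\hopfMonad[1]\cong\hopfMonad[2]$ and then argue that monoidality of $\pivotalStruct$ forces the transported structure to agree with the direct one. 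The paper is aware of your route and explicitly comments (in the footnote following Proposition~\ref{prop:iso_as_HC_pivotal}) that one could proceed this way, but prefers the direct definition so that each $\hopfMonad[i]$ carries its canonical coend-defined structure from the outset. Your approach has the advantage that no separate verification of the Hopf monad axioms for $\hopfMonad[1],\hopfMonad[4]$ is needed; the paper's approach has the advantage that the structure on each $\hopfMonad[i]$ is given intrinsically, independent of any choice of isomorphism to $\hopfMonad[2]$.
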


    The following proposition will now show that the canonical natural isomorphisms
    $\kappa_{i,j}: \hopfMonad[i] \To \hopfMonad[j]$ defined by
    \begin{align}
        (\kappa_{i,j})_V \circ \dinatCoend[i][V][X]
        = \dinatCoend[j][V][X'] \circ (\text{isomorphism of components}),
    \end{align}
    are isomorphisms of Hopf monads.\footnote{
        Alternatively, one could have introduced the Hopf monad structure only on
        $\hopfMonad[2]$, and then used the natural isomorphisms $\kappa_{2,j}$ to
        transport the structure to the other functors $\hopfMonad[j]$.
        However, each $\hopfMonad[j]$ comes with a canonical choice of Hopf monad
        structure defined using universal properties of coends, which is the structure we
        want to use throughout the remainder of the text.
        Thus we prefer to present the canonical structure in the four cases first and then
        establish the isomorphisms afterwards.
    }
    Here, $X'$ stands for $X$, $\dualL{X}$ or $\dualR{X}$ as appropriate, and the
    `isomorphisms of components' consist of coherence isomorphisms and the
    isomorphisms $\dualR{(\dualL{X})} \cong X$ and, in the pivotal case, ${\dualL{X}\cong
    \dualR{X}}$.
    For example,
    \begin{align}
        \label{eq:def-kappa_23}
        &\left[ 
            \hopfMonadComp{2}
            \xrightarrow{\dinatCoend[2][V][X]} \hopfMonad[2][V] 
            \xrightarrow{(\kappa_{2,3})_V } \hopfMonad[3][V] 
        \right]
        \nonumber\\
        &~=~
        \left[ 
            \hopfMonadComp{2}
            \xrightarrow{\sim}
            (\dualL{X}\, V)X
            \xrightarrow{\id \otimes \omega_X^{-1}}
            (\dualL{X}\,V)(\dualR{(\dualL{X})})
            \xrightarrow{\dinatCoend[3][V][\dualL{X}]} \hopfMonad[3][V] 
        \right]
        \ .
    \end{align}

    \begin{prop}\label{prop:iso_as_HC_pivotal}
        The natural isomorphism $\kappa_{2,3}$ is an isomorphism of Hopf monads.
        If $\cat$ is pivotal, then $\kappa_{i,j}$ are isomorphisms of Hopf monads for all
        $i,j$.
    \end{prop}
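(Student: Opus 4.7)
The plan is to invoke \Cref{lem:BimonadIsosGiveFunctors}, which says that a bimonad morphism between two bimonads on $\cat$ is the same datum as a strict monoidal functor between their module categories whose underlying endofunctor on $\cat$ is $\id_\cat$. Compatibility of Hopf monad morphisms with the left and right antipodes is automatic by \cite[Lem.~3.13]{BV-hopfmonads}, so it is enough to show each $\kappa_{i,j}$ is a bimonad isomorphism, equivalently to exhibit the required strict monoidal identification of the module categories $\cat_{\hopfMonad[i]}$ and $\cat_{\hopfMonad[j]}$.

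Using the universal property of the coend together with the dual adjunctions, an $\hopfMonad[2]$-module structure on $V\in\cat$ is equivalent to a half-braiding $V\otimes X \to X\otimes V$ dinatural in $X$, whereas an $\hopfMonad[3]$-module structure on $V$ is equivalent to a half-braiding $X\otimes V \to V\otimes X$. Both identifications of $\cat_{\hopfMonad[i]}$ with the Drinfeld center $\mathcal{Z}(\cat)$ are strict monoidal and the identity on underlying objects, and the translation between the two conventions uses only coherence together with the canonical isomorphism $\dualR{(\dualL{X})}\cong X$; this produces $\kappa_{2,3}$ as a bimonad isomorphism. For indices in $\{1,4\}$, the analogous adjunctions cross between left and right duals in the factor that does not carry $V$, which is possible precisely when $\cat$ is pivotal; with $\pivotalStruct$ in hand, the same argument produces all remaining $\kappa_{i,j}$. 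That the resulting bimonad morphisms agree with $\kappa_{i,j}$ as defined in \eqref{eq:def-kappa_23} follows by inserting the recipe of \Cref{rem:BimonadIsosGiveFunctors} and comparing with the dinatural transformations.

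The main obstacle is verifying that the identifications of module categories respect the monoidal structures, equivalently, compatibility of each $\kappa_{i,j}$ with the multiplications $\multCoend[i]$ and $\multCoend[j]$. The definition \eqref{eq:DefMultCoend} involves the isomorphism $\gamma_{Y,X}\colon \dualL{Y}\otimes \dualL{X}\to \dualL{(X\otimes Y)}$, which behaves asymmetrically in its two arguments; the required check, made after precomposition with the dinatural transformations of the iterated coends via the Fubini theorem, reduces to a coherence computation pairing $\gamma$ against the canonical isomorphisms $\dualR{(\dualL{X})}\cong X$ and, in the pivotal cases, $\pivotalStruct$. By contrast, compatibility with the counit and comultiplication reduces to single-leg diagram chases built directly from \eqref{eq:CoendComonoidalStructure} and its analogues, and compatibility with the units is immediate since each $\unitCoend[i][V]$ is specified by evaluating the dinatural transformation at $X=\tensUnit$, where the dual-switching isomorphisms become trivial.
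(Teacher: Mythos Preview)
Your approach is correct and uses the same key tool as the paper, namely \Cref{lem:BimonadIsosGiveFunctors} together with \Cref{rem:BimonadIsosGiveFunctors} and the automatic compatibility with antipodes from \cite[Lem.~3.13]{BV-hopfmonads}. The difference is one of framing rather than substance. The paper works directly with the pullback functor $F=(\kappa_{2,3})^*\colon \cat_{\hopfMonad[3]}\to\cat_{\hopfMonad[2]}$: it writes down the induced $\hopfMonad[2]$-action $F\rho$ explicitly and then checks strict monoidality by the single equation $F(\rho\otimes\sigma\circ\comultCoend[3][V,W])=(F\rho\otimes F\sigma)\circ\comultCoend[2][V,W]$ together with $F(\counitCoend[3])=\counitCoend[2]$. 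You instead pass through the identification of each $\cat_{\hopfMonad[i]}$ with the Drinfeld center $\mathcal{Z}(\cat)$ via half-braidings, and then argue that the composite equivalence is strict monoidal over $\cat$.

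Your detour through $\mathcal{Z}(\cat)$ is conceptually pleasant and makes it transparent why the four monads are all isomorphic, but it introduces an extra layer: you must verify that \emph{each} identification $\cat_{\hopfMonad[i]}\cong\mathcal{Z}(\cat)$ is strict monoidal over $\cat$, which amounts to the same coherence check the paper does once. The paper's route is more economical because it bypasses the center entirely and checks monoidality of a single explicit pullback. In particular, what you flag as the ``main obstacle'' (compatibility with the multiplications via the Fubini theorem and $\gamma$) is exactly what the paper absorbs into its one-line verification of strict monoidality of $F$; the paper does not need to unpack the multiplication because it checks compatibility with the comonoidal structure instead, which by \Cref{lem:BimonadIsosGiveFunctors} is equivalent.
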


    \begin{proof}
        We first claim that the pullback $F = (\kappa_{2,3})^* : \cat_{\hopfMonad[3]} \to
        \cat_{\hopfMonad[2]}$ is a well-defined functor.
        Namely, on an $\hopfMonad[3]$-module $(V,\rho)$ the functor acts as
        $F(V,\rho) = (V,F\rho)$, where we define $F\rho : \hopfMonad[2]V\to V$ by
        \begin{align}
            \label{eq:A2isoA3}
            F{\rho} \circ \dinatCoend[2][V][X] 
            = 
            \left[
                \hopfMonadComp{2}
                \xrightarrow{\sim}
                (\dualL{X}\,V)(\dualR{(\dualL{X})})
                \xrightarrow{\dinatCoend[3][V][\dualL{X}]}
                \hopfMonad[3][V] 
                \xrightarrow{ \rho }
                V \right],
        \end{align}
        so that indeed $F\rho = \rho \circ \kappa_{2,3}$ by \eqref{eq:def-kappa_23}.
        A calculation shows that $F\rho$ is an $\hopfMonad[2]$-action.

        Next we check the conditions in \Cref{lem:BimonadIsosGiveFunctors}.
        As $F$ is given by pullback, the underlying functor is the identity on $\cat$. To
        verify strict monoidality, one checks that for $(V,\rho)$, $(W,\sigma)\in
        \cat_{\hopfMonad[3]}$ one has
        \begin{align}
            F\left({ \rho \otimes \sigma \circ \comultCoend[3][V,W] }\right)
            = 
            \big(F\rho \otimes F\sigma\big) \circ \comultCoend[2][V,W]
        \end{align}
        and $F(\counitCoend[3]) = \counitCoend[2]$, which is easy to see.

        Thus from \Cref{lem:BimonadIsosGiveFunctors} we obtain a morphism of bimonads
        (and hence of Hopf monads) $\hopfMonad[2] \Rightarrow \hopfMonad[3]$.
        Since $F = (\kappa_{2,3})^*$, by \Cref{rem:BimonadIsosGiveFunctors} this
        morphism is given by $\kappa_{2,3}$.
        As $\kappa_{2,3}$ is an isomorphism, we finally get $\hopfMonad[2] \cong
        \hopfMonad[3]$.

        If $\cat$ is pivotal, then e.g.\ the equivalence $G:\cat_{\hopfMonad[2]}\to
        \cat_{\hopfMonad[1]}$ is given by $G(V,\rho) = (V,G\rho)$ with
        \begin{align}
            G{\rho} \circ \dinatCoend[1][V][X] 
            = 
            \left[
                \dualR{X} (VX) \xrightarrow{\sim} \dualL{X} (VX)
                \xrightarrow{\dinatCoend[2][V][X]}
                \hopfMonad[2][V] 
                \xrightarrow{ \rho }
                V
            \right],
        \end{align}
        where the first isomorphism is given by the inverse to the one
        in~\eqref{eq:RightDualIsLeft-pivotal}.
        It is straightforward to check that $G = (\kappa_{1,2})^*$ and that it is strict
        monoidal and it is the identity on objects from $\cat$.
        Hence $\hopfMonad[2] \cong \hopfMonad[1]$ as Hopf monads.
    \end{proof}

	It is not hard to see that $\cat_{\hopfMonad[2]}\cong \cat[Z(C)] \cong
	\cat_{\hopfMonad[3]}$ as monoidal categories, where $\cat[Z(C)]$ is the Drinfeld
	centre of $\cat$, cf.\ \cite[Sec.~9.3]{BV-hopfmonads}.
    This was the reason to introduce central monads, and also explains the name.

    \smallskip

    \begin{example}\label{example:HopfAlgebra-monads}
        Let $\cat=\hmodM$ be the category of finite-dimensional modules over a
        finite-dimensional Hopf algebra $H$, and let $i= 2,3$.
        As vector spaces, the $\hopfMonad(V)$ are isomorphic to $H^*\otimes V$, and we
        choose the module structures as follows.
        With $h\in H$, $f \in H^*$, and $v\in V$,
        the action $\actionCoend$ of $H$
        on $\hopfMonad(V)$ is~\footnotemark
        \begin{align}
            \notag
            \actionCoend[2][h][f\otimes v]&=
            \langle f \mid S(h\sweedler{1}) \placeholder h\sweedler{3} \rangle 
            \otimes h\sweedler{2}v
            \ ,
            \\
            \actionCoend[3][h][f\otimes v]&=
            \langle f \mid S\inv(h\sweedler{3}) \placeholder h\sweedler{1} \rangle 
            \otimes h\sweedler{2}v
            \ .
            \label{eq:Hopf-example-action-A2-A3}
        \end{align}
    Here we use the sumless Sweedler-notation $\Delta(h) = h\sweedler{1} \otimes
    h\sweedler{2}$ etc., see \Cref{sec:Hopf-conf-def} for details.
        Note that $\hopfMonad[2][\tensUnit]$ is the coadjoint representation of $H$, cf.\
        \cite[Sec.~7]{FGR1}.
        \footnotetext{We use $\langle\placeholder \mid \placeholder \rangle: V^*\otimes V
            \to \field$ to denote the canonical pairing in vector spaces.
        }
        The universal dinatural transformations are defined as
        \begin{align}
            \dinatCoend[2][V][X](f \otimes v \otimes x) &= 
            \sum_i
            \langle
                f \mid e_i.x
            \rangle
            e^i \otimes  v,
            \notag \\ 
            \dinatCoend[3][V][X](x \otimes v \otimes f) &= 
            \sum_i
            \langle
                f \mid e_i.x
            \rangle
            e^i \otimes  v,
            \label{eq:dinat_trans-Hopf}
        \end{align}
        where $f \in X^*$, $v\in V$, $x\in X$, and $\{e_i\}$ is a basis of $H$ with dual
        basis $\{e^i\}$.
    In string diagram notation, these read
    \renewcommand{\varI}{2}
        \begin{equation}
            \dinatCoend[2][V][X] = \quad
            \ipic{-0.5}{./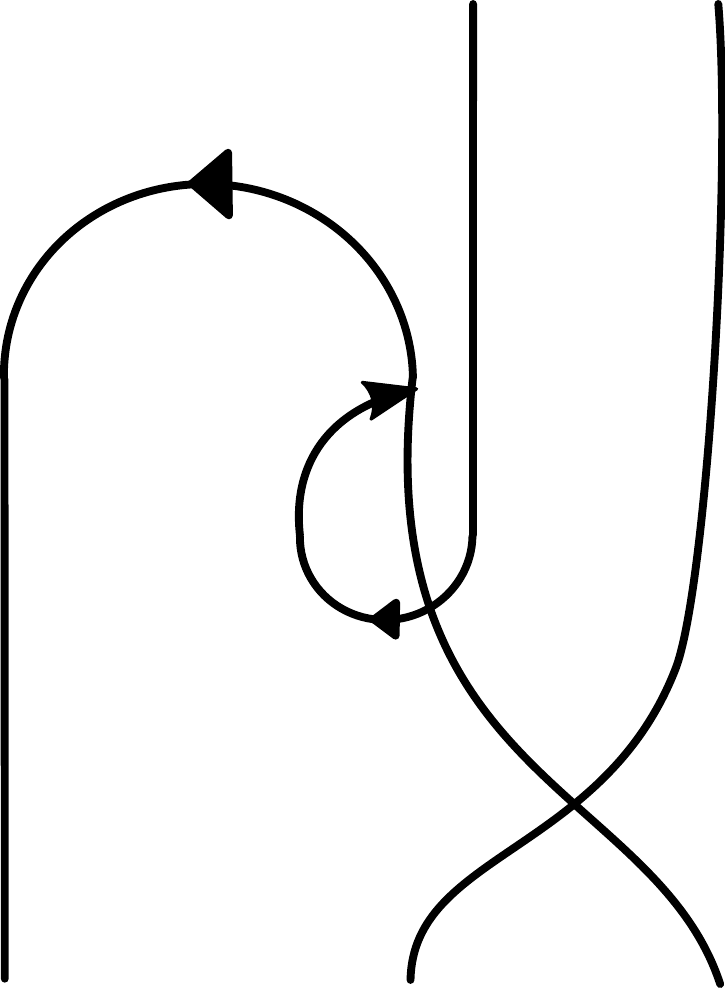}{0.3}
            \put (-28,048) {$H^*$}
            \put (-04,048) {$V$}
            \put (-69,-55) {$\dualL{X}$}
            \put (-32,-55) {$V$}
            \put (-06,-55) {$X$}
            \put (010,050) {$\boxed{\Vect_{\field}}$}
            \quad \quad \quad,\quad
            \dinatCoend[3][V][X] = \quad
            \ipic{-0.5}{./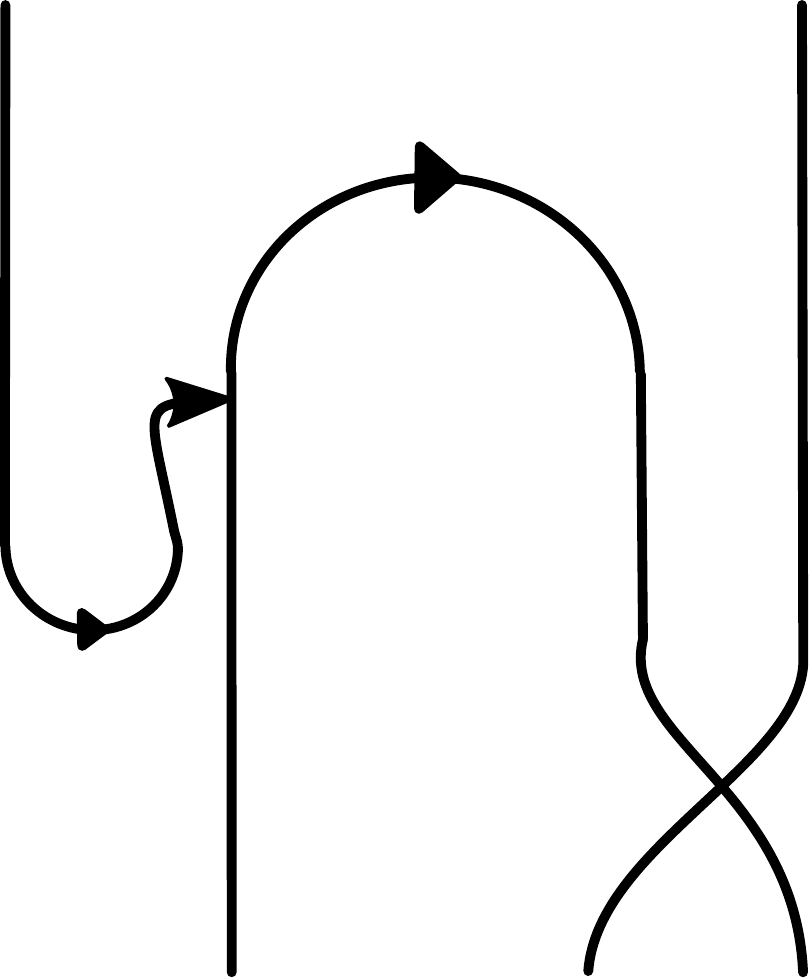}{0.3}
            \put (-73,048) {$H^*$}
            \put (-04,048) {$V$}
            \put (-55,-55) {$X$}
            \put (-24,-55) {$V$}
            \put (-06,-55) {$\dualR{X}$}
            \put (010,050) {$\boxed{\Vect_{\field}}$}
            \ ,
        \end{equation}
    where the boxed $\Vect_{\field}$ signifies that this is to be read in the
    category of $\field$-vector spaces.
        The actions in~\eqref{eq:Hopf-example-action-A2-A3} are uniquely determined by
        requiring $\dinatCoend[2]$ and $\dinatCoend[3]$ to be morphisms in $\cat$.
        
        The units are 
        \begin{align}
            \unitCoend[2][V] = \unitCoend[3][V] = \counit \otimes \id_V, 
        \end{align}
        where $\counit$ is the counit of $H$, and with $\Delta$ the comultiplication of
        $H$, the multiplications are given by\footnote{
    The convention we use for the dual map $\Delta^*$ on $H^* \otimes H^*$ is as follows:
    for $f,g \in H^*$ and $b \in H$ we set $(\Delta^*(f \otimes g))(b) := (f \otimes
    g)(\Delta(b))$.
    Ditto for $(\Delta\op)^*$.
	\label{fn:the-H*H*-convention}
}        
    \begin{align}
        \label{eq:Hopf-example-A23-product}
        \multCoend[2][V] = (\Delta\op)^* \otimes \id_V
        , \quad
        \multCoend[3][V] = \Delta^* \otimes \id_V\ .
    \end{align}

    The comultiplication of $\hopfMonad[i]$ is given by linear maps
    \begin{align}
        \comultCoend[i][V,W]: 
        H^*\otimes V\otimes W \to H^*\otimes V \otimes H^* \otimes W
    \end{align}
    for all $V,W\in \cat$ explicitly as follows. We have 
    \begin{align}
        \comultCoend[2][V, W](f \tensor v \tensor w)
        = \sum_{i,j} \langle f \mid e_i \cdot e_j \rangle \,  
        e^i \tensor v \tensor e^j \tensor w
    \end{align}
    and
    \begin{align}
        \comultCoend[3][V, W](f \tensor v \tensor w)
        = \sum_{i,j} \langle f \mid e_j \cdot e_i \rangle \,  
        e^i \tensor v \tensor e^j \tensor w
    \end{align}
    for $f\in H^*$.
    The counits, being morphisms $\hopfMonad[i][\tensUnit] \to \tensUnit$, can be
    identified with elements in $H$, and we find that they are given by the unit of $H$,
    \begin{align}
        \counitCoend[2]=\counitCoend[3]=\oneQ.
    \end{align}
    The left antipode is given by linear maps 
    \begin{align}
        \leftAntCoend[i][V]: H^* \otimes (H^* \otimes V)^* \to V^*,
    \end{align}
    for $V\in \cat,$ and, denoting by $\widetilde{\leftAntCoend[i]}(V)$ the corresponding
    endomorphism of $H\otimes V$, we have
    \begin{align}
        \widetilde{\leftAntCoend[2]}(V) = S \otimes \id_V
        \quad \text{and} \quad
        \widetilde{\leftAntCoend[3]}(V) = S\inv \otimes \id_V\ .
    \end{align}

    Assume now that $H$ is a pivotal Hopf algebra, i.e.\ that it contains a grouplike
    element $\pivotQ$, called the \emph{pivot}, satisfying $S^2(a)=\pivotQ a \pivotQ\inv$
    for all $a\in H$ \cite{AAGTV,BBGa}.
    The two remaining actions on $\hopfMonad(V)$ for $i=1,4$ can be chosen as
    \begin{align}
        \notag
        \actionCoend[1][h][f\otimes v]&=
        \langle f \mid S\inv(h\sweedler{1}) \placeholder h\sweedler{3} \rangle 
        \otimes h\sweedler{2}v
        \\
        \actionCoend[4][h][f\otimes v]&=
        \langle f \mid S(h\sweedler{3}) \placeholder h\sweedler{1} \rangle 
        \otimes h\sweedler{2}v \ . 
    \end{align}
    With this definition, the corresponding universal dinatural transformations are the
    same linear maps as before: 
        \begin{align}
            \dinatCoend[1][V][X] 
            = \dinatCoend[2][V][X] 
            \quad \text{and} \quad
            \dinatCoend[4][V][X] 
            = \dinatCoend[3][V][X]  \ .
            \label{eq:dinat_trans-Hopf-pivotal}
        \end{align}
        The counits are 
        \begin{align}
            \counitCoend[1]=\pivotQ\inv 
            \quad \text{and} \quad
            \counitCoend[4]=\pivotQ.
        \end{align}
     
    Rather than determining the Hopf monad structure on each $\hopfMonad[i]$ separately as
    stated before \Cref{prop:all-are-HM}, it may be easier to work out only
    one, say $\hopfMonad[2]$, and then to transport the structure via the isomorphisms
    $\kappa_{ij}$. 
    By \Cref{prop:iso_as_HC_pivotal}, this gives the same result.
    The $\kappa_{ij}$ take a simple form in the Hopf case:
        \begin{align}
        (\kappa_{12})_V (f \otimes v) 
        &= \langle f \mid \pivotQ\inv ~ \placeholder \rangle \otimes v
        \ , \notag \\ 
        (\kappa_{23})_V (f \otimes v) 
    &= \big( f \circ S \big) \otimes v 
        \ , \notag \\ 
        (\kappa_{43})_V (f \otimes v) 
    &= \langle f \mid \pivotQ ~ \placeholder \rangle \otimes v
        \ ,
        \end{align}
        for all $f \in H^*, v\in V$.
        Note that then e.g.\ $\counitCoend[1] = \counitCoend[2] \circ (\kappa_{12})_{\tensUnit}$.
\end{example}

\smallskip
    
\subsection{The distinguished invertible object}
    \label{sec:DistInvObj}
    
    Since $\cat$ is a finite tensor category, all projective objects are injective
    \cite[Prop.\,6.1.3]{EGNO}.
    In particular the socle (i.e.\ the maximal semisimple subobject) of the projective
    cover $P_U$ of a simple object $U$ is again simple.
    One can show that the socle $\distInvObj$ of $P_{\tensUnit}$ is an invertible object
    \cite[Lem.~6.4.1]{EGNO}, and we call it the \emph{distinguished invertible object of
    $\cat$}.\footnote{
        This means that our $D$ is in fact dual to the distinguished invertible object of
        \cite[Sec.~6.4]{EGNO}.
        However, our definition agrees with the one given in \cite[Sec.~6]{ENO-S4}.
    }
    This is equivalent to saying that 
    \begin{align}
        \dualL{P_{\tensUnit}} \cong P_{\dualL{D}}.
    \end{align}
    We call $\cat$ \emph{unimodular} if $D\cong \tensUnit$.

    \begin{example}\label{Example:DistInvHopf}
        Let $H$ be a finite-dimensional Hopf algebra.
        A left integral in $H$ is an element $c^l \in H$ such that $h c^l =
        \counit(h) c^l$ for all $h\in H$.
        It can be shown that left integrals are unique up to scalar, and thus there is a
        unique algebra morphism $\modulus: H\to \field$ such that 
        \begin{align}\label{eq:modulus_definition}
            c^l h = \modulus(h) c^l \quad \text{for all } h \in H .
        \end{align}
        This algebra morphism is called the \emph{modulus} of $H$, and by abuse of
        notation we denote the associated one-dimensional $H$-module again by $\modulus$.

        In \cite[Prop.~2.13]{EO-finiteTensor} it is shown that the distinguished
        invertible object $D$ of the category $\hmodM$ of finite-dimensional left
        $H$-modules is precisely the one-dimensional $H$-module with action given by    $\modulus\inv = \modulus \circ S$.
    \end{example}

\smallskip
    
\subsection{Monadic cointegrals for finite tensor categories}
    Consider the free $\hopfMonad$-module $(\hopfMonad[i][\distInvObj],
    \multCoend[i][\distInvObj])$.
    \begin{mydef}\label{def:main-def}
        For $i=2$ (resp.\ $i=3$), a morphism 
        \begin{align}\label{eq:categoricalCointegralsDefined}
            \coint_i : \tensUnit \to \hopfMonad[i][\distInvObj]
        \end{align}
        is called a \emph{right} (resp.\ \emph{left}) \emph{monadic cointegral of $\cat$}
        if it intertwines the trivial $\hopfMonad$-action on $\tensUnit$ and the free
        action on $\hopfMonad[i][\distInvObj]$.
        \\
        If $\cat$ is pivotal, then for $i=1$ (resp.\ $i=4$) such a morphism is called a
        \emph{right} (resp.\ \emph{left}) \emph{$\distInvObj$-symmetrised monadic
        cointegral of $\cat$}.
    \end{mydef}

    We denote the subspace of monadic cointegrals in
    $\cat(\tensUnit,\hopfMonad[i][\distInvObj])$ by:
    \begin{align}
        \label{eq:spacesOfMonadicCointegrals}
        \begin{array}{clcccl}
            i=1 : &
            \spaceRightSymMonCoint 
            &\hspace{3em}&
                    i=2 : &
            \spaceRightMonCoint 
            \\[0.5em]
                    i=4 : &
            \spaceLeftSymMonCoint 
            &&
                    i=3 : &
            \spaceLeftMonCoint 
        \end{array}
    \end{align}

\begin{samepage}
    \begin{rmrk}\label{remark:intertw-diagram-coint-transport}~
        \begin{enumerate}
            \item
                The $A_i$-module intertwining condition
                from~\eqref{eq:categoricalCointegralsDefined} for a morphism $\coint_i:
                \tensUnit \to \hopfMonad[i][\distInvObj]$ in $\cat$ is equivalent to the
                commutativity of the diagram
                \begin{equation}\label{diag:CatCointDiagram}
                    \begin{tikzcd}[row sep=large,column sep=large]
                        \hopfMonad[i][\tensUnit] \ar[r, "{\hopfMonad[i](\coint_i)}"] 
                        \ar[d,swap,"\counitCoend"]
                        & \hopfMonad[i]^2(\distInvObj)
                        \ar[d,"{\multCoend[i][\distInvObj]}"]
                        \\
                        \tensUnit \ar[r,swap,"\coint_i"]
                        & \hopfMonad[i][\distInvObj] 
                    \end{tikzcd}\ ,
                \end{equation}
                that is
                \begin{align}
                    \coint_i \circ \counitCoend 
                    = \multCoend[i][\distInvObj] \circ \hopfMonad(\coint_i)\ .
                    \label{eq:monCointEquation}
                \end{align}
            In \cite[Eq.~(45)]{BV-hopfmonads}, a cointegral of a bimonad $T$
                was defined as an intertwiner of $T$-modules from $(\tensUnit,T_0)$ to
                $(T(\tensUnit),\mu_{\tensUnit})$.
                Thus, if $\cat$ is unimodular, a right (resp.\ left) monadic cointegral of
                $\cat$ is just a cointegral of the bimonad $\hopfMonad[2]$ (resp.\
                $\hopfMonad[3]$).
            \item
                It follows immediately from  \Cref{prop:iso_as_HC_pivotal} and
                from the diagram~\eqref{diag:CatCointDiagram} that $\coint_i$ is a monadic
                cointegral for $\hopfMonad[i]$ if and only if $(\kappa_{i,j})_D \circ
                \coint_i$ is a monadic cointegral for $\hopfMonad[j]$.
        \end{enumerate}
    \end{rmrk}
\end{samepage}

    The names for the monadic cointegrals are chosen because of the relation to
    cointegrals for Hopf algebras, as we will see in the following
    example.\footnote{
        According to our convention of calling the invariants under the regular actions of
        a Hopf algebra \emph{integrals}, one could also call e.g.\ the right
        $\distInvObj$-symmetrised monadic cointegral simply an integral for the Hopf monad
        $\hopfMonad[1]$.
        This would follow more closely the nomenclature of \cite{BV-hopfmonads} 
        (who, however, call the invariants under the regular actions of a Hopf algebra
        ``cointegrals'', which is opposite to our convention). It would also fit to 
        \Cref{cor:OurCointIsShimizus}, which roughly states that monadic
        cointegrals are dual to the categorical cointegrals of \cite{Sh-integrals}.
    
        However, as we explain in \Cref{example:HopfAlgebra-catcoint} and
        \Cref{SEC:MainThm}, the reason for keeping these names is that  the four
        versions of monadic cointegrals automatically correspond to the four versions of
        cointegrals for $H$ if $\cat = \hmod$ for a pivotal (quasi) Hopf algebra $H$.
    }

    \begin{example}\label{example:HopfAlgebra-catcoint}
        Let $\cat=\hmodM$ be as in \Cref{example:HopfAlgebra-monads}.
        By \Cref{Example:DistInvHopf}, the distinguished invertible object
        $\distInvObj$ is just the ground field $\field$ with action given by the algebra
        morphism $\modulus\inv$, where $\modulus$ is the modulus of $H$.
        Thus, a morphism $\tensUnit \to \hopfMonad[i][\distInvObj]$ is the same as an
        element in $H^*$ intertwining some specific $H$-actions.

        Let us first look at the linear condition coming from diagram
        \eqref{diag:CatCointDiagram}.
        Using the Hopf monad structure as given in
        \Cref{example:HopfAlgebra-monads}, we see that a right (resp.\ left)
        monadic cointegral is, as a linear form, a solution to
        \begin{align}\label{eq:Hopf-example-mon-coint}
            (\coint_2 \otimes \id) (\Delta(h))
            = 
            \coint_2(h) \oneQ
            , \quad \text{resp.} \quad
            (\id \otimes \coint_3) (\Delta(h))
            = 
            \coint_3(h) \oneQ.
        \end{align}
        That is, it is a right (resp.\ left) cointegral for the Hopf algebra $H$ in the
        usual sense, cf.\ \cite[Def.~10.1.2]{Radford-Hopf-algebras}.
        Conversely, e.g.\ a solution to the first equation in
        \eqref{eq:Hopf-example-mon-coint} is a right monadic cointegral, provided it is in
        addition an intertwiner $\tensUnit \to \hopfMonad[2][\distInvObj]$ of $H$-modules.
        However, by \cite[Thm.~10.5.4(e)]{Radford-Hopf-algebras} a right cointegral
        $\coint$ satisfies $ \coint(a S\inv(b)) = \modulus\inv( b\sweedler{2} ) \coint(
        S(b\sweedler{1}) a)$.
        Applying~\eqref{eq:Hopf-example-action-A2-A3} for $A_2(D)$, we have
        \begin{align}\label{eq:from-sym-to-inter}
            \modulus\inv( h\sweedler{2} ) \coint( S(h\sweedler{1}) a h\sweedler{3} )
            =
            \coint( a h\sweedler{2} S\inv(h\sweedler{1}) )
            =
            \counit(h) \coint(a)\ ,
        \end{align}
        that is, the intertwining condition is automatically satisfied.

        If $(H,\pivotQ)$ is a pivotal Hopf algebra, then 
        diagram \eqref{diag:CatCointDiagram}
         can, as a linear equation, be evaluated for $i=1,4$,
        and it gives the equations
        \begin{align}\label{eq:Hopf-example-symmon-coint}
            (\coint_1 \otimes \id) (\Delta(h))
            = 
            \coint_1(h) \pivotQ\inv
            , \quad
            (\id \otimes \coint_4) (\Delta(h))
            = 
            \coint_4(h) \pivotQ.
        \end{align}
        According to \cite[Sec.~4.4]{FOG}, solutions to these equations are precisely
        $\modulus$-sym\-metrised cointegrals for $H$ (where we regard $H$ as a Hopf
        $G$-coalgebra for $G$ the trivial group), see also~\cite{BBGa} for the unimodular
        case.
        As above, in the converse direction, solutions to e.g.\ the first equation in
        \eqref{eq:Hopf-example-symmon-coint} are automatically intertwiners of $H$-modules
        from $\tensUnit$ to $\hopfMonad[1][\distInvObj]$.\footnote{
            To see this, note that by \cite[Prop.~4.18]{FOG} the linear form $\coint_1$
            lies in the space $\gammaSSymRS$ from \eqref{eq:Def_gammaSSym_pivotal}.
            This space is isomorphic to $\cat(\tensUnit, \hopfMonad[1][\distInvObj])$, and
            the isomorphism \eqref{eq:isoXC-pivotal} is the identity in the Hopf case.
    }
        Finally, let us note that $\modulus$-symmetrised cointegrals are an example of
        $g$-cointegrals for a group-like $g$ as introduced in \cite{Radford-integrals}
        (and called $g$-integrals there), see \cite[Rem.\ 3.10]{BGR1}.

        Let us stress a point already made in the introduction.
        As we just saw, via the very natural realisation of each monad $\hopfMonad[i]$
        given in \Cref{example:HopfAlgebra-monads}, the monadic cointegrals for
        $\hopfMonad[1],\dots,\hopfMonad[4]$ reduce to four known versions of cointegrals
        for finite dimensional (pivotal) Hopf algebras.
        This is an important motivation to keep all four of the $\hopfMonad[i]$, even
        though they are all isomorphic.
        Indeed, also in the Hopf case one can easily give explicit isomorphisms between
        the four spaces of cointegrals, but in practice it is important to have all four
        notions available, rather than  singling one out arbitrarily.
    \end{example}

    \medskip
    The preceding example shows that for $\cat=\hmodM$ with $H$ a finite-dimensional
    (pivotal) Hopf algebra, left/right ($\distInvObj$-symmetrised) monadic cointegrals
    exist and are unique up to scalar.    
    The next proposition states
    that this remains true for any (pivotal) finite tensor
    category.

\hyphenation{mo-na-dic}

    \begin{prop}
        \label{prop:MonCoint_uniq_ex}
        Let $\cat$ be a finite tensor category. 
        \begin{enumerate}
            \item         
                Non-zero left/right monadic cointegrals exist and are unique up to scalar
                multiples.
            \item
                Suppose $\cat$ is in addition pivotal.
                Then non-zero left/right $\distInvObj$-symmetrised monadic cointegrals
                exist and are unique up to scalar multiples.
        \end{enumerate}
    \end{prop}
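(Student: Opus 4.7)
The plan is to reduce all four statements to a single one (say $i=2$, the right monadic cointegral) and then invoke the existence-and-uniqueness result for categorical cointegrals from \cite{Sh-integrals}.

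First I would observe that by \Cref{prop:iso_as_HC_pivotal} the bimonad isomorphisms $\kappa_{i,j}\colon \hopfMonad[i] \To \hopfMonad[j]$ are available for $(i,j)\in\{(2,3)\}$ in general, and for all $i,j\in\{1,2,3,4\}$ when $\cat$ is pivotal. By \Cref{remark:intertw-diagram-coint-transport}(2), postcomposition with $(\kappa_{i,j})_{\distInvObj}$ gives a linear bijection
\[
    \cat(\tensUnit,\hopfMonad[i][\distInvObj])^{\hopfMonad[i]\text{-int}}
    \;\xrightarrow{\;\sim\;}\;
    \cat(\tensUnit,\hopfMonad[j][\distInvObj])^{\hopfMonad[j]\text{-int}}
\]
between the spaces of monadic cointegrals. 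Thus it suffices to prove existence and one-dimensionality for a single value of $i$; the pivotal hypothesis in (2) enters only to enable the transport to $i=1,4$.

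Second, I would fix $i=2$ and use the identification of monadic cointegrals for $\hopfMonad[2]$ with the categorical cointegrals of Shimizu, as promised by \Cref{cor:OurCointIsShimizus}. Under this identification the space $\spaceRightMonCoint$ is in bijection with Shimizu's space of cointegrals for the central Hopf comonad on $\cat$. Shimizu proves in \cite{Sh-integrals} that for any finite tensor category this space is one-dimensional and non-zero, which gives the claim for $i=2$, and hence by the transport above for $i=3$.

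For part (2), with $\cat$ pivotal, one more time applies $\kappa_{2,1}$ and $\kappa_{2,4}$ (which are bimonad isomorphisms by the pivotal case of \Cref{prop:iso_as_HC_pivotal}) to transport the one-dimensional space $\spaceRightMonCoint$ to $\spaceRightSymMonCoint$ and $\spaceLeftSymMonCoint$ respectively. The main subtlety is making sure that the comparison with Shimizu's cointegral really is an isomorphism of vector spaces rather than merely an injection: this is exactly the content of \Cref{cor:OurCointIsShimizus}, so once that corollary is in place the present proposition follows in a few lines. There is no serious obstacle beyond verifying that the isomorphisms $\kappa_{i,j}$ preserve the defining intertwining condition \eqref{eq:monCointEquation}, which is immediate from their being bimonad morphisms together with \Cref{remark:intertw-diagram-coint-transport}(2).
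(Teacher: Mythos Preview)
Your proposal is correct and follows essentially the same route as the paper: reduce to $i=2$ via the bimonad isomorphisms $\kappa_{i,j}$ (\Cref{prop:iso_as_HC_pivotal} and \Cref{remark:intertw-diagram-coint-transport}(2)), identify the $i=2$ space with Shimizu's categorical cointegrals via \Cref{cor:OurCointIsShimizus}, and invoke \cite[Thm.~4.8]{Sh-integrals}. The only difference is cosmetic ordering---the paper first establishes the $i=2$ case and then transports, while you first reduce and then prove---but the content is identical.
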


    The proof will follow from results in \cite{Sh-integrals}, after we relate monadic
    cointegrals to the categorical cointegral of \cite{Sh-integrals}, and is given at the
    end of the next subsection.

\smallskip
    
\subsection{Relation to the categorical cointegral}\label{subsec:comonad-Z}
    Define functors $\hopfComonad$ via the ends
    \begin{align}
        \hopfComonad[1][V] = \int_{X\in \cat} \hopfComonadComp{1} , 
        \quad \quad \quad
        \hopfComonad[2][V] = \int_{X\in \cat} \hopfComonadComp{2}
        \notag \\
        \hopfComonad[3][V] = \int_{X\in \cat} \hopfComonadComp{3} , 
        \quad \quad \quad
        \hopfComonad[4][V] = \int_{X\in \cat} \hopfComonadComp{4}
    \end{align}
    with corresponding universal dinatural transformations $\dinatEnd[i][V]$, so that for
    example
    \begin{align}
        \dinatEnd[4][V][X]:  \hopfComonad[4][V] \to \hopfComonadComp{4} \ .
    \end{align}
    Below we will give an adjunction between $\hopfComonad[4]$ and $\hopfMonad[2]$. One
    can formulate such adjunctions in the three other cases, too, but we will not need
    this and will only consider $\hopfComonad[4]$ in the following.

    Similarly to how the $\hopfMonad$, $i=2,3$ became Hopf monads, 
    $\hopfComonad[4]$ becomes a Hopf comonad and we denote the comultiplication, counit, 
    multiplication, and unit by 
    \begin{alignat}{3}
        \label{eq:structureMapsEnd}
        \comultEnd[4][V]&: \hopfComonad[4][V] \to 
  	  	\hopfComonad[4]\hopfComonad[4](V)
        , \quad
        &&\counitEnd[4][V]&&: \hopfComonad[4][V] \to V,
        \notag \\
        \multEnd[4][V,W]&: \hopfComonad[4][V] \otimes \hopfComonad[4][W]
        \to \hopfComonad[4][V\otimes W]
        , \quad
        &&\quad \unitEnd[4]&&: \tensUnit \to \hopfComonad[4][\tensUnit]
        \ ,
    \end{alignat}
    respectively.
    $\hopfComonad[4]$ is precisely the central comonad of \cite{Sh-integrals}, where also
    a detailed description of the structure maps \eqref{eq:structureMapsEnd} can be found.

    We can now recall the definition of the \emph{categorical cointegral} from
    \cite[Def.~4.3]{Sh-integrals}:
    It is a $\hopfComonad[4]$-comodule morphism
    \begin{align}
        \label{eq:ShCointegral}
    \coint^{\textup{Sh}}: 
        (\hopfComonad[4][\dualL{\distInvObj}], 
        \comultEnd[4][\dualL{\distInvObj}]) \to \tensUnit
    \end{align}
    from the cofree comodule on $\dualL{\distInvObj}$ to the tensor unit
    considered as the trivial comodule.\footnotemark
    \footnotetext{
        Although Shimizu's definition is not explicitly stated this way, it is easy to see
        that \cite[Def.~4.3]{Sh-integrals} and \eqref{eq:ShCointegral} are equivalent.
        This is also mentioned in the proof of \cite[Thm.~4.8]{Sh-integrals}.
    }

    \medskip

    To relate the two notions \emph{categorical cointegral} and \emph{monadic cointegral},
    we observe that there is an adjunction $\hopfMonad[2] \dashv \hopfComonad[4]$, i.e.\
    the central Hopf monad $\hopfMonad[2]$ is left adjoint to $\hopfComonad[4]$.
    Indeed,
    \begin{align}\label{eq:HomSetAdj-AZ}
        \notag
        \cat(\hopfMonad[2][V], W) 
        &\cong \operatorname{Dinat}( \hopfMonadComp{2}[-][V], W ) \\
        &\cong \operatorname{Dinat}( V, \hopfComonadComp{4}[-][W] )
        \cong \cat( V, \hopfComonad[4][W] ).
    \end{align}
    We denote the counit and unit of this adjunction by 
    \begin{align}
        \counitAdj: \hopfMonad[2]\hopfComonad[4]\Rightarrow \id_{\cat},
        \quad 
        \unitAdj : \id_{\cat} \Rightarrow \hopfComonad[4] \hopfMonad[2]
    \end{align}
    respectively.
    They can easily be deduced from \eqref{eq:HomSetAdj-AZ}; for example
    \begin{equation}
        \ipic{-0.5}{./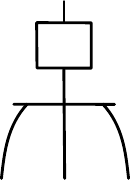}{1.4}
        \put (-32,040) {$V$}
        \put (-34,015) {$\counitAdj_V$}
        \put (-04,-07) {$\dinatCoend[2]$}
        \put (-58,-50) {$\dualL{X}$}
        \put (-38,-50) {$\hopfComonad[4]V$}
        \put (-06,-50) {$X$}
        \quad 
        = 
        \quad
        \ipic{-0.5}{./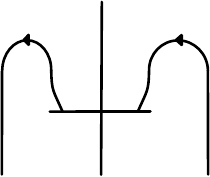}{1.4}
        \put (-50,038) {$V$}
        \put (-22,-13) {$\dinatEnd[4]$}
        \put (-91,-48) {$\dualL{X}$}
        \put (-54,-48) {$\hopfComonad[4]V$}
        \put (-07,-48) {$X$}
    \end{equation}
    determines the counit.

    \medskip
    For a comonad $M$ on $\cat$, the category of comodules is denoted by $\cat^M$.

    \begin{lemma}
        \label{Lem:EquivalenceOfModulesComodules}
        The functor $F:\cat^{\hopfComonad[4]} \to \cat_{\hopfMonad[2]}$, given on objects
        and morphisms by
        \begin{align}\label{eq:Equivalence_Zcomod_Amod}
            F(V,\rho) = (V, \counitAdj_V \circ \hopfMonad[2][\rho]), 
            \quad\quad
            Ff = f,
        \end{align}
        is an equivalence.
    \end{lemma}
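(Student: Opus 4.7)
The plan is to construct an explicit quasi-inverse $G\colon \cat_{\hopfMonad[2]} \to \cat^{\hopfComonad[4]}$ using the unit $\unitAdj$ of the adjunction, defined on objects by $G(W,\sigma) = (W, \hopfComonad[4](\sigma) \circ \unitAdj_W)$ and on morphisms by $Gf = f$. Since both $F$ and $G$ act as the identity on underlying morphisms, the lemma reduces to three verifications: (i) $F$ and $G$ are well-defined, i.e.\ their images satisfy the module, respectively comodule, axioms; (ii) a morphism $f$ intertwines two $\hopfComonad[4]$-coactions iff it intertwines the induced $\hopfMonad[2]$-actions; and (iii) $GF = \id$ and $FG = \id$ on objects.

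Point (iii) is the cleanest and rests entirely on the triangle identities for the adjunction. For $(V,\rho) \in \cat^{\hopfComonad[4]}$, using functoriality of $\hopfComonad[4]$, naturality of $\unitAdj$ on the morphism $\rho$, and the triangle identity $\hopfComonad[4](\counitAdj_V) \circ \unitAdj_{\hopfComonad[4](V)} = \id$, one computes
\begin{align*}
    \hopfComonad[4]\bigl(\counitAdj_V \circ \hopfMonad[2](\rho)\bigr) \circ \unitAdj_V
    = \hopfComonad[4](\counitAdj_V) \circ \unitAdj_{\hopfComonad[4](V)} \circ \rho
    = \rho,
\end{align*}
and $FG = \id$ is obtained symmetrically via the other triangle identity $\counitAdj_{\hopfMonad[2] W} \circ \hopfMonad[2](\unitAdj_W) = \id$. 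Point (ii) follows analogously: the intertwining square for $F\rho$ is obtained from the one for $\rho$ by applying $\hopfMonad[2]$ and post-composing with $\counitAdj$, using naturality of $\counitAdj$, and the converse direction uses naturality of $\unitAdj$ in the same way.

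The real content is point (i). For $F$, the unit axiom $F\rho \circ \unitCoend[2]_V = \id_V$ reduces, by naturality of $\unitCoend[2]$ applied to $\rho$ and the comodule counit axiom $\counitEnd[4]_V \circ \rho = \id_V$, to the compatibility identity $\counitAdj_V \circ \unitCoend[2]_{\hopfComonad[4](V)} = \counitEnd[4]_V$. This follows from the explicit realisations of $\unitCoend[2]$ and $\counitEnd[4]$ in terms of $\dinatCoend[2](-,\tensUnit)$ and $\dinatEnd[4](-,\tensUnit)$, combined with the defining relation $\counitAdj \circ \dinatCoend[2] = \dinatEnd[4]$ of the adjunction counit recorded in the display just before the lemma. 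The associativity axiom reduces similarly to a compatibility among $\multCoend[2]$, $\comultEnd[4]$, and $\counitAdj$, which one checks by chasing through the universal properties of the coend $\hopfMonad[2]$ and the end $\hopfComonad[4]$; well-definedness of $G$ is dual. The main obstacle lies precisely in this bookkeeping: the compatibilities between the monad structure on $\hopfMonad[2]$, the comonad structure on $\hopfComonad[4]$, and the adjunction unit/counit are conceptually natural because all these structures arise from the same universal (co)ends, but their explicit verification demands careful manipulation of dinatural transformations. An alternative, essentially formal route is to invoke the monoidal equivalences $\cat_{\hopfMonad[2]} \simeq \cat[Z(C)] \simeq \cat^{\hopfComonad[4]}$ with the Drinfeld centre (cf.\ \cite[Sec.~9.3]{BV-hopfmonads}) and check that the composite equivalence agrees with $F$ on free modules.
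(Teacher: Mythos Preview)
Your proposal is correct and takes essentially the same approach as the paper: construct the inverse $G(W,\sigma) = (W,\hopfComonad[4](\sigma)\circ\unitAdj_W)$ and verify $GF=\id$, $FG=\id$ via the triangle identities of the adjunction $\hopfMonad[2]\dashv\hopfComonad[4]$. The paper's proof is two sentences long and says only that the claim ``follows immediately'' from the adjunction together with ``a simple check using the adjunction triangles''; you have spelled out what that check entails, and in particular you are right to flag that well-definedness of $F$ and $G$ (your point (i)) implicitly relies on the compatibility of the monad structure of $\hopfMonad[2]$ and the comonad structure of $\hopfComonad[4]$ with the adjunction---this is the general ``adjoint monad/comonad'' phenomenon, which the paper treats as understood.
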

    \begin{proof}
        This statement follows immediately from the fact that $\hopfMonad[2]$ is
        left adjoint to $\hopfComonad[4]$.
        The inverse equivalence $G: \cat_{\hopfMonad[2]} \to \cat^{\hopfComonad[4]}$ is
        given on objects and morphisms by 
        \begin{align}\label{eq:InvEquivalence_Zcomod_Amod}
            G(V,\nu) = (V, \hopfComonad[4][\nu] \circ \unitAdj_V), 
            \quad\quad
            Gf = f,
        \end{align}
        and a simple check using the adjunction triangles proves the claim.
    \end{proof}

    We make the following observation.
    \begin{prop}\label{prop:OurCointegralsAreShimizus}
        \label{prop:iso_as_A_modules}
        Let $F$ be as in \Cref{Lem:EquivalenceOfModulesComodules}. There is an
        isomorphism
        \begin{align}
            (\hopfMonad[2][V], \multCoend[2][V]) \cong
            \dualL{\big( 
                F( \hopfComonad[4][\dualR{V}], \comultEnd[4][\dualR{V}])
                \big)}
        \end{align}
        of $\hopfMonad[2]$-modules, natural in $V\in\cat$.
    \end{prop}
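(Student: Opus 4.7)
My plan is to first construct the underlying isomorphism of objects in $\cat$ using the universal properties of the (co)ends, and then to verify it intertwines the two $\hopfMonad[2]$-module structures.

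For the object-level isomorphism $\phi_V : \hopfMonad[2]V \to \dualL{(\hopfComonad[4]\dualR{V})}$, I would start from the universal projection $\dinatEnd[4][\dualR{V}][X] : \hopfComonad[4]\dualR{V} \to X \otimes \dualR{V} \otimes \dualL{X}$, reindex via $X \mapsto \dualR{Y}$, and apply the rigidity isomorphisms $\dualR{Y} \otimes \dualR{V} \cong \dualR{(V \otimes Y)}$ and $\dualL{(\dualR{Y})} \cong Y$ to obtain a family of morphisms $\hopfComonad[4]\dualR{V} \to \dualR{(V \otimes Y)} \otimes Y$ that is dinatural in $Y$. Taking left duals, and using $\dualL{(\dualR{(V \otimes Y)} \otimes Y)} \cong \dualL{Y} \otimes V \otimes Y$, yields a dinat $\dualL{Y} \otimes V \otimes Y \to \dualL{(\hopfComonad[4]\dualR{V})}$, which factors uniquely through $\dinatCoend[2][V][Y]$ to give $\phi_V$. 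An inverse $\psi_V$ is built symmetrically: dualize $\dinatCoend[2][V][Y]$ to get a cone on $\dualL{(\hopfMonad[2]V)}$, re-identify the components as $X \otimes \dualR{V} \otimes \dualL{X}$ via rigidity (with $X = \dualR{Y}$), and invoke the end universal property of $\hopfComonad[4]\dualR{V}$.

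The main work is Step~2: the target of $\phi_V$ carries the $\hopfMonad[2]$-action obtained by first applying $F$ of~\eqref{eq:Equivalence_Zcomod_Amod}, giving the action $\rho = \counitAdj_{\hopfComonad[4]\dualR{V}} \circ \hopfMonad[2]\comultEnd[4]_{\dualR{V}}$, and then left-dualizing via the antipode formula~\eqref{eq:rho-dual-def}, producing $\tilde{\rho} = \leftAntCoend[2]_{\hopfComonad[4]\dualR{V}} \circ \hopfMonad[2]\dualL{\rho}$. I have to show
\begin{equation*}
    \phi_V \circ \multCoend[2][V] \;=\; \tilde{\rho} \circ \hopfMonad[2]\phi_V \colon \hopfMonad[2]\hopfMonad[2]V \to \dualL{(\hopfComonad[4]\dualR{V})}.
\end{equation*}
By the Fubini theorem for iterated coends, it suffices to verify this equality after precomposing with $\dinatCoend[2][\hopfMonad[2]V][Y] \circ (\id \otimes \dinatCoend[2][V][X] \otimes \id)$ for all $X,Y$. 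Both sides then unfold into finite compositions of structural data: the comultiplication of $\hopfMonad[2]$ in the form~\eqref{eq:DefMultCoend} (involving $\gamma_{Y,X}$), the comultiplication $\comultEnd[4]$ of the central comonad, the adjunction counit $\counitAdj$ together with the dinats $\dinatEnd[4]$ and $\dinatCoend[2]$, the left antipode $\leftAntCoend[2]$, and rigidity coherence. The resulting identity in $\cat$ is verified by a string-diagram chase.

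Naturality of $\phi_V$ in $V$ is immediate: every ingredient (the end projection, the rigidity isomorphisms, the coend dinat) is natural in $V$, and the universal property defining $\phi_V$ preserves this naturality.

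The main obstacle will be Step~2, since the dualized action $\tilde{\rho}$ is defined only implicitly through the antipode of $\hopfMonad[2]$, which itself is characterized by a universal property, and the counit $\counitAdj$ of the adjunction $\hopfMonad[2] \dashv \hopfComonad[4]$ interleaves the dinats of the end and coend. The check ultimately comes down to the compatibility expressed by $\counitAdj$ together with the defining equation~\eqref{comdiag:antip_coev} of $\leftAntCoend[2]$, but carrying this out cleanly requires careful bookkeeping of the rigidity isomorphisms $\dualL{\dualR{Y}} \cong Y$ that were used to construct $\phi_V$.
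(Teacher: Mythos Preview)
Your proposal is correct and follows essentially the same approach as the paper. The paper defines the isomorphism $E_V$ by exactly the construction you describe (reindexing the end projection $\dinatEnd[4][\dualR{V}]$ along $X\mapsto \dualR{Y}$ and using the rigidity isomorphism $\omega$), and then verifies the $\hopfMonad[2]$-module property by precomposing both sides with the iterated coend dinaturals and carrying out the string-diagram comparison; the key coherence step you anticipate (bookkeeping of $\dualL{(\dualR{Y})}\cong Y$) appears in the paper as a small commutative square relating $\gamma$, $\gamma^r$, and $\omega$.
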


    \begin{proof}
        Abbreviate
        \begin{align}
            \widetilde{V} = \hopfComonad[4][\dualR{V}]
            \quad \text{and} \quad 
            \rho_{\widetilde{V}} = \comultEnd[4][\dualR{V}]: \widetilde{V} \to 
            \hopfComonad[4]\big( \widetilde{V} \big).
        \end{align}
        Under the equivalence from \Cref{Lem:EquivalenceOfModulesComodules} we have
        \begin{align}
            F(\widetilde{V}, \rho_{\widetilde{V}}) = (\widetilde{V},
            \sigma_{\widetilde{V}})
        \end{align}
        where $\sigma_{\widetilde{V}} = \counitAdj_{\widetilde{V}} \circ
        \hopfMonad[2](\rho_{\widetilde{V}}) : \hopfMonad[2](\widetilde{V}) \to
        \widetilde{V}$ is the $\hopfMonad[2]$-action corresponding to the free coaction.

        Define the natural isomorphism $E_V: \hopfMonad[2][V] \to \dualL{\widetilde{V}}$
        by
        \begin{equation}
            E_V \circ \dinatCoend[2][X][V]
            = ~
            \ipic{-0.5}{./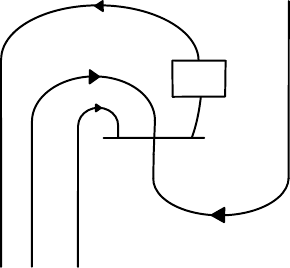}{1.5}
            \put (-007,064) {$\dualL{\widetilde{V}}$}
            \put (-046,022) {$\omega_X$}
            \put (-055,-15) {$\dinatEnd[4][\dualR{V}][\dualR{X}]$}
            \put (-134,-70) {$\dualL{X}$}
            \put (-115,-70) {$V$}
            \put (-097,-70) {$X$}
        \end{equation}
        for $V\in\cat$. Here $\omega_X : \dualL{(\dualR{X})} \to X$ denotes the natural
        isomorphism from \eqref{eq:definitionOmegaIso}.

        We want to show that $E_V$ is an $\hopfMonad[2]$-module map, that is
        \begin{align}
            \label{eq:E-isAnAIntertwiner}
            E_V \circ \multCoend[2][V] 
            =  
            \leftAntCoend[2][\widetilde{V}]
            \circ \hopfMonad[2][\dualL{\sigma_{\widetilde{V}}} \circ E_V],
        \end{align}
        where we also used the action~\eqref{eq:rho-dual-def} on the dual
        $\hopfMonad[2]$-module.
        To check that this equality holds we establish that both sides of
        \eqref{eq:E-isAnAIntertwiner} satisfy the same universal property for the iterated coend $\hopfMonad[2]\hopfMonad[2]$.
        For the left hand side of \eqref{eq:E-isAnAIntertwiner} we get 
        \begin{center}
            \begin{minipage}{0.40\textwidth}
                \begin{align*}
                    E_V 
                    &\circ \multCoend[2][V]
                    \circ \dinatCoend[2][\hopfMonad[2]V][Y] \\
                    &
                    \circ \big(\id_\dualL{Y} \otimes ( \dinatCoend[2][V][X] \otimes \id_Y
                    )\big)
                    = 
                    \hspace*{-1cm}
                \end{align*}
            \end{minipage}
            \begin{minipage}{0.50\textwidth}
                \begin{equation}
                    \label{eq:BigUglyLHS}
                    \ipic{-0.5}{./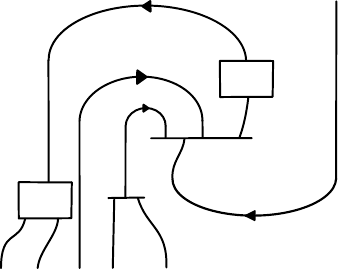}{1.5}\ .
                    \put (-013,064) {$\dualL{\widetilde{V}}$}
                    \put (-054,022) {$\omega_X$}
                    \put (-072,-15) {$\dinatEnd[4][\dualR{V}][\dualR{(XY)}]$}
                    \put (-138,-30) {$\gamma$}
                    \put (-160,-70) {$\dualL{Y}$}
                    \put (-144,-70) {$\dualL{X}$}
                    \put (-125,-70) {$V$}
                    \put (-110,-70) {$X$}
                    \put (-085,-70) {$Y$}
                \end{equation}
            \hfill
            \end{minipage}
        \end{center} 
        The right hand side of \eqref{eq:E-isAnAIntertwiner} composed with the same
        dinatural transformation immediately yields
        \begin{align}
            \label{eq:BigUglyRHS-1}
            \ipic{-0.5}{./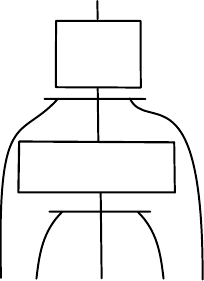}{1.5}
            \put (-055,065) {$\dualL{(\widetilde{V})}$}
            \put (-022,015) {$\dinatCoend[2]$}
            \put (-062,034) {$\leftAntCoend[2][\widetilde{V}]$}
            \put (-078,-15) {$\dualL{(\sigma_{\widetilde{V}})} \circ E_V$}
            \put (-021,-33) {$\dinatCoend[2]$}
            \put (-095,-72) {$\dualL{Y}$}
            \put (-078,-72) {$\dualL{X}$}
            \put (-049,-72) {$V$}
            \put (-023,-72) {$X$}
            \put (-005,-72) {$Y$}
            \quad = \quad
        \ipic{-0.5}{./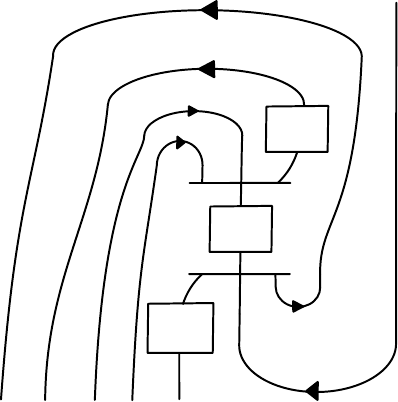}{1.3}
        \put (-005,078) {$\dualL{\widetilde{V}}$}
        \put (-045,024) {$\omega_X$}
        \put (-039,002) {\footnotesize $\dinatEnd[4]$}
        \put (-067,-13) {$\sigma_{\widetilde{V}}$}
        \put (-039,-30) {\footnotesize $\dinatCoend[2]$}
        \put (-090,-52) {$\omega\inv_Y$}
        \put (-156,-87) {$\dualL{Y}$}
        \put (-139,-87) {$\dualL{X}$}
        \put (-119,-87) {$V$}
        \put (-105,-87) {$X$}
        \put (-087,-87) {$Y$}
        \end{align}
        A simple calculation shows 
        \begin{align}
            \dinatEnd[4][\dualR{V}][\dualR{X}]
            \circ \sigma_{\widetilde{V}}
            \circ
            \dinatCoend[2][\widetilde{V}][\dualR{Y}]
            =~
            \ipic{-0.5}{./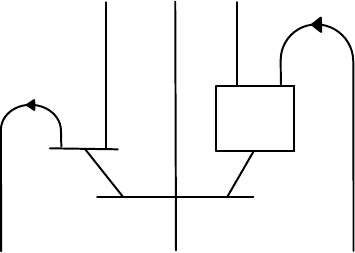}{1.4}\ ,
            \put (-120,055) {$\dualR{X}$}
            \put (-093,055) {$\dualR{V}$}
            \put (-070,055) {$\dualL{(\dualR{X})}$}
        \put (-058,-00) {$\gamma\inv$}
            \put (-047,-35) {$\dinatEnd[4]$}
            \put (-105,-05) {$\id$}
            \put (-165,-62) {$\dualL{(\dualR{Y})}$}
            \put (-090,-65) {$\widetilde{V}$}
            \put (-019,-62) {$\dualR{Y}$}
        \end{align}
        and we thus get 
        \begin{align}
            \label{eq:BigUglyRHS-2}
            \eqref{eq:BigUglyRHS-1}~
            =
            \ipic{-0.5}{./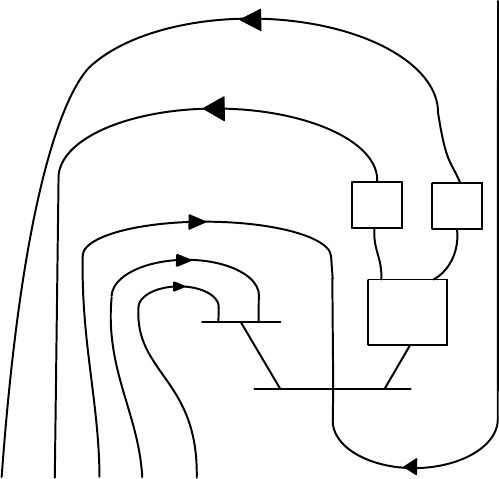}{1.2}\ .
            \put (-018,088) {$\dualL{(\widetilde{V})}$}
            \put (-054,010) {$\omega$}
            \put (-025,010) {$\omega$}
        \put (-048,-28) {$\gamma\inv$}
            \put (-055,-60) {$\dinatEnd[4]$}
            \put (-190,-94) {$\dualL{Y}$}
            \put (-174,-94) {$\dualL{X}$}
            \put (-152,-94) {$V$}
            \put (-137,-94) {$X$}
            \put (-117,-94) {$Y$}
        \end{align}

        Let $\gamma^r$ be the analogue of $\gamma$ (defined in
        \eqref{eq:gammaCanonicalIso}) for right duals.
        One checks that the diagram 
        \begin{equation}
            \begin{tikzcd}[column sep=large, row sep=large]
                \dualL{(\dualR{X} \otimes \dualR{Y})}
                \ar[r,"{\gamma\inv_{\dualR{Y},\dualR{X}}}"]
                \ar[d,"{\dualL{\big((\gamma^r)\inv_{Y,X}\big)}}",swap]
                &
                \dualL{(\dualR{Y})} \otimes \dualL{(\dualR{X})} 
                \ar[d,"{\omega_Y \otimes \omega_X}"]
                \\
                \dualL{(\dualR{(Y\otimes X)})} 
                \ar[r,swap,"{\omega_{Y\otimes X}}"]
                &
                Y\otimes X
            \end{tikzcd}
        \end{equation}
        commutes.

        Dinaturality of $\dinatEnd[4]$ then implies
        \begin{align}
            \ipic{-0.5}{./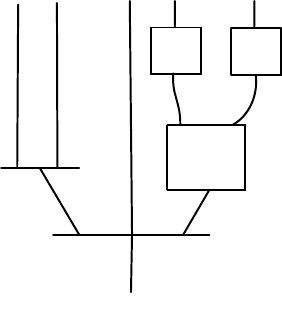}{1.2}
            \put (-101,058) {\small $\dualR{X}$}
            \put (-085,058) {\small $\dualR{Y}$}
            \put (-056,058) {\small $V$}
            \put (-046,058) {\small $\dualL{(\dualR{Y})}$}
            \put (-018,058) {\small $\dualL{(\dualR{X})}$}
            \put (-041,035) {$\omega$}
            \put (-013,035) {$\omega$}
            \put (-068,000) {$\id$}
            \put (-035,-03) {$\gamma\inv$}
            \put (-035,-35) {$\dinatEnd[4]$}
            \put (-065,-56) {\small $\hopfComonad[4][V]$}
            \quad = \quad
            \ipic{-0.5}{./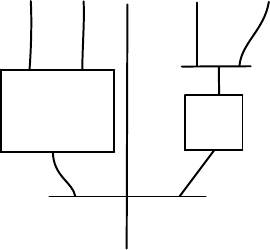}{1.2}.
            \put (-096,046) {\small $\dualR{X}$}
            \put (-077,046) {\small $\dualR{Y}$}
            \put (-056,046) {\small $V$}
            \put (-044,046) {\small $\dualL{(\dualR{Y})}$}
            \put (-014,046) {\small $\dualL{(\dualR{X})}$}
            \put (-004,018) {$\id$}
            \put (-092,001) {${(\gamma^r)}\inv$}
            \put (-027,-03) {$\omega$}
            \put (-035,-35) {$\dinatEnd[4]$}
            \put (-065,-56) {\small $\hopfComonad[4][V]$}
        \end{align}
        After plugging this into \eqref{eq:BigUglyRHS-2} and substituting the definitions
        of ${\gamma^r}, \gamma,$ and $\omega$, we see that this agrees with
        \eqref{eq:BigUglyLHS}.
    \end{proof}

    Combining \Cref{Lem:EquivalenceOfModulesComodules} and
    \Cref{prop:iso_as_A_modules} and using $\dualR{D}\cong\dualL{D}$ (this
    holds for all invertible objects), we get
    \begin{samepage}
    \begin{cor}
        \label{cor:OurCointIsShimizus}
        There is an isomorphism
        \begin{align}
            \cat_{\hopfMonad[2]}
            \big( \tensUnit, 
                ( \hopfMonad[2][\distInvObj], \multCoend[2][\distInvObj] )
            \big)
            \cong
            \cat^{\hopfComonad[4]}
            \big(
                ( \hopfComonad[4][\dualL{\distInvObj}], 
                \comultEnd[4][\dualL{\distInvObj}] )
                , \tensUnit
            \big)
            \ .
        \end{align}
    \end{cor}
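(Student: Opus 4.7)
The plan is to chain together the results just established, reading the right-hand side of the claimed isomorphism step by step back to the left-hand side. First I would use \Cref{Lem:EquivalenceOfModulesComodules}: since $F \colon \cat^{\hopfComonad[4]} \to \cat_{\hopfMonad[2]}$ is an equivalence of categories, it induces a bijection on Hom-sets,
\begin{equation*}
	\cat^{\hopfComonad[4]}\!\big( (\hopfComonad[4][\dualL{\distInvObj}], \comultEnd[4][\dualL{\distInvObj}]) , \tensUnit \big)
	\;\cong\;
	\cat_{\hopfMonad[2]}\!\big( F(\hopfComonad[4][\dualL{\distInvObj}], \comultEnd[4][\dualL{\distInvObj}]) , F(\tensUnit) \big) \ .
\end{equation*}
Inspecting \eqref{eq:Equivalence_Zcomod_Amod} with $\rho = \unitEnd[4]$, the trivial $\hopfComonad[4]$-comodule structure on $\tensUnit$, shows $F(\tensUnit) \cong (\tensUnit, \counitCoend[2])$ as $\hopfMonad[2]$-modules, i.e.\ it is the monoidal unit of $\cat_{\hopfMonad[2]}$, which I will denote again by $\tensUnit$.

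Next I would invoke the fact that $\distInvObj$ is invertible, so $\dualL{\distInvObj} \cong \dualR{\distInvObj}$, and this isomorphism lifts to an isomorphism of cofree comodules $(\hopfComonad[4][\dualL{\distInvObj}], \comultEnd[4][\dualL{\distInvObj}]) \cong (\hopfComonad[4][\dualR{\distInvObj}], \comultEnd[4][\dualR{\distInvObj}])$ by naturality of $\hopfComonad[4]$ and $\comultEnd[4]$. Applying \Cref{prop:iso_as_A_modules} with $V = \distInvObj$ then yields
\begin{equation*}
	F(\hopfComonad[4][\dualR{\distInvObj}], \comultEnd[4][\dualR{\distInvObj}]) \;\cong\; \dualL{(\hopfMonad[2][\distInvObj], \multCoend[2][\distInvObj])}
\end{equation*}
as $\hopfMonad[2]$-modules. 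Composing the two identifications reduces the right-hand side of the corollary to $\cat_{\hopfMonad[2]}( \dualL{(\hopfMonad[2][\distInvObj], \multCoend[2][\distInvObj])}, \tensUnit )$.

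Finally I would use that $\hopfMonad[2]$ is a Hopf monad by \Cref{prop:all-are-HM}, so $\cat_{\hopfMonad[2]}$ is rigid, and the standard duality adjunction gives a natural isomorphism
\begin{equation*}
	\cat_{\hopfMonad[2]}\!\big( \dualL{M} , \tensUnit \big) \;\cong\; \cat_{\hopfMonad[2]}\!\big( \tensUnit, M \big)
\end{equation*}
for any $\hopfMonad[2]$-module $M$, obtained by composing with $\coevL_M$ (and inversely with $\evL_M$). Applying this with $M = (\hopfMonad[2][\distInvObj], \multCoend[2][\distInvObj])$ produces the left-hand side of the claimed isomorphism and completes the proof.

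The only step that requires more than bookkeeping is the identification $F(\tensUnit) \cong \tensUnit$, which amounts to checking that the $\hopfMonad[2]$-action $\counitAdj_\tensUnit \circ \hopfMonad[2](\unitEnd[4])$ coincides with $\counitCoend[2]$; this is immediate from the triangle identities of the adjunction $\hopfMonad[2] \dashv \hopfComonad[4]$. All other ingredients are formal once \Cref{Lem:EquivalenceOfModulesComodules} and \Cref{prop:iso_as_A_modules} are in place, so the corollary follows with no genuine obstacle.
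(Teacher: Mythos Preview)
Your proof is correct and follows essentially the same route as the paper: combine \Cref{Lem:EquivalenceOfModulesComodules} and \Cref{prop:iso_as_A_modules}, use $\dualR{\distInvObj}\cong\dualL{\distInvObj}$ for invertible objects, and pass through the duality in the rigid category $\cat_{\hopfMonad[2]}$. The paper merely states that the corollary is obtained by ``combining'' these ingredients, while you spell out the intermediate Hom-set identifications (including the check that $F(\tensUnit)\cong\tensUnit$ and the explicit use of rigidity of $\cat_{\hopfMonad[2]}$), but the argument is the same.
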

    \end{samepage}

    \medskip
    After these preparations, we can show the existence and uniqueness (up to scalar) of
    monadic cointegrals.

    \begin{proof}[Proof of \Cref{prop:MonCoint_uniq_ex}]
        By the preceding corollary, the right monadic cointegral is equivalent to the
        categorical cointegral \eqref{eq:ShCointegral} of \cite{Sh-integrals}.
        Existence and uniqueness of categorical cointegrals were established in
        \cite[Thm.\ 4.8]{Sh-integrals}.
        The claim then follows
        from \Cref{remark:intertw-diagram-coint-transport}\,(2).
   \end{proof}

\begin{rmrk}\label{rem:H*mod-Hcomod}
    Recall that the definition of integrals and cointegrals in the Hopf case is symmetric
    under duality.
    More precisely, if $H$
    is a finite-dimensional Hopf algebra, then a left cointegral
    for $H$ is the same as a morphism $\coint^l: H \to \tensUnit$ in the category of left
    $H$-comodules, where we regard $H$ as the coregular comodule.
    Equivalently, we can consider the dual Hopf algebra $H^*$ (with the structure given by
    transposition of that of $H$, cf.\ \cref{fn:the-H*H*-convention}).        
    In this case it is a morphism $\coint^l: \tensUnit \to H^*$ in the category of left
    $H^*$-modules, where we regard $H^*$ as the regular module.

    Taking duals provides a (contravariant) equivalence,
    \begin{equation}
        \label{eq:cointegral-duality-Hopf-2}
        H\text{-comod} \cong H^*\text{-mod}
    \end{equation}
    and in particular we have an isomorphism 
    \begin{align}
        \label{eq:cointegral-duality-Hopf}
        (H\text{-comod})(H, \tensUnit) 
        \cong 
        (H^*\text{-mod})(\tensUnit, H^*)
    \end{align}
    of vector spaces. 
    We will need one more observation. Abbreviate $\cat = H$-mod and recall the
    computation \eqref{eq:from-sym-to-inter}.
    This showed that there is an $H$-module structure on $H^*=\hopfMonad[3][D]$ and
    similarly on $H=\hopfComonad[4][\dualL{D}]$ (which are not the (co)regular) such that:
    \begin{equation}
        \label{eq:cointegral-duality-Hopf-in-Hmod}
        \begin{array}{ccc}
            (H\text{-comod})(H, \tensUnit)
            &\cong&
            (H^*\text{-mod})(\tensUnit, H^*)
            \\
            \cap && \cap
            \\
            \cat(\hopfComonad[4][\dualL{D}], \tensUnit)
            &&
            \cat(\tensUnit, \hopfMonad[3][D])
        \end{array}
    \end{equation} 

    For quasi-Hopf algebras $H$ the corresponding line of reasoning to relate integrals
    and cointegrals fails at the outset, as $H^*$ is not again a quasi-Hopf algebra.
    Instead, there is the following categorical version of it. 
    We have a (contravariant) equivalence given by the composition of equivalences
    \begin{align}
        \label{eq:cointegral-duality-Cat-2}
        \cat^{\hopfComonad[4]}
        \overset{ \text{Lem.}~\ref{Lem:EquivalenceOfModulesComodules} }{\cong}
        \cat_{\hopfMonad[2]}
        \overset{ \text{dualising} }{\cong}
        \cat_{\hopfMonad[2]}
        \overset{ \text{Prop.}~\ref{prop:iso_as_HC_pivotal} }{\cong}
        \cat_{\hopfMonad[3]} \ .
    \end{align}
    \Cref{cor:OurCointIsShimizus} then implies
        \begin{align}
            \label{eq:cointegral-duality-Cat}
            \cat^{\hopfComonad[4]}
            \big(
                \,( \hopfComonad[4][\dualL{\distInvObj}],
                \comultEnd[4][\dualL{\distInvObj}] ),\,
                \tensUnit\,
            \big) 
            ~\cong~
            \cat_{\hopfMonad[3]}
            \big(
                \,\tensUnit,\,
                (\hopfMonad[3][\distInvObj], \multCoend[3][\distInvObj] )
            \,\big)
            \ .
        \end{align}
    To relate the right hand side of \eqref{eq:cointegral-duality-Cat} in the Hopf case to
    that of \eqref{eq:cointegral-duality-Hopf-in-Hmod}, one uses the explicit form of the
    monad multiplication in \eqref{eq:Hopf-example-A23-product}.
    For the left hand side, one correspondingly uses the coproduct of $\hopfComonad[4]$,
    we omit the details.

    The categorical version \eqref{eq:cointegral-duality-Cat} of the isomorphism
    \eqref{eq:cointegral-duality-Hopf-in-Hmod} 
    provides a more conceptual reason
    for the relation between the left monadic cointegral and the categorical cointegral
    of~\cite{Sh-integrals}.
\end{rmrk}

\subsection{Rewriting the monadic cointegral via \texorpdfstring{$\hopfComonad[4]$}{Z4}}
In \Cref{cor:OurCointIsShimizus} we saw one way to rewrite the definition of
monadic cointegrals in terms of Hom-spaces in $\cat^{\hopfComonad[4]}$.
We will later need the more direct relation we present here.

Under the equivalence from \Cref{Lem:EquivalenceOfModulesComodules}, specifically
\eqref{eq:InvEquivalence_Zcomod_Amod}, we can map the free $\hopfMonad[2]$-module on any
$U \in \cat$ to its corresponding $\hopfComonad[4]$-comodule
\begin{align}\label{eq:coactionLeftAdjoint}     
        (\hopfMonad[2](U), R_U) \text{ with }
        R_U =
    \left[ 
        \hopfMonad[2][U]
        \xrightarrow{\unitAdj_{\hopfMonad[2][U]}}
        \hopfComonad[4]\hopfMonad[2]^2(U)
        \xrightarrow{\hopfComonad[4][\multCoend[2][U]]}
        \hopfComonad[4]\hopfMonad[2][U]
    \right]\ .
\end{align}
Note that this assignment is in fact natural in $U$, i.e.\ we have a natural
transformation 
\begin{align}\label{eq:categoricalCoaction}
    R: \hopfMonad[2] \Rightarrow \hopfComonad[4]\hopfMonad[2], 
\end{align}
which we call the \emph{categorical coaction}.\footnote{
    The categorical coaction turns $\hopfMonad[2]$ into a $\hopfComonad[4]$-comodule in
    $\End(\cat)$.
}
Since we have equivalence \eqref{eq:Equivalence_Zcomod_Amod}, we can see the equality
\begin{align}
    \cat_{\hopfMonad[2]}\big(\tensUnit,\, (\hopfMonad[2][\distInvObj],
    \multCoend[2][\distInvObj]\, \big)
    ~=~
    \cat^{\hopfComonad[4]}\big( \tensUnit, (\hopfMonad[2][\distInvObj],
    R_{\distInvObj})\big)
\end{align}
of subspaces of $\cat\big( \tensUnit, \hopfMonad[2][\distInvObj]\!\big)$.
An element in the subspace on the left hand side is by definition a right monadic
cointegral.
Spelling out the condition to be in the subspace on the right hand side proves the
following lemma (recall from \eqref{eq:structureMapsEnd} the notation $\unitEnd[4]$ for
the unit of $\hopfComonad[4]$):

\begin{lemma}\label{lem:monCointViaCatCoaction}      
    A morphism $\coint:\tensUnit \to \hopfMonad[2][\distInvObj]$ in $\cat$ is a right
    monadic cointegral if and only if 
	\begin{align}\label{eq:MonadicCointegralsViaCoaction}
		R_{\distInvObj} \circ \coint = \hopfComonad[4][\coint] \circ \unitEnd[4] \ .
	\end{align}
\end{lemma}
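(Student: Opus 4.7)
The plan is to unpack the definitions given just above the lemma and invoke the equivalence of module and comodule categories established in \Cref{Lem:EquivalenceOfModulesComodules}. Since the statement is essentially a translation between an $\hopfMonad[2]$-module intertwiner condition and a $\hopfComonad[4]$-comodule intertwiner condition, the proof should amount to unwinding one diagram.

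First, I would note that the inverse equivalence $G$ from \eqref{eq:InvEquivalence_Zcomod_Amod} applied to the free module $(\hopfMonad[2](\distInvObj), \multCoend[2][\distInvObj])$ yields precisely $(\hopfMonad[2](\distInvObj), R_{\distInvObj})$, since $R_{\distInvObj} = \hopfComonad[4](\multCoend[2][\distInvObj]) \circ \unitAdj_{\hopfMonad[2](\distInvObj)}$ by \eqref{eq:coactionLeftAdjoint}. Also, $G$ sends the trivial $\hopfMonad[2]$-module $\tensUnit$ (with action $\counitCoend[2]$) to the trivial $\hopfComonad[4]$-comodule $(\tensUnit, \unitEnd[4])$, by the same formula. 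Since $F$ and $G$ act as the identity on morphisms, the resulting Hom-spaces coincide as subspaces of $\cat(\tensUnit, \hopfMonad[2](\distInvObj))$:
\begin{equation*}
    \cat_{\hopfMonad[2]}\big(\tensUnit, (\hopfMonad[2](\distInvObj), \multCoend[2][\distInvObj])\big)
    \;=\;
    \cat^{\hopfComonad[4]}\big((\tensUnit, \unitEnd[4]), (\hopfMonad[2](\distInvObj), R_{\distInvObj})\big).
\end{equation*}

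Second, I would simply write out the defining diagram for $\coint$ to be an element of the right-hand Hom-space: a morphism $\coint\colon \tensUnit \to \hopfMonad[2](\distInvObj)$ is a $\hopfComonad[4]$-comodule intertwiner precisely when
\begin{equation*}
    R_{\distInvObj} \circ \coint \;=\; \hopfComonad[4](\coint) \circ \unitEnd[4],
\end{equation*}
which is exactly \eqref{eq:MonadicCointegralsViaCoaction}. Combined with the identification of subspaces above, this proves both directions of the equivalence simultaneously.

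There is no real obstacle here: the content was done in proving \Cref{Lem:EquivalenceOfModulesComodules} and in defining $R_U$ in \eqref{eq:coactionLeftAdjoint}, and the present lemma is the concrete translation of that equivalence at the specific object $\distInvObj$. The only care needed is to confirm that the equivalence sends $\tensUnit$ with its trivial $\hopfMonad[2]$-action to $\tensUnit$ with its trivial $\hopfComonad[4]$-coaction $\unitEnd[4]$, which follows directly from the formula for $G$ together with one of the adjunction triangles.
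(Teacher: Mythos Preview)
Your proposal is correct and follows essentially the same approach as the paper: the paper establishes the equality of subspaces $\cat_{\hopfMonad[2]}\big(\tensUnit, (\hopfMonad[2][\distInvObj], \multCoend[2][\distInvObj])\big) = \cat^{\hopfComonad[4]}\big(\tensUnit, (\hopfMonad[2][\distInvObj], R_{\distInvObj})\big)$ via the equivalence of \Cref{Lem:EquivalenceOfModulesComodules}, and then remarks that spelling out membership in the right-hand side yields the lemma. Your write-up is slightly more explicit in checking that $G$ sends the trivial $\hopfMonad[2]$-module to the trivial $\hopfComonad[4]$-comodule, which the paper leaves implicit.
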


\medskip

\section{Cointegrals for quasi-Hopf algebras}\label{SEC:Coints_qHopf}
In this section we first set out our conventions for quasi-Hopf algebras,
which agree with those used in \cite{FGR1,BGR1}.
Then, we specialise monadic cointegrals to the category of modules over a quasi-Hopf
algebra, and we recall the definition of cointegrals from \cite{HN-integrals, BC2-2011}.

Throughout this section, let $H$ be a finite-dimensional quasi-Hopf algebra over~$\field$.

\subsection{Conventions and definitions}\label{sec:Hopf-conf-def}
    The antipode of $H$ is denoted by $S$, and the coassociator and its inverse by 
    $\coassQ$ and $\invCoassQ$, respectively.
    For the coproduct, the counit, and the unit we use the standard notations
    $\Delta$, $\counit$, $\oneQ$.
    The evaluation and coevaluation elements are $\alphaQ$ and $\betaQ$, and without loss
    of generality we assume that $\counit(\alphaQ) = 1 = \counit(\betaQ)$.

    For elements in tensor powers of $H$, e.g.\ $a\in H\otimes H$, we write
    \begin{align}
        a = a_1 \otimes a_2
        , \quad
        a_{21} = \tau(a) = a_2 \otimes a_1,
    \end{align}
    where $\tau$ is the tensor flip of vector spaces.
    This notation is extended to higher tensor powers of $H$.

    The components of the coassociator and its inverse will be written with upper-case
    Latin letters and lower-case Latin letters like 
    \begin{align}
        \coassQ = \coassQ[1]_1 \otimes \coassQ[1]_2 \otimes \coassQ[1]_3
        \quad \text{and} \quad
        \invCoassQ = \invCoassQ[1]_1 \otimes \invCoassQ[1]_2 \otimes \invCoassQ[1]_3,
    \end{align}
    respectively. 
    For further copies of $\coassQ$ and $\invCoassQ$ in an expression different
    letters like $\coassQ[2], \invCoassQ[2]$ are used.
    We remark that our conventions for $\Phi$ differ from those
    of e.g.\ \cite{HN-integrals,BC2-2011}, in that we use the quasi-coassociativity
    condition
    \begin{align}\label{eq:quasiCoassociativity}
        (\Delta\otimes \id)\left( \Delta(h) \right) \cdot \coassQ
        =
        \coassQ\cdot (\id\otimes \Delta)\left( \Delta(h) \right)
    \end{align}
    for all $h\in H$. 
	Thus, the $\coassQ$ in \cite{HN-integrals,BC2-2011} is our $\invCoassQ$.
    The pentagon axiom is 
    \begin{align}
        \label{eq:pentagon-qHopf}
        (\Delta \otimes \id \otimes \id)(\coassQ) 
        \cdot (\id \otimes \id \otimes \Delta)(\coassQ) 
        = (\coassQ \otimes \oneQ) 
        \cdot (\id \otimes \Delta \otimes \id)(\coassQ)
        \cdot (\oneQ \otimes \coassQ).
    \end{align}

    A sumless Sweedler-notation for (iterated) coproducts is used, i.e.\
    \begin{align}
        \Delta(h) = h\sweedler{1} \otimes h\sweedler{2} 
        , \quad
        (\Delta\otimes \id)\left( \Delta(h) \right)
        = h\sweedler{1,1}\otimes h\sweedler{1,2} \otimes h\sweedler{2}
    \end{align}
    for $h\in H$.

    With the above notation we can write the antipode axioms as
    \begin{align}
        \label{eq:antipodeAxioms}
        S(h\sweedler{1}) \alphaQ h\sweedler{2}
        = \counit(h) \alphaQ
        , \quad
        h\sweedler{1} \betaQ S(h\sweedler{2})
        = \counit(h) \betaQ \ ,
        \quad h\in H,
    \end{align}
    and
    \begin{align}
        \label{eq:zigzag-qHopf}
        S(\coassQ[1]_1) \alphaQ \coassQ[1]_2 \betaQ S(\coassQ[1]_3) = \oneQ
        , \quad
        \invCoassQ[1]_1 \betaQ S(\invCoassQ[1]_2) \alphaQ \invCoassQ[1]_3 = \oneQ.
    \end{align}
    The latter are also referred to as \emph{zig-zag axioms}.

    \smallskip

    By considering either the opposite multiplication or the opposite comultiplication
    we get new quasi-Hopf algebras $H\op$ and $H\cop$, with the quasi-Hopf structure
    given by $S\op = S\cop = S\inv$, $\coassQ\op = \invCoassQ$, $\alphaQ\op =
    S\inv(\betaQ)$, $\betaQ\op = S\inv(\alphaQ)$, $\coassQ\cop = \Psi_{321}$,
    $\alphaQ\cop = S\inv(\alphaQ)$, and $\betaQ\cop = S\inv(\betaQ)$.

    We will make frequent use of the hook notation, for $h\in H$, $f\in H^*$,
    \begin{align}\label{eq:hookActionDefined-H}
        h \rightharpoonup f = \langle f \mid \placeholder h \rangle
        \quad , \quad
        f \leftharpoonup h = \langle f \mid h \placeholder \rangle
    \end{align}
    and 
    \begin{align}\label{eq:hookActionDefined-H-dual}
        f \rightharpoonup h = h\sweedler{1} f(h\sweedler{2})
        \quad , \quad
        h \leftharpoonup f = f(h\sweedler{1}) h\sweedler{2}.
    \end{align}

\subsection{Modules}
    Finite-dimensional left $H$-modules and their intertwiners form the category $\hmodM$,
    and we now recall its structure, following~\cite{BC2-2011}.
    This category is a finite tensor category; 
    the tensor product of two objects $V,W\in\hmodM$ is the vector space
    $V\otimes_{\field} W$ equipped with the diagonal $H$-action.
    The left dual $\dualL{V}$ and the right dual $\dualR{V}$ of a module $V$ are both
    modelled on the dual vector space $V^*$, and $h\in H$ acts on $v^*\in \dualL{V}$
    via
    \begin{align}
        \langle h.v^*, w \rangle = 
        \langle v^* \leftharpoonup S(h) \mid w \rangle =
        \langle v^*, S(h) w \rangle
    \end{align}
    and on $v^*\in \dualR{V}$ via
    \begin{align}
        \langle h.v^*, w \rangle = 
        \langle v^* \leftharpoonup S\inv(h) \mid w \rangle = 
        \langle v^*,  S\inv(h)w \rangle.
    \end{align}
    The left and right evaluations are
    \begin{align}
        \notag
        \evL_V:\dualL{V}\otimes V&\to \tensUnit,
        \quad 
        \evL_V(v^* \tensor w) = \langle v^*, \alphaQ w \rangle 
        \\
        \evR_V:V\otimes \dualR{V}&\to \tensUnit,
        \quad 
        \evR_V(w \tensor v^*) = \langle v^*, S\inv(\alphaQ) w \rangle ,
    \end{align}
    and the left and right coevaluations are
    \begin{align}
        \notag
        \coevL_V:\tensUnit&\to V\otimes \dualL{V},
        \quad 
        \coevL_V(1) = \sum_i \betaQ v_i \otimes v^i
        \\
        \coevR_V:\tensUnit&\to \dualR{V}\otimes V,
        \quad 
        \coevR_V(1) = \sum_i v^i \otimes S\inv(\betaQ) v_i,
    \end{align}
    which we expressed in terms of a basis $\{v_i\}$ of $V$, with dual basis $\{v^i\}$.

    \smallskip
    Recall the natural isomorphism $\gamma_{V,W}$ from \eqref{eq:gammaCanonicalIso}.
    There is a unique invertible element $\Dt\in H\otimes H$, called the \emph{Drinfeld
    twist}, such that
    \begin{align}
        \gamma_{V,W}(f \otimes g)(w\otimes v) = g(\Dt_1 w) f(\Dt_2 v)
    \end{align}
    for all $f \in V^*, v\in V$, $g\in W^*, w\in W$, see \cite{Dr} and e.g.\
    \cite[Lem.~6.7]{FGR1}.
    The statement that $\gamma_{V,W}$ be an intertwiner translates to the equation
    \begin{align}\label{eq:DrinfeldTwistProperty}
        \Dt\cdot \Delta(S(a)) \cdot \Dt\inv 
        =
        (S\otimes S) \left( \Delta\cop(a) \right)\ .
    \end{align}

    Analogously, we can express the natural isomorphism
    ${\gamma^r}_{V,W}:\dualR{V}\otimes \dualR{W} \to \dualR{(WV)}$, obtained by
    mirroring the diagram in \eqref{eq:gammaCanonicalIso} at the vertical axis, by using
    what we call the \emph{Drinfeld twist for right duals} $\Dt^r$:
    \begin{align}
        {\gamma^r}_{V,W}(f \otimes g)(w\otimes v) 
        = g(\Dt^r_1 w) f(\Dt^r_2 v)
    \end{align}
    for all $f \in V^*, v\in V$, $g\in W^*, w\in W$.

    Both $\Dt$ and $\Dt^r$ can be written in terms of the defining data of the quasi-Hopf
    algebra $H$, and we will give their explicit form 
    in \Cref{sec:DrinfeldTwist-closed}.

\medskip

    If $\hmodM$ is pivotal, the quasi-Hopf algebra $H$ is called \emph{pivotal}.
    The pivotal structure on $\hmodM$ corresponds uniquely to an element $\pivotQ \in H$
    called the \emph{pivot} satisfying 
    \begin{align}
        \label{eq:pivotConditions}
        \Delta(\pivotQ) = \Dt\inv \cdot (S\otimes S)(\Dt_{21}) \cdot (\pivotQ \otimes
        \pivotQ), \qquad \counit(\pivotQ) = 1,
    \end{align}
    and $S^2(h) = \pivotQ h \pivotQ\inv$, for all $h\in H$
    \cite[Prop.~3.2]{BCT-involutory}.
    As for Hopf algebras, the inverse of the pivot is $\pivotQ\inv = S(\pivotQ)$, see
    \cite[Prop.~3.12]{BT-sovereign}.

\medskip

\subsection{Monadic cointegrals for quasi-Hopf algebras}
\label{subseq:HopfMonad-qHopf}

    Left and right integrals for quasi-Hopf algebras are defined in the same way
    as for Hopf algebras, see \Cref{Example:DistInvHopf}, and it was shown
    in \cite{HN-integrals} that the space of left (resp.\ right) integrals of a
    quasi-Hopf algebra is one-dimensional.
    The \emph{modulus} $\modulus$ of a quasi-Hopf algebra is defined as in the
    Hopf case, cf.\ \eqref{eq:modulus_definition}.
    As for Hopf algebras, the distinguished invertible object $D$ of $\hmodM$ is the
    one-dimensional module with action given by $\modulus\inv = \modulus \circ S$, see
    \cite[Prop.~6.5.5]{EGNO},  and we have $\modulus\inv = \dualL{\modulus}$ as
    $H$-modules.
    We call $H$ \emph{unimodular} if $\modulus=\counit$, or
    equivalently, if every left integral is also right.

    \medskip
    We now want to describe the monadic cointegrals for $H$.
    To this end, let us first give our realizations of the central Hopf monads
    $\hopfMonad$ on the category $\hmodM$.
    As for ordinary Hopf algebras, the objects $\hopfMonad[i][V]$, $V\in \hmodM$, have
    $H^* \otimes V$ as underlying vector space,
    and the same dinatural transformations $\dinatCoend[i][V]$ as in
    \eqref{eq:dinat_trans-Hopf} and \eqref{eq:dinat_trans-Hopf-pivotal}.
    These uniquely determine the $H$-action on $\hopfMonad[i][V]$ (denoted by
    $\actionCoend$) to be
    \begin{align}
        \actionCoend[1][h][f \otimes v]
        &= 
        \langle 
            f \mid 
            S\inv( h\sweedler{1} ) \placeholder h\sweedler{2,2}
        \rangle
        \otimes h\sweedler{2,1}.v \ ,
        \notag \\
        \actionCoend[2][h][f \otimes v]
        &= 
        \langle 
            f \mid 
            S( h\sweedler{1} ) \placeholder h\sweedler{2,2}
        \rangle
        \otimes h\sweedler{2,1}.v \ ,
        \notag \\
        \actionCoend[3][h][f \otimes v]
        &= 
        \langle 
            f \mid 
            S\inv( h\sweedler{2} ) \placeholder h\sweedler{1,1}
        \rangle
        \otimes h\sweedler{1,2}.v \ ,
        \notag \\
        \actionCoend[4][h][f \otimes v]
        &= 
        \langle 
            f \mid 
            S( h\sweedler{2} ) \placeholder h\sweedler{1,1}
        \rangle
        \otimes h\sweedler{1,2}.v \ .
        \label{eq:hopfMonad-qHopf-actions}
    \end{align}
    Let us also record here the Hopf monad isomorphisms from
    \Cref{prop:iso_as_HC_pivotal} for (pivotal) quasi-Hopf algebras, using our
    realizations of the monads.

    \begin{prop}
        \label{prop:IsoBetweenHopfMonads-qHopf}
        The canonical Hopf monad isomorphisms 
        \begin{align}
            \hopfMonad[1][V]
            \xrightarrow{(\kappa_{1,2})_V}
            \hopfMonad[2][V]
            \xrightarrow{(\kappa_{2,3})_V}
            \hopfMonad[3][V]
            \xrightarrow{(\kappa_{3,4})_V}
            \hopfMonad[4][V]
        \end{align}
        from \Cref{prop:iso_as_HC_pivotal} (for the maps $(\kappa_{1,2})_V$,
        $(\kappa_{3,4})_V$, we require a pivot $\pivotQ$) are given by the linear maps
        \begin{align}
            (\kappa_{1,2})_V (f\otimes v) 
            &= (f \leftharpoonup \pivotQ\inv) \otimes v \ ,
            \notag \\
            (\kappa_{2,3})_V (f\otimes v) 
            &= \langle f \mid S(\placeholder \coassQ[1]_1) \coassQ[1]_3 \rangle 
            \otimes \coassQ[1]_2 . v \ ,
            \notag \\
            (\kappa_{3,4})_V (f\otimes v) 
            &= (f \leftharpoonup \pivotQ\inv) \otimes v \ ,
            \label{eq:HopfMonadIsos-qHopf}
        \end{align}
        for $f\in H^*$, $v\in V$.
    \end{prop}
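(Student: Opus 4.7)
The strategy is to substitute the explicit data of $\hmodM$ into the defining relation of $\kappa_{i,j}$. Recall from the discussion preceding \Cref{prop:iso_as_HC_pivotal} and from \eqref{eq:def-kappa_23} that each $\kappa_{i,j}$ is characterised by an equation of the form
\[
(\kappa_{i,j})_V \circ \dinatCoend[i][V][X] = \dinatCoend[j][V][X'] \circ \varphi \ ,
\]
where $\varphi$ is built from coherence isos, the iso $\omega_X$ of \eqref{eq:definitionOmegaIso} (or its analogue $X \cong \dualR{(\dualL{X})}$), and, in the pivotal case, the iso $\dualR{X} \cong \dualL{X}$ of \eqref{eq:RightDualIsLeft-pivotal}. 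In $\hmodM$ the four dinatural transformations have the same underlying linear map (paragraph preceding~\eqref{eq:hopfMonad-qHopf-actions}), so it suffices to compute each $\varphi$ explicitly at the vector-space level.

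For $\kappa_{1,2}$ and $\kappa_{3,4}$, $\varphi$ is the pivotal isomorphism $\dualR{X} \to \dualL{X}$ acting on the appropriate tensorand. Using the $H$-actions on $\dualR{X}$ and $\dualL{X}$ recalled in Section~3.2 together with $S^2(h) = \pivotQ h \pivotQ^{-1}$, a short check shows that as a linear map on $X^*$ this isomorphism is $f \mapsto f \leftharpoonup \pivotQ^{-1}$. Substituting into the defining relation and tracing through the explicit form of the dinatural transformations immediately yields the claimed formulas for $(\kappa_{1,2})_V$ and $(\kappa_{3,4})_V$.

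For $\kappa_{2,3}$, the relevant $\varphi$ is the inverse of $\omega_X$ (in the incarnation $X \cong \dualR{(\dualL{X})}$ used in \eqref{eq:def-kappa_23}), acting on the third tensorand. Its definition \eqref{eq:definitionOmegaIso} is built from $\evL$ and $\coevR$, which in $\hmodM$ carry factors of $\alphaQ$ and $S^{-1}(\betaQ)$ respectively. Composing with $\dinatCoend[3][V][\dualL{X}]$ and simplifying using the antipode axioms \eqref{eq:antipodeAxioms} together with the zig-zag identity $\invCoassQ[1]_1 \betaQ S(\invCoassQ[1]_2) \alphaQ \invCoassQ[1]_3 = \oneQ$ from \eqref{eq:zigzag-qHopf} is expected to produce precisely the coassociator-valued expression $\langle f \mid S(\placeholder \coassQ[1]_1) \coassQ[1]_3 \rangle \otimes \coassQ[1]_2.v$.

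The main obstacle is the bookkeeping for $\kappa_{2,3}$: one has to track how the factors of $\alphaQ$, $\betaQ$, and the components of $\invCoassQ$ thread through the evaluations and coevaluations appearing in $\omega_X^{-1}$, and verify that the Drinfeld-twist contributions arising from the duality on tensor products cancel so that only $\coassQ$ survives in the final expression. Once this is carried out, all three formulas follow by direct computation using only the quasi-Hopf axioms recalled in Section~3.1.
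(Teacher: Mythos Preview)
Your approach is correct and matches the paper's: both amount to substituting the explicit $\hmodM$ data into the defining relation of $\kappa_{i,j}$ and computing. However, you are overcomplicating the case of $\kappa_{2,3}$. The paper's proof dispatches it with a single observation: the isomorphism $X\cong \dualR{(\dualL{X})}$ appearing in \eqref{eq:def-kappa_23} is the \emph{same linear map as in $\Vect$}, i.e.\ the canonical identification $X\cong X^{**}$. Once you know this, the only nontrivial piece of $\varphi$ is the associator $\dualL{X}(VX)\to(\dualL{X}V)X$, which acts by $\coassQ$, and composing with $\dinatCoend[3][V][\dualL{X}]$ (which picks up an $S$ from the $\dualL{X}$-action) gives the stated formula in one line.

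The cancellation you anticipate is real but happens entirely inside the iso $X\cong\dualR{(\dualL{X})}$ itself: tracing through its definition in $\hmodM$ produces the element $\invCoassQ[1]_1\betaQ S(\invCoassQ[1]_2)\alphaQ\invCoassQ[1]_3$, which is $\oneQ$ by the zig-zag axiom~\eqref{eq:zigzag-qHopf}. So there is no need to carry $\alphaQ$, $\betaQ$ through the rest of the computation. Also, no Drinfeld-twist contributions arise: $\Dt$ enters via $\gamma_{V,W}$ in \eqref{eq:gammaCanonicalIso}, which is not used in the definition of $\kappa_{2,3}$, so that concern is a red herring.
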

    \begin{proof}
        The proof is a straightforward computation.
        For example, for $\kappa_{2,3}$ one needs to check the commutativity of
        \begin{equation}
            \begin{tikzcd}
                \dualL{X} \otimes (V \otimes X)
                \ar[r,"\sim"] 
                \ar[d,"{\dinatCoend[2][V][X]}"]
                & (\dualL{X} \otimes V) \otimes X \ar[r,"\sim"]
                &
                (\dualL{X} \otimes V) \otimes \dualR{(\dualL{X})}
                \ar[d,"{\dinatCoend[3][V][\dualL{X}]}"]
                \\
                \hopfMonad[2][V]
                \ar[rr,"(\kappa_{2,3})_V"]
                & & \hopfMonad[3][V]
            \end{tikzcd}\ .
        \end{equation}
        Note here that the isomorphism $X \cong \dualR{(\dualL{X})}$ is the same linear
        map as in $\Vect$.
    \end{proof}

    Define $\tau \in H^{\otimes 5}$ by
    \begin{align}
        \tau 
        &= \coassQ[1]_1 
        \otimes \coassQ[1]_2 \invCoassQ[2]_1
        \otimes \invCoassQ[1]_1 ({\coassQ[1]_3}\sweedler{1} {\invCoassQ[2]_2})\sweedler{1}
        \otimes \invCoassQ[1]_2 ({\coassQ[1]_3}\sweedler{1} {\invCoassQ[2]_2})\sweedler{2}
        \otimes \invCoassQ[1]_3 {\coassQ[1]_3}\sweedler{2} {\invCoassQ[2]_3}
        \end{align}
        The multiplication of $\hopfMonad[2]$ can be computed explicitly from
        \eqref{eq:DefMultCoend}.
        For $f,g\in H^*$ and $v\in V$, the image $\multCoend[2][V](f\otimes g\otimes v)
        \in H^*\otimes V$ under multiplication can be identified with the linear map
        \begin{align}
            H \ni 
            h \mapsto 
                (g \otimes f)
                \big(
                    (S\otimes S)(\tau_{21}) \Dt \Delta(h) \tau_{45}
                \big) \ 
                \tau_3.v 
            \, \in V 
            \ .
        \end{align}
        The counit $\counitCoend[2]:\hopfMonad[2][\tensUnit] \to \tensUnit$ is easily
        computed from \eqref{eq:CoendComonoidalStructure}, and we identify it with the
        element $\counitCoend[2] = \alphaQ \in H$.
        Let us recall the right monadic cointegral equation from
        \eqref{eq:monCointEquation}:
        \begin{align}
            \coint \circ \counitCoend[2]
            = \multCoend[2][\distInvObj] \circ \hopfMonad[2][\coint]\ .
        \end{align}
        This is an equality of (linear) endomorphisms of $H^*$, and evaluating it on $f\in
        H^*$, we immediately get
        \begin{align}
            f(\alphaQ) \coint 
            = 
            \modulus\inv(\tau_3)
            (\coint \otimes f)
            \big(
                (S\otimes S)(\tau_{21}) \Dt \Delta(e_i) \tau_{45}
            \big)
            e^i.
        \end{align}
        This is clearly equivalent to 
        \begin{align}\label{eq:qHopf-rmco-lin}
            \coint(h) \alphaQ 
            =
            \modulus\inv(\tau_3) 
            (\coint \otimes \id) 
            \big( 
                (S \otimes S)(\tau_{21}) \Dt \Delta(h) \tau_{45}
            \big)
        \end{align}
        for all $h\in H$.
    
    Altogether, an element $\coint \in H^*$ is a right monadic cointegral if and only if
    it satisfies \eqref{eq:qHopf-rmco-lin} and is an $H$-module intertwiner $\tensUnit \to
    \hopfMonad[2][\modulus\inv]$ (see \eqref{eq:app-linear-eqn-Hom-to-A2gamma} to see this
    written out as a linear equation).\footnote{
        The $H$-intertwiner condition is automatic for monadic cointegrals for Hopf
        algebras, and an analogous condition is automatic for cointegrals of quasi-Hopf
        algebras as defined in \Cref{def:cointegrals} below.
        The corresponding statement remains to be shown in the monadic setting for
        quasi-Hopf algebras, but it is not needed for the present paper.
    }

    Similarly, with $\Dt^r$ the Drinfeld twist for right duals and $\sigma =
    (\tau\cop)_{54321}$ given explicitly by
    \begin{align}
        \sigma
        &= {\invCoassQ[1]_1}\sweedler{1,1} \coassQ[2]_1 \coassQ[1]_1
        \otimes {\invCoassQ[1]_1}\sweedler{1,2} \coassQ[2]_2 {\coassQ[1]_2}\sweedler{1}
        \otimes {\invCoassQ[1]_1}\sweedler{2} \coassQ[2]_3 {\coassQ[1]_2}\sweedler{2}
        \otimes \invCoassQ[1]_2 \coassQ[1]_3 
        \otimes \invCoassQ[1]_3 \ ,
    \end{align}
    one obtains necessary conditions for the three remaining types of monadic cointegrals.
    Namely, if $\coint \in H^*$ is a 
    \begin{enumerate}
        \item 
            right $\distInvObj$-symmetrised monadic cointegral then it satisfies
            \begin{align}
                \coint(h) \pivotQ\inv \alphaQ
                =
                \modulus\inv(\tau_3) 
                (\coint \otimes \id) 
                \big( 
                    (S\inv \otimes S\inv)(\tau_{21}) \Dt^r \Delta(h)
                    \tau_{45}
                \big)
            \end{align}
        \setcounter{enumi}{2}
        \item 
            left monadic cointegral then it satisfies
            \begin{align}
                \coint(h) S\inv(\alphaQ)
                =
                \modulus\inv(\sigma_3) 
                (\id \otimes \coint) 
                \big( 
                    (S\inv \otimes S\inv)(\sigma_{54}) \Dt^r \Delta(h)
                    \sigma_{12}
                \big)
            \end{align}
        \item 
            left $\distInvObj$-symmetrised monadic cointegral then it
            satisfies
            \begin{align}
                \coint(h) \pivotQ S\inv(\alphaQ)
                =
                \modulus\inv(\sigma_3) 
                (\id \otimes \coint) 
                \big( 
                    (S \otimes S)(\sigma_{54}) \Dt \Delta(h) \sigma_{12}
                \big)
            \end{align}
    \end{enumerate}
    for all $h\in H$.

    \smallskip

\subsection {Special elements and relations}
\label{sec:DrinfeldTwist-closed}

    Here we want to give the closed form of the Drinfeld twist and its inverse.
    We closely follow \cite{BC2-2011}, but note that our conventions are slightly
    different, i.e.\ our $\coassQ$ is their $\coassQ\inv$.

    We will need the four elements $\qR, \pR, \qL, \pL$ in $H\otimes H$, given by 
    \begin{alignat}{2}
        \notag
        &\qR= \invCoassQ[1]_1\otimes S\inv(\alphaQ \invCoassQ[1]_3)\invCoassQ[1]_2 \ ,
        \qquad 
        && \pR = \coassQ[1]_1\otimes \coassQ[1]_2 \betaQ S(\coassQ[1]_3) \ , \\
        \label{eq:q,\pL}
        &\qL= S(\coassQ[1]_1) \alphaQ \coassQ[1]_2 \otimes \coassQ[1]_3 \ , \qquad
        && \pL = \invCoassQ[1]_2 S\inv(\invCoassQ[1]_1 \betaQ) \otimes \invCoassQ[1]_3 
        \ .
    \end{alignat}
    These satisfy the identities
    \begin{alignat}{2}
        \notag
        \Delta(\qR_1) \pR [\oneQ\otimes S(\qR_2)] &= \oneQ\otimes \oneQ \ , \qquad 
        & [\oneQ\otimes S\inv(\pR_2)] \qR \Delta(\pR_1) &=\oneQ\otimes \oneQ \ , \\
        \label{eq:identities qpL}
        \Delta(\qL_2) \pL [S\inv(\qL_1)\otimes \oneQ] &= \oneQ\otimes \oneQ \ , \qquad 
        & [S(\pL_1)\otimes \oneQ] \qL \Delta(\pL_2) &=\oneQ\otimes \oneQ \ ,
    \end{alignat}
    and, for all $a\in H$,
    \begin{alignat}{1}
        \label{eq:identities_qp_iterated_coproduct}
        [\oneQ \otimes S\inv(a\sweedler{2})] \qR \Delta (a\sweedler{1}) 
        &= [a \otimes \oneQ]\qR  \ , \notag \\  
        [S(a\sweedler{1})\otimes \oneQ] \qL \Delta (a\sweedler{2}) 
        &= [\oneQ \otimes a ]\qL \ , \notag \\
           \Delta(a\sweedler{1}) \pR [\oneQ \otimes S(a\sweedler{2})]
       &= \pR [a\otimes \oneQ] \ , \notag \\
        \Delta(a\sweedler{2}) \pL [S\inv(a\sweedler{1}) \otimes \oneQ]
        &= \pL [\oneQ \otimes a] \ .
    \end{alignat}
    These elements and relations are well-known in the representation theory of
    quasi-Hopf algebras. 
    For their interpretation in terms of natural transformations and categorical
    identities see \cite{HN-doubles} and e.g.\ \cite[Sec.~3]{BGR1}.
    
    Next, let us also define $\elQbold{\varepsilon},\elQbold{\delta} \in H\otimes H$ by
    \begin{align}\label{eq:gamma-qHopf-DT}
        \elQbold{\varepsilon}
        &= S(\invCoassQ[1]_2) \qL_1 {\invCoassQ[1]_3}\sweedler{1}
        \otimes S(\invCoassQ[1]_1) \alphaQ \qL_2 {\invCoassQ[1]_3}\sweedler{2} 
        = S(\qR_2 {\coassQ[1]_1}\sweedler{2}) \coassQ[1]_2 
        \otimes S(\qR_1 {\coassQ[1]_1}\sweedler{1}) \alphaQ \coassQ[1]_3
    \end{align}
    and
    \begin{align}\label{eq:delta-qHopf}
        \elQbold{\delta}
        &= {\invCoassQ[1]_1}\sweedler{1} \pR_1 \betaQ S(\invCoassQ[1]_3)
        \otimes {\invCoassQ[1]_1}\sweedler{2} \pR_2 S(\invCoassQ[1]_2)
        = \coassQ[1]_1 \betaQ S({\coassQ[1]_3}\sweedler{2} \pL_2)
        \otimes \coassQ[1]_2 S({\coassQ[1]_3}\sweedler{1} \pL_1)\ .
    \end{align}
    Then the Drinfeld twist is given by
    \begin{align}
        \label{eq:DrinfeldTwist-closedForm}
        \Dt 
        = (S\otimes S)(\Delta\cop ( \pR_1 )) 
        \elQbold{\varepsilon}
        \Delta(\pR_2)
    \end{align}
    and its inverse is
    \begin{align}
        \label{eq:DrinfeldTwistInverse-closedForm}
        \Dt\inv
        = \Delta(\qL_1) 
        \elQbold{\delta}
        (S\otimes S)(\Delta\cop ( \qL_2 ))\ .
    \end{align}
    The explicit form of the Drinfeld twist for right duals can similarly be given as
    \begin{align}
        \Dt^r 
        &= (S\inv \otimes S\inv)(\elQbold{\varepsilon}_{21} \Delta\cop(\pL_1))
        \Delta(\pL_2) \ ,
        \notag \\ 
        (\Dt^r)\inv
        &= \Delta( \qR_2 ) (S\inv \otimes S\inv) \big( \Delta\cop(\qR_1) \elDelta_{21}
        \big)
        \ .
    \end{align}

    \smallskip
    We end this subsection by stating some technical properties of the Drinfeld twist
    which we will need later on (see~\cite{BC2-2011}).    
    The Drinfeld twist satisfies the identity
    \begin{align}
        (\oneQ \otimes \Dt)\cdot (\id \otimes \Delta)(\Dt)\cdot \invCoassQ
        =
        (S\otimes S\otimes S)(\invCoassQ_{321}) \cdot (\Dt \otimes \oneQ)\cdot 
        (\Delta \otimes \id)(\Dt) \ ,
    \end{align}
    or, written in Sweedler-notation,
    \begin{align} 
        \notag
        \Dt_1 \invCoassQ[1]_1 \otimes \widetilde{\Dt}_1{\Dt_2}\sweedler{1} \invCoassQ[1]_2
        & \otimes \widetilde{\Dt}_2{\Dt_2}\sweedler{2} \invCoassQ[1]_3
        \\ \label{eq:DrinfeldTwist-twistedCoassoc}
        &=
        S(\invCoassQ[1]_3) \widetilde{\Dt}_1 {\Dt_1}\sweedler{1}
        \otimes S(\invCoassQ[1]_2) \widetilde{\Dt}_2 {\Dt_1}\sweedler{2}
        \otimes S(\invCoassQ[1]_1) \Dt_2 \ ,
    \end{align}
    where we use the symbol $\widetilde{\Dt}$ to denote another copy of the Drinfeld
    twist.

    A direct computation shows that $\Dt$ further satisfies
    \begin{align}\label{eq:Dt-alphBet-interplay}
        \Dt_1 \betaQ S(\Dt_2) = S(\alphaQ)
        \ , \quad
        S(\betaQ \Dt_1) \Dt_2 = \alphaQ \ .
    \end{align}
    Lastly, we have
    \begin{align}\label{eq:coproduct_beta}
        \Delta(\betaQ) \Dt\inv = \elDelta\ .
    \end{align}

\subsection{Cointegrals via coactions}
    \label{Sec:BC-CointegralsPartiallyExplained}
    In preparation for the proof of the main theorem let us now briefly review the
    original definition of cointegrals for quasi-Hopf algebras from \cite{HN-integrals}.

    \smallskip

    Following \cite{HN-integrals,BC2-2011}, we set
    \begin{alignat}{3}
        \elU &= \Dt\inv (S\otimes S)(\qR_{21}), 
        \quad
            && \elU\cop && = (S\inv\otimes S\inv)(\qL \Dt\inv),
        \notag \\
        \label{eq:Definition_UV}
        \elV &= (S\inv \otimes S\inv)\left( \Dt_{21} \pR_{21}\right),
        \quad\quad
            && \elV\cop && = (S\otimes S)(\pL) \Dt_{21}.
    \end{alignat}

    \begin{mydef}\label{def:cointegrals}
        A \emph{left cointegral} for $H$ is an element $\coint^l\in H^*$ satisfying
        \begin{align}\label{eq:LeftCointEq}
            (\id\otimes \coint^l)
            \left(
                \elV
                \Delta(h)
                \elU
            \right)
            = \modulus(\coassQ[1]_1) \coint^l(h S(\coassQ[1]_2)) \coassQ[1]_3
        \end{align}
        for all $h\in H$, and a \emph{right cointegral} for $H$ is a left cointegral for
        $H\cop$. 
        Explicitly, this means it is an element $\coint^r\in H^*$ satisfying
        \begin{align}\label{eq:RightCointEq}
            (\id \otimes \coint^r)\big( \elV\cop  \Delta\cop(h) \elU\cop \big)
            &=
            \modulus(\invCoassQ[1]_3) \coint^r(h S\inv(\invCoassQ[1]_2)) \invCoassQ[1]_1.
        \end{align}
    \end{mydef}

    \smallskip

    Cointegrals exist, and a non-zero cointegral is a non-degenerate linear form on~$H$,
    uniquely determined up to scalar~\cite[Thm.~4.3]{HN-integrals}.
    We denote the spaces of left and right cointegrals by 
    \begin{align}\label{eq:spacesOfBC-Cointegrals}
        \spaceLeftCoint 
        \quad \text{and} \quad 
        \spaceRightCoint,
    \end{align}
    respectively.
    Let now $\coint^l$ and $\coint^r$ be a left and a right cointegral of $H$,
    respectively.
	These satisfy
    \begin{align}\label{eq:symmetryPropertiesCoint}
        \coint^l(S\inv(a) b) = \coint^l( b S(a \leftharpoonup \modulus))
        \quad \text{and} \quad
        \coint^r(S(a) b) = \coint^r( b S\inv( \modulus \rightharpoonup a) ),
    \end{align}
    for $a,b\in H$, see \cite[Lem.~5.1]{HN-integrals}.
    With 
    \begin{align}\label{eq:elu-right-left-coint-defined}
        \elu = (\modulus \otimes S^2)(\elV)
        \quad \text{and} \quad
        \elu\cop = (\modulus \otimes S^{-2})(\elV\cop)
            \ ,
    \end{align}
    left and right cointegrals can be related as follows \cite[Prop.~4.3]{BC2-2011},
    \begin{align}\label{eq:relating_left_and_right_cointegrals}
        (\coint^r \leftharpoonup \elu) \circ S \in \spaceLeftCoint
        \ ,
        \quad \text{resp.} \quad
        (\coint^l \leftharpoonup \elu\cop) \circ S\inv \in \spaceRightCoint 
        \ .
    \end{align}
    The corresponding relation between left and right monadic cointegrals is given by
    $(\kappa_{2,3})_D$, see \Cref{remark:intertw-diagram-coint-transport}\,(2).
    It is worth comparing the definition of $\elu$ to that of $\kappa_{2,3}$ from
    \Cref{prop:IsoBetweenHopfMonads-qHopf}, which only used a single
    coassociator.

    \smallskip

    The left-hand side of the left cointegral equation \eqref{eq:LeftCointEq} has the
    following categorical interpretation, which we recall from \cite[Sec.~3]{BC2-2011}.
    Let $\hmodM[H][H]$ be the category of $H$-bimodules, equipped with the monoidal
    structure of the category of modules over the quasi-Hopf algebra $H\otimes H\op$.

    In $\hmodM[H][H]$, the regular bimodule $H$ is a coalgebra,\footnote{
        The coassociativity diagram for the coalgebra $H$ in the non-strict category
        $\hmodM[H][H]$ is precisely the quasi-coassociativity condition
        \eqref{eq:quasiCoassociativity} on the quasi-Hopf algebra $H$.
    }
    and we can thus consider the comonad $\cat[Y]^r:B \mapsto B \otimes H$ on
    $\hmodM[H][H]$.
    With $H$ equipped with the obvious $\cat[Y]^r$-comodule structure, via the coproduct,
    the right dual $\dualR{H}$ becomes a $\cat[Y]^r$-comodule via
    \begin{align}
        \rho^r&:\dualR{H} \to \cat[Y]^r(\dualR{H}),
        \notag \\
        \rho^r &=
        \bigg[
            \dualR{H} \xrightarrow{\sim} \tensUnit\dualR{H}
            \xrightarrow{\coevR_{\!\!H} \otimes \id}
            ( \dualR{H} H ) \dualR{H}[-0.25]
            \xrightarrow{(\id \otimes \Delta)\otimes \id}
            ( \dualR{H} (HH) ) \dualR{H}
            \xrightarrow{\sim}
        \notag \\
            &\quad\quad
            \xrightarrow{\sim}
        (\dualR{H} H) (H~\dualR{H}[-0.2])
            \xrightarrow{\id\otimes \evL_{\!\!H}} (\dualR{H} H)\tensUnit 
            \xrightarrow{\sim} \dualR{H} H
            = \cat[Y]^r(\dualR{H})
        \bigg] \ .
    \end{align}
    Here all coherence isomorphisms are those of $\hmodM[H][H]$, so that 
    explicitly,
    $\rho^r$ sends $f\in H^*$ to
    \begin{align}
        \rho^r(f) = f(\elV_2 (e_i)\sweedler{2} \elU_2) e^i 
        \otimes \elV_1 (e_i)\sweedler{1} \elU_1
        \ .
    \end{align}
    Then \eqref{eq:LeftCointEq} can be written as 
    \begin{align}
        \label{eq:LeftCointEq_withoutArgument}
        \rho^r(\coint^l)
        = \modulus(\coassQ[1]_1) \coint^l.\coassQ[1]_2 \otimes \coassQ[1]_3 \ ,
    \end{align}
    where by the dot we mean the right $H$-action 
    on the right dual $\dualR{H}$, i.e.\ $(f.a)(h) = f(h S(a))$ for $f\in \dualR{H}$,
    $a\in H$.\footnote{
        The full meaning of \eqref{eq:LeftCointEq_withoutArgument}
        is that $\coint^l$ is a
        `coinvariant' of the coaction $\rho^r$.
        This statement is made precise using the so-called \emph{fundamental theorem of
        quasi-Hopf bimodules}, cf.\ \cite[Sec.\ 3]{HN-integrals},  which states that there
        is a monoidal adjoint equivalence $(\hmodM[H][H])^{\cat[Y]^r} \cong \hmodM[H]$.
        The left $H$-module of \emph{coinvariants} of a $\cat[Y]^r$-comodule $B$ is then
        defined as the image of $B$ under the equivalence.
        By \cite[Cor.~3.9]{HN-integrals}, \eqref{eq:LeftCointEq_withoutArgument}
        is an equivalent characterization of the coinvariants of $\dualR{H}$.
        
    }

    Similarly, the left dual $\dualL{H}$ carries a natural left $H$-comodule
    structure, i.e.\ it is a comodule of the comonad $\cat[Y]^l: B\mapsto H\otimes B$;
    call the corresponding coaction $\rho^l$.
    Using the explicit expressions from above, one verifies $(\rho^r)\cop_{21} =
    \rho^l$, and we obtain that $\coint^r\in H^*$ is a right cointegral if and
    only if
    \begin{align}
        \label{eq:RightCointEq_withoutArgument}
        \rho^l(\coint^r)
        =
        \modulus(\invCoassQ[1]_3)  \invCoassQ[1]_1 \otimes \coint^r.\invCoassQ[1]_2\ ,
    \end{align}
    and the dot here denotes the action on the left dual.

\smallskip

    For later use, we relate the comonad $\hopfComonad[4]$ on $\hmodM[H]$ (recall its
    definition in \Cref{subsec:comonad-Z}) and $\cat[Y]^l$ on $\hmodM[H][H]$ as
    follows.
    Consider the functor 
    \begin{align}
        \label{eq:AdTwistFunctor}
        \Adtwist: \hmodM[H][H] &\to \hmodM[H]
    \end{align}
    sending a bimodule $B$ to the $H$-module $B$ with action 
    \begin{align}
        h\otimes b \mapsto h\sweedler{1}.b.S(h\sweedler{2} \leftharpoonup \modulus\inv)
        = \modulus\inv(h\sweedler{2,1}) h\sweedler{1}.b.S(h\sweedler{2,2}),
    \end{align}
    for $h\in H$, $b\in B$. 
    Then there is a natural isomorphism $\varphi$ making the diagram 
    \begin{equation}
        \label{cd:nat_trans:AY=ZA}
        \begin{tikzcd}[sep=large]
            \hmodM[H][H] 
            \ar[r, "{\cat[Y]^l}"]
            \ar[d, swap, "{\Adtwist}"]
            &
            \hmodM[H][H]
            \ar[d, "{\Adtwist}"]
            \ar[dl, Rightarrow, "{\varphi}"]
            \\
            \hmodM[H]
            \ar[r, swap, "{\hopfComonad[4]}"]
            &
            \hmodM[H]
        \end{tikzcd}
    \end{equation}
    commute.

    To see this, let us also specialise the comonad $\hopfComonad[4]$ from
    \cite{Sh-integrals} to the case $\cat=\hmodM$.
    We choose the realisation such that the objects $\hopfComonad[4][V]$ are given by the
    underlying vector space $H\otimes V$ with actions
    \begin{align} \label{eq:Z4-realisation}
        \actionEnd[4][h][a\otimes v]
        &= h\sweedler{1,1}aS(h\sweedler{2}) \otimes h\sweedler{1,2}.v \ .
    \end{align}
    For later use, let us record that the unit of $\hopfComonad[4]$ is given by the
    coevaluation element,
    \begin{align}\label{eq:unitEnd-4}
        \unitEnd[4] = \betaQ \ .
    \end{align}

    We then get the following explicit form of $\varphi$.

    \begin{prop}\label{prop:NatIso_AdY-ZAd}
        The family of maps
        \begin{align}\notag
            \varphi_B: \Adtwist \cat[Y]^l(B) &\to \hopfComonad[4]\Adtwist(B),\\ \notag
            h \otimes b &\mapsto 
            \modulus\inv({\invCoassQ[1]_3}\sweedler{1}
            {\coassQ[1]_2}\sweedler{1}\coassQ[2]_1)
            ~\invCoassQ[1]_1 {\coassQ[1]_1}\sweedler{1} h \Dt\inv_1 S(\coassQ[1]_3
            \coassQ[2]_3)  \\ 
            &\hspace*{2.5cm}\otimes
            \invCoassQ[1]_2 {\coassQ[1]_1}\sweedler{2} .b. \Dt\inv_2
            S({\invCoassQ[1]_3}\sweedler{2}
            {\coassQ[1]_2}\sweedler{2}\coassQ[2]_2) 
            ,
            \label{eq:phi-B-qHopf-explicit}
        \end{align}
        for $B\in \hmodM[H][H]$ defines a natural isomorphism as in
        \eqref{cd:nat_trans:AY=ZA}.
   \end{prop}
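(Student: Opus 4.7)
The plan is to verify three claims about the family $\{\varphi_B\}$: that each $\varphi_B$ is $H$-linear with respect to the source and target module structures, that the family is natural in $B \in \hmodM[H][H]$, and that each $\varphi_B$ is a bijection.

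The $H$-linearity is the heart of the argument and the main obstacle. Unfolding the action on the source $\Adtwist \cat[Y]^l(B)$, the element $h \in H$ acts by combining the diagonal left action via $\Delta(h)$ on $H \otimes B$ (coming from $H$ being a coalgebra in $\hmodM[H][H]$), the right action on the $B$-slot, and the $\Adtwist$-twist $S(h\sweedler{2} \leftharpoonup \modulus\inv) = \modulus\inv(h\sweedler{2,1}) S(h\sweedler{2,2})$. On the target $\hopfComonad[4] \Adtwist(B)$, $h$ acts via \eqref{eq:Z4-realisation} on the $H$-factor and via $\Adtwist$ on the $B$-slot. Substituting both into \eqref{eq:phi-B-qHopf-explicit}, the two expressions are reduced to a common form using the pentagon \eqref{eq:pentagon-qHopf} and quasi-coassociativity \eqref{eq:quasiCoassociativity} to reorganise nested $\coassQ$'s; the Drinfeld twist property \eqref{eq:DrinfeldTwistProperty} to interchange $\Delta \circ S$ with $(S \otimes S) \circ \Delta\cop$; the twisted coassociativity \eqref{eq:DrinfeldTwist-twistedCoassoc} to push $\Dt\inv$ past iterated coproducts; the antipode axioms \eqref{eq:antipodeAxioms} and the zig-zag \eqref{eq:zigzag-qHopf}; and the fact that $\modulus$ is an algebra map satisfying $\modulus\inv = \modulus \circ S$ to absorb scalar factors. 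The abundance of nested coproducts, associators, and Drinfeld-twist factors makes the bookkeeping lengthy; a useful sanity check is the Hopf-algebra special case $\coassQ = \oneQ^{\otimes 3}$, $\Dt = \oneQ \otimes \oneQ$, in which \eqref{eq:phi-B-qHopf-explicit} collapses to $\varphi_B(h \otimes b) = h \otimes b$ and both comonads act by the same formula, so the quasi-Hopf case amounts to tracking the correction terms through each identity above.

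Naturality in $B$ is immediate. An $H$-bimodule morphism $f: B \to B'$ commutes with both left and right $H$-multiplication on the $B$-slot, and \eqref{eq:phi-B-qHopf-explicit} affects $b$ only through such multiplications, with all other factors independent of $b$. Hence $\varphi_{B'} \circ \Adtwist \cat[Y]^l(f) = \hopfComonad[4] \Adtwist(f) \circ \varphi_B$ follows at once from the bimodule property of $f$, without invoking any further quasi-Hopf identities.

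For invertibility, the underlying $k$-linear endomorphism of $H \otimes B$ has the shape $h \otimes b \mapsto \lambda(h)\,L(h) \otimes L'(h)\, b\, R'(h)$, where $\lambda(h)$ is a scalar and $L, L', R'$ are $H$-valued functions of $h$ built as products of Sweedler components of $\coassQ$, $\invCoassQ$ and $\Dt\inv$ and of their images under $S$. An explicit two-sided inverse is constructed by reversing each factor using $\Dt$ and the elements $\qR, \qL, \pR, \pL$ from \eqref{eq:identities qpL}; the compositions collapse to the identity using \eqref{eq:identities_qp_iterated_coproduct}, the zig-zag \eqref{eq:zigzag-qHopf}, and the pentagon \eqref{eq:pentagon-qHopf}. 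Alternatively, one can recognise $\varphi_B$ as a composite of structural isomorphisms (coherence isomorphisms, the duality data used to define $\hopfComonad[4]$, and the coaction \eqref{eq:coactionLeftAdjoint} realising the comonad–monad adjunction), each of which is manifestly invertible.
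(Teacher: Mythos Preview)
Your approach is essentially the same as the paper's: verify naturality, $H$-linearity, and invertibility in turn, with the intertwining condition left as a direct calculation using the explicit actions (which the paper also states without carrying out).

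The one place where you overshoot is invertibility. You propose building an inverse from the $\pR,\pL,\qR,\qL$ elements and the identities \eqref{eq:identities qpL}, \eqref{eq:identities_qp_iterated_coproduct}, and \eqref{eq:zigzag-qHopf}; but none of these appear in the formula \eqref{eq:phi-B-qHopf-explicit}, so there is no reason they should be needed to undo it. The paper simply notes that the inverse ``can be read off directly from the explicit expression'': the map has the form $h\otimes b \mapsto \sum L_i\, h\, R_i \otimes L'_i . b . R'_i$ where the tensor $\sum L_i \otimes R_i \otimes L'_i \otimes R'_i$ (together with the scalar $\modulus\inv(\cdots)$) is built purely from $\coassQ$, $\invCoassQ$, $\Dt\inv$, their Sweedler components, and~$S$. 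Each of these ingredients is invertible, so the inverse is obtained by substituting the inverse element at each stage and composing in the opposite order. Your alternative suggestion to recognise $\varphi_B$ as a composite of structural isomorphisms is in the right spirit, but the reference to the categorical coaction \eqref{eq:coactionLeftAdjoint} is misplaced here: $R$ is a different map, and $\varphi_B$ is not constructed from it in the paper (rather, $\varphi_B$ is used later in Proposition~4.2 to relate $R_{\dualL{\modulus}}$ to the bimodule coaction $\rho^l$).
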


    \begin{proof}
        That the above map is natural in $B\in \hmodM[H][H]$ is immediate, and proving
        that it intertwines the corresponding $H$-actions is a straightforward
        calculation.
        For convenience we state that the action on 
        $\Adtwist \cat[Y]^l(B)$ is
        \begin{align}
            h\otimes(a\otimes b) \mapsto 
            h\sweedler{1,1} a S(h\sweedler{2} \leftharpoonup \modulus\inv)\sweedler{1}
            \otimes 
            h\sweedler{1,2}. b . S(h\sweedler{2} \leftharpoonup \modulus\inv)\sweedler{2}
        \end{align}
        and the action on $\hopfComonad[4]\Adtwist (B)$ is
        \begin{align}
            h\otimes(a\otimes b) \mapsto 
            h\sweedler{1,1} a S(h\sweedler{2})
            \otimes 
            h\sweedler{1,2,1}. b . S(h\sweedler{1,2,2} \leftharpoonup \modulus\inv)
        \end{align}
        for $a,h\in H$, $b\in B$.
        Here the dot denotes the action on the bimodule $B$.

        Finally, the inverse of $\varphi_B$ can be read off directly 
        from the explicit expression in~\eqref{eq:phi-B-qHopf-explicit}.
    \end{proof}

\medskip

\section{Main Theorem}\label{SEC:MainThm}
We are now ready to state our two main theorems, which are
\Cref{thm:MainThmSectionStatement,thm:MainThmPivotalCase} below.

\subsection{Left and right monadic cointegrals}
\begin{thm}\label{thm:MainThmSectionStatement}
    Let $H$ be a finite-dimensional quasi-Hopf algebra with modulus $\modulus$.
    \begin{enumerate}
        \item
            Define the linear map   
            \begin{align}\label{eq:MainThmMap}
                \monCointRight{(\placeholder)}:
                H^* &\to H^*, \quad
                \monCointRight{f} 
                = \langle f \mid S(\betaQ) ~ \placeholder ~ S\inv(\elX)
                \rangle
                \ ,
            \end{align}
            where $\elX = (\id \otimes \modulus) (\Dt\inv)$.
            Then $\monCointRight{\coint}$ is a right monadic cointegral if and only
            if $\coint \in H^*$ is a right cointegral.
        \item
            Define the linear map 
            \begin{align}
                \label{eq:MainThmMap-Left}
                \monCointLeft{(\placeholder)}:
                H^* &\to H^*, \quad
                \monCointLeft{f}
                = \langle f \mid S^{-2}(\betaQ) ~ \placeholder ~ S(\hat{\elX}) \rangle \ ,
            \end{align}
            where $\hat{\elX} = \elX\cop = (S\inv \otimes \modulus\inv) (\Dt\inv)$.
            Then $\monCointLeft{\coint}$ is a left monadic cointegral if and only if
            $\coint \in H^*$ is a left cointegral,
    \end{enumerate}
\end{thm}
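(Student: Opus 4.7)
The plan is to prove both parts via the same categorical mechanism, deducing (2) from (1) by the $\cop$-duality or by a parallel argument, and reducing the claim to a comparison of two coaction equations.  Throughout I use the realisations of $\hopfMonad[2],\hopfMonad[3]$ and $\hopfComonad[4]$ on $\hmodM[H]$ described in \Cref{subseq:HopfMonad-qHopf,subsec:comonad-Z}, under which $\hopfMonad[i](\distInvObj) \cong H^*$ and $\hopfComonad[4](\dualL{\distInvObj})\cong H$ as vector spaces.

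The first step is to translate both sides.  By \Cref{lem:monCointViaCatCoaction}, a linear form $\coint_2 \in H^*$ is a right monadic cointegral precisely when, as a morphism $\tensUnit\to\hopfMonad[2](\distInvObj)$, it satisfies $R_\distInvObj\circ\coint_2 = \hopfComonad[4](\coint_2)\circ\unitEnd[4]$; recall $\unitEnd[4]=\betaQ$ by \eqref{eq:unitEnd-4}.  On the other side, by \eqref{eq:RightCointEq_withoutArgument} a linear form $\coint^r\in H^*$ is a right cointegral iff $\rho^l(\coint^r) = \modulus(\invCoassQ_3)\,\invCoassQ_1\otimes \coint^r.\invCoassQ_2$ in the category $\hmodM[H][H]$ of $H$-bimodules, where $\rho^l$ is the natural $\cat[Y]^l$-comodule structure on $\dualL{H}$.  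The bridge is the commuting square \eqref{cd:nat_trans:AY=ZA} and the natural isomorphism $\varphi$ of \Cref{prop:NatIso_AdY-ZAd}: applying $\Adtwist$ to $\rho^l$ turns a $\cat[Y]^l$-comodule equation on $\dualL{H}$ into a $\hopfComonad[4]$-comodule equation on $\Adtwist(\dualL{H})$.  The plan is to show that the linear map $\coint^r\mapsto \monCointRight{\coint^r}$ of \eqref{eq:MainThmMap} is exactly the isomorphism of Hom-spaces
\begin{equation*}
    (\hmodM[H][H])(\dualL{H},\tensUnit) \xrightarrow{\Adtwist} \hmodM[H](\Adtwist(\dualL{H}),\tensUnit) \xrightarrow{\sim} \hmodM[H](\hopfComonad[4](\dualL{\distInvObj}),\tensUnit)
\end{equation*}
followed by the identification of the latter with $\cat(\tensUnit,\hopfMonad[2](\distInvObj))$ from \Cref{cor:OurCointIsShimizus}, and that this bijection sends the cointegral equation \eqref{eq:RightCointEq_withoutArgument} to the monadic cointegral equation of \Cref{lem:monCointViaCatCoaction}.

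To identify the conjugation by $S(\betaQ)$ and $S\inv(\elX)$ in \eqref{eq:MainThmMap} conceptually, note that the factor $S(\betaQ)$ arises from pre-composing with $\unitEnd[4]=\betaQ$ when passing between the cofree comodule on $\dualL{\distInvObj}$ (built from the regular bimodule $H$) and the Hom-space description; equivalently, it encodes the left coevaluation for $\dualL{H}$ when realising $\coint^r$ as a bimodule map $\dualL{H}\to\tensUnit$.  The factor $S\inv(\elX)$ on the right, with $\elX=(\id\otimes\modulus)(\Dt\inv)$, arises from the $\modulus$-twisted right $H$-action in $\Adtwist$ together with the Drinfeld twist entering the evaluation/coevaluation on duals (see \eqref{eq:DrinfeldTwistInverse-closedForm} and \eqref{eq:Dt-alphBet-interplay}); explicitly it comes from tracing through $\varphi_B$ of \eqref{eq:phi-B-qHopf-explicit} with $B=H$ and collapsing using $(\id\otimes\modulus)(\invCoassQ)=\oneQ\otimes\oneQ$ plus the antipode axioms \eqref{eq:antipodeAxioms}–\eqref{eq:zigzag-qHopf}.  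Once the identification of the two maps is established, the theorem reduces to the equivalence of two equations that are now written in the same categorical language, and bijectivity is immediate since $\monCointRight{(\placeholder)}$ is visibly a linear isomorphism of $H^*$ (as $\betaQ$ and $\elX$ are invertible on the image; this follows from non-degeneracy of the cointegral pairing, cf.\ \cite{HN-integrals}).

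For part (2), the cleanest route is to apply part (1) to $H\cop$: a left cointegral for $H$ is by definition a right cointegral for $H\cop$, and under the opposite comultiplication one has $S\cop=S\inv$, $\betaQ\cop=S\inv(\betaQ)$, so $S\cop(\betaQ\cop)=S^{-2}(\betaQ)$, while $(\Dt\inv)\cop=\Dt\inv_{21}$ produces $\hat\elX=\elX\cop$.  Simultaneously one checks that $\hopfMonad[3]$ for $H$ corresponds to $\hopfMonad[2]$ for $H\cop$ under the identification $\hmodM[H]\cong\hmodM[H\cop]\op$, so the monadic cointegral equation for $\hopfMonad[3]$ on $H$ is exactly the one for $\hopfMonad[2]$ on $H\cop$.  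The main obstacle is verifying that $\monCointRight{(\placeholder)}$ coincides with $\Adtwist$ followed by the $\varphi$-transport, which requires a careful, book-keeping-intensive comparison of the explicit formula \eqref{eq:phi-B-qHopf-explicit} against the definitions of $\elU\cop,\elV\cop$ in \eqref{eq:Definition_UV} and the closed form \eqref{eq:DrinfeldTwistInverse-closedForm} of $\Dt\inv$; I would defer this computation to the appendix indicated by the authors (\Cref{sec:proofs-sec-4}).  The $H$-intertwiner condition on $\monCointRight{\coint^r}$ should then follow automatically from the cointegral property, analogously to \eqref{eq:from-sym-to-inter} in the Hopf case, using the symmetry identities \eqref{eq:symmetryPropertiesCoint}.
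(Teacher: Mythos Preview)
Your high-level strategy matches the paper's: characterise right monadic cointegrals via \Cref{lem:monCointViaCatCoaction} in terms of the categorical coaction $R_\distInvObj$, and transport the quasi-Hopf cointegral equation \eqref{eq:RightCointEq_withoutArgument} through $\Adtwist$ and the isomorphism $\varphi$ of \Cref{prop:NatIso_AdY-ZAd}. The paper makes this bridge precise as \Cref{prop:CatCoaction_is_coaction}, which states $R_{\dualL{\modulus}} = \varphi_{\dualL{H}}\circ\Adtwist(\rho^l)$. Your route for part~(2) via $H\cop$ is also legitimate; the paper instead transports along the explicit Hopf-monad isomorphism $\kappa_{2,3}$ of \Cref{prop:IsoBetweenHopfMonads-qHopf} and verifies a commutative square, which amounts to the same thing.

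There is, however, a gap in your identification of the map~\eqref{eq:MainThmMap}. Right cointegrals are \emph{not} elements of $(\hmodM[H][H])(\dualL{H},\tensUnit)$: condition \eqref{eq:RightCointEq_withoutArgument} is a coinvariance condition for the $\cat[Y]^l$-coaction, not a bimodule-morphism condition, so your Hom-space chain starts in the wrong place and the appeal to \Cref{cor:OurCointIsShimizus} does not do the work you want. Relatedly, the claim that $\monCointRight{(\placeholder)}$ is ``visibly a linear isomorphism of $H^*$'' is not justified: $\betaQ$ need not be invertible in a quasi-Hopf algebra, so left multiplication by $S(\betaQ)$ is not obviously bijective on $H$, and non-degeneracy of the cointegral does not help here. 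The paper addresses both points by introducing an intermediate subspace $\gammaSSym\subset H^*$ cut out by the symmetry condition $f\leftharpoonup S(a) = S\inv(\modulus\rightharpoonup a)\rightharpoonup f$ (which right cointegrals satisfy by \eqref{eq:symmetryPropertiesCoint}) and proving directly, with an explicit inverse built from $\qL$, that $f\mapsto\betaQ.f.\elX$ is an isomorphism $\gammaSSym\xrightarrow{\sim}\cat(\tensUnit,\hopfMonad[2](\dualL{\modulus}))$ (\Cref{prop:X2=C2}). The equivalence of the two cointegral equations then rests on a separate technical identity (\Cref{lem:weird_identity}) relating $\varphi_{\dualL{H}}$, $\Delta(\betaQ)$, $\invCoassQ$ and $\Delta(\elX)$; this is where the actual computational content lives and cannot simply be deferred without leaving the argument incomplete.
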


Let us explain the main ideas in the proof.
First, we specialise the equivalent characterisation of (right) monadic cointegrals
from \Cref{lem:monCointViaCatCoaction} to the quasi-Hopf setting.
The resulting equation resembles the right cointegral equation
\eqref{eq:RightCointEq_withoutArgument} we encountered in our discussion of quasi-Hopf
algebras.
Indeed, we find a nice relationship between the categorical coaction $R_D$
from~\eqref{eq:coactionLeftAdjoint} and the left coaction from \cite{BC2-2011}, cf.\
\Cref{prop:CatCoaction_is_coaction} below.

Using the relation between these two coactions we then show that the map
\eqref{eq:MainThmMap} sends a right cointegral to a right monadic cointegral via a direct
calculation.
This establishes Part~1. Part~2 will then be inferred from Part~1 using the isomorphism
$\hopfMonad[2] \cong \hopfMonad[3]$.

The details follow below, with some technical steps deferred to \Cref{app:Proof1}.

\subsection*{Relation to quasi-Hopf cointegrals}
Let $\cat=\hmodM$ and recall our realization of the Hopf comonad $\hopfComonad[4]$ from
\eqref{eq:Z4-realisation} and \eqref{eq:unitEnd-4}.
In this setting, the distinguished invertible object of $\cat$ is $\dualL{\modulus}$,
and we can rewrite
Equation~\eqref{eq:MonadicCointegralsViaCoaction} on  right monadic cointegrals as the
linear equation
\begin{align}\label{eq:MonadicCointViaCoaction-qHopf}
    R_{\dualL{\modulus}} \circ \coint
    = \betaQ \otimes_{\field} \coint\ ,
\end{align}
where we identified morphisms from $\tensUnit$ to $H$ with elements in $H$.
We will show that this equation is equivalent to the right cointegral equation
\eqref{eq:RightCointEq_withoutArgument}, 
\begin{align}\label{eq:RightCointEq_withoutArgument-qHopf}
    \rho^l(\coint^r)
    =
    \modulus(\invCoassQ[1]_3)  \invCoassQ[1]_1 \otimes \coint^r.\invCoassQ[1]_2\ , 
\end{align}
where $\coint^r\in H^*$ satisfies $\coint = \monCointRight{(\coint^r)}$ with the
$(-)^{\mathrm{mon}}$ defined in~\eqref{eq:MainThmMap}.

We will first relate the categorical coaction $R_{\dualL{\modulus}}$ and the coaction
$\rho^l$.
To this end, recall the functor $\Adtwist:\hmodM[H][H]\to \hmodM$ from
\eqref{eq:AdTwistFunctor}.
One can check that in our realisation of the central Hopf monad we have the equality
$\Adtwist(\dualL{H}) = \hopfMonad[2](\dualL{\modulus})$ of $H$-modules.

\begin{prop}\label{prop:CatCoaction_is_coaction}
    With the natural isomorphism
   $\varphi: \cat[AY]^l \To \hopfComonad[4] \Adtwist$
    from \eqref{cd:nat_trans:AY=ZA} and the left $H$-coaction $\rho^l: \dualL{H}
    \to \cat[Y]^l(\dualL{H})$ from \Cref{Sec:BC-CointegralsPartiallyExplained} we have that
    \begin{align}
        R_{\dualL{\modulus}} = 
        \left[ 
            \Adtwist(\dualL{H}) 
            \xrightarrow{\Adtwist(\rho^l)}
            \Adtwist\cat[Y]^l(\dualL{H})
            \xrightarrow{\varphi_{\dualL{H}}}
            \hopfComonad[4]\Adtwist(\dualL{H})
        \right].
    \end{align}
\end{prop}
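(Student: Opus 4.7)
The plan is to verify the stated equality by a direct computation in the realisation $\hmodM$ of $\cat$. Both morphisms go from $\Adtwist(\dualL{H}) = \hopfMonad[2](\dualL{\modulus})$ to $\hopfComonad[4]\Adtwist(\dualL{H})$, and in our realisations both sides are linear maps $H^* \to H \otimes H^*$, so it suffices to evaluate on an arbitrary $f \in H^*$ and compare.

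For the right-hand side, I would first spell out the left coaction $\rho^l : \dualL{H} \to \cat[Y]^l(\dualL{H})$, which, by mirroring the explicit formula given for $\rho^r$ in \Cref{Sec:BC-CointegralsPartiallyExplained}, sends $f$ to an expression built from $\elV\cop, \elU\cop$ and the coproduct. The functor $\Adtwist$ leaves the underlying linear map unchanged, so $\Adtwist(\rho^l)(f)$ has the same formula but is regarded as a morphism in $\hmodM$. Post-composing with $\varphi_{\dualL{H}}$, using the explicit form \eqref{eq:phi-B-qHopf-explicit}, then rewrites the result in terms of $\Dt, \Dt\inv$, the coassociators $\coassQ, \invCoassQ$, the modulus $\modulus$, and the action of $S$ on the dual module $\dualL{H}$.

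For the left-hand side, I would unpack
\begin{align*}
R_{\dualL{\modulus}} \;=\; \hopfComonad[4](\multCoend[2][\dualL{\modulus}]) \circ \unitAdj_{\hopfMonad[2](\dualL{\modulus})}.
\end{align*}
The adjunction unit $\unitAdj$ can be extracted from \eqref{eq:HomSetAdj-AZ} and its depicted counit; on the realisation $\hopfMonad[2](V) = H^* \otimes V$ it is determined by the universal dinatural transformations \eqref{eq:dinat_trans-Hopf}, and together with the explicit realisation \eqref{eq:Z4-realisation} of $\hopfComonad[4]$ and unit \eqref{eq:unitEnd-4}, this produces a closed formula. The factor $\hopfComonad[4](\multCoend[2][\dualL{\modulus}])$ is then read off from the multiplication of $\hopfMonad[2]$ as computed just above \eqref{eq:qHopf-rmco-lin}, with $V = \dualL{\modulus}$ contributing the character $\modulus\inv$ on the $\tau_3$-factor.

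The main obstacle is the bookkeeping: both expressions involve the Drinfeld twist, its inverse, several copies of $\coassQ$ and $\invCoassQ$, and the modulus, all entwined with the adjoint-type action used in $\Adtwist$ and $\hopfComonad[4]$. I expect the key ingredients to be the closed forms \eqref{eq:DrinfeldTwist-closedForm} and \eqref{eq:DrinfeldTwistInverse-closedForm} of $\Dt$ in terms of $\pR, \qL, \elQbold{\varepsilon}, \elQbold{\delta}$, the coproduct-intertwining identities \eqref{eq:identities_qp_iterated_coproduct}, the twisted coassociativity \eqref{eq:DrinfeldTwist-twistedCoassoc} of $\Dt$, the zig-zag relations \eqref{eq:zigzag-qHopf}, and the pentagon \eqref{eq:pentagon-qHopf}. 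I do not foresee a shortcut via a universal property since $R$ is defined through the adjunction $\hopfMonad[2] \dashv \hopfComonad[4]$ while $\varphi \circ \Adtwist(\rho^l)$ uses the fundamental theorem of quasi-Hopf bimodules; the required compatibility is precisely the content of the proposition. The proof is therefore a long but mechanical rewriting, naturally deferred to the technical appendix.
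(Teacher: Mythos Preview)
Your proposal is correct and follows essentially the same route as the paper: both sides are evaluated explicitly as linear maps $H^* \to H \otimes H^*$ and then matched via quasi-Hopf identities. The paper organises the computation of $R_U$ by first deriving the universal-property characterisation $\dinatEnd[4] \circ R_U \circ \dinatCoend[2]$ (\Cref{lem:CoactionR_univ_prop}) and then specialising to $X=Y=H$ to read off the formula, but this is just a convenient way to extract the adjunction unit; the decisive step is an identity (equation \eqref{eq:useThisInProof}) expressing $\pR$ via $\qL$ and the Drinfeld twist, which is proved using exactly the ingredients you list.
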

The proof of this proposition has been relegated to
\Cref{proof:CatCoaction_is_coaction}.

Note that $\Adtwist$ does not do anything on morphisms; 
in particular, the linear maps $\Adtwist(\rho^l)$ and $\rho^l$ are identical.
Then this proposition together with \eqref{eq:MonadicCointViaCoaction-qHopf}, says that
$\coint$ is a right monadic cointegral if and only if it satisfies 
\begin{align}
    \label{eq:MainThmProof-final-version-of-monCoint-Eq}
    (\varphi_{\dualL{H}} \circ \rho^l) (\coint) = 
    \betaQ \otimes_{\field} \coint \ .
\end{align}
In \Cref{app:Proof1} we prove that this is equivalent to the right cointegral
equation~\eqref{eq:RightCointEq_withoutArgument-qHopf} using the map
\eqref{eq:MainThmMap}.
This finishes the proof of the first part of \Cref{thm:MainThmSectionStatement}.

\bigskip

The second part is the same as the first part for the coopposite quasi-Hopf algebra, but
we follow a more direct approach using the isomorphism $\hopfMonad[2] \cong \hopfMonad[3]$
from \Cref{prop:iso_as_HC_pivotal} and
\Cref{prop:IsoBetweenHopfMonads-qHopf}, see \Cref{app:ProofMainThPart2}.

Namely, in the appendix, we show that the diagram 
\begin{equation}
    \begin{tikzcd}[row sep=large, column sep=large]
        \spaceRightCoint
        \ar[r,"\eqref{eq:MainThmMap}"]
        \ar[d,"(*)",swap]
        & \spaceRightMonCoint
        \ar[d,"(\kappa_{2,3})_\distInvObj ~ \circ ~ \placeholder"]
        \\
        \spaceLeftCoint
        \ar[r,"\eqref{eq:MainThmMap-Left}",swap]
        &
        \spaceLeftMonCoint
    \end{tikzcd}
\end{equation}
commutes, where $(*)$ is, up to a non-zero factor, the isomorphism between left and right
cointegrals \eqref{eq:relating_left_and_right_cointegrals}, and $\kappa_{2,3}:
\hopfMonad[2] \Rightarrow \hopfMonad[3]$ is the Hopf monad isomorphism from
\eqref{eq:HopfMonadIsos-qHopf}.

\medskip

\subsection{Left and right \texorpdfstring{$\distInvObj$}{D}-symmetrised monadic
cointegrals}\label{sec:D-sym-coint-qHopf}

    To prove an analogous result of \Cref{thm:MainThmSectionStatement} for
    $\distInvObj$-symmetrised monadic cointegrals, we first need to recall the appropriate version of cointegrals on the quasi-Hopf side.

\begin{mydef}
Let $(H,\pivotQ)$ be a finite-dimensional pivotal quasi-Hopf algebra. 
\begin{enumerate}
	\item
The \emph{right $\modulus$-symmetrised cointegrals} $\symRightCoint\in H^*$ are the solutions of
\begin{align}
    (\symRightCoint \otimes \id) (\qR \Delta(h) \pR) 
    = \modulus(\coassQ[1]_1) \symRightCoint(\coassQ[1]_2 h) \cdot \pivotQ\inv 
    S(\coassQ[1]_3) 
    ~~ \text{for all} ~~ h\in H \ .
\end{align}
\item
The \emph{left $\modulus$-symmetrised cointegrals} $\symLeftCoint$ are the solutions of
\begin{align}
    (\id \otimes \symLeftCoint) (\qL \Delta(h) \pL) 
    = \modulus(\invCoassQ[1]_3) \symLeftCoint(\invCoassQ[1]_2 h) \cdot \pivotQ
    S\inv(\invCoassQ[1]_1) 
    ~~ \text{for all} ~~ h\in H \ .
\end{align}
\end{enumerate}
\end{mydef}
An equivalent definition has been given in \cite[Sec.\,6.4]{Shi-Shi}, and the above
equations also appear in \cite[Lem.\,3.7]{BGR1}.
In that lemma it is shown that, given a right (resp.\ left) cointegral $\coint^r$ (resp.\
$\coint^l$) for $H$, the $\modulus$-symmetrised version can be expressed as
\begin{align}
    \label{eq:symCointViaCoint}
    \symRightCoint = \coint^r \leftharpoonup \elu \pivotQ
    \quad (\text{resp.} \quad
    \symLeftCoint = \coint^l \leftharpoonup \elu\cop \pivotQ\inv
    ) \ ,
\end{align}
where $\elu$ was defined in \eqref{eq:elu-right-left-coint-defined}.
(This is the actual definition of $\modulus$-symmetrised cointegrals given
in~\cite{Shi-Shi}.)
From \eqref{eq:symCointViaCoint} it is clear that non-zero $\modulus$-symmetrised
cointegrals exist and are unique up to scalars, and
from~\eqref{eq:symmetryPropertiesCoint}
one easily verifies
\begin{align}
    \label{eq:symmetryPropertiesSymmetrizedCoint}
    \symRightCoint(ab) = \symRightCoint((b\leftharpoonup \modulus) a)
    \quad \text{and} \quad
    \symLeftCoint(ab) = \symLeftCoint((\modulus \rightharpoonup b) a)
\end{align}
for all $a,b\in H$, see \cite[Eqn.\,(3.44)]{BGR1}.

We denote the one-dimensional spaces of right and left $\modulus$-symmetrised cointegral
by 
\begin{align}
    \spaceRightSymCoint
    \quad \text{and} \quad
    \spaceLeftSymCoint.
\end{align}

Now we can extend \Cref{thm:MainThmSectionStatement} to the pivotal case.

\begin{thm}\label{thm:MainThmPivotalCase}
    Let $(H,\pivotQ)$ be a pivotal quasi-Hopf algebra.
    \begin{itemize}
        \item
            Consider the linear map   
            \begin{align}\label{eq:MainThmMapPivotalRight}
                \monCointRightSym{(\placeholder)}:
                H^* &\to H^*, \quad
                \monCointRightSym{f} 
                = \langle f \mid S(\betaQ) ~ \placeholder ~ S\inv(\elTh) \rangle \ ,
            \end{align}
            where $\elTh = (\modulus\inv \otimes S\inv) (\pL)$.
            Then $\monCointRightSym{\coint}$ is a right $\distInvObj$-symmetrised
            monadic cointegral if and only if $\coint \in H^*$ is a right
            $\modulus$-symmetrised cointegral.
        \item
            Consider the linear map
            \begin{align}\label{eq:MainThmMapPivotalLeft}
               \monCointLeftSym{(\placeholder)}:
                H^* &\to H^*, \quad
                \monCointLeftSym{f}
                = \langle f \mid \betaQ ~ \placeholder ~ S(\hat{\elTh}) \rangle \ ,
            \end{align}
            where $\hat{\elTh} = \elTh\cop = (S \otimes \modulus\inv) (\pR)$.
            Then $\monCointLeftSym{\coint}$ is a left $\distInvObj$-symmetrised
            monadic cointegral if and only if $\coint \in H^*$ is a left
            $\modulus$-symmetrised cointegral.
    \end{itemize}
\end{thm}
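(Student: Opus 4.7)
The plan is to reduce \Cref{thm:MainThmPivotalCase} to \Cref{thm:MainThmSectionStatement} by exploiting the Hopf monad isomorphisms $\kappa_{1,2}$ and $\kappa_{3,4}$ from \Cref{prop:iso_as_HC_pivotal,prop:IsoBetweenHopfMonads-qHopf} together with the known relation \eqref{eq:symCointViaCoint} between $\modulus$-symmetrised and ordinary cointegrals. For part~1, by \Cref{remark:intertw-diagram-coint-transport}\,(2), a morphism $\coint_1 : \tensUnit \to \hopfMonad[1][\distInvObj]$ is a right $\distInvObj$-symmetrised monadic cointegral if and only if $(\kappa_{1,2})_{\distInvObj} \circ \coint_1$ is a right monadic cointegral for $\hopfMonad[2]$. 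By \Cref{thm:MainThmSectionStatement}\,(1) combined with \eqref{eq:symCointViaCoint}, the space $\spaceRightMonCoint$ is moreover in bijection with $\spaceRightSymCoint$ via the composition $\symRightCoint \mapsto \coint^r := \symRightCoint \leftharpoonup (\elu \pivotQ)^{-1} \mapsto \monCointRight{\coint^r}$. Hence, it suffices to prove that
\begin{align*}
    (\kappa_{1,2})_{\distInvObj} \bigl( \monCointRightSym{\symRightCoint} \bigr)
    ~=~
    \monCointRight{\bigl( \symRightCoint \leftharpoonup (\elu\pivotQ)^{-1} \bigr)}
\end{align*}
as elements of $H^*$, for any right $\modulus$-symmetrised cointegral $\symRightCoint$.

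Unfolding the definitions \eqref{eq:MainThmMap}, \eqref{eq:MainThmMapPivotalRight} and \eqref{eq:HopfMonadIsos-qHopf}, this reduces the claim to showing the identity
\begin{align*}
    \symRightCoint\bigl( S(\betaQ) \, h \, S\inv(\elTh) \bigr)
    ~=~
    \symRightCoint\bigl( (\elu\pivotQ)^{-1}\, S(\betaQ) \, \pivotQ \, h \, S\inv(\elX) \bigr)
\end{align*}
for all $h \in H$, where $\elTh = (\modulus\inv \otimes S\inv)(\pL)$ and $\elX = (\id \otimes \modulus)(\Dt\inv)$. The strategy for this verification is to use the symmetry property~\eqref{eq:symmetryPropertiesSymmetrizedCoint} of $\symRightCoint$ to cycle the factors on the right-hand side, then expand $\elX$, $\elu$ and the pivot relations \eqref{eq:pivotConditions}, and finally use the closed-form expressions~\eqref{eq:DrinfeldTwist-closedForm}--\eqref{eq:DrinfeldTwistInverse-closedForm} for $\Dt^{\pm 1}$ together with the identities \eqref{eq:identities qpL}--\eqref{eq:identities_qp_iterated_coproduct} for $\pL, \qL, \pR, \qR$, and the definition of $\elTh$ in terms of $\pL$ and $\modulus$.

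Part~2 then follows by an entirely analogous argument, replacing $\kappa_{1,2}$ by $\kappa_{3,4}$ and invoking \Cref{thm:MainThmSectionStatement}\,(2) together with the left version of~\eqref{eq:symCointViaCoint}. Alternatively, one can derive it from part~1 by passing to $H\cop$, since the $(-)\cop$ operation exchanges left and right cointegrals as well as the pairs $(\hopfMonad[1], \hopfMonad[4])$ and $(\hopfMonad[2], \hopfMonad[3])$.

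The main obstacle will be the algebraic identity above. The calculation is expected to be technical but should follow the same pattern as the proof of \Cref{thm:MainThmSectionStatement} in the appendix: the $\pivotQ$-conjugation on the left-hand side should be absorbed by the presence of $\elu$ (whose definition \eqref{eq:elu-right-left-coint-defined} is closely tied to the square of the antipode) together with \eqref{eq:symmetryPropertiesSymmetrizedCoint}, while the discrepancy between $\elX$ and $\elTh$ should be accounted for by the pentagon relation \eqref{eq:pentagon-qHopf} and the interplay \eqref{eq:Dt-alphBet-interplay}--\eqref{eq:coproduct_beta} between $\betaQ$, $\elDelta$, and the Drinfeld twist. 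Given the length of such verifications, I would defer the full computation to an appendix paralleling \Cref{app:Proof1,app:ProofMainThPart2}.
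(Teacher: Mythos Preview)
Your approach is essentially the same as the paper's: reduce \Cref{thm:MainThmPivotalCase} to \Cref{thm:MainThmSectionStatement} via the Hopf monad isomorphisms $\kappa_{1,2}$, $\kappa_{3,4}$ together with the relation~\eqref{eq:symCointViaCoint} between symmetrised and ordinary cointegrals, and then check that the resulting square commutes by an explicit algebraic identity. The paper carries out this last step (which you defer) by passing through $\coint^l$ via~\eqref{eq:relating_left_and_right_cointegrals} and reducing everything to the clean element-level identity $\elu\cop\,\pivotQ^{-1}\,\elX\,\pivotQ = \elTh$, whose verification uses only the definitions of $\elu\cop$, $\elV\cop$, $\elX$, and $\pL$; you may find it simpler to aim directly for that identity rather than the more diffuse list of tools you mention.
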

The proof is via monad isomorphisms as in the second part of
\Cref{thm:MainThmSectionStatement} and can be found in \Cref{app:Proof1}.

\medskip

\section{Examples}
    \label{SEC:Examples}
    Here we give examples of quasi-Hopf algebras and their cointegrals.
    Our examples are mostly non-unimodular;    
    some unimodular examples can be found e.g.\ in \cite[Ex.~3.7]{BC2-2011} and
    \cite{BGR1}.
    All examples below are considered over the complex numbers $\field[C]$.
    
    \begin{example}
        This is example 2.2 and 3.4 in \cite{BC2-2011}.
        Consider the unital $\field[C]$-algebra generated by $g$ and $x$, obeying
        relations $g^2 = \oneQ$, $x^4 = 0$ and $gxg\inv = -x $. 
        Define two orthogonal idempotents $p_\pm = \tfrac{1}{2} ( \oneQ \pm g )$.
        The comultiplication and counit are given on generators by
        \begin{alignat}{2}
            \Delta(g) &= g\otimes g,\quad &&\counit(g) =1 \notag \\
            \Delta(x) &= x\otimes (p_+ \pm i p_-) + \oneQ\otimes p_+x + g\otimes p_-x,
            \quad &&\counit(x) = 0 \,
        \end{alignat}
        and with $\coassQ=\invCoassQ=\oneQ\otimes \oneQ\otimes \oneQ - 2 p_-\otimes
        p_-\otimes p_-$ we obtain two 8-dimensional quasi-bialgebras, denoted
        $H_\pm(8)$, both of which admit an antipode $S(g)=g$, $S(x)=-x(p_+ \pm i
        p_-)$, and with evaluation and coevaluation element $\alphaQ=g$ and
        $\betaQ=1$, respectively.

        A right integral is given by $c^r = x^3 p_+$, while $c^l = p_+ x^3 = x^3 p_-$ is a
        left integral.
        One computes that the modulus of $H_\pm(8)$ is $\modulus(x)=0$,
        $\modulus(g)=-1$.
        Thus, the quasi-Hopf algebra in this example is not unimodular.
        Note also that $\modulus = \modulus\inv$.

        Finally, we want to give the cointegrals of $H_\pm(8)$.
        Basis elements of $H_\pm(8)$ are of the form $B_{m,n}=g^m x^n$, $0\leq m \leq
        1$, $0\leq n \leq 3$, and we denote the element dual to $B_{m,n}$ by $B_{m,n}^*$.
        The Drinfeld twist is given by 
        \begin{align}
            \Dt^{\pm 1} = 2 p_+ \otimes p_+ - g \otimes g
        \end{align}
        and so we obtain the right monadic cointegral
        \begin{align}
            \coint^{r,\textup{mon}} 
            = B_{0,3}^* \pm i B_{1,3}^* \ .
        \end{align}
        Concretely, one solves \eqref{eq:qHopf-rmco-lin} and finds that its
        solution space is one-dimensional, so its elements automatically are
        morphisms in $\hmodM$.        
        With the Hopf monad isomorphism $\kappa_{2,3}$ from
        \Cref{prop:IsoBetweenHopfMonads-qHopf} it is then easy to show that
        \begin{align}
            \coint^{l,\textup{mon}} 
            = B_{1,3}^*
        \end{align}
        is a non-zero left monadic cointegral. Using the isomorphisms from 
        \Cref{thm:MainThmSectionStatement},
        we obtain the `classical' right and left cointegrals,
        \begin{align}
            \coint^r 
            = B_{0,3}^* \mp i B_{1,3}^* 
            \quad \text{and} \quad
            \coint^l
            = B_{0,3}^* \ .
        \end{align}
        The same expression for the left cointegral was also derived in
        \cite[Ex.~3.9]{BC2-2011}.

        Note that $H_\pm(8)$ is not pivotal.
        Indeed, one easily checks that already for the generator $x$, $S^2(x)h = h x$
        implies $h=0$, so that $S^2$ is not inner.
    \end{example}

    \begin{example}
        Fix $N\in \field[N],$ $\beta\in \field[C]$ satisfying $\beta^4 = (-1)^N$.
        This example is based on the symplectic fermion ribbon quasi-Hopf algebra
        $\sympFerm(N,\beta)$ from \cite[Sec.\,3]{FGR2}.
        $\sympFerm(N,\beta)$ is factorisable, so in particular unimodular and its
        cointegrals were already discussed in \cite[Sec.\,5]{BGR1}.
        We now restrict to the sub-quasi-Hopf algebra $H(N,\beta) \subset
        \sympFerm(N,\beta)$, which is defined as follows.
        As a unital $\field[C]$-algebra, it is generated by $\elQ{K},
        \elQ{f}_i$, $1\leq i \leq N$, with defining relations
        \begin{align}
            \{ \elQ{f}_i, \elQ{K} \} = 0 \ , \quad \{\elQ{f}_i, \elQ{f}_j \} = 0 \ ,
            \quad \elQ{K}^4 = \oneQ \ , 
        \end{align}
        where $\{a,b\}=ab+ba$ is the anticommutator.

        A PBW-type basis of $H(N,\beta)$ is\footnote{
            We use the convention $\prod_{i=1}^n a_i = a_1 \cdot \prod_{i=2}^n a_i$ for
            the non-commutative product.
        }
        \begin{align}
            \left\{
                B_{\vec{j},l} = 
                \bigg( \prod_{i=1}^N \elQ{f}_i^{j_i} \bigg)  \elQ{K}^l
                ~\bigg|~
                \vec{j} \in \{0,1\}^N,~~ 0\leq l \leq 3
            ~\right\}
            \ .
        \end{align}
        Elements in the corresponding dual basis are simply decorated by an asterisk.

        Using the orthogonal central idempotents $\eQ_0 = \tfrac{1}{2}(\oneQ +
        \elQ{K}^2)$ and $\eQ_1 = \oneQ - \eQ_0$, and setting $\omega = (\eQ_0 + i \eQ_1)
        \elQ{K}$, the comultiplication and the counit are
        \begin{alignat}{3}
            \Delta(\elQ{K}) &= \elQ{K} \otimes \elQ{K} - (1 + (-1)^N)\eQ_1
            \elQ{K}\otimes \eQ_1 \elQ{K} \ ,
            \quad \quad && \counit(\elQ{K}) &&= 1 \ ,
            \notag \\
            \Delta(\elQ{f_i}) &= \elQ{f}_i \otimes \oneQ + \omega \otimes \elQ{f}_i \ ,
            && \counit(\elQ{f}_i) &&= 0 \ .
        \end{alignat}
        The coassociator and its inverse are
        \begin{align}
            \coassQ^{\pm 1} = \oneQ \otimes\oneQ \otimes \oneQ +
            \eQ_1 \otimes \eQ_1 \otimes \{ \eQ_0 (\elQ{K}^N - \oneQ)
            + \eQ_1 (\elQbold{\beta}_\pm - \oneQ) \} \ ,
        \end{align}
        where $\elQbold{\beta}_\pm = \eQ_0 + \beta^2 (\pm i \elQ{K})^N \eQ_1$.
        The evaluation and coevaluation elements are $\alphaQ = \oneQ$, $\betaQ =
        \elQbold{\beta}_+$, and the antipode is
        \begin{align}
            S(\elQ{K}) = \elQ{K}^{(-1)^N} \ , \quad 
            S(\elQ{f}_i) = \elQ{f}_i (\eQ_0 + (-1)^N i \eQ_1) \elQ{K} \ . 
        \end{align}
        Then, with $X = \oneQ + \elQ{K} + \elQ{K}^2 + \elQ{K}^3$, we see that
        \begin{align}
            c^l = X \prod_{i=1}^N \elQ{f}_i, \quad 
            c^r = \left(\prod_{i=1}^N \elQ{f}_i \right) X
        \end{align}
        are a left and a right integral, respectively.
        From this, one easily computes that the modulus is the algebra homomorphism
        given on generators by 
        \begin{align}
            \modulus(\elQ{K})= (-1)^N, \quad \modulus(\elQ{f}_i) = 0.
        \end{align}
        In particular, $H(N,\beta)$ is unimodular if and only if $N$ is even.
        Note that just as in the previous example $\modulus = \modulus\inv$.

        Now we describe the cointegrals of $H(N,\beta)$.
        The Drinfeld twist and its inverse are given by
        \begin{align}
            \Dt^{\pm 1} 
            = \eQ_0 \otimes \oneQ
            + \eQ_1 \otimes \eQ_0 \elQ{K}^N 
            + \eQ_1 \betaQ_\mp \otimes \eQ_1\ ,
        \end{align}
        see also \cite[(3.35)]{FGR2}.
        We again find the right monadic cointegral via \eqref{eq:qHopf-rmco-lin}
        and then obtain the left monadic cointegral via the isomorphism of Hopf monads
        from \Cref{prop:IsoBetweenHopfMonads-qHopf}:
        \begin{align}
            \coint^{r,\textup{mon}}
            = B_{\vec{N},0}^*
            \quad \text{and} \quad
            \coint^{l,\textup{mon}}
            = 
            \delta_{N,\text{even}} B_{\vec{N},0}^*       
        + \delta_{N,\text{odd}} (B_{\vec{N},1}^* - i B_{\vec{N},3}^*)
            \ ,
        \end{align}
        where $\vec{N}$ is the multi-index consisting only of $1$s.
        In particular, the left and the right monadic cointegral do not agree unless $N$
        is even.
        
        With our main theorem, we obtain the right and the left quasi-Hopf cointegral
        \begin{align}
            \coint^r
            &=
            a^r_+ B_{\vec{N},0}^* + a^r_- B_{\vec{N},2}^*
            - \delta_{N,\text{odd}} ( B_{\vec{N},1}^* - B_{\vec{N},3}^* ), 
            \notag \\
            \coint^l 
            &=
            a^l_+ B_{\vec{N},0}^* + a^l_- B_{\vec{N},2}^*
            - \delta_{N,\text{odd}} ( B_{\vec{N},1}^* + B_{\vec{N},3}^* ),
        \end{align}
        where the coefficients are
        \begin{align}
            a^r_\pm = 
            \delta_{N, \text{even}} ( 1 \pm \beta^2 )
            + \delta_{N, \text{odd}} \beta^2 i 
            \quad \text{and} \quad
            a^l_\pm = \delta_{N, \text{even}} \pm \beta^2\ .
        \end{align}
    \end{example}

    \begin{example}
        Fix an odd integer $t$, let $p\geq 2$ be an integer, and set $q=e^{i\pi/p}$.
        Here we consider the quasi Hopf modification $\bar{U}^{(\coassQ)}_q
        \mathfrak{sl}(2)$ of the restricted quantum group $\bar{U}_q\mathfrak{sl}(2)$ from
        \cite[Sec.\,4]{CGR}.
        This quasi-Hopf algebra is factorisable, and
        as in the previous example we will
        consider a non-unimodular sub-quasi-Hopf algebra~$U^-$.\footnote{
            Different values of $t$ lead to twist-equivalent quasi-bialgebras, cf.\
            \cite[Thm.\,4.1]{CGR}.  
            Explicitly, with $ J(t',t) = \eQ_0 \otimes \oneQ + \eQ_1\otimes K^{(t'-t)/2} $
            we get e.g.\ $ J(t',t)\cdot \Delta_t(h) \cdot J(t',t)\inv = \Delta_{t'}(h) $
            for all $h\in U^-$. 
        }
        As a $\field[C]$-algebra, it is generated by $F$ and $K$, with defining relations
        \begin{align}
            F^p = 0, \quad K^{2p}=\oneQ ,\quad\text{and}\quad KFK\inv = q^{-2} F,
        \end{align}
        and a natural choice of basis of $U^-$ is therefore
        \begin{align}
            \{
                B_{m,n} = F^m K^n \mid 0\leq m\leq p-1, \quad 0\leq n\leq 2p-1 
            \}\ .
        \end{align}

        Using the two central idempotents $\eQ_0 = \tfrac{1}{2}(\oneQ + K^p)$ and
        $\eQ_1=\oneQ - \eQ_0$, the comultiplication and the counit are
        \begin{alignat}{2}
            \Delta_t(F) &= F\otimes \oneQ + (\eQ_0 + q^{-t} \eQ_1)K\inv\otimes F,
            \quad \qquad && \counit(F) = 0,
            \notag \\
            \Delta_t(K) &= K\otimes K,
            \quad && \counit(K) = 1.
        \end{alignat}
        The coassociator and the antipode are given by
        \begin{align}
            \coassQ_t &= \oneQ \otimes \oneQ \otimes \oneQ + \eQ_1 \otimes \eQ_1
            \otimes (K^{-t} - \oneQ) 
            \notag \\
            \quad S_t(F) &= -KF(\eQ_0 + q^{-t} \eQ_1), \quad S_t(K) = K\inv,
        \end{align}
        and finally, evaluation and coevaluation element are 
        \begin{align}
            \alphaQ = \oneQ, \quad \betaQ_t = \eQ_0 + K^{-t} \eQ_1\, 
        \end{align}
        respectively.
        
        \smallskip
        From these data, one finds that the Drinfeld twist and its inverse are
        \begin{align}
            \Dt^{\pm 1}
            =  \eQ_0 \otimes \oneQ
            + \eQ_1 \otimes \eQ_0 K^{\mp t}
            + \eQ_1 K^{\pm t} \otimes \eQ_1
            \ ,
        \end{align} 
        see~\eqref{eq:DrinfeldTwist-closedForm}.
        This quasi-Hopf algebra is pivotal, and the pivot we choose is 
        \begin{align}
            \pivotQ_t = \eQ_0 K - \eQ_1 K^{t+1} \ .
        \end{align}

        \smallskip

        Set $X=\sum_{i=0}^{2p-1} K^i$.
        Then one can see that $c^r = F^{p-1}X$ and $c^l = X F^{p-1}$ are a right and a
        left integral for $U^-$, respectively.
        From
        \begin{align}
            c^l F &= 0 \ , \quad c^l K = q^{-2} c^l \ , \notag \\
            F c^r &= 0 \ , \quad K c^r = q^2 c^r
        \end{align}
        we can see that $U^-$ is non-unimodular.
        The modulus is
        \begin{align}
            \modulus(F) = 0 \ , 
            \quad 
            \modulus(K) = q^{-2}\ .
        \end{align}
        The order of $\modulus$ is $p$, and in particular $\modulus \neq \modulus\inv$ if
        $p>2$.

        \smallskip

        Using \eqref{eq:qHopf-rmco-lin}
        one verifies that the space of right monadic cointegrals is
        \begin{align}
            \spaceRightMonCoint = \field[C]B_{p-1,0}^*\ .
        \end{align}
        The left monadic cointegral can then be found via the isomorphism from
        \Cref{prop:IsoBetweenHopfMonads-qHopf}.
        Normalizing the result, we obtain
        \begin{align}
            \coint^{l,\textup{mon}} 
            = (1 + q^{-t(p-1)}) B_{p-1,p-1}^*
            + (1 - q^{-t(p-1)}) B_{p-1,2p-1}^*\ .
        \end{align}
        Also from \Cref{prop:IsoBetweenHopfMonads-qHopf} we obtain the right
        $\distInvObj$-symmetrised monadic cointegral
        \begin{align}
            \coint^{r,\distInvObj-\textup{sym}}
            =
            B_{p-1,p-1}^*
            + B_{p-1,2p-1}^*
            + q^{2t} 
            \big(
                B_{p-1,p-t-1}^*
                - B_{p-1,2p-t-1}^*
            \big)
        \end{align}
        and the left $\distInvObj$-symmetrised monadic cointegral
        \begin{align}
            \coint^{l,\distInvObj-\textup{sym}}
            =
            B_{p-1,0}^*
            + B_{p-1,p}^*
            + q^{-t(p+1)} 
            \big(
                B_{p-1,t}^*
                - B_{p-1,p+t}^*
            \big)\ .
        \end{align}

        The `classical' \cite{HN-integrals}-cointegrals of $U^-$ can now be obtained using
        \Cref{thm:MainThmSectionStatement}.
        The right and the left cointegral are found to be
        \begin{align}
            \coint^{r}
            =
            B_{p-1,0}^*
            + B_{p-1,p}^*
            + B_{p-1,t}^*
            - B_{p-1,p+t}^*
        \end{align}
        and
        \begin{align}
            \coint^{l}
            =
            B_{p-1,p-1}^*
            + B_{p-1,2p-1}^*
            + q^{-t(p-1)} 
            \big(
                B_{p-1,p-t-1}^*
                - B_{p-1,2p-t-1}^*
            \big)\ , 
        \end{align}
        respectively.
        Finally, we give the $\modulus$-symmetrised cointegrals of $U^-$ using the
        characterization \eqref{eq:symCointViaCoint}.
        The right $\modulus$-symmetrised cointegral is given by
        \begin{align}
            \widehat{\coint}^r 
            = B_{p-1,p-1}^* \ ,
        \end{align}
        and the left $\modulus$-symmetrised cointegral is 
        \begin{align}
            \widehat{\coint}^l
            = 
            (1 + q^{-t(p-1)}) B_{p-1,0}^*
            + (1 - q^{-t(p-1)}) B_{p-1,p}^*\ .
        \end{align}

        \smallskip

        For $p=2$ and $t=1$, $U^-$ is, as a quasi-Hopf algebra, isomorphic to
        \begin{align}
            H(N=1,\beta=\exp(i\tfrac{\pi}{4}))
        \end{align}
        from the previous example, by mapping generators according to
        \begin{align}
            F \mapsto i \elQ{f},\quad K\mapsto \elQ{K},
        \end{align}
        cf.\ \cite[Rem.\,4.3(2)]{CGR}.
        Note that, under this isomorphism, the cointegrals of $U^-$ agree with those of
        $H(N=1, \beta=\exp(i \tfrac{\pi}{4}))$.
    \end{example}

\medskip

\section{Cointegrals for the coend in the braided case}\label{SEC:braided_case}

In a braided category $\cat$ there exist notions of integrals and cointegrals for Hopf
algebras internal to $\cat$.
If $\cat$ is in addition finite tensor, then the coend $\coend = \int^{X \in \cat}
\dualL{X} \otimes X$ is an example of such a Hopf algebra \cite{LyuMaj,
Lyubashenko-mod-transfs}.
In this section we relate left and right integrals for $\coend$ to right monadic cointegrals and
consider quasi-triangular quasi-Hopf algebras as an example.

\smallskip

In this section, let $\cat$ be a braided finite tensor category.

\subsection{Integrals and cointegrals for Hopf algebras in \texorpdfstring{$\cat$}{C}}

Let $A$ be a Hopf algebra in
    $\cat$ with invertible antipode, see e.g.~\cite{KerlerLyubashenko}
    or~\cite[Sec.\,2.2]{FGR1}.
    Then the notions of (left/right) integrals and (left/right) cointegrals are
    well-defined, see \cite[Prop.~4.2.4]{KerlerLyubashenko}.
    We repeat the definition of a left integral for $A$.
    It consists of an invertible object $\objectOfInts$, the \emph{object of
    integrals}, and a morphism $\intLyu_A: \objectOfInts \to A$ making the diagram 
    \begin{equation}
        \label{diag:LyuLeftIntegral}
        \begin{tikzcd}[column sep=large]
            \hspace{-43.0pt}A \otimes \objectOfInts
            \ar[r,"\id\otimes \intLyu_A"] 
            \ar[d,swap,"\counit \otimes \id", 
            ]
            & A\otimes A \ar[dd,"m"] \\
            \tensUnit \otimes \objectOfInts
            \ar[d,"\sim",swap] & \\[-10pt]
            \objectOfInts
            \ar[r,"\intLyu_A",swap]
            & A 
        \end{tikzcd}
    \end{equation}
    commute.
    Here, $m$ and $\counit$ are multiplication and counit of the Hopf algebra $A$,
    respectively.
    Right integrals are defined similarly (with the same object $\objectOfInts$).
    It is known that non-zero (left/right) integrals $\intLyu_A$ exists and are uniquely
    determined up to scalar \cite[Prop.~4.2.4]{KerlerLyubashenko}.
    Note that the above diagram is just the statement that a left integral for $A$ is a
    morphism $\intLyu_A : \objectOfInts \to A$ of left $A$-modules, where the $A$-actions
    are given by the left and right side of the diagram \eqref{diag:LyuLeftIntegral},
    respectively.    

    As remarked in \cite[Ex.~3.10]{BV-hopfmonads}, tensoring with a Hopf algebra with
    invertible antipode in a braided category yields a Hopf monad.
    The category of modules over the Hopf algebra is then the same as the category of
    modules over the corresponding Hopf monad.

\subsection{Central Hopf monad via the coend in braided case}
    \renewcommand{\objectOfInts}[1][\coend]{\operatorname{Int}#1}
    In the braided setting, the coend 
$\coend = \int^{X \in \cat} \dualL{X} \otimes X$
    with universal dinatural transformation $\dinatLyu$ becomes a Hopf algebra
    \cite{LyuMaj,Lyubashenko-mod-transfs}, see also \cite{Fuchs-Schweigert,FGR1} for a
    review.
    It is easy to see that $\hopfMonad[2]$ is in fact isomorphic to the Hopf monad
    obtained by tensoring with~$\coend$. 
    The isomorphism $\xi_V: \hopfMonad[2](V) \to \coend \otimes V$ we choose is  obtained
    via
    \begin{align}
        \label{eq:isoLyuCoendHopfMonad}
        \xi_V \circ \dinatCoend[2][V]_X = 
        \left[
            \hopfMonadComp{2}
        \xrightarrow{ \id \otimes \braiding_{X,V}\inv }
            \dualL{X} \left( X V \right) \xrightarrow{\sim} 
            \left( \dualL{X} X \right) V 
            \xrightarrow{\dinatLyu_X \otimes \id }
            \coend V
        \right].
    \end{align}
    The inverse of the braiding appears to make $\xi$ an isomorphism of bimonads, with the
    bimonad structure on $\coend \otimes \placeholder$ inherited from the bialgebra
    structure on $\coend$ as defined in \cite[Sec.~3.3]{FGR1}.
In the same way, $\placeholder \otimes \coend$ becomes a bimonad and we get a bimonad
isomorphism $\zeta: \hopfMonad[2] \To (\placeholder \otimes \coend)$ via
\begin{align}
	\label{eq:isoLyuCoendHopfMonad-right}
	\zeta_V \circ \dinatCoend[2][V]_X = 
	\big[
	\hopfMonadComp{2}
	&\xrightarrow{\sim}
	\left( \dualL{X} V \right) X
	\xrightarrow{ \braiding_{V,\dualL{X}}\inv \otimes \id }
	\left( V \dualL{X} \right) X
	\notag \\
	&\xrightarrow{\sim}
	V \left( \dualL{X} X \right) 
	\xrightarrow{\id \otimes \dinatLyu_X}
	V \coend
	\big] \ .
\end{align}
Again the inverse braiding is required to make $\zeta$ a bimonad morphism.

    In a finite tensor category, an invertible object has isomorphic left and right duals,
    and so in particular there is an up to scalars unique isomorphism $\dualL{\distInvObj}
    \xrightarrow{\sim} \dualR{\distInvObj}$.
    This fact is used in formulating the following proposition.

    \begin{prop}
        \label{prop:MonadicCointIsLyuInt}
        Let $\cat$ be a braided finite tensor category.
        \begin{enumerate}
            \item
                The distinguished invertible object is dual to the object of integrals for
                $\coend$,
                \begin{align}
                    \distInvObj \cong \dualL{(\objectOfInts)}.
                \end{align}
            \item 
                Let ${\intLyu_{\coend}:\dualR{\distInvObj} \to \coend}$ be non-zero.
                Then
                \begin{equation*}
                    \intLyu_{\coend} \text{  is a  } 
                    \begin{cases}
                        \text{left integral for $\coend$ in the sense of
                        \eqref{diag:LyuLeftIntegral}, resp.}\\
                        \text{right integral for $\coend$}
                    \end{cases}
                \end{equation*}
                if and only if
                \begin{align}
                    \label{eq:MonCointFromLyuInt}
                    \coint \defined
                    \begin{cases}
                        \big[
                            \tensUnit \xrightarrow{ \coevR_{\distInvObj} }
                            \dualR{\distInvObj} \otimes \distInvObj
                            \xrightarrow{ \intLyu_\coend \otimes \id}
                            \coend \otimes \distInvObj
                            \xrightarrow{\xi_{\distInvObj}\inv}
                            \hopfMonad[2][\distInvObj]
                        \big] \text{ , resp.}
                        \\
                        \big[
                            \tensUnit \xrightarrow{ \coevL_{\distInvObj} }
                            \distInvObj \otimes \dualL{\distInvObj}
                            \xrightarrow{\sim}
                            \distInvObj \otimes \dualR{\distInvObj}
                            \xrightarrow{ \id \otimes \intLyu_\coend}
                            \distInvObj \otimes \coend 
                            \xrightarrow{\zeta_{\distInvObj}\inv}
                            \hopfMonad[2][\distInvObj]
                        \big]
                    \end{cases}
            \end{align}
            is a non-zero right monadic cointegral of $\cat$.
        \end{enumerate}
    \end{prop}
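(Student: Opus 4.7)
The plan is to use the Hopf monad isomorphism $\xi: \hopfMonad[2] \To \coend \otimes \placeholder$ from~\eqref{eq:isoLyuCoendHopfMonad} to translate right monadic cointegrals into $\coend$-module morphisms $\tensUnit \to \coend \otimes \distInvObj$, and then, via the right-duality adjunction, into left integrals for $\coend$ with object of integrals $\dualR{\distInvObj}$. The right-integral half of part~(2) will be handled symmetrically using $\zeta: \hopfMonad[2] \To \placeholder \otimes \coend$ from~\eqref{eq:isoLyuCoendHopfMonad-right}, and part~(1) will drop out together with \Cref{prop:MonCoint_uniq_ex}.

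First I would note that since $\xi$ is a bimonad isomorphism, it identifies the category of $\hopfMonad[2]$-modules with the category of left $\coend$-modules in $\cat$; under this identification the free $\hopfMonad[2]$-module $(\hopfMonad[2][\distInvObj], \multCoend[2][\distInvObj])$ corresponds to $(\coend \otimes \distInvObj, m \otimes \id_{\distInvObj})$ with action by left multiplication, and the trivial $\hopfMonad[2]$-module $(\tensUnit, \counitCoend[2])$ corresponds to the trivial $\coend$-module $(\tensUnit, \counit \otimes \id)$. Hence right monadic cointegrals $\coint: \tensUnit \to \hopfMonad[2][\distInvObj]$ are in bijection with left $\coend$-module morphisms $\xi_{\distInvObj} \circ \coint: \tensUnit \to \coend \otimes \distInvObj$.

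The second step uses the right-duality adjunction to write every morphism $\tensUnit \to \coend \otimes \distInvObj$ uniquely as $(\intLyu \otimes \id_{\distInvObj}) \circ \coevR_{\distInvObj}$ for some $\intLyu: \dualR{\distInvObj} \to \coend$, which is precisely the shape in the first line of~\eqref{eq:MonCointFromLyuInt}. A short diagram chase shows that the $\coend$-module intertwining condition on such a morphism is equivalent to commutativity of~\eqref{diag:LyuLeftIntegral} for $\intLyu$ with $\objectOfInts = \dualR{\distInvObj}$: one rewrites $(m \otimes \id_{\distInvObj}) \circ (\id_{\coend} \otimes \coint) = \bigl(m \circ (\id_{\coend} \otimes \intLyu)\bigr) \otimes \id_{\distInvObj} \circ (\id_{\coend} \otimes \coevR_{\distInvObj})$ and uses the right-duality zigzag to strip off $\coevR_{\distInvObj}$, reducing the intertwining condition to the Lyubashenko integral equation for $\intLyu$. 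This yields the desired bijection, and combined with \Cref{prop:MonCoint_uniq_ex} it produces a non-zero $\intLyu: \dualR{\distInvObj} \to \coend$ satisfying~\eqref{diag:LyuLeftIntegral}. By the uniqueness of the object of integrals for a Hopf algebra in $\cat$ up to isomorphism, this forces $\objectOfInts \cong \dualR{\distInvObj}$, whence $\dualL{\objectOfInts} \cong \dualL{(\dualR{\distInvObj})} \cong \distInvObj$, which is part~(1).

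For the right-integral half of part~(2) we run the same argument with $\zeta$ in place of $\xi$: monadic cointegrals are identified with right $\coend$-module morphisms $\tensUnit \to \distInvObj \otimes \coend$, and the left-duality adjunction gives the bijection with $\intLyu: \dualR{\distInvObj} \to \coend$ via the second line of~\eqref{eq:MonCointFromLyuInt}, where the isomorphism $\dualL{\distInvObj} \xrightarrow{\sim} \dualR{\distInvObj}$ between the two duals of the invertible object $\distInvObj$ accounts for the extra identification appearing there. The main obstacle is the diagram chase translating the module-intertwining condition into the Lyubashenko integral condition; the subtlety is that the definitions of $\xi$ and $\zeta$ involve the inverse braiding $\braiding^{-1}$, but since we have already verified that these are bimonad isomorphisms, the transported $\coend$-action on $\coend \otimes \distInvObj$ (respectively $\distInvObj \otimes \coend$) is literally $m \otimes \id$ (respectively $\id \otimes m$), so the braidings do not intrude into the final computation and only the zigzag identities are needed.
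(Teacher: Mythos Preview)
Your proposal is correct and the core computation is the same as the paper's: both translate the $\hopfMonad[2]$-intertwiner condition through the bimonad isomorphism $\xi$ to the left-integral condition for $\coend$, and both use the duality adjunction to pass between morphisms $\tensUnit \to \coend \otimes \distInvObj$ and morphisms $\dualR{\distInvObj} \to \coend$. The paper spells out this translation as an explicit chain of equalities (using directly that $\xi$ intertwines $\multCoend[2]$ with $m\otimes\id$ and $\counitCoend[2]$ with $\counit\otimes\id$), whereas you package the same content by invoking ``$\xi$ is a bimonad isomorphism, hence identifies module categories''; these are equivalent.

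The one genuine difference is the logical direction for part~(1). The paper starts from the existence of a left integral $\intLyu_\coend:\objectOfInts\to\coend$ (Kerler--Lyubashenko), shows it produces a non-zero $\hopfMonad[2]$-intertwiner $\tensUnit\to\hopfMonad[2][\dualL{\objectOfInts}]$, and then invokes the uniqueness result from \cite{Sh-integrals} (via \Cref{cor:OurCointIsShimizus}) that $\distInvObj$ is the unique invertible object admitting such an intertwiner, forcing $\dualL{\objectOfInts}\cong\distInvObj$. You run this backwards: start from the existence of a right monadic cointegral (\Cref{prop:MonCoint_uniq_ex}), produce a non-zero left integral with domain $\dualR{\distInvObj}$, and invoke Kerler--Lyubashenko's uniqueness of the object of integrals to force $\objectOfInts\cong\dualR{\distInvObj}$. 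Both routes are valid; yours has the minor advantage that the uniqueness of $\objectOfInts$ is perhaps more familiar than the characterisation of $\distInvObj$ via $\hopfMonad[2]$-intertwiners, while the paper's route makes the dependence on \cite{Sh-integrals} more explicit.
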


The first statement was already observed in \cite[Thm.~6.8]{Sh-unimodFinTens}.

    \begin{proof}
        We will only treat the case of left integrals for $\coend$ explicitly, the case of
        right integrals can be shown analogously.    
    
        Let us abbreviate $X=\objectOfInts$, so that by existence of left cointegrals we
        can find a non-zero morphism $\intLyu_\coend : X \to \coend$ which satisfies
        \eqref{diag:LyuLeftIntegral}.
        We now define $\coint$ as in part (2), but with $X$ instead of $\dualR{D}$:
        \begin{align}
            \label{eq:MonCointFromLyuInt-aux1}
            \coint \vcentcolon=
            \big[
                \tensUnit \xrightarrow{ \coevL_{\!\!X} }
                X \otimes \dualL{X}
                \xrightarrow{ \intLyu_\coend \otimes \id}
                \coend \otimes \dualL{X}
                \xrightarrow{\xi_{\dualL{X}}\inv}
                \hopfMonad[2][\dualL{X}]
            \big] \ .
        \end{align}
        Note that $\coint$ is non-zero, too.

        The somewhat lengthy computation below will establish that $\coint$
        from~\eqref{eq:MonCointFromLyuInt-aux1} is an $\hopfMonad[2]$-inter\-twiner. 
        By \cite[Lem.~4.1]{Sh-integrals} and \Cref{cor:OurCointIsShimizus}
        the distinguished invertible object~$\distInvObj$ is the unique (up to unique
        isomorphism) invertible object such that the space of $\hopfMonad[2]$-intertwiners
        from $\tensUnit$ to $\hopfMonad[2][\distInvObj]$ is 
        non-zero.
        Thus we must have $\dualL{X} \cong D$, proving part (1).
        Together with part (1), the fact that $\coint$ is an $\hopfMonad[2]$-inter\-twiner
        implies that it is a right monadic cointegral, proving the direction $\Rightarrow$
        of part (2). 
        The direction $\Leftarrow$ of part (2) can be verified by an analogous
        computation, where a right monadic cointegral $\coint$ gets mapped to a left
        integral of $\coend$ via
        \begin{align}
            \label{eq:LamL-br-from-mon}
            \intLyu_{\coend} \defined
            \bigg[
                \dualR{\distInvObj}[-0.3]
                \xrightarrow{\sim}
                \tensUnit \otimes \dualR{\distInvObj}[-0.3] 
                \xrightarrow{ \coint \otimes \id }
                \hopfMonad[2][\distInvObj] \otimes \dualR{\distInvObj}[-0.3] 
                \xrightarrow{\xi_{\distInvObj} \otimes \id}
                &(\coend \distInvObj) \dualR{\distInvObj}[-0.3]
                \xrightarrow{\sim} 
                \coend (\distInvObj \dualR{\distInvObj}[-0.3]) 
                \notag \\
                &\xrightarrow{ \id \otimes \evR_{\distInvObj} }
                \coend \otimes \tensUnit \xrightarrow{\sim} 
                \coend
            \bigg] .
        \end{align}
        Note that this is indeed inverse to \eqref{eq:MonCointFromLyuInt}.

        Let us now turn to the verification that $\coint$ in
        \eqref{eq:MonCointFromLyuInt-aux1} is indeed an $\hopfMonad[2]$-inter\-twiner.
        Note that since $\xi$ is an isomorphism of bimonads, it satisfies
        \begin{align}\label{eq:xi-is-monad-iso-MULT}
            &\big[
            \coend (\coend V) 
            \xrightarrow{\sim}
            (\coend \coend) V
            \xrightarrow{ m \otimes \id_V }
            \coend V
            \xrightarrow{ \xi\inv_V }
            \hopfMonad[2][V]
            \big]
            \notag
            \\
            &= 
            \big[
            \coend (\coend V) 
            \xrightarrow{ \xi\inv_{\coend V} } 
            \hopfMonad[2][\coend V]
            \xrightarrow{ \hopfMonad[2][\xi\inv_V] }
            (\hopfMonad[2])^2(V)
            \xrightarrow{ \multCoend[2][V] }
            \hopfMonad[2][V]
            \big] \ ,
        \end{align}
        and
        \begin{align}
            \label{eq:xi-is-monad-iso-COUNIT}
            \big[ 
            \coend \tensUnit
            \xrightarrow{ \xi\inv_{\tensUnit} }
            \hopfMonad[2][\tensUnit]
            \xrightarrow{ \counitCoend[2] } 
            \tensUnit
            \big]
            = 
            \big[
                \coend \tensUnit \xrightarrow{ \varepsilon \otimes \id_{\tensUnit} }
                \tensUnit\tensUnit
                \xrightarrow{ \sim } \tensUnit 
        \big] \ ,
        \end{align}
        where $m$ and $\varepsilon$ are the multiplication and the counit of $\coend$.

        For the next calculation, let us explicitly denote components of the left unitor
        and the associator by 
        \begin{align}
            l_V: \tensUnit V \to V
            \quad \text{and} \quad
            \alpha_{U,V,W}: U(VW)\to (UV)W,
        \end{align}
        respectively, for $U,V,W\in \cat$.
        Then
        \begin{align}
            &\multCoend[2][\dualL{X}] \circ \hopfMonad[2][\coint]
            \notag \\
        &\overset{\eqref{eq:MonCointFromLyuInt-aux1}}=
            \multCoend[2][\dualL{X}] \circ 
            \hopfMonad[2]\big( \xi\inv_{\dualL{X}} \circ 
                (\intLyu_{\coend} \otimes \id_\dualL{X})
            \circ \coevL_X \big)
            \notag \\
            &\overset{\eqref{eq:xi-is-monad-iso-MULT}}=
            \xi\inv_{\dualL{X}} \circ 
            (m\otimes \id_\dualL{X}) 
            \circ \alpha_{\coend, \coend, \dualL{X}}
            \circ \xi_{\coend \otimes \dualL{X}}
            \circ \hopfMonad[2]\big( (\intLyu_{\coend} \otimes \id_\dualL{X})
            \circ \coevL_X \big)
            \notag \\
            &\overset{\xi\,\text{nat.}}=
            \xi\inv_{\dualL{X}} \circ 
            ( m\otimes \id_\dualL{X} )
            \circ \alpha_{\coend, \coend, \dualL{X}}
            \circ (\id_\coend \otimes (\intLyu_{\coend} \otimes \id_{\dualL{X}}))
            \circ (\id_\coend \otimes \coevL_X )
            \circ \xi_{\tensUnit}
            \notag \\
            &\overset{\alpha\,\text{nat.}}=
            \xi\inv_{\dualL{X}} \circ 
            \big( (m \circ (\id_\coend \otimes \intLyu_{\coend}) )\otimes \id_\dualL{X} \big)
            \circ \alpha_{\coend, X, \dualL{X}}
            \circ (\id_\coend \otimes \coevL_X )
            \circ \xi_{\tensUnit}
            \notag \\
            &\overset{\eqref{diag:LyuLeftIntegral}}=
            \xi\inv_{\dualL{X}} \circ 
            \big( 
            ( \intLyu_{\coend} \circ l_X \circ (\varepsilon \otimes \id_X) )
            \otimes \id_\dualL{X}
            \big)
            \circ \alpha_{\coend, X, \dualL{X}}
            \circ (\id_\coend \otimes \coevL_X )
            \circ \xi_{\tensUnit}
            \notag \\
            &\overset{ \alpha\,\text{nat.} }=
            \xi\inv_{\dualL{X}} \circ 
            \big( 
            ( \intLyu_{\coend} \circ l_X )
            \otimes \id_\dualL{X}
            \big)
            \circ \alpha_{\tensUnit, X, \dualL{X}}
            \circ (\id_{\tensUnit} \otimes \coevL_X )
            \circ (\varepsilon \otimes \id_{\tensUnit})
            \circ \xi_{\tensUnit}
            \notag \\
            &\overset{ \text{coher.} }=
            \xi\inv_{\dualL{X}} \circ 
            (\intLyu_{\coend} \otimes \id_\dualL{X})
            \circ l_{X \dualL{X}}
            \circ (\id_{\tensUnit} \otimes \coevL_X )
            \circ (\varepsilon \otimes \id_{\tensUnit})
            \circ \xi_{\tensUnit}
            \notag \\
            &\overset{ l\,\text{nat.} }=
            \xi\inv_{\dualL{X}} \circ 
            (\intLyu_{\coend} \otimes \id_\dualL{X})
            \circ \coevL_X 
            \circ \ l_{\tensUnit}
            \circ (\varepsilon \otimes \id_{\tensUnit})
            \circ \xi_{\tensUnit}
            \notag \\
            &\overset{\eqref{eq:xi-is-monad-iso-COUNIT}}=\,
            \xi\inv_{\dualL{X}} \circ 
            (\intLyu_{\coend} \otimes \id_\dualL{X})
            \circ \coevL_X 
            \circ \counitCoend[2]
            \notag \\
        &\overset{\eqref{eq:MonCointFromLyuInt-aux1}}= 
            \coint \circ \counitCoend[2],
        \end{align}
        showing that $\coint$ is an $\hopfMonad[2]$-inter\-twiner.
        \end{proof}

By composing with $\kappa_{3,2}$ one obtains analogous statements to those in the above
proposition for left monadic cointegrals. 

\begin{rmrk}\label{rem:unimod-left=right}
    Let $\cat$ be a unimodular braided finite tensor category.
    Then $\distInvObj = \tensUnit$, and by \Cref{prop:MonadicCointIsLyuInt},
    the object of integrals of $\coend$ is the tensor unit.
    It follows from the coherence of braided monoidal categories that
    \begin{align}
        \xi_\tensUnit = 
        \big[ 
            \hopfMonad[2][\tensUnit] \xrightarrow{ \zeta_\tensUnit }
            \tensUnit \coend \xrightarrow{ \sim }
            \coend \tensUnit
        \big]
    \end{align}
    for the bimonad isomorphisms $\xi$ and $\zeta$ from \eqref{eq:isoLyuCoendHopfMonad}
    and \eqref{eq:isoLyuCoendHopfMonad-right}.
    Now one can show that left and right integrals for $\coend$ agree.
    Indeed, the composition
    \begin{align}
        \{ \text{right $\coend$-integrals} \}
        \xrightarrow{ \eqref{eq:MonCointFromLyuInt} }
        \cat_{\hopfMonad[2]}( \tensUnit, \hopfMonad[2][\tensUnit] )
        \xrightarrow{ \eqref{eq:LamL-br-from-mon} }
        \{ \text{left $\coend$-integrals} \}
    \end{align}
    of isomorphisms is proportional to the identity on the one-dimensional subspace of
    right $\coend$-integrals of $\cat(\tensUnit, \coend)$.
    Therefore, a right integral for $\coend$ is also left and vice versa.
    This result has also been shown by different means in
    \cite[Thm.\,6.9]{Sh-unimodFinTens}.    
\end{rmrk}

\smallskip

\subsection{Quasi-triangular quasi-Hopf algebras}\label{sec:qtriang-qHopf}

Let $H$ be a finite-dimensional quasi-tri\-an\-gu\-lar qua\-si-Hopf algebra with universal
$R$-matrix $\rMatrix$, see e.g.~\cite[Sec.\,6]{FGR1} for details in the same notation as
used here.
We denote the multiplicative inverse of the $R$-matrix by $\rMatrixInv$.
The category $\cat = \hmodM$ is a braided finite tensor category.

The coend $\coend$ can be realised by $H^*$ with the coadjoint action, see e.g.\
\cite[Sec.~7]{FGR1}, and with our realization of the Hopf monad $\hopfMonad[2]$ as in
\Cref{subseq:HopfMonad-qHopf} 
we get the following formula for the Hopf monad
isomorphism $\hopfMonad[2] \cong \coend \otimes \placeholder$ from
\eqref{eq:isoLyuCoendHopfMonad}.

\begin{lemma}
    \label{lem:isoLyuCoendHopfMonad-qHopf}
    The isomorphism $\xi_V : \hopfMonad[2][V] \to \coend \otimes V$ from
    \eqref{eq:isoLyuCoendHopfMonad} is given by
    \begin{align}
        \label{eq:isoLyuCoendHopfMonad-qHopf}
        \xi_V(f\otimes v) =
        \langle f \mid S(\coassQ[1]_1)\placeholder \coassQ[1]_2 \rMatrixInv_1 \rangle
        \otimes \coassQ[1]_3 \rMatrixInv_2.v
    \end{align}
    for $V\in \cat$, $f\in H^*$, $v\in V$.
\end{lemma}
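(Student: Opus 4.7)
The plan is to verify the formula by exploiting the universal property of the coend: the map $\xi_V$ is uniquely determined by the composition $\xi_V \circ \dinatCoend[2][V]_X$ given in \eqref{eq:isoLyuCoendHopfMonad}, so it suffices to evaluate both sides of that defining equation on a general element $\phi \otimes v \otimes x \in \dualL{X} \otimes (V \otimes X)$ and compare with what the proposed formula \eqref{eq:isoLyuCoendHopfMonad-qHopf} produces when precomposed with the dinatural transformation $\dinatCoend[2][V]_X$ written out in \eqref{eq:dinat_trans-Hopf}.

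First, I would compute the right-hand side of \eqref{eq:isoLyuCoendHopfMonad}, step by step, in the realisation $\cat = \hmodM$. The inverse braiding is $\braiding_{X,V}^{-1}(v \otimes x) = \rMatrixInv_1.x \otimes \rMatrixInv_2.v$, coming directly from the quasi-triangular structure; the associator $\dualL{X} \otimes (X \otimes V) \to (\dualL{X} \otimes X) \otimes V$ acts on underlying vector spaces by $\coassQ$ in the convention fixed by \eqref{eq:quasiCoassociativity} (so that $(\phi \otimes (x' \otimes v')) \mapsto ((\coassQ_1.\phi \otimes \coassQ_2.x') \otimes \coassQ_3.v')$); and the realisation of $\coend = H^*$ has dinatural transformation $\dinatLyu_X(\phi \otimes x) = \phi(\placeholder.x)$, which is just $\dinatCoend[2][\tensUnit]_X$ up to the identification $\hopfMonad[2](\tensUnit) \cong \coend$. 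Combining these ingredients and using $(h.\phi)(y) = \phi(S(h)y)$ on $\dualL{X}$, the composition sends
\begin{align*}
\phi \otimes v \otimes x \;\longmapsto\; \langle \phi \mid S(\coassQ[1]_1)\,\placeholder\,\coassQ[1]_2 \rMatrixInv_1.x\rangle \otimes \coassQ[1]_3 \rMatrixInv_2.v \ .
\end{align*}

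Then I would compute the other side: applying \eqref{eq:isoLyuCoendHopfMonad-qHopf} to $\dinatCoend[2][V]_X(\phi \otimes v \otimes x) = \sum_i \langle \phi \mid e_i.x\rangle\, e^i \otimes v$ gives
\begin{align*}
\sum_i \langle \phi \mid e_i.x\rangle\, \langle e^i \mid S(\coassQ[1]_1)\,\placeholder\,\coassQ[1]_2\rMatrixInv_1\rangle \otimes \coassQ[1]_3 \rMatrixInv_2.v\ .
\end{align*}
Using the completeness relation $\sum_i \langle e^i \mid h\rangle e_i = h$ collapses the $i$-sum to $\phi(S(\coassQ[1]_1)\, h\, \coassQ[1]_2 \rMatrixInv_1 .x)$ as a function of $h$, giving precisely the same expression as the direct computation above. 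This verifies the defining equation on the image of $\dinatCoend[2][V]_X$ for every $X$, and uniqueness of the coend-induced morphism concludes that the proposed formula agrees with $\xi_V$.

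The only real hurdle is bookkeeping: one must consistently track the chosen convention for the associator in $\hmodM$ (equivalently, the placement of $\coassQ$ versus $\invCoassQ$), for the $H$-action on the dual $\dualL{X}$ via $S$, and for the sign of the braiding given the sumless notation for $\rMatrix$ and $\rMatrixInv$. Once these conventions are pinned down to match \eqref{eq:quasiCoassociativity} and the realisations chosen in \Cref{subseq:HopfMonad-qHopf}, the verification is essentially an unfolding of definitions and poses no conceptual difficulty.
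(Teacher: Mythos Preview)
Your approach is correct and is precisely what the paper has in mind: it merely says ``The proof is a straightforward computation'', and the computation you outline---evaluating both sides of the defining relation \eqref{eq:isoLyuCoendHopfMonad} on $\phi \otimes v \otimes x$ using the explicit realisations of the braiding, the associator, and the dinatural transformations, then invoking the universal property of the coend---is exactly that computation. Your caveat about bookkeeping the conventions for $\coassQ$, the $S$-action on $\dualL{X}$, and the inverse braiding is well placed and is the only place where care is needed.
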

The proof is a straightforward computation.

\medskip

Next, we give the explicit formulas relating right monadic cointegrals and left integrals
for the coend.
As usual, we identify linear maps $\field \to V$ with elements in $V$.

\begin{lemma}
    \label{lem:intCointIsMonadicCoint-qHopf}
    Let $\coint \in H^*$ be a right monadic cointegral.
    Then
    \begin{align}\label{eq:braided-v-monadic}
        \intLyu_\coend \defined
        \modulus\inv(\qR_2 \coassQ[1]_3 \rMatrixInv_2)
        \langle
            \coint \mid
            S({\qR_1}\sweedler{1} \coassQ[1]_1) \placeholder 
            {\qR_1}\sweedler{2} \coassQ[1]_2 \rMatrixInv_1
        \rangle
    \end{align}
    is a left integral for the coend $\coend$.
\end{lemma}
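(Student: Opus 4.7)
The plan is to verify the formula \eqref{eq:braided-v-monadic} by directly unwinding the composition \eqref{eq:LamL-br-from-mon} of \Cref{prop:MonadicCointIsLyuInt}(2) in the category $\cat = \hmodM$, using the realisations of $\coend$, $\hopfMonad[2]$ and $\xi$ fixed in this section.

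To set up the data: the distinguished invertible object $D$ is one-dimensional with $H$-action by $\modulus\inv$, and $\dualR{D}$ is one-dimensional with $H$-action by $\modulus$; both are identified with $\field$ via fixed basis vectors, and I trace $1\in\dualR{D}$ through the composition. The explicit form of $\xi_D$ from \Cref{lem:isoLyuCoendHopfMonad-qHopf} provides the factors involving $\coassQ$ and $\rMatrixInv$; the right evaluation $\evR_V(v \otimes v^*) = v^*(S\inv(\alphaQ) v)$ produces the scalar $\modulus(\alphaQ)$ on $D\otimes \dualR{D}$; and the associator $(\coend D)\dualR{D} \to \coend(D\,\dualR{D})$ in $\hmodM$ is, in the convention \eqref{eq:quasiCoassociativity}, given by left multiplication by a second coassociator, which I will denote $\invCoassQ[2]$.

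Chasing $1\in\dualR{D}$ through the composition: after $\coint \otimes \id$ and $\xi_D \otimes \id$, the resulting element of $(\coend \otimes D) \otimes \dualR{D}$ has the prefactor $\modulus\inv(\coassQ[1]_3\rMatrixInv_2)$ and the twisted argument $S(\coassQ[1]_1)\,\placeholder\,\coassQ[1]_2\rMatrixInv_1$ from \eqref{eq:isoLyuCoendHopfMonad-qHopf}. The associator $\invCoassQ[2]$ acts diagonally: $\invCoassQ[2]_1$ acts coadjointly on $\coend=H^*$, so via its coproduct ${\invCoassQ[2]_1}\sweedler{1} \otimes {\invCoassQ[2]_1}\sweedler{2}$ it inserts the factor $S({\invCoassQ[2]_1}\sweedler{1})\,\placeholder\,{\invCoassQ[2]_1}\sweedler{2}$ around the argument; meanwhile $\invCoassQ[2]_2$ and $\invCoassQ[2]_3$ act on $D$ and $\dualR{D}$ as the scalars $\modulus\inv(\invCoassQ[2]_2)$ and $\modulus(\invCoassQ[2]_3)$. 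Finally $\id\otimes\evR_D$ contributes $\modulus(\alphaQ)$. Using $\modulus\inv=\modulus\circ S$ and that $\modulus$ is an algebra homomorphism, the combined scalar simplifies to
\[
\modulus\inv(\invCoassQ[2]_2)\,\modulus(\invCoassQ[2]_3)\,\modulus(\alphaQ) \,=\, \modulus\bigl(S(\invCoassQ[2]_2)\,\alphaQ\,\invCoassQ[2]_3\bigr) \,=\, \modulus\inv(\qR_2)
\]
by the definition of $\qR$ in~\eqref{eq:q,\pL}; simultaneously $\invCoassQ[2]_1=\qR_1$, so the action gives $S({\qR_1}\sweedler{1}\coassQ[1]_1)\,\placeholder\,{\qR_1}\sweedler{2}\coassQ[1]_2\rMatrixInv_1$ as the new argument, matching \eqref{eq:braided-v-monadic} exactly.

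The main obstacle is the bookkeeping of three distinct coassociator copies alongside the $R$-matrix, the modulus and the antipode, and the recognition that the factors emerging from the associator together with the right evaluation assemble precisely into the element $\qR$. Once this identification is made, the remainder is a direct unpacking of the coadjoint action on $\coend=H^*$.
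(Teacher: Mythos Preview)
Your proof is correct and follows exactly the approach the paper indicates: the paper's proof consists of the single sentence ``The proof amounts to evaluating \eqref{eq:LamL-br-from-mon} in $\hmodM$ using \Cref{lem:isoLyuCoendHopfMonad-qHopf}'', and you have carried out precisely this evaluation, including the key identification of the associator and right evaluation contributions with the element $\qR$.
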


The proof amounts to evaluating \eqref{eq:LamL-br-from-mon} in $\hmodM$
using \Cref{lem:isoLyuCoendHopfMonad-qHopf}.
We arrive at the following corollary.

\begin{cor}
    \label{cor:LyubashenkoIntVsBCCoint}
    Let $\coint^r\in H^*$ be a right cointegral for the quasi-Hopf algebra $H$.
    Then
    \begin{align}\label{eq:lyubashenkoIntViaHopfCoint}
        \intLyu_\coend \defined
        \modulus\inv(\qR_2 \coassQ[1]_3 \rMatrixInv_2 S\inv(\Dt\inv_2))
        \langle
            \coint^r \mid
            S({\qR_1}\sweedler{1} \coassQ[1]_1 \betaQ) \placeholder
            {\qR_1}\sweedler{2} \coassQ[1]_2 \rMatrixInv_1 S\inv(\Dt\inv_1)
        \rangle
    \end{align}
    is a left integral for the coend $\coend$.
\end{cor}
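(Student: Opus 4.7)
The approach is simply to compose the two preceding results: \Cref{thm:MainThmSectionStatement}(1), which identifies right cointegrals of $H$ with right monadic cointegrals via the map $\monCointRight{(\placeholder)}$ from~\eqref{eq:MainThmMap}, and \Cref{lem:intCointIsMonadicCoint-qHopf}, which turns a right monadic cointegral into a left integral for $\coend$ via~\eqref{eq:braided-v-monadic}. No new conceptual input should be required; the only thing to verify is that the resulting composite formula agrees with~\eqref{eq:lyubashenkoIntViaHopfCoint}. I would therefore state the proof as: set $\coint = \monCointRight{\coint^r}$, which by \Cref{thm:MainThmSectionStatement}(1) is a right monadic cointegral, and substitute this $\coint$ into~\eqref{eq:braided-v-monadic}.

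For the bookkeeping, after substitution the integrand reads
\begin{align*}
\coint\bigl(S({\qR_1}\sweedler{1} \coassQ[1]_1)\, h\, {\qR_1}\sweedler{2} \coassQ[1]_2 \rMatrixInv_1\bigr)
= \coint^r\bigl(S(\betaQ)\,S({\qR_1}\sweedler{1} \coassQ[1]_1)\, h\, {\qR_1}\sweedler{2} \coassQ[1]_2 \rMatrixInv_1\, S\inv(\elX)\bigr),
\end{align*}
and I would then use that $S$ is an anti-algebra morphism to combine $S(\betaQ)\,S({\qR_1}\sweedler{1} \coassQ[1]_1) = S({\qR_1}\sweedler{1} \coassQ[1]_1 \betaQ)$, matching the required factor of $\betaQ$ appearing in~\eqref{eq:lyubashenkoIntViaHopfCoint}.

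The remaining bookkeeping concerns $\elX$ and the modulus prefactor. Since $\elX = (\id \otimes \modulus)(\Dt\inv)$, one has $S\inv(\elX) = \modulus(\Dt\inv_2)\,S\inv(\Dt\inv_1)$ where the scalar $\modulus(\Dt\inv_2)$ pulls out of $\coint^r$. Combining this scalar with the prefactor $\modulus\inv(\qR_2 \coassQ[1]_3 \rMatrixInv_2)$ from \Cref{lem:intCointIsMonadicCoint-qHopf} and using that $\modulus\inv$ is an algebra morphism together with the identity $\modulus = \modulus\inv \circ S\inv$ (which follows from $\modulus\inv = \modulus \circ S$), one obtains
\begin{align*}
\modulus\inv(\qR_2 \coassQ[1]_3 \rMatrixInv_2)\cdot \modulus(\Dt\inv_2)
= \modulus\inv\bigl(\qR_2 \coassQ[1]_3 \rMatrixInv_2\, S\inv(\Dt\inv_2)\bigr),
\end{align*}
which is exactly the prefactor appearing in~\eqref{eq:lyubashenkoIntViaHopfCoint}.

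There is no real obstacle here: the result is a mechanical consequence of the two cited statements, and the only place where a minor pitfall could occur is the interplay between $\elX$ and $\modulus$. The calculation is delicate only in that one must keep track of which factors are scalars (those captured by $\modulus$) and which remain in $H$, and consistently use $\modulus \circ S = \modulus\inv$ to match the two expressions. Given that, the identification is immediate and the corollary follows.
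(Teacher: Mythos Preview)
Your proof is correct and takes exactly the same approach as the paper, which simply states that combining \Cref{thm:MainThmSectionStatement} with \Cref{lem:intCointIsMonadicCoint-qHopf} immediately yields the formula. Your explicit bookkeeping with $S\inv(\elX) = \modulus(\Dt\inv_2)\,S\inv(\Dt\inv_1)$ and the conversion $\modulus(\Dt\inv_2) = \modulus\inv(S\inv(\Dt\inv_2))$ fills in precisely the details the paper leaves implicit.
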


\begin{proof}
    Combining \Cref{thm:MainThmSectionStatement} with the previous lemma
    immediately yields the formula.
\end{proof}

Using \Cref{prop:MonadicCointIsLyuInt}\,(2), one can also write  formulas similar
to~\eqref{eq:braided-v-monadic}, resp.~\eqref{eq:lyubashenkoIntViaHopfCoint}, for the
relation between right integrals for $\coend$ and right monadic cointegrals, resp.\ right
cointegrals for $H$.
	We will skip the details.

\begin{rmrk}
    \label{rmrk:unimodqHopf-monLyuCoint}
    Let $H$ be unimodular with right cointegral $\coint^r$.
    Observe that then $\distInvObj = \tensUnit$ and $\hopfMonad[2][\tensUnit] = \coend$
    as $H$-modules.
    The relationship \eqref{eq:braided-v-monadic} between integrals for $\coend$ and right
    monadic cointegrals is now particularly simple:
    \begin{align}\label{eq:unimod-simple-int-moncoint-rel}
        \intLyu_\coend  
        = 
        \coint \ .
    \end{align}
    By \Cref{rem:unimod-left=right}, left and right integrals for $\coend$ coincide,
    and so \eqref{eq:unimod-simple-int-moncoint-rel} says that the right monadic
    cointegral and the left/right integral for $\coend$ are given by the same linear form
    on $H$.
    The relation to right cointegrals for $H$ also simplifies: $\intLyu_\coend = \langle
    \coint^r \mid S(\betaQ) \placeholder \rangle$.
\end{rmrk}

\medskip

\section{Application: \texorpdfstring{$\slTwoZ$}{SL2Z}-action}
\label{SEC:Sl2Z}

\subsection{\texorpdfstring{$\slTwoZ$}{SL2Z}-action for modular tensor categories}
\label{sec:SL2Z-mtc}

In a braided finite tensor category $\cat$, the Hopf algebra $\coend = \int^X
\dualL{X}\otimes X$ admits a Hopf pairing
\begin{align}\label{eq:hopfPairing}
    \hopfPair: \coend \otimes \coend \to \tensUnit\ ,
\end{align}
see \cite{Lyubashenko-mod-transfs} for details or \cite[Sec.~3.3]{FGR1} for a review.
By a \emph{modular tensor category} 
we mean a ribbon finite tensor category which is \emph{factorisable}, that is, in which
the Hopf pairing \eqref{eq:hopfPairing} induces an isomorphism $\coend \cong
\dualL{\coend}$ of Hopf algebras.
Equivalent definitions of factorisability can be found in \cite{Sh-nonDegen}.

Let for the rest of this section $\cat$ be a modular tensor category with ribbon
twist~$\ribTwist$.
Since $\cat$ is factorisable it is in particular
unimodular~\cite[Lem.~5.2.8]{KerlerLyubashenko}, and the Hopf algebra $\coend$ has a
two-sided integral $\intLyu_\coend: \tensUnit \to \coend$ by
\Cref{rem:unimod-left=right}, see also \cite[Cor.~5.2.11]{KerlerLyubashenko}.

\smallskip

Define the morphism $\mathcal{Q} : \coend \otimes \coend \to \coend \otimes \coend$ by
\begin{align}
	\ipic{-0.5}{./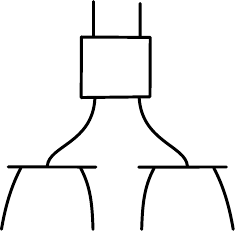}{1.2}
	\put (-053,045) {$\coend$}
	\put (-037,045) {$\coend$}
	\put (-047,012) {$\mathcal{Q}$}
	\put (-089,-13) {$\dinatLyu_X$}
	\put (-003,-13) {$\dinatLyu_Y$}
	\put (-087,-50) {$\dualL{X}$}
	\put (-055,-50) {$X$}
	\put (-037,-50) {$\dualL{Y}$}
	\put (-005,-50) {$Y$}
	\quad = \quad
	\ipic{-0.5}{./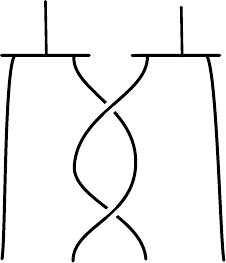}{1.2}
	\put (-66,048) {$\coend$}
	\put (-19,048) {$\coend$}
	\put (-80,031) {$\dinatLyu_X$}
	\put (-00,028) {$\dinatLyu_Y$}
	\put (-83,-56) {$\dualL{X}$}
	\put (-59,-56) {$X$}
	\put (-32,-56) {$\dualL{Y}$}
	\put (-05,-56) {$Y$}
	\ .
\end{align}
This is related to the Hopf pairing $\omega$ via $ \hopfPair = (\counit \otimes \counit)
\circ \mathcal{Q}, $ denoting by $\counit$ the counit of $\coend$.
Next, define $\modS, \modT \in \End_{\cat}(\coend)$ by
\begin{align}
	\modS 
	= (\counit \otimes \id) \circ \mathcal{Q} \circ (\id \otimes \intLyu_\coend)
	\ , \quad 
	\modT \circ \dinatLyu_X 
	= \dinatLyu_X \circ (\id \otimes \ribTwist_X) \ .
\end{align}
These endomorphisms satisfy 
\begin{align}
    (\modS \modT)^3 = \lambda \modS^2 = \lambda S_\coend\inv
\end{align}
where $\lambda$ is a non-zero constant and $S_\coend$ is the antipode of $\coend$
\cite{Lyubashenko-mod-transfs}.

Recall that $\slTwoZ$ is the group generated by
$\mathbf{S}=(\begin{smallmatrix}0&-1\\1&0\end{smallmatrix})$ 
and 
$\mathbf{T}=(\begin{smallmatrix}1&1\\0&1\end{smallmatrix})$ 
together with the relations
\begin{align}
    (\mathbf{ST})^3 = \mathbf{S}^2
    ,\quad \mathbf{S}^4 = \id.
\end{align}

It was shown in \cite{Lyubashenko-mod-transfs} that the $\field$-vector space
$\cat(\tensUnit,\coend)$ carries a projective $\slTwoZ$-action, given by
\begin{align}
    \mathbf{S}.f = \modS \circ f
    \ ,\quad
    \mathbf{T}.f = \modT \circ f
\end{align}
for $f:\tensUnit \to \coend$, see \cite[Sec.~5]{FGR1} for a review.

Equivalently, one also obtains an action on $\cat(\coend,\tensUnit)$, given by
\begin{align}
    \mathbf{S}.f = f \circ \modS 
    \ ,\quad
    \mathbf{T}.f = f \circ \modT
\end{align}
for $f:\coend \to \tensUnit$.

\medskip

\subsection{\texorpdfstring{$\slTwoZ$}{SL2Z}-action for factorisable quasi-Hopf algebras}\label{sec:SL2Z-qHopf}

Let now $H$ be a finite-dimensional factorisable\footnotemark{} ribbon quasi-Hopf algebra
with ribbon element~$\elRibbon$ and $R$-matrix~$\rMatrix$.
As mentioned before, $H$ being factorisable implies that it is unimodular. 
Thus, by \Cref{rmrk:unimodqHopf-monLyuCoint}, the left integral for $\coend$ and the
right monadic cointegral of $\hmodM$ are given by the same linear form on $H$.
\footnotetext{
    A quasi-triangular quasi-Hopf algebra $H$ is factorisable \cite{BT-Factorizable} if
    and only if a certain linear map $\mathfrak{M}:H^* \to H$ involving the monodromy is
    bijective.
    In \cite[Sec.~7.3]{FGR1} it was shown that this is equivalent to $\hmodM$ being
    factorisable.
    In particular, $\hmodM$ is a modular tensor category for $H$ factorisable and ribbon.
}

The linear injection
\begin{align}
    \alphaQ Z = \{\alphaQ z \mid z \in Z(H)\} 
    \to 
    \cat(\coend, \tensUnit)
    , \quad
    \alphaQ z \mapsto 
    \pivotalStruct_{\Vect}(\alphaQ z) = \langle \placeholder \mid \alphaQ z \rangle
\end{align}
can be shown to be an isomorphism\footnote{
    Note also that $\cat(\tensUnit, \coend) \cong \betaQ Z$, which is defined similarly.
},
and so we get an action $\modS_{\alphaQ Z}$ of the $\mathbf{S}$-generator
on $\alphaQ Z$ by setting
\begin{align}
    \modS_{\alphaQ Z}(h) 
    = \pivotalStruct_{\Vect}\inv 
    ( \langle \placeholder \mid h \rangle \circ \modS )
    = \pivotalStruct_{\Vect}\inv 
    ( \langle \modS(\placeholder) \mid h \rangle ),
\end{align}
for $h\in \alphaQ Z$.
Since $\modS\in \End_{\cat}(\coend)$ and $\coend \cong_{\field} H^*$, we can define
$\hat{\modS} \in \End_{\field}(H)$ via
\begin{align}
    \langle f \mid \hat{\modS}(h) \rangle 
    = 
    \langle \modS(f) \mid h \rangle
\end{align}
for all $h\in H$, $f\in H^*$, and it is then immediate that
\begin{align}
    \label{eq:SalphaZ-equals-Shat}
    \modS_{\alphaQ Z} = \hat{\modS}\big|_{\alphaQ Z}.
\end{align}

We will express the Hopf pairing $\hopfPair$ from \eqref{eq:hopfPairing} via an element
$\elOmega\in H\otimes H$ such that
\begin{align}
    \hopfPair (f \otimes g) = g(\elOmega_1) f(\elOmega_2)
\end{align}
for all $f,g\in H^*$.
        An expression of $\elOmega$ in term of quasi-Hopf data was derived
        in~\cite[Thm.~7.3]{FGR1}. 
        We will not need it explicitly.

\begin{prop}
    \label{prop:SL2Z-action-via-monadic-qHopf}
    Let $\coint \in H^*$ be the right monadic cointegral for $\hmodM$.
    The $\mathbf{S}$- and $\mathbf{T}$-transformations on $\alphaQ Z$ are given by the
    linear maps
    \begin{align}
        \modS_{\alphaQ Z}(\alphaQ z) 
        &= \langle 
            \coint \mid 
            \elOmega_1 z 
        \rangle
        \ \elOmega_2
        \notag \\
        \modT_{\alphaQ Z}(\alphaQ z) 
        &= \elRibbon\inv \alphaQ z
    \end{align}
    for $z \in Z$.
\end{prop}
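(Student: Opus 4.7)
I treat the two generators separately. For the $\mathbf{T}$-generator the plan is direct: by definition $\modT \circ \dinatLyu_X = \dinatLyu_X \circ (\id \otimes \ribTwist_X)$, and in $\hmodM$ the ribbon twist on $X$ is multiplication by $\elRibbon\inv$. Using the realisation $\coend = H^*$ with coadjoint action, together with the explicit form of $\dinatLyu_X$ recalled from \cite[Sec.~7]{FGR1}, one reads off that $\modT$ acts on $f \in H^*$ by pre-composition with right multiplication by $\elRibbon\inv$ on $H$. Dualising via $\alphaQ z \mapsto \langle \,\cdot\, \mid \alphaQ z \rangle$ and using that $\elRibbon$ is central gives $\modT_{\alphaQ Z}(\alphaQ z) = \elRibbon\inv \alphaQ z$ at once.

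For the $\mathbf{S}$-generator the key observation is that factorisability of $H$ implies unimodularity, so $\distInvObj = \tensUnit$ and by \Cref{rmrk:unimodqHopf-monLyuCoint} the two-sided integral $\intLyu_\coend$ coincides, as an element of $H^*$, with the monadic cointegral $\coint$. Plugging this into the definition $\modS = (\counit_\coend \otimes \id_\coend) \circ \mathcal{Q} \circ (\id_\coend \otimes \intLyu_\coend)$ gives, for $f \in H^*$,
\[
\modS(f) \;=\; (\counit_\coend \otimes \id_\coend)\,\mathcal{Q}(f \otimes \coint).
\]
The next step is to compute the partial contraction $(\counit_\coend \otimes \id_\coend) \circ \mathcal{Q}$ on $\coend \otimes \coend$. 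Because $(\counit_\coend \otimes \counit_\coend)\circ \mathcal{Q} = \hopfPair$ and $\hopfPair(a\otimes b) = b(\elOmega_1)\,a(\elOmega_2)$, evaluating $\mathcal{Q}$ dinaturally on the regular representation and using the tautological pairing on $\dualL{H}\otimes H$ one obtains
\[
\big\langle (\counit_\coend \otimes \id_\coend)\mathcal{Q}(f \otimes g) \,\big|\, h \big\rangle
\;=\; f(\elOmega_2)\,g(\elOmega_1 h)
\qquad (f,g \in H^*, \; h \in H).
\]
Setting $g = \coint$ and $h = \alphaQ z$, passing to the dual via \eqref{eq:SalphaZ-equals-Shat}, and using centrality of $z$ together with the explicit structure of $\elOmega$ from \cite[Thm.~7.3]{FGR1} (which absorbs the factor $\alphaQ$), yields $\modS_{\alphaQ Z}(\alphaQ z) = \langle \coint \mid \elOmega_1 z\rangle\, \elOmega_2$.

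The main obstacle is the partial-contraction identity, i.e.\ computing $(\counit_\coend \otimes \id_\coend) \circ \mathcal{Q}$ in closed form. The direct route is to unpack $\mathcal{Q}$ via its defining diagram, which involves the monodromy $\rMatrix_{21}\rMatrix \in H \otimes H$ together with several coassociators and the coadjoint coaction on $H^*$; this calculation is elementary but technical. A cleaner route is to observe that $(\counit_\coend \otimes \id_\coend) \circ \mathcal{Q}$ is uniquely determined by the Hopf pairing $\hopfPair$ together with the requirement that one further application of $\counit_\coend$ on the remaining factor reproduces $\hopfPair$; uniqueness then forces the formula above without grinding through quasi-Hopf coherences, and is what produces the advertised simplification over the corresponding expressions in \cite{FGR1}.
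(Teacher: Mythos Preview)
Your treatment of the $\mathbf{T}$-generator is fine and essentially what the paper does (the paper simply cites \cite[Sec.\,8]{FGR1}).

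For the $\mathbf{S}$-generator there is a genuine gap. The identity you assert,
\[
\big\langle (\counit_\coend \otimes \id_\coend)\mathcal{Q}(f \otimes g) \,\big|\, h \big\rangle \;=\; f(\elOmega_2)\,g(\elOmega_1 h),
\]
is not correct in the quasi-Hopf case. The element $\elOmega$ encodes only the \emph{full} contraction $(\counit_\coend \otimes \counit_\coend)\circ\mathcal{Q}=\hopfPair$; the partial contraction $(\counit_\coend \otimes \id_\coend)\circ\mathcal{Q}$ carries additional coassociator and $\pL$ factors. Indeed, the explicit formula the paper imports from \cite[(8.15)]{FGR1} reads
\[
\hat{\modS}(\alphaQ z) \;=\; \big\langle \coint \,\big|\, S(\coassQ[1]_1)\,\elOmega_1\,\coassQ[1]_2\, S({\coassQ[1]_3}\sweedler{1}\pL_1)\,\alphaQ z\,{\coassQ[1]_3}\sweedler{2}\pL_2 \big\rangle\;\elOmega_2,
\]
so extra terms are genuinely present. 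Your ``cleaner route'' cannot repair this: knowing that a further application of $\counit_\coend$ reproduces $\hopfPair$ constrains a map $\coend\otimes\coend \to \coend$ only up to the (huge) kernel of $\counit_\coend$, so there is no uniqueness argument here.

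The paper's proof proceeds differently. It starts from the full expression above and simplifies by hand: centrality of $z$ together with the antipode axiom \eqref{eq:antipodeAxioms} collapses $S({\coassQ[1]_3}\sweedler{1})\,\alphaQ\,{\coassQ[1]_3}\sweedler{2}$ to $\counit(\coassQ[1]_3)\,\alphaQ$, after which the zig-zag axiom \eqref{eq:zigzag-qHopf} applied to $S(\pL_1)\alphaQ\pL_2$ removes the remaining $\alphaQ$. It is precisely these extra coherence factors---which your formula drops---that are needed to absorb $\alphaQ$; the claim that ``the structure of $\elOmega$ absorbs the factor $\alphaQ$'' does not hold.
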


\begin{proof}
    The action of $\modT$ is immediate from \cite[Sec.\,8]{FGR1}.
For the action of $\modS$ we use
    \eqref{eq:SalphaZ-equals-Shat} and compute
    \begin{align}
        \hat{\modS}(\alphaQ z)
        &\oversetEq[\text{(i)}]
        \langle 
            \coint \mid 
            S(\coassQ[1]_1) \elOmega_1 \coassQ[1]_2 
            \dashuline{ S({\coassQ[1]_3}\sweedler{1} }
            \pL_1)
            \dashuline{\alphaQ} z 
            \dashuline{ {\coassQ[1]_3}\sweedler{2} }
            \pL_2
        \rangle
        \ \elOmega_2
        \notag \\
        &\oversetEq[\text{(ii)}]
        ~ ~
        \langle 
            \coint \mid 
            \elOmega_1 S(\pL_1) \alphaQ \pL_2 z
        \rangle
        \ \elOmega_2
        \notag \\
        &\oversetEq[\text{(iii)}]
        \quad 
        \langle 
            \coint \mid 
            \elOmega_1 z 
        \rangle
        \ \elOmega_2 \ ,
    \end{align}
    where in the first step (i) we used the form of $\hat{\modS}$ as given
    in~\cite[(8.15)]{FGR1}, (ii) uses \eqref{eq:antipodeAxioms} for the underlined part
    and that $\coassQ$ is normalised, and (iii) follows from the definition of $\pL$
    in~\eqref{eq:q,\pL} and the zig-zag axiom~\eqref{eq:zigzag-qHopf}.
\end{proof}

We can express the action of $\modS$ on $\alphaQ Z$ using the right
cointegral $\coint^r$ from \eqref{eq:RightCointEq} and \Cref{thm:MainThmSectionStatement}
as
\begin{align}
    \modS_{\alphaQ Z}(\alphaQ z) 
    &= \langle 
        \coint^r \mid 
        S(\betaQ) \elOmega_1 z 
    \rangle
    \ \elOmega_2\ .
\end{align}

\begin{rmrk}
    One can also show that
    \begin{align}
        \modS_{\alphaQ Z}(\alphaQ z) 
        = \elOmega_1 \langle \coint \mid \elOmega_2 z \rangle \ ,
    \end{align}
    where $\coint$ is the right monadic cointegral.
    To see this, one checks $\omega \circ (\ribTwist_{\coend} \otimes \id) = \omega \circ
    \braiding_{\coend,\coend}\inv$ using $\ribTwist_{\coend} = (S_{\coend})^2$,
    see~\cite[Lem.\,5.2.4]{KerlerLyubashenko}.
    This then readily implies
    \begin{align}
        \hopfPair \circ (f\otimes \id) 
        = \hopfPair \circ (\id \otimes f)
    \end{align}
    for any $f \in \cat(\tensUnit, \coend)$.
    The claim follows since $\langle \coint \mid \placeholder z \rangle \in
    \cat(\tensUnit, \coend)$ for $z$ central.
\end{rmrk}

\medskip

\appendix

\section{Proofs for \texorpdfstring{\Cref{SEC:MainThm}}{Section}}
\label{sec:proofs-sec-4}

\subsection{Proof of \texorpdfstring{\Cref{prop:CatCoaction_is_coaction}}{Proposition}}
\label{app:ProofProp}

Before giving the proof we need to show some intermediate
results.

Using the explicit form of the unit and the counit of the adjunction, we can give the
following simple characterization of the components of $R$.
\begin{lemma}
    \label{lem:CoactionR_univ_prop}
    The coactions $R_U$ defined in \eqref{eq:coactionLeftAdjoint} satisfy
    \begin{equation}
        \ipic{-0.5}{./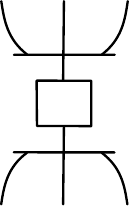}{1.5}
        \put (-59,049) {$Y$}
        \put (-39,049) {$\hopfMonad[2]U$}
        \put (-03,049) {$\dualL{Y}$}
        \put (-02,019) {$\dinatEnd[4]$}
        \put (-37,-04) {$R_U$}
        \put (-03,-24) {$\dinatCoend[2]$}
        \put (-63,-57) {$\dualL{X}$}
        \put (-33,-57) {$U$}
        \put (-07,-57) {$X$}
        \quad \quad \quad \quad
        =
        \quad \quad
        \ipic{-0.5}{./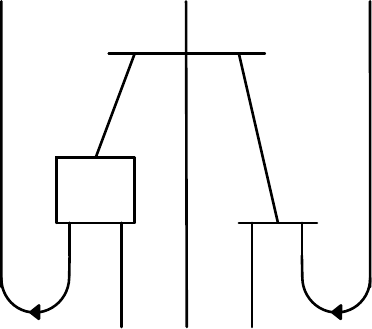}{1.1}
        \put (-121,0056) {$Y$}
        \put (-070,0056) {$\hopfMonad[2]U$}
        \put (-003,0056) {$\dualL{Y}$}
        \put (-031,0033) {$\dinatCoend[2]$}
        \put (-098,-012) {$\gamma_{Y,X}$}
        \put (-026,-016) {$\id$}
        \put (-087,-063) {$\dualL{X}$}
        \put (-064,-063) {$U$}
        \put (-045,-063) {$X$}
        \ .
    \end{equation}
\end{lemma}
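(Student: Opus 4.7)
The plan is to unwind the definition \eqref{eq:coactionLeftAdjoint} of $R_U$, rewrite the left-hand side via the explicit form of the adjunction $\hopfMonad[2] \dashv \hopfComonad[4]$, and then match the result against the defining equation \eqref{eq:DefMultCoend} of $\multCoend[2]$.

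First I would observe that, by \eqref{eq:coactionLeftAdjoint}, $R_U$ is the adjoint transpose of $\multCoend[2][U] : \hopfMonad[2]^2 U \to \hopfMonad[2] U$ along the hom-set isomorphism \eqref{eq:HomSetAdj-AZ}. Tracing through the three isomorphisms in \eqref{eq:HomSetAdj-AZ}---the first is the universal property of the coend $\hopfMonad[2] V$, the second is the pair of adjunctions $\dualL{Y}\otimes(\placeholder)\dashv Y\otimes(\placeholder)$ and $(\placeholder)\otimes Y\dashv(\placeholder)\otimes\dualL{Y}$ applied dinaturally in $Y$, and the third is the universal property of the end $\hopfComonad[4] W$---one sees that for any $f \colon \hopfMonad[2] V \to W$ with transpose $\tilde f \colon V \to \hopfComonad[4] W$,
\begin{align*}
    \dinatEnd[4][W][Y] \circ \tilde f \;=\;
    (\id_Y \otimes (f \circ \dinatCoend[2][V][Y]) \otimes \id_{\dualL{Y}})
    \circ (\coevL_Y \otimes \id_V \otimes \coevL_Y)
\end{align*}
modulo unitor and associator isomorphisms. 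Specialising to $V = W = \hopfMonad[2] U$ and $f = \multCoend[2][U]$ delivers a closed-form expression for $\dinatEnd[4][\hopfMonad[2]U][Y] \circ R_U$.

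Second, I would precompose this identity with $\dinatCoend[2][U][X]$ on the right. The inner composite thereby becomes
\begin{align*}
    \multCoend[2][U] \circ \dinatCoend[2][\hopfMonad[2]U][Y]
    \circ (\id_{\dualL{Y}} \otimes \dinatCoend[2][U][X] \otimes \id_Y),
\end{align*}
which is precisely the left-hand side of \eqref{eq:DefMultCoend}; that defining equation rewrites it as $\dinatCoend[2][U][X \otimes Y] \circ (\gamma_{Y,X} \otimes \id_{U \otimes X \otimes Y})$. Reassembling with the outer coevaluations yields exactly the string diagram on the right-hand side of the statement.

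The main obstacle is extracting the two-coevaluation formula for the adjoint transpose from \eqref{eq:HomSetAdj-AZ}. Although this is a routine zigzag argument once the two adjunctions $\dualL{Y}\otimes(\placeholder)\dashv Y\otimes(\placeholder)$ and $(\placeholder)\otimes Y\dashv(\placeholder)\otimes\dualL{Y}$ are unpacked, in a non-strict rigid monoidal category one has to carry the coherence isomorphisms carefully through the end/coend bookkeeping; this is the only step where genuine diagram chasing (rather than direct invocation of universal properties) is required.
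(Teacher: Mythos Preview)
Your proposal is correct and follows essentially the same route as the paper: both unwind $R_U = \hopfComonad[4][\multCoend[2][U]] \circ \unitAdj_{\hopfMonad[2]U}$, use the explicit two-coevaluation description of the adjunction unit (which you package as a general transpose formula, while the paper just draws it), and then invoke the defining equation \eqref{eq:DefMultCoend} for $\multCoend[2]$. The only cosmetic difference is that the paper first uses naturality of $\dinatEnd[4]$ to pull $\multCoend[2][U]$ through before unpacking $\unitAdj$, whereas you absorb that step into your transpose identity.
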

\begin{proof}
    By definition,
    \begin{align}
        (\hopfMonad[2][U], R_U) = (\hopfMonad[2][U], 
        \hopfComonad[4][\multCoend[2][U]]
        \circ \unitAdj_{\hopfMonad[2]U})
    \end{align}
    as $\hopfComonad[4]$-comodules.
    Then 
    \begin{equation}
        \ipic{-0.5}{./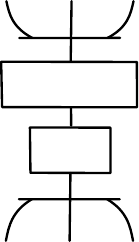}{1.5}
        \put (-60,058) {$Y$}
        \put (-39,058) {$\hopfMonad[2]U$}
        \put (-04,058) {$\dualL{Y}$}
        \put (-06,032) {$\dinatEnd[4]$}
        \put (-57,011) {$\hopfComonad[4][\multCoend[2][U]]$}
        \put (-41,-16) {$\unitAdj_{\hopfMonad[2]U}$}
        \put (-06,-36) {$\dinatCoend[2]$}
        \put (-63,-64) {$\dualL{X}$}
        \put (-34,-64) {$U$}
        \put (-08,-64) {$X$}
        \quad 
        =
        \quad 
        \ipic{-0.5}{./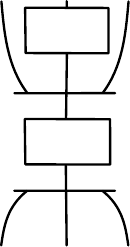}{1.5}
        \put (-60,058) {$Y$}
        \put (-39,058) {$\hopfMonad[2]U$}
        \put (-04,058) {$\dualL{Y}$}
        \put (-42,038) {$\multCoend[2][U]$}
        \put (-04,011) {$\dinatEnd[4]$}
        \put (-39,-11) {$\unitAdj_{\hopfMonad[2]U}$}
        \put (-04,-31) {$\dinatCoend[2]$}
        \put (-61,-65) {$\dualL{X}$}
        \put (-33,-65) {$U$}
        \put (-07,-65) {$X$}
        \quad 
        =
        \quad 
        \ipic{-0.5}{./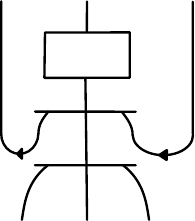}{1.7}
        \put (-98,059) {$Y$}
        \put (-63,059) {$\hopfMonad[2]U$}
        \put (-04,059) {$\dualL{Y}$}
        \put (-68,025) {$\multCoend[2][U]$}
        \put (-26,-01) {$\dinatCoend[2]$}
        \put (-25,-29) {$\dinatCoend[2]$}
        \put (-91,-66) {$\dualL{X}$}
        \put (-57,-66) {$U$}
        \put (-28,-66) {$X$}
    \end{equation}
    together with the definition \eqref{eq:DefMultCoend} of the multiplication of
    $\hopfMonad[2]$ proves the claim.
\end{proof}

\begin{lemma}\label{lem:catCoaction-qHopf}
    Let $V\in \hmodM$, $v\in V$, $h^* \in H^*$, and choose the realization of the central
    Hopf monad $\hopfMonad[2]$ as given in \eqref{eq:hopfMonad-qHopf-actions} and the
    comonad $\hopfComonad[4]$ in \eqref{eq:Z4-realisation}.
    Then
    \begin{samepage}
        \begin{align}\label{eq:categoricalCoaction-qHopf}
            R_V(h^* \otimes v)
            &=
            \langle h^* \mid
            S( {\invCoassQ[1]_2}\sweedler{2} \pL_2 \coassQ[1]_1 )
            \Dt_1 
                \left[e_i {\invCoassQ[1]_3}\sweedler{2} {\coassQ[1]_2}\sweedler{2}
                    \coassQ[2]_2
                \right]\sweedler{1}
            \pR_1
            \rangle 
            \notag \\ 
            &\times 
            \invCoassQ[1]_1 S( {\invCoassQ[1]_2}\sweedler{1} \pL_1 )
            \Dt_2 
                \left[
                    e_i {\invCoassQ[1]_3}\sweedler{2} {\coassQ[1]_2}\sweedler{2}
                    \coassQ[2]_2
                \right]\sweedler{2}
                \pR_2 S( \coassQ[1]_3 \coassQ[2]_3 )
            \notag \\ 
            & \otimes e^i \otimes 
            {\invCoassQ[1]_3}\sweedler{1} {\coassQ[1]_2}\sweedler{1} \coassQ[2]_1 . v ,
        \end{align}
    \end{samepage}
    where $\{e_i\}$ is a basis of $H$ with corresponding dual basis $\{e^i\}$, and
    summation over $i$ is implied.
\end{lemma}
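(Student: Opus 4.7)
The strategy is to apply the universal characterisation of $R_V$ from Lemma~\ref{lem:CoactionR_univ_prop} in the quasi-Hopf realisations of $\hopfMonad[2]$ and $\hopfComonad[4]$, and to extract the formula by choosing the test objects $X,Y$ so that the dinatural transformations become easy to invert. Concretely, one takes both $X=Y=H$, viewed as left regular $H$-modules, since then \eqref{eq:dinat_trans-Hopf} gives
\begin{align*}
\dinatCoend[2][V][H](f\otimes v\otimes \oneQ)=f\otimes v,
\end{align*}
so every element of $\hopfMonad[2] V = H^*\otimes V$ is hit, and similarly for the iterated coend $\hopfMonad[2]\hopfMonad[2](V)$ on the LHS in \eqref{eq:DefMultCoend}.

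The steps are as follows. First, I will spell out the universal dinatural transformation $\dinatEnd[4][W][Y]\colon H\otimes W\to (Y\otimes W)\otimes\dualL{Y}$ in the realisation \eqref{eq:Z4-realisation}; this uses only the unit $\unitEnd[4]=\betaQ$ together with the left evaluation and coevaluation on $\hmodM$, and produces an expression whose coefficients are polynomial in $\alphaQ,\betaQ,\coassQ,\invCoassQ$. Second, I will rewrite the right-hand side of Lemma~\ref{lem:CoactionR_univ_prop}: the map $\gamma_{Y,X}\colon \dualL Y\otimes\dualL X\to\dualL{(X\otimes Y)}$ in $\hmodM$ is governed by the Drinfeld twist $\Dt$ as in \eqref{eq:DrinfeldTwist-closedForm}, and the ensuing $\dinatCoend[2][U][X\otimes Y]$ can be turned into the formula \eqref{eq:dinat_trans-Hopf}. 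Third, composing with $\dinatEnd[4][\hopfMonad[2]U][Y]$ on the LHS yields a linear identity of the form
\begin{align*}
(\dinatEnd[4][\hopfMonad[2]U][Y]\circ R_U)(f\otimes v) = (\text{RHS}),
\end{align*}
from which $R_U(f\otimes v)$ can be read off by using the dual basis $\{e_i\},\{e^i\}$ of $H$ and $H^*$ to resolve the $Y$-dependence. Fourth, I will identify the resulting string of $H$-elements with the one in~\eqref{eq:categoricalCoaction-qHopf}.

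The main obstacle is managing the proliferation of coassociators: the formula involves two distinct copies $\coassQ[1],\coassQ[2]$ and one $\invCoassQ[1]$, together with $\pL,\pR$ and $\Dt$. This indicates that a full application of the pentagon axiom \eqref{eq:pentagon-qHopf} is needed, combined with the identities \eqref{eq:identities qpL}--\eqref{eq:identities_qp_iterated_coproduct} for $\qR,\pR,\qL,\pL$, to rewrite the triple coends into the canonical form appearing in~\eqref{eq:categoricalCoaction-qHopf}. The antipode axioms \eqref{eq:antipodeAxioms} and \eqref{eq:zigzag-qHopf}, together with \eqref{eq:Dt-alphBet-interplay} and \eqref{eq:coproduct_beta}, will be used repeatedly to absorb $\alphaQ$'s and $\betaQ$'s coming from the left evaluation/coevaluation in $\hmodM$. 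Once the bookkeeping is set up, the verification is a direct but lengthy Sweedler-notation calculation with no further conceptual input.
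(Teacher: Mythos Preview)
Your plan is essentially the paper's proof: it too invokes Lemma~\ref{lem:CoactionR_univ_prop}, specialises both $X$ and $Y$ to the left regular module $H$, and then reads off $R_U$ by using that $\dinatCoend[2][V][H]$ has the right inverse $f\otimes v\mapsto f\otimes v\otimes\oneQ$ and $\dinatEnd[4][V][H]$ has the left inverse $h\otimes v\otimes f\mapsto f(\oneQ)\,h\otimes v$. One small simplification relative to your outline: rather than first computing $\dinatEnd[4]$ in full and then inverting, the paper works directly with this left inverse, and the final rewriting to the form~\eqref{eq:categoricalCoaction-qHopf} uses only~\eqref{eq:DrinfeldTwistProperty} and~\eqref{eq:identities_qp_iterated_coproduct} (the pentagon and zig-zag axioms are not needed at this stage).
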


\begin{proof}
    Set 
    \begin{align}
        \Xi &= (\id \otimes \id \otimes \Delta \otimes \id)(\invCoassQ \otimes \oneQ)
        \cdot (\Delta\otimes \Delta \otimes \id)(\coassQ)
        \cdot (\oneQ\otimes \oneQ\otimes \coassQ)
        \notag \\
            &= 
        \invCoassQ[1]_1 {\coassQ[1]_1}\sweedler{1} 
        \otimes 
        \invCoassQ[1]_2 {\coassQ[1]_1}\sweedler{2} 
        \otimes
        {\invCoassQ[1]_3}\sweedler{1} {\coassQ[1]_2}\sweedler{1} \coassQ[2]_1
        \otimes
        {\invCoassQ[1]_3}\sweedler{2} {\coassQ[1]_2}\sweedler{2} \coassQ[2]_2
        \otimes
        \coassQ[1]_3 \coassQ[2]_3
        , \notag \\
        \Theta &= (S\otimes S)(\pL_{21}) \Dt
        , \notag \\
        \Omega &= \Xi_1 \otimes S(\Xi_2) \otimes \Xi_4 \otimes S(\Xi_5) \otimes \Xi_3,
        \label{eq:RV-lemma-aux1}
    \end{align}
    then from \Cref{lem:CoactionR_univ_prop} one computes
    \begin{align}
        \dinatEnd[4][\hopfMonad[2]U][Y]
        \circ R_U
        \circ \dinatCoend[2][U][X]
        &= 
        \quad
        \ipic{-0.5}{./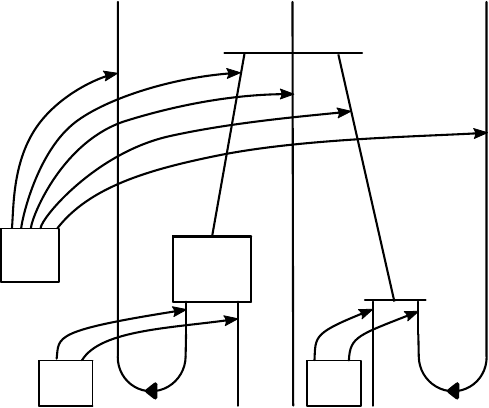}{1.5}
        \put (-164,0092) {$Y$}
        \put (-096,0092) {$\hopfMonad[2]U$}
        \put (-003,0092) {$\dualL{Y}$}
        \put (-052,0063) {$\dinatCoend[2]$}
        \put (-132,-030) {\large $\gamma_{Y,X}$}
        \put (-204,-027) {\large $\Xi$}
        \put (-028,-040) {$\id$}
        \put (-189,-082) {$\pL$}
        \put (-074,-082) {$\pR$}
        \put (-115,-100) {$\dualL{X}$}
        \put (-090,-100) {$U$}
        \put (-056,-100) {$X$}
        \put (0020,0100) {$\boxed{\Vect_{\field}}$}
        \notag \\
        &=
        \ipic{-0.5}{./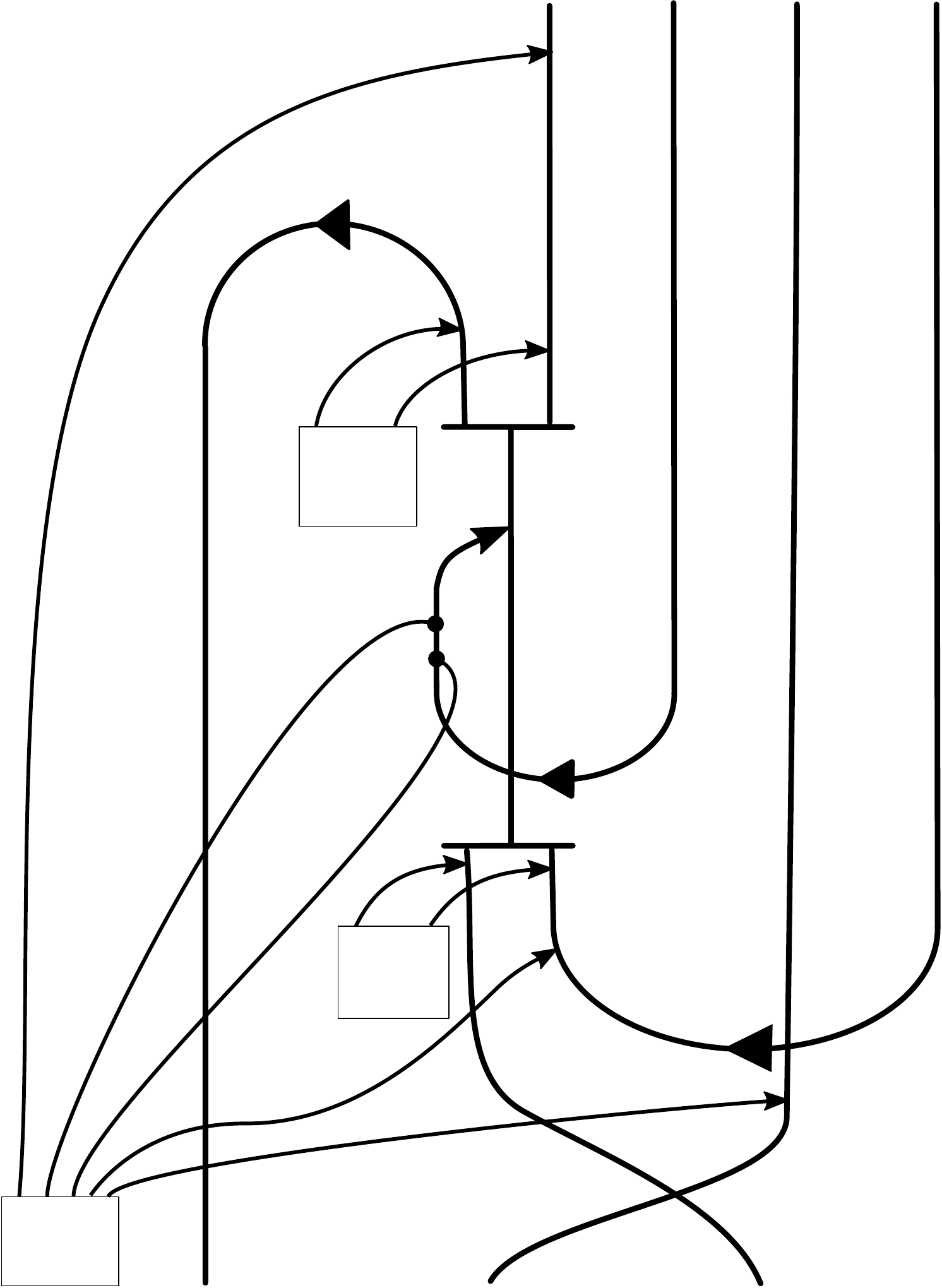}{0.4}
        \put (-075,0120) {$Y$}
        \put (-055,0120) {$H^*$}
        \put (-030,0120) {$U$}
        \put (-004,0120) {$\dualL{Y}$}
        \put (-066,0040) {$\id$}
        \put (-112,0027) {$\Theta$}
        \put (-066,-040) {$\id$}
        \put (-108,-064) {$\pR$}
        \put (-167,-113) {$\Omega$}
        \put (-142,-129) {$\dualL{X}$}
        \put (-087,-129) {$U$}
        \put (-039,-129) {$X$}
        \put (0020,0120) {$\boxed{\Vect_{\field}}$}
        \ .
    \end{align}
    Recall that the box with $\Vect_{\field}$ means that these pictures are to be
    understood as linear maps.
    Specializing $X$ and $Y$ to the regular left module $H$, we note that
    $\dinatEnd[4][V][H]$ has a left inverse
    \begin{align}
        (H\otimes V) \otimes \dualL{H} \to H\otimes V,
        \quad
        h\otimes v \otimes f \mapsto f(\oneQ) h \otimes v\ ,
    \end{align}
    while $\dinatCoend[4][V][H]$ has a right inverse
    \begin{align}
        H^*\otimes V \to \dualL{H} \otimes (V \otimes H)
        \quad
        f\otimes v \mapsto f \otimes v \otimes \oneQ \ .
    \end{align}

    Applying the inverses we obtain the explicit form of $R_U$,
    \begin{align}
        \ipic{-0.5}{./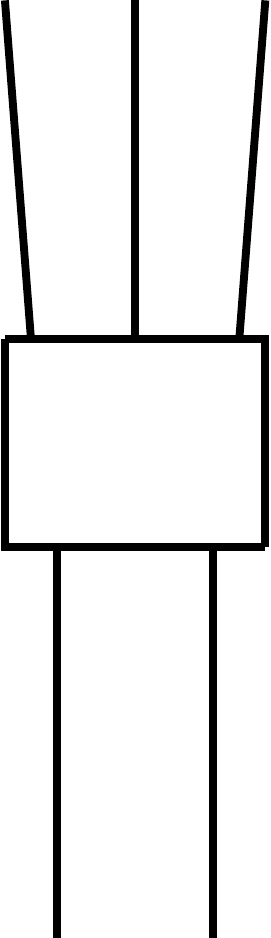}{0.5}
        \put (-44,072) {$H$}
        \put (-24,072) {$H^*$}
        \put (-03,072) {$U$}
        \put (-27,000) {$R_U$}
        \put (-37,-79) {$H^*$}
        \put (-13,-79) {$U$}
        \quad 
        &=
        \quad 
        \ipic{-0.5}{./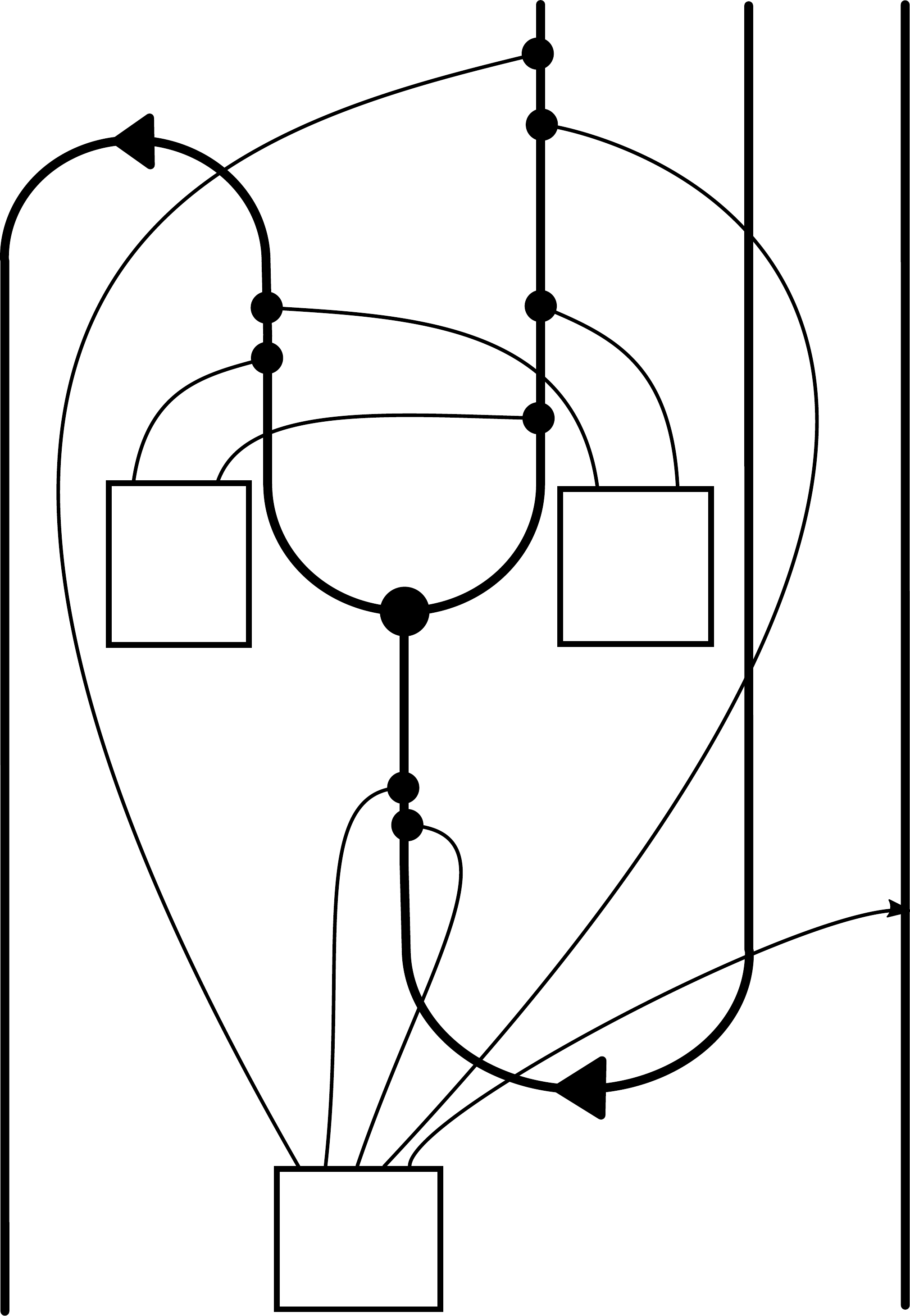}{0.18}
        \put (-047,079) {$H$}
        \put (-024,079) {$H^*$}
        \put (-004,079) {$U$}
        \put (-090,006) {\large $\Theta$}
        \put (-038,007) {$\pR$}
        \put (-069,-72) {\large $\Omega$}
        \put (-111,-87) {$H^*$}
        \put (-005,-87) {$U$}
        \put (0020,080) {$\boxed{\Vect_{\field}}$}
    \end{align}
From this we can read off that $R_U$ is the linear map given by
(to better see where we apply the changes from one line to the next we sometimes underline
the relevant part)
    \begin{align}
        R_U(h^*\otimes u)
        &= 
        \langle 
            h^* \otimes \id \mid 
            (\oneQ \otimes \Omega_1) \Theta \Delta( \Omega_2 e_i \Omega_3 ) \pR 
            (\oneQ \otimes \Omega_4) 
        \rangle
        \otimes e^i \otimes \Omega_5.v
        \notag \\
        &= 
        \langle 
            h^* \otimes \id \mid 
            (\oneQ \otimes \Xi_1) \Theta \Delta( S(\Xi_2) e_i \Xi_4 ) \pR 
            (\oneQ \otimes S(\Xi_5))
        \rangle
        \otimes e^i \otimes \Xi_3.v
        \notag \\
        &= 
        \langle 
            h^* \mid 
            S(\pL_{2}) \Dt_1 \left[ S(\Xi_2) e_i \Xi_4 \right]\sweedler{1}
            \pR_1 
        \rangle
        \notag \\
        &~\times
        \Xi_1 S(\pL_1) \Dt_2 \left[ S(\Xi_2) e_i \Xi_4 \right]\sweedler{2} \pR_2 S(\Xi_5)
        \otimes e^i \otimes \Xi_3.v
        \notag \\
        &\oversetEq[\eqref{eq:RV-lemma-aux1}]~ 
        \langle 
            h^* \mid 
            S(\pL_{2}) \dashuline{\Dt_1}
            \big[ 
                \dashuline{ S(\invCoassQ[1]_2 {\coassQ[1]_1}\sweedler{2}) }
                e_i
                {\invCoassQ[1]_3}\sweedler{2} {\coassQ[1]_2}\sweedler{2} \coassQ[2]_2 
            \big]\sweedler{1}
            \pR_1 
        \rangle
        \notag \\
        &~\times
        \invCoassQ[1]_1 {\coassQ[1]_1}\sweedler{1} S(\pL_1) \dashuline{\Dt_2}
        \big[ 
            \dashuline{ S(\invCoassQ[1]_2 {\coassQ[1]_1}\sweedler{2}) }
            e_i 
            {\invCoassQ[1]_3}\sweedler{2} {\coassQ[1]_2}\sweedler{2} \coassQ[2]_2 
        \big]\sweedler{2}
        \pR_2 S(\coassQ[1]_3 \coassQ[2]_3)
        \notag \\
        &~\otimes e^i \otimes {\invCoassQ[1]_3}\sweedler{1} {\coassQ[1]_2}\sweedler{1}
        \coassQ[2]_1.v
        \notag \\
        &\oversetEq[\eqref{eq:DrinfeldTwistProperty}]~
        \langle 
            h^* \mid 
            S({\invCoassQ[1]_2}\sweedler{2} 
                \dashuline{ {\coassQ[1]_1}\sweedler{2,2}\pL_{2} } 
            ) \Dt_1 
            \left[ 
                e_i
                {\invCoassQ[1]_3}\sweedler{2} {\coassQ[1]_2}\sweedler{2} \coassQ[2]_2 
            \right]\sweedler{1}
            \pR_1 
        \rangle
        \notag \\
        &~\times
        \invCoassQ[1]_1 {\coassQ[1]_1}\sweedler{1} 
        S({\invCoassQ[1]_2}\sweedler{1} 
            \dashuline{ {\coassQ[1]_1}\sweedler{2,1}\pL_1 }
        ) \Dt_2 
        \left[ 
            e_i 
            {\invCoassQ[1]_3}\sweedler{2} {\coassQ[1]_2}\sweedler{2} \coassQ[2]_2 
        \right]\sweedler{2}
        \pR_2 S(\coassQ[1]_3 \coassQ[2]_3)
        \notag \\
        &~\otimes e^i \otimes {\invCoassQ[1]_3}\sweedler{1} {\coassQ[1]_2}\sweedler{1}
        \coassQ[2]_1.v
        \notag \\
        &\oversetEq[\eqref{eq:identities_qp_iterated_coproduct}]~
        \langle 
            h^* \mid 
            S({\invCoassQ[1]_2}\sweedler{2} \pL_{2} \coassQ[1]_1) \Dt_1 
            \left[ 
                e_i
                {\invCoassQ[1]_3}\sweedler{2} {\coassQ[1]_2}\sweedler{2} \coassQ[2]_2 
            \right]\sweedler{1}
            \pR_1 
        \rangle
        \notag \\
        &~\times
        \invCoassQ[1]_1 S({\invCoassQ[1]_2}\sweedler{1} \pL_1) \Dt_2 
        \left[ 
            e_i 
            {\invCoassQ[1]_3}\sweedler{2} {\coassQ[1]_2}\sweedler{2} \coassQ[2]_2 
        \right]\sweedler{2}
        \pR_2 S(\coassQ[1]_3 \coassQ[2]_3)
        \notag \\
        &~\otimes e^i \otimes {\invCoassQ[1]_3}\sweedler{1} {\coassQ[1]_2}\sweedler{1}
        \coassQ[2]_1.v
    \end{align}
    for $h^*\in H^*$, $u\in U$.
\end{proof}

\label{proof:CatCoaction_is_coaction}
\begin{proof}[Proof of \texorpdfstring{\Cref{prop:CatCoaction_is_coaction}}{Proposition}]
We will need the identity
\begin{align}\label{eq:....}
    \qL_2 [\Dt\inv_2]\sweedler{2} \otimes S(\Dt\inv_1) \qL_1
    [\Dt\inv_2]\sweedler{1}
    = (S\otimes S)(\pR) \Dt_{21} \ , 
\end{align}
which can be seen as follows:
\begin{align}
    \qL_2 &[\Dt\inv_2]\sweedler{2} \otimes S(\Dt\inv_1) \qL_1
    [\Dt\inv_2]\sweedler{1} 
    \notag \\
    \oversetEq[\eqref{eq:q,p^L}]~&\quad
    \coassQ[1]_3 [\Dt\inv_2]\sweedler{2} 
    \otimes S(\coassQ[1]_1 \Dt\inv_1) \alphaQ \coassQ[1]_2 [\Dt\inv_2]\sweedler{1} 
    \notag \\
    \oversetEq[\eqref{eq:DrinfeldTwist-twistedCoassoc}]~&\quad
    \Dt\inv_2 S(\coassQ[1]_1) \Dt_2 
    \otimes 
    \dashuline{ S([\Dt\inv_1]\sweedler{1} } \widetilde{\Dt}\inv_1 S(\coassQ[1]_3))
    \dashuline{ \alphaQ [\Dt\inv_1]\sweedler{2} }
    \widetilde{\Dt}\inv_2 S(\coassQ[1]_2) \Dt_1
    \notag \\
    \oversetEq[\eqref{eq:antipodeAxioms}]~&\quad
    \dotuline{ \counit(\Dt\inv_1) \Dt\inv_2 } S(\coassQ[1]_1) \Dt_2 
    \otimes \dashuline{ S(\widetilde{\Dt}\inv_1 } S(\coassQ[1]_3))
    \dashuline{ \alphaQ \widetilde{\Dt}\inv_2 } S(\coassQ[1]_2) \Dt_1
    \notag \\
    \oversetEq[(*)]~&\quad
    S(\coassQ[1]_1) \Dt_2 
    \otimes S^2(\coassQ[1]_3) S(\betaQ) S(\coassQ[1]_2) \Dt_1
    \notag \\
    \oversetEq[\eqref{eq:q,p^L}]~&\quad
    (S\otimes S)(\pR) \cdot \Dt_{21} \ .
\end{align}
In the step labelled $(*)$ one uses \eqref{eq:Dt-alphBet-interplay} (dashed underline) and
that the counit applied to any leg of the inverse Drinfeld twist yields $\oneQ$ (dotted
underline).
The identity \eqref{eq:....} immediately implies
\begin{align}\label{eq:useThisInProof}
	\pR = 
	S\inv( \qL_2 {\Dt\inv_2}\sweedler{2} \widetilde{\Dt}\inv_2 )
	\otimes S\inv( \qL_1 {\Dt\inv_2}\sweedler{1} \widetilde{\Dt}\inv_1 )
	\Dt\inv_1.
\end{align}

For the proof of the proposition, let now $h^* \in H^*$. 
Then 
\begin{align}
    \big( \varphi_{\dualL{H}} \circ &\Adtwist(\rho) \big) (h^*)
    \notag \\
    =~&
    \modulus\inv( {\invCoassQ[1]_3}\sweedler{1} {\coassQ[1]_2}\sweedler{1}
    \coassQ[2]_1 )
    \langle
        h^* \mid
        S(\pL_2) \Dt_1 
        \left[ 
            e_i
        \right]\sweedler{1}
        S\inv( \qL_2 \Dt\inv_2 ) 
    \rangle 
    \notag \\
    &\times
    \invCoassQ[1]_1 {\coassQ[1]_1}\sweedler{1} S(\pL_1) \Dt_2 
    \left[ 
        e_i
    \right]\sweedler{2}
    S\inv( \qL_1 \Dt\inv_1 ) \widetilde{\Dt}\inv_1 S(\coassQ[1]_3
    \coassQ[2]_3)
    \notag \\
    &\otimes
    \dashuline{\invCoassQ[1]_2 {\coassQ[1]_1}\sweedler{2}}
    . e^i .
    \dashuline{\widetilde{\Dt}\inv_2 S( {\invCoassQ[1]_3}\sweedler{2}
    {\coassQ[1]_2}\sweedler{2} \coassQ[2]_2 )}
    \notag \\[0.2em]
    \oversetEq[\eqref{eq:hookActionDefined-H}]~&
    \modulus\inv( {\invCoassQ[1]_3}\sweedler{1} {\coassQ[1]_2}\sweedler{1}
    \coassQ[2]_1 )
    \langle
        h^* \mid
        S(\pL_2) \Dt_1 
        \left[ 
            e_i
        \right]\sweedler{1}
        S\inv( \qL_2 \dashuline{\Dt\inv_2} ) 
    \rangle 
    \notag \\
    &\times
    \invCoassQ[1]_1 {\coassQ[1]_1}\sweedler{1} S(\pL_1) \Dt_2 
    \left[ 
        e_i
    \right]\sweedler{2}
    S\inv( \qL_1 \dashuline{\Dt\inv_1} ) \widetilde{\Dt}\inv_1 S(\coassQ[1]_3
    \coassQ[2]_3)
    \notag \\
    &\otimes
    \dashuline{
        {\invCoassQ[1]_3}\sweedler{2}
        {\coassQ[1]_2}\sweedler{2} \coassQ[2]_2 S\inv(\widetilde{\Dt}\inv_2)
    }
    \rightharpoonup e^i \leftharpoonup
    S(\invCoassQ[1]_2 {\coassQ[1]_1}\sweedler{2})
    \notag \\[0.2em]
    \oversetEq[\eqref{eq:DrinfeldTwistProperty}]~&\quad
    \modulus\inv( {\invCoassQ[1]_3}\sweedler{1} {\coassQ[1]_2}\sweedler{1}
    \coassQ[2]_1 )
    \langle
        h^* \mid
        S(\pL_2) \Dt_1 
        \left[ 
            e_i
            {\invCoassQ[1]_3}\sweedler{2} {\coassQ[1]_2}\sweedler{2} \coassQ[2]_2
        \right]\sweedler{1}
        \dashuline{
            S\inv( \qL_2 {\Dt\inv_2}\sweedler{2} \widetilde{\Dt}\inv_2 ) 
        }
    \rangle 
    \notag \\
    &\times
    \invCoassQ[1]_1 {\coassQ[1]_1}\sweedler{1} S(\pL_1) \Dt_2 
    \left[ 
        e_i
        {\invCoassQ[1]_3}\sweedler{2} {\coassQ[1]_2}\sweedler{2} \coassQ[2]_2
    \right]\sweedler{2}
    \dashuline{
        S\inv( \qL_1 {\Dt\inv_2}\sweedler{1} \widetilde{\Dt}\inv_1 )
        \widetilde{\Dt}\inv_1 
    }
    S(\coassQ[1]_3 \coassQ[2]_3)
    \notag \\
    &\otimes
    e^i \leftharpoonup
    S(\invCoassQ[1]_2 {\coassQ[1]_1}\sweedler{2})
    \notag \\[0.2em]
    \oversetEq[\eqref{eq:useThisInProof}]~&\quad
    \modulus\inv( {\invCoassQ[1]_3}\sweedler{1} {\coassQ[1]_2}\sweedler{1}
    \coassQ[2]_1 )
    \langle
        h^* \mid
        S(\pL_2) \dashuline{\Dt_1}
        \left[ 
            e_i
            {\invCoassQ[1]_3}\sweedler{2} {\coassQ[1]_2}\sweedler{2} \coassQ[2]_2
        \right]\sweedler{1}
        \pR_1
    \rangle 
    \notag \\
    &\times
    \invCoassQ[1]_1 {\coassQ[1]_1}\sweedler{1} S(\pL_1) \dashuline{\Dt_2}
    \left[ 
        e_i
        {\invCoassQ[1]_3}\sweedler{2} {\coassQ[1]_2}\sweedler{2} \coassQ[2]_2
    \right]\sweedler{2}
    \pR_2 S(\coassQ[1]_3 \coassQ[2]_3)
    \notag \\
    &\otimes
    e^i \leftharpoonup
    \dashuline{
        S(\invCoassQ[1]_2 {\coassQ[1]_1}\sweedler{2})
    }
    \notag \\[0.2em]
    \oversetEq[\eqref{eq:DrinfeldTwistProperty}]~&\quad
    \modulus\inv( {\invCoassQ[1]_3}\sweedler{1} {\coassQ[1]_2}\sweedler{1}
    \coassQ[2]_1 )
    \langle
        h^* \mid
        S({\invCoassQ[1]_2}\sweedler{2} 
        \dashuline{
            {\coassQ[1]_1}\sweedler{2,2} \pL_2
        }
        ) \Dt_1 
        \left[ 
            e_i
            {\invCoassQ[1]_3}\sweedler{2} {\coassQ[1]_2}\sweedler{2} \coassQ[2]_2
        \right]\sweedler{1}
        \pR_1
    \rangle 
    \notag \\
    &\times
    \invCoassQ[1]_1 \dashuline{{\coassQ[1]_1}\sweedler{1}}
    S({\invCoassQ[1]_2}\sweedler{1} 
    \dashuline{
        {\coassQ[1]_1}\sweedler{2,1}\pL_1
    }
    ) \Dt_2 
    \left[ 
        e_i
        {\invCoassQ[1]_3}\sweedler{2} {\coassQ[1]_2}\sweedler{2} \coassQ[2]_2
    \right]\sweedler{2}
    \pR_2 S(\coassQ[1]_3 \coassQ[2]_3) 
    \otimes e^i 
    \notag \\[0.2em]
    \oversetEq[\eqref{eq:identities_qp_iterated_coproduct}]~&\quad
    \modulus\inv( {\invCoassQ[1]_3}\sweedler{1} {\coassQ[1]_2}\sweedler{1}
    \coassQ[2]_1 )
    \langle
        h^* \mid
        S({\invCoassQ[1]_2}\sweedler{2} \pL_2 \coassQ[1]_1) \Dt_1 
        \left[ 
            e_i
            {\invCoassQ[1]_3}\sweedler{2} {\coassQ[1]_2}\sweedler{2} \coassQ[2]_2
        \right]\sweedler{1}
        \pR_1
    \rangle 
    \notag \\ 
    &\times
    \invCoassQ[1]_1 S({\invCoassQ[1]_2}\sweedler{1} \pL_1) \Dt_2 
    \left[ 
        e_i
        {\invCoassQ[1]_3}\sweedler{2} {\coassQ[1]_2}\sweedler{2} \coassQ[2]_2
    \right]\sweedler{2}
    \pR_2 S(\coassQ[1]_3 \coassQ[2]_3) 
    \otimes e^i .
\end{align}
From \Cref{lem:catCoaction-qHopf} we obtain
\begin{align}
    R_{\dualL{\modulus}}(h^*)
    &=
    \modulus\inv( {\invCoassQ[1]_3}\sweedler{1} {\coassQ[1]_2}\sweedler{1}
    \coassQ[2]_1 ) ~
    \langle h^* \mid
    S( {\invCoassQ[1]_2}\sweedler{2} \pL_2 \coassQ[1]_1 )
    \Dt_1 
        \left[
            e_i {\invCoassQ[1]_3}\sweedler{2} {\coassQ[1]_2}\sweedler{2}
            \coassQ[2]_2
        \right]\sweedler{1}
    \pR_1
    \rangle 
    \notag \\ 
    &\times 
    \invCoassQ[1]_1 S( {\invCoassQ[1]_2}\sweedler{1} \pL_1 )
    \Dt_2 
        \left[
            e_i {\invCoassQ[1]_3}\sweedler{2} {\coassQ[1]_2}\sweedler{2}
            \coassQ[2]_2
        \right]\sweedler{2}
        \pR_2 S( \coassQ[1]_3 \coassQ[2]_3 )
    \otimes e^i\ ,
\end{align}
so that
\begin{align}
    R_{\dualL{\modulus}} = \varphi_{\dualL{H}} \circ \Adtwist(\rho)
\end{align}
indeed holds, finishing the proof.
\end{proof}

\medskip

\subsection{Proof of \texorpdfstring{\Cref{thm:MainThmSectionStatement}}{4.1}
(1)}\label{app:Proof1}

The first step in the proof of the \Cref{thm:MainThmSectionStatement} is to map cointegrals to a Hom-space containing monadic cointegrals.
To do this, we define the space
\begin{align}\label{eq:Def_gammaSSym}
    \gammaSSym & = 
    \{
        f\in H^* \mid 
        f \leftharpoonup S(a) = S\inv(\modulus \rightharpoonup a) \rightharpoonup f 
        \quad \forall a\in H
    \} \notag \\
    &=
    \{
        f\in \dualL{H}\in \hmodM[H][H] \mid 
        a.f = f.(\modulus \rightharpoonup a)
        \quad \forall a\in H
    \}, 
\end{align}
where the dot denotes the action on the left dual of the regular bimodule in 
    $\hmodM[H][H]$,
the category of $H\otimes H\op$-modules as introduced in \Cref{Sec:BC-CointegralsPartiallyExplained}.

Note that right cointegrals are automatically in $\gammaSSym$ by
\eqref{eq:symmetryPropertiesCoint}:
\begin{align}
    \spaceRightCoint \subset \gammaSSymR\ .
\end{align}
By \eqref{eq:hopfMonad-qHopf-actions}, we have
\begin{align}
&\cat(\tensUnit,\hopfMonad[2][\dualL{\modulus}])  \notag\\
&=
    \{
        f \in H^* \mid 
        \counit(h) f(a) = f(S(h\sweedler{1}) a (h\sweedler{2} \leftharpoonup \modulus\inv))
        \quad \forall h,a\in H
    \}
    \notag \\
    &=\{
        f \in \dualL{H} \in \hmodM[H][H] \mid 
        \counit(h) f = h\sweedler{1}.f.S(h\sweedler{2} \leftharpoonup \modulus\inv)
        \quad \forall h\in H
    \},
\label{eq:app-linear-eqn-Hom-to-A2gamma}    
\end{align}
where in the second line we again let the dot denote the action on the left dual of the
regular bimodule.

We then have the following proposition.

\begin{prop}\label{prop:X2=C2}
    Let $\elX = (\id \otimes \modulus)(\Dt\inv)$.
    Then the map
    \begin{align}
        \isoXC_2:\gammaSSym &\to \cat \big( \tensUnit,
        \hopfMonad[2](\dualL{\modulus}) \big),
        \quad f\mapsto \betaQ.f.\elX,
    \end{align}
    is a linear isomorphism.
\end{prop}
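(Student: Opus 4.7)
The plan is to establish (1) that $\isoXC_2$ is well-defined, i.e., $\betaQ.f.\elX$ lies in $\cat(\tensUnit,\hopfMonad[2](\dualL{\modulus}))$ whenever $f \in \gammaSSym$, and (2) that $\isoXC_2$ is bijective by exhibiting an explicit inverse.

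For well-definedness, I set $g = \betaQ . f . \elX$ and compute
\begin{align*}
h\sweedler{1} . g . S\big(h\sweedler{2} \leftharpoonup \modulus\inv\big)
= (h\sweedler{1}\betaQ) . f . \big(\elX \cdot S(h\sweedler{2} \leftharpoonup \modulus\inv)\big),
\end{align*}
using that left and right actions on $\dualL{H}$ commute as an action of $H \otimes H\op$. The defining relation of $\gammaSSym$ then lets me trade the left factor $h\sweedler{1}\betaQ$ for the right factor $\modulus \rightharpoonup (h\sweedler{1}\betaQ)$, reducing the claim to an identity purely in terms of right-multiplication on $f$ that must collapse to $\counit(h)\,g$. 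The tools for this collapse are the antipode axioms \eqref{eq:antipodeAxioms}, the compatibility \eqref{eq:DrinfeldTwistProperty} of $S$ with $\Dt$, the relation $\Delta(\betaQ)\Dt\inv = \elDelta$ from \eqref{eq:coproduct_beta}, and the twisted coassociativity \eqref{eq:DrinfeldTwist-twistedCoassoc} of $\Dt$; assembled in the right order, these force the coassociators arising from iterated coproducts to cancel.

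For bijectivity I would construct an inverse of the form $g \mapsto \alpha . g . \gamma$ for suitable $\alpha, \gamma \in H$ built from the quasi-Hopf data, with natural candidates coming from components of $\qR, \qL, \pR, \pL$ combined with $\modulus$-twists. The zig-zag axioms \eqref{eq:zigzag-qHopf} and the identities \eqref{eq:identities qpL}, \eqref{eq:identities_qp_iterated_coproduct} should then show that both compositions collapse to the identities on $\gammaSSym$ and on $\cat(\tensUnit, \hopfMonad[2](\dualL{\modulus}))$ respectively. A symmetric compatibility check (mirroring the calculation in step~(1)) verifies that the putative inverse indeed takes values in $\gammaSSym$.

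The main obstacle is the bookkeeping enforced by quasi-coassociativity: every expansion of $\Delta(h\betaQ)$ or iterated coproduct of $h$ drags in coassociators $\coassQ, \invCoassQ$, and the proof amounts to finding a sequence of applications of the identities of \Cref{sec:DrinfeldTwist-closed} in which these telescope. A conceptually cleaner alternative would be to realise both $\gammaSSym$ and the target Hom-space as morphism spaces in $\hmodM[H][H]$ via an adjunction compatible with $\Adtwist$ (as appears in \Cref{cd:nat_trans:AY=ZA}), and derive the isomorphism from the corresponding adjunction unit, trading the explicit Sweedler arithmetic for a more abstract argument.
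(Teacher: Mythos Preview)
Your overall plan (well-definedness plus an explicit two-sided inverse) matches the paper's, but your execution of step~(1) goes in the harder direction. You propose to push the left factor $h\sweedler{1}\betaQ$ to the right via the raw $\gammaSSym$ condition $a.f = f.(\modulus\rightharpoonup a)$, which forces you to expand $\modulus\rightharpoonup(h\sweedler{1}\betaQ)$ and then chase iterated coproducts and coassociators through \eqref{eq:coproduct_beta} and \eqref{eq:DrinfeldTwist-twistedCoassoc}. The paper avoids all of this by first rewriting the $\gammaSSym$ condition, using only the Drinfeld twist property~\eqref{eq:DrinfeldTwistProperty}, as
\[
S(a).f \;=\; f.\,\elX\, S(a\leftharpoonup\modulus\inv)\,\elX\inv .
\]
This lets one move the \emph{right} factor $\elX\,S(h\sweedler{2}\leftharpoonup\modulus\inv)$ back to the \emph{left} as $S(h\sweedler{2})$; the antipode axiom $h\sweedler{1}\betaQ S(h\sweedler{2})=\counit(h)\betaQ$ then finishes step~(1) in a single line. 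No pentagon, no $\elDelta$, no twisted coassociativity.

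For step~(2), the paper's inverse is the concrete map
\[
\mathbb{B}_2\colon\; g \;\longmapsto\; \qL_1.\,g.\,S(\qL_2\leftharpoonup\modulus\inv)\,\elX\inv ,
\]
built from $\qL$. That $\mathbb{B}_2\isoXC_2=\id$ is immediate from the zig-zag axiom~\eqref{eq:zigzag-qHopf}; that $\isoXC_2\mathbb{B}_2=\id$ uses $\betaQ=(S\otimes\counit)(\pL)$, the $\qL,\pL$-relation in~\eqref{eq:identities qpL}, and the target intertwiner condition on $g$. The check that $\mathbb{B}_2$ lands in $\gammaSSym$ again uses the reformulated condition above together with~\eqref{eq:identities_qp_iterated_coproduct}. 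Your alternative conceptual route through $\Adtwist$ and an adjunction is an interesting suggestion, but the paper proceeds entirely by the direct Sweedler computation just described.
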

\begin{proof}
    \label{proof:X2=C2}
    Abbreviate $\cat_2 \defined \cat \big( \tensUnit, \hopfMonad[2](\dualL{\modulus})
    \big)$, and let us check that $\isoXC_2(\gammaSSym) \subset \cat_2$.
    To this end, observe that the defining equation for $f\in H^*$ to be in
    $\gammaSSym$ may be rewritten as
    \begin{align}\label{eq:AlternateX2Condition}
        S(a).f = f.\elX S(a \leftharpoonup \modulus\inv) \elX\inv
    \end{align}
    by using the definition of the Drinfeld twist.
    Then we compute
    \begin{alignat}{2}
        h\sweedler{1}.\isoXC_2(f).S(h\sweedler{2} \leftharpoonup \modulus\inv)
        &~\oversetEq &&\quad
        h\sweedler{1}\betaQ.f.\elX S(h\sweedler{2} \leftharpoonup \modulus\inv) 
        \notag \\ &~\oversetEq[\eqref{eq:AlternateX2Condition}] &&\quad
        h\sweedler{1}\betaQ S(h\sweedler{2}).f.\elX  
        \notag \\ &~\oversetEq[\eqref{eq:antipodeAxioms}] &&\quad
        \counit(h) \betaQ.f.\elX 
        \notag \\ &~\oversetEq &&\quad
        \counit(h) \isoXC_2(f).
    \end{alignat}

    \noindent Next, we claim that the assignment
    \begin{align}
        \label{eq:inverseOfMainThm}
        \mathbb{B}_2: f \mapsto 
        \qL_1 . f . S(\qL_2 \leftharpoonup \modulus\inv) \elX\inv
    \end{align}
    is the two-sided inverse of $\isoXC_2$. 
    First of all, $\mathbb{B}_2(\cat_2) \subset \gammaSSym$.
    Indeed,
    \begin{samepage}
    \begin{alignat}{2}
        \mathbb{B}_2(f).\elX S(a \leftharpoonup \modulus\inv) \elX\inv
        &~\oversetEq &&\quad
        \qL_1 . f . S(\qL_2 \leftharpoonup \modulus\inv) \elX\inv \elX S(a
        \leftharpoonup \modulus\inv) \elX\inv
        \notag \\ &~\oversetEq  &&\quad
        \qL_1 . f . S((a\qL_2) \leftharpoonup \modulus\inv) \elX\inv
        \notag \\ &~\oversetEq[\eqref{eq:identities_qp_iterated_coproduct}]  &&\quad
        S(a\sweedler{1}) \qL_1 a\sweedler{2,1} . f. S((\qL_2 a\sweedler{2,2})
        \leftharpoonup \modulus\inv) \elX\inv
        \notag \\ &~\oversetEq[(\star)]  &&\quad
        S(a) \qL_1 . f. S((\qL_2) \leftharpoonup \modulus\inv) \elX\inv
        \notag \\ &~\oversetEq  &&\quad
        S(a). \mathbb{B}_2(f).
    \end{alignat}
    \end{samepage}
    Here $(\star)$ uses that $f\in \cat_2$.

    It is not hard to see that $\mathbb{B}_2$ is a left inverse of $\isoXC_2$:
    \begin{align}
        \mathbb{B}_2\isoXC_2(f) 
        &= \mathbb{B}_2(\betaQ.f.\elX) 
        = \qL_1 \betaQ . f . \elX S( \qL_2 \leftharpoonup \modulus\inv) \elX \inv
        = \qL_1 \betaQ S(\qL_2) . f
        = f.
    \end{align}
    To see that $\mathbb{A}_2\mathbb{B}_2=\id$ we need the fact that $\betaQ=(S\otimes
    \counit)(\pL)$, and the $\pL,\qL$-relation in \eqref{eq:identities qpL}.
    We compute
    \begin{alignat}{2}
        \isoXC_2\mathbb{B}_2(f) 
        &~\oversetEq &&\quad
        \isoXC_2(
            \qL_1 . f . S( \qL_2 \leftharpoonup \modulus\inv) \elX \inv 
            )
        \notag \\ &~\oversetEq &&\quad
        \betaQ \qL_1 . f . S( \qL_2 \leftharpoonup \modulus\inv) 
        \notag \\ &~\oversetEq &&\quad
        S(\pL_1) \qL_1 . \counit(\pL_2) f . S( \qL_2 \leftharpoonup \modulus\inv) 
        \notag \\ &~\oversetEq[(\star)] &&\quad
        S(\pL_1) \qL_1 {\pL_2}\sweedler{1}. f . S( (\qL_2 {\pL_2}\sweedler{2})
        \leftharpoonup \modulus\inv) 
        \notag \\ &~\oversetEq[\eqref{eq:identities qpL}] &&\quad
        1.f.S(1\leftharpoonup \modulus \inv)
        \notag \\ &~\oversetEq &&\quad
        f\ ,
    \end{alignat}
    using that $f\in \cat_2$ in $(\star)$.
\end{proof}

We will need the following technical lemma.

\begin{lemma}\label{lem:weird_identity}
    Let $f\in \gammaSSym$.
    Then
\begin{align}
    \modulus(\invCoassQ[1]_3)~
    \varphi_{\dualL{H}} 
    \big(
        \betaQ\sweedler{1} \invCoassQ[1]_1 \elX\sweedler{1} 
        \otimes
        \betaQ\sweedler{2} . f . \invCoassQ[1]_2 \elX\sweedler{2} 
    \big)
    =
    \betaQ \otimes_{\field} \betaQ. f . \elX 
\end{align}
\end{lemma}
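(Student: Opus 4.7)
The strategy is a direct calculation: substitute the explicit formula for $\varphi_{\dualL{H}}$ from \Cref{prop:NatIso_AdY-ZAd} into the left-hand side, then systematically use the defining property of $\gammaSSym$ together with the quasi-Hopf axioms to arrive at the right-hand side. The plan is to set $B = \dualL{H}$, take the two tensor factors of $\Delta(\betaQ)\otimes\dots$ fed into $\varphi$, and then unfold \eqref{eq:phi-B-qHopf-explicit}. This produces a large expression in $H\otimes \dualL{H}$ depending on two coassociators $\coassQ[1],\coassQ[2]$, one inverse coassociator $\invCoassQ[1]$, one Drinfeld twist $\Dt^{-1}$, on $\elX$, on $\betaQ$, and on the defining coassociator $\invCoassQ$ that already appears on the left-hand side of the lemma.

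The first key move is to use the bimodule-intertwiner property of $f$, namely \eqref{eq:AlternateX2Condition} rewritten as $S(a).f = f.\elX S(a\leftharpoonup\modulus^{-1})\elX^{-1}$, to push every left $H$-action through $f$ to the right. All the left actions combine into a single element of the form $\invCoassQ[1]_2 {\coassQ[1]_1}\sweedler{2}\betaQ\sweedler{2}\invCoassQ[1]_1\cdots$, which after applying the antipode axiom \eqref{eq:antipodeAxioms} to $\Delta(\betaQ)$ (expressing $h_{(1)}\betaQ S(h_{(2)})=\counit(h)\betaQ$) should collapse.

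Next I would deal with the Drinfeld-twist factor coming from the right of $f$. After moving $\elX S(\cdots\leftharpoonup\modulus^{-1})\elX^{-1}$ to the right, the expression should factor through $\elDelta$ via the identity $\Delta(\betaQ)\Dt^{-1}=\elDelta$ from \eqref{eq:coproduct_beta}, and then \eqref{eq:delta-qHopf} together with the pentagon \eqref{eq:pentagon-qHopf} for the combination $\invCoassQ\cdot(\coassQ\otimes\oneQ)$ will annihilate the remaining coassociators against $\elDelta$. A parallel simplification on the first tensor factor uses the zig-zag identity \eqref{eq:zigzag-qHopf} and the fact that the pieces $\invCoassQ[1]_1 \betaQ\sweedler{1}\cdots$ arrange themselves, after applying \eqref{eq:antipodeAxioms} once more to remove $\Delta(\betaQ)$, into a lone $\betaQ$. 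The modulus factor $\modulus(\invCoassQ[1]_3)$ on the left-hand side is precisely the one needed to cancel the $\modulus^{-1}$-prefactor of $\varphi$ after these coassociator simplifications; all other occurrences of $\modulus$ introduced by shifting actions through $f$ recombine through the algebra-homomorphism property of $\modulus$.

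The main obstacle will be the sheer combinatorial bookkeeping: the formula for $\varphi$ involves five coassociators in its iterated Sweedler expansion, plus $\Dt^{-1}$ and two $\betaQ$-legs coming from $\Delta(\betaQ)$ and $\elX$. In practice the cleanest route is not to expand everything at once but to perform the reductions in the following order: (i) apply \eqref{eq:AlternateX2Condition} to transfer all left actions on $f$ to the right; (ii) on the resulting right-hand argument of $f$, recognise $\Delta(\betaQ)\Dt^{-1}=\elDelta$ and use \eqref{eq:delta-qHopf} to cancel one coassociator against $\pR$ or $\pL$; (iii) apply the pentagon to combine the remaining two coassociators with $\invCoassQ$; (iv) use the zig-zag axiom and $\counit(\alphaQ)=\counit(\betaQ)=1$ to collapse what remains. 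After these four reductions the first tensor factor becomes $\betaQ$ and the second becomes $\betaQ.f.\elX$, matching the right-hand side.
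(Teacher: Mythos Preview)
Your overall strategy matches the paper's, and the ingredients you list (the explicit $\varphi$, the $\gammaSSym$ property of $f$, $\Delta(\betaQ)\Dt^{-1}=\elDelta$, and the closed form \eqref{eq:delta-qHopf} of $\elDelta$) are exactly those used. However, you have not identified the decisive technical step that makes the calculation collapse: the Drinfeld twist identity \eqref{eq:DrinfeldTwist-twistedCoassoc}. The paper first rewrites $\Delta(\betaQ)=\elDelta\cdot\Dt$, applies the formula for $\varphi$, and expands $\Delta(\elX)$ (introducing a second copy $\widetilde\Dt^{-1}$ of the inverse twist). It then uses $f\in\gammaSSym$ to move \emph{only} the single factor $\Dt_2$ across $f$. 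This arranges the three twists and the inner $\invCoassQ$ into precisely the shape to which \eqref{eq:DrinfeldTwist-twistedCoassoc} applies, converting that $\invCoassQ$ into antipoded coassociator legs that subsequently cancel (together with the counit property of $\Dt^{-1}$). After inserting \eqref{eq:delta-qHopf} for $\elDelta$, one more targeted use of $\gammaSSym$ finishes the job.

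Neither the pentagon axiom, the zig-zag axiom, nor the antipode axiom for $\Delta(\betaQ)$ appears directly in the paper's argument; your proposed steps (iii)--(iv) would amount to rederiving the twist identity in place, which is possible in principle but substantially messier, and you have not indicated why the particular combination of five coassociator legs plus twists should collapse along that route. Similarly, moving \emph{all} left actions through $f$ at once (your step (i)) accumulates a long product of factors $\elX\, S(\cdots\leftharpoonup\modulus^{-1})\,\elX^{-1}$ on the right of $f$; the paper's two surgical applications of the $\gammaSSym$ property avoid this bookkeeping.
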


\begin{proof}
    \begin{align}
        \modulus(\invCoassQ[1]_3)~
        &\varphi_{\dualL{H}} 
        \big(
            \betaQ\sweedler{1} \invCoassQ[1]_1 \elX\sweedler{1} 
            \otimes
            \betaQ\sweedler{2} . f . \invCoassQ[1]_2 \elX\sweedler{2} 
        \big)
        \notag \\ 
        \oversetEq[\eqref{eq:coproduct_beta}]\quad &
        \modulus(\invCoassQ[1]_3)~
        \varphi_{\dualL{H}} 
        \big(
            \elDelta_1 \Dt_1 \invCoassQ[1]_1 \elX\sweedler{1} 
            \otimes
            \elDelta_2 \Dt_2  . f . \invCoassQ[1]_2 \elX\sweedler{2} 
        \big)
        \notag \\[1em]
        =\quad&
        \modulus\inv({\invCoassQ[2]_3}\sweedler{1}
        {\coassQ[1]_2}\sweedler{1}\coassQ[2]_1) \modulus(\invCoassQ[1]_3)~
            \invCoassQ[2]_1 {\coassQ[1]_1}\sweedler{1}
            \elDelta_1 \Dt_1 \invCoassQ[1]_1 
            \dashuline{\elX\sweedler{1}}
            \Dt\inv_1 S(\coassQ[1]_3 \coassQ[2]_3) 
        \notag \\ 
        &\otimes
            \invCoassQ[2]_2 {\coassQ[1]_1}\sweedler{2}
            \elDelta_2 \Dt_2  . f . \invCoassQ[1]_2 
            \dashuline{\elX\sweedler{2}}
            \Dt\inv_2 S({\invCoassQ[2]_3}\sweedler{2}
            {\coassQ[1]_2}\sweedler{2}\coassQ[2]_2)
        \notag \\[1em]
        =\quad&
        \modulus\inv({\invCoassQ[2]_3}\sweedler{1}
        {\coassQ[1]_2}\sweedler{1}\coassQ[2]_1) 
        \modulus(\invCoassQ[1]_3 \MakeUppercase{\Dt}\inv_2 )~
            \invCoassQ[2]_1 {\coassQ[1]_1}\sweedler{1}
            \elDelta_1 
            \dashuline{\Dt_1}
            \invCoassQ[1]_1 {\MakeUppercase{\Dt}\inv_1}\sweedler{1} 
            \Dt\inv_1 S(\coassQ[1]_3 \coassQ[2]_3) 
        \notag \\ 
        &\otimes
            \invCoassQ[2]_2 {\coassQ[1]_1}\sweedler{2}
            \elDelta_2 
            \dashuline{\Dt_2}
            . f . \invCoassQ[1]_2
            {\MakeUppercase{\Dt}\inv_1}\sweedler{2} 
            \Dt\inv_2 S({\invCoassQ[2]_3}\sweedler{2}
            {\coassQ[1]_2}\sweedler{2}\coassQ[2]_2)
        \notag \\[1em]
        \oversetEq[(\star)]\quad&
        \modulus\inv({\invCoassQ[2]_3}\sweedler{1}
        {\coassQ[1]_2}\sweedler{1}\coassQ[2]_1) 
        \modulus(
        \dashuline{
            {\Dt_2}\sweedler{2} \invCoassQ[1]_3 \MakeUppercase{\Dt}\inv_2 
        }
        )~
            \invCoassQ[2]_1 {\coassQ[1]_1}\sweedler{1}
            \elDelta_1 
            \dashuline{
                \Dt_1 \invCoassQ[1]_1 {\MakeUppercase{\Dt}\inv_1}\sweedler{1} 
                \Dt\inv_1 
            }
            S(\coassQ[1]_3 \coassQ[2]_3) 
        \notag \\ 
        &\otimes
            \invCoassQ[2]_2 {\coassQ[1]_1}\sweedler{2}
            \elDelta_2 . f . 
            \dashuline{
                {\Dt_2}\sweedler{1} \invCoassQ[1]_2
                {\MakeUppercase{\Dt}\inv_1}\sweedler{2} 
                \Dt\inv_2 
            }
            S({\invCoassQ[2]_3}\sweedler{2}
            {\coassQ[1]_2}\sweedler{2}\coassQ[2]_2)
        \notag \\[1em]
        \oversetEq[\eqref{eq:DrinfeldTwist-twistedCoassoc}]\quad&
        \modulus\inv({\invCoassQ[2]_3}\sweedler{1}
        {\coassQ[1]_2}\sweedler{1}
        \dotuline{\coassQ[2]_1} )
        \modulus( 
            \dashuline{
                \Dt\inv_2 
            }
            \dotuline{
                S(\invCoassQ[1]_1) 
            }
            )~
            \invCoassQ[2]_1 {\coassQ[1]_1}\sweedler{1} \elDelta_1 
            \dotuline{S(\invCoassQ[1]_3)}
            S(\coassQ[1]_3 \dotuline{\coassQ[2]_3}) 
        \notag \\ 
        &\otimes
            \invCoassQ[2]_2 {\coassQ[1]_1}\sweedler{2}
            \elDelta_2 . f . 
            \dashuline{\Dt\inv_1} \dotuline{S(\invCoassQ[1]_2)}
            S({\invCoassQ[2]_3}\sweedler{2}
            {\coassQ[1]_2}\sweedler{2}
            \dotuline{\coassQ[2]_2} )
        \notag \\[1em]
        =\quad &
        \modulus\inv({\invCoassQ[2]_3}\sweedler{1}
        {\coassQ[1]_2}\sweedler{1}) 
            \invCoassQ[2]_1 {\coassQ[1]_1}\sweedler{1} \elDelta_1 
            S(\coassQ[1]_3) 
        \otimes
            \invCoassQ[2]_2 {\coassQ[1]_1}\sweedler{2}
            \elDelta_2 . f . \elX
            S({\invCoassQ[2]_3}\sweedler{2}
            {\coassQ[1]_2}\sweedler{2})
        \notag \\[1em]
        \oversetEq[\eqref{eq:delta-qHopf}] \quad &
        \modulus\inv({\invCoassQ[2]_3}\sweedler{1}
        {\coassQ[1]_2}\sweedler{1}) 
            \invCoassQ[2]_1 {\coassQ[1]_1}\sweedler{1} 
            {\invCoassQ[1]_1}\sweedler{1} \coassQ[2]_1 \betaQ S(\invCoassQ[1]_3)
            S(\coassQ[1]_3) 
        \notag \\
        &\otimes
            \invCoassQ[2]_2 {\coassQ[1]_1}\sweedler{2}
            {\invCoassQ[1]_1}\sweedler{2} \coassQ[2]_2 \betaQ 
            \dashuline{
                S(\invCoassQ[1]_2 \coassQ[2]_3)
            }
            . f . \dashuline{~\elX~} 
            S({\invCoassQ[2]_3}\sweedler{2} {\coassQ[1]_2}\sweedler{2})
        \notag \\[1em]
        =\quad &
        \modulus\inv(
        \dashuline{
            {\invCoassQ[2]_3}\sweedler{1}
            {\coassQ[1]_2}\sweedler{1}
            (\invCoassQ[1]_2 \coassQ[2]_3)\sweedler{1} )
        }
        \dashuline{
            \invCoassQ[2]_1 {\coassQ[1]_1}\sweedler{1} 
            {\invCoassQ[1]_1}\sweedler{1} \coassQ[2]_1 
        }
        \betaQ 
        \dashuline{
            S(\invCoassQ[1]_3) S(\coassQ[1]_3) 
        }
        \notag \\
        &\otimes
        \dashuline{
            \invCoassQ[2]_2 {\coassQ[1]_1}\sweedler{2}
            {\invCoassQ[1]_1}\sweedler{2} \coassQ[2]_2 
        }
        \betaQ . f .  \elX
        \dashuline{
            S((\invCoassQ[1]_2 \coassQ[2]_3)\sweedler{2})
            S({\invCoassQ[2]_3}\sweedler{2} {\coassQ[1]_2}\sweedler{2})
        }
        \notag \\[1em]
        &= \betaQ \otimes \betaQ . f . \elX
        \ ,
    \end{align}
    where in $(\star)$ we used that $f\in \gammaSSymR$.
\end{proof}
\medskip

Now we have all the necessary ingredients and can prove our main theorem.

\begin{proof}[{Proof of \texorpdfstring{\Cref{thm:MainThmSectionStatement}}{Theorem} (1)}]
    By \Cref{prop:X2=C2}, for each $\cointCat\in \cat(\tensUnit,
    \hopfMonad[2][\distInvObj])$ there is a unique $\coint \in \gammaSSym$ such that
    $\cointCat = \isoXC_2(\coint) = \betaQ.\coint.\elX$.
    Assume first that $\coint$ is a right cointegral.
    Then
    \begin{align}
        (\varphi_{\dualL{H}} \circ \rho^l)(\cointCat)
        &\oversetEq[(*)]
    \modulus(\invCoassQ[1]_3) \varphi_{\dualL{H}}( \Delta(\betaQ) .
        (\invCoassQ[1]_1 \otimes \coint . \invCoassQ[1]_2) . \Delta(\elX))
        ~
        \oversetEq[(**)]
        ~
        \betaQ \otimes \cointCat
    \end{align}
    shows that $\cointCat$ is a right monadic cointegral, using the equivalent
    characterisation~\eqref{eq:MainThmProof-final-version-of-monCoint-Eq}.
    Here ($*$) uses that $\coint$ is a right cointegral, and ($**$) uses
    \Cref{lem:weird_identity}.

    Conversely, assume that $\cointCat$ is a right monadic cointegral.
    Note that for any $f\in \gammaSSym$ we have
    \begin{align}
        \label{eq:needed_as_first_step}
        f
        =
        \qL_1 \betaQ S(\qL_2) . f 
        =
        \qL_1 \betaQ . f . (\modulus \rightharpoonup S(\qL_2)) ,
    \end{align}
    where the first step is the zig-zag axiom \eqref{eq:zigzag-qHopf}, and the second step
    uses that $f\in \gammaSSymR$.
    Then
    \begin{align}
        \rho^l(\coint) 
    ~&\oversetEq[\eqref{eq:needed_as_first_step}]~
        \rho^l( \qL_1 \betaQ . \coint . \elX \elX\inv 
        (\modulus \rightharpoonup S(\qL_2)) )
        \notag \\
        &\oversetEq[(1)] 
        \Delta(\qL_1) \rho^l( \betaQ . \coint . \elX ) 
        \Delta(\elX\inv (\modulus \rightharpoonup S(\qL_2)) ))
        \notag \\
        &\oversetEq[(2)] 
        \Delta(\qL_1) 
        \varphi_{\dualL{H}}\inv(\betaQ \otimes \betaQ . \coint . \elX )
        \Delta(\elX\inv (\modulus \rightharpoonup S(\qL_2)) ))
        \notag \\
        &\oversetEq[(3)]
        \modulus(\invCoassQ[1]_3)~
        \Delta(\qL_1) . 
        \big(
            \betaQ\sweedler{1} \invCoassQ[1]_1 \elX\sweedler{1} 
            \otimes
            \betaQ\sweedler{2} . \coint . \invCoassQ[1]_2 \elX\sweedler{2} 
        \big) .
        \Delta(\elX\inv (\modulus \rightharpoonup S(\qL_2)) )
        \notag \\
        &= 
        \modulus(\invCoassQ[1]_3)~
        \big(
            (\qL_1\betaQ)\sweedler{1} \invCoassQ[1]_1 
            \otimes
            (\qL_1\betaQ)\sweedler{2} . \coint . \invCoassQ[1]_2 
        \big) .
        \Delta(\modulus \rightharpoonup S(\qL_2))
        \notag \\
        &\oversetEq[(4)] 
        \modulus((\qL_1\betaQ)\sweedler{2,2} \invCoassQ[1]_3)~
        \big(
            (\qL_1\betaQ)\sweedler{1} \invCoassQ[1]_1 
            \otimes
            \coint . (\qL_1\betaQ)\sweedler{2,1} \invCoassQ[1]_2 
        \big) .
        \Delta(\modulus \rightharpoonup S(\qL_2))
        \notag \\
        &\oversetEq[(5)] 
        \modulus( \invCoassQ[1]_3 )~
        \big(
            \invCoassQ[1]_1 
            \otimes
            \coint . \invCoassQ[1]_2 
        \big) .
        \Delta(\modulus \rightharpoonup (\qL_1\betaQ) )
        \Delta(\modulus \rightharpoonup S(\qL_2))
        \notag \\
        &=
        \modulus( \invCoassQ[1]_3 )~
        \invCoassQ[1]_1 
        \otimes
        \coint . \invCoassQ[1]_2 
    \end{align}
    shows that $\coint \in \gammaSSym$ is a right cointegral in the sense of
    \cite{BC2-2011, HN-integrals}.
    The step labelled (1) uses that $\rho^l$ is a bimodule morphism, (2) is the fact that
    $\cointCat = \betaQ . \coint . \elX$ is a monadic cointegral, (3) follows from
    \Cref{lem:weird_identity}, (4) uses that $\coint\in \gammaSSym$, and (5) is an
    application of quasi-coassociativity.
\end{proof}

\subsection{Proof of \Cref{thm:MainThmSectionStatement} (2)}
\label{app:ProofMainThPart2}

Similarly to right cointegrals, left cointegrals for $H$ are automatically contained
in the space
\begin{align}
    \gammaSSymL 
    &= 
    \{
        f\in H^* \mid
        f \leftharpoonup S\inv(a) = S(a \leftharpoonup \modulus) \rightharpoonup f
    \}
    \notag \\
    &= 
    \{
        f\in \dualR{H}\in \hmodM[H][H] \mid
        a.f = f.(a \leftharpoonup \modulus)
    \}, 
\end{align}
see \eqref{eq:symmetryPropertiesCoint}, and analogously to \Cref{prop:X2=C2}
one can show that 
\begin{align}
    \isoXC_3 = \isoXC_2\cop :
    \gammaSSymL \to \cat(\tensUnit, \hopfMonad[3][\dualL{\modulus}]),
    \quad \isoXC_3(f) = S\inv(\beta).f.\hat{\elX}
\end{align}
is an isomorphism.
Note that here the dot denotes the action on the right dual of the regular
$H\otimes H\op$-module.
The Hopf monads $\hopfMonad[2]$ and $\hopfMonad[3]$ are canonically isomorphic
via $\kappa_{2,3}: \hopfMonad[2] \Rightarrow \hopfMonad[3]$, 
see \eqref{eq:A2isoA3} and \Cref{prop:IsoBetweenHopfMonads-qHopf}.
This allows us to transport right monadic cointegrals to left monadic cointegrals.
Thus, upon showing that
\begin{equation}\label{eq:left-right-integral-transport-maps}
    \begin{tikzcd}[row sep=large, column sep=large]
        \spaceRightCoint
        \ar[r, hook]
        \ar[d,"(*)",swap]
        &\gammaSSymR
        \ar[r,"{\isoXC_2}"]
        &
        \cat(\tensUnit, \hopfMonad[2][\dualL{\modulus}])
        \ar[d,"{(\kappa_{2,3})_{\dualL{\modulus}} ~ \circ ~ \placeholder}"]
        &
        \ar[l,hook']
        \spaceRightMonCoint
        \ar[d,"{(\kappa_{2,3})_\dualL{\modulus} ~ \circ ~ \placeholder}"]
        \\
        \spaceLeftCoint
        \ar[r,hook]
        &\gammaSSymL
        \ar[r,"{\isoXC_3}",swap]
        &
        \cat(\tensUnit, \hopfMonad[3][\dualL{\modulus}])
        &
        \ar[l,hook']
        \spaceLeftMonCoint
    \end{tikzcd}
\end{equation}
commutes, we know that $\isoXC_3$ maps left cointegrals to left monadic cointegrals.
Here $(*)$ maps the right cointegral $\coint^r$ to
    \footnote{
    The zig-zag axiom \eqref{eq:zigzag-qHopf} implies that both $\modulus(\alphaQ)$ and
    $\modulus(\betaQ)$ are invertible in $\field$.
    Therefore, the prefactor in \eqref{eq:mapFromRightToLeftCoint-MainThm} is
    well-defined.
    }
\begin{align}
    \coint^l = 
    \modulus(\alphaQ S(\betaQ))\inv \cdot 
    (\coint^r \circ S \leftharpoonup (\elu\cop)\inv).
    \label{eq:mapFromRightToLeftCoint-MainThm}
\end{align}
By \cite[Prop.~4.3]{BC2-2011} this is a left cointegral, cf.\
\eqref{eq:relating_left_and_right_cointegrals}.

The right hand square in~\eqref{eq:left-right-integral-transport-maps} commutes by
construction.
Using the explicit formula \eqref{eq:HopfMonadIsos-qHopf} for $\kappa_{2,3}$,
one finds that the upper path of the left hand square is 
\begin{align}\label{eq:mainthm_Left_version_upper_path}
    \coint^r 
    &\mapsto 
    \modulus\inv(\coassQ[1]_2) 
    \langle 
        \coint^r \mid 
        S(\betaQ) S(\placeholder \coassQ[1]_1) \coassQ[1]_3 S\inv(\elX) 
    \rangle
    \notag  \\
    &=
    \modulus\inv(\coassQ[1]_2) 
    \langle 
        \coint^r \circ S \mid 
        S^{-2}(\elX) S\inv(\coassQ[1]_3) \placeholder \coassQ[1]_1 \betaQ
    \rangle
    \notag  \\
    &=
    \modulus(\alphaQ S(\betaQ)) \modulus\inv(\coassQ[1]_2) 
    \langle 
        \coint^l \mid 
        \elu\cop S^{-2}(\elX) S\inv(\coassQ[1]_3) \placeholder \coassQ[1]_1 \betaQ
    \rangle
    \notag  \\
    &\oversetEq[(1)]
    \modulus(\alphaQ S(\betaQ)) \modulus\inv(\coassQ[1]_2 \pL_1)
    \langle 
        \coint^l \mid 
        S\inv(\coassQ[1]_3 \pL_2) \placeholder \coassQ[1]_1 \betaQ
    \rangle
    \notag  \\
    &\oversetEq[(2)]
    \modulus(\alphaQ S(\betaQ)) \modulus\inv(\coassQ[1]_2 \pL_1)
    \modulus((\coassQ[1]_3 \pL_2)\sweedler{1})
    \langle 
        \coint^l \mid 
        \placeholder \coassQ[1]_1 \betaQ S((\coassQ[1]_3 \pL_2)\sweedler{2})
    \rangle,
\end{align}
the step marked (1) follows directly from the definition of $\elu$ and $\elX$,
    see \eqref{eq:elu-right-left-coint-defined} resp.\
    \Cref{thm:MainThmSectionStatement}, 
and step (2) uses $\coint^l\in \gammaSSymL$.

The lower path of the left square of~\eqref{eq:left-right-integral-transport-maps}
evaluates to
\begin{align}\label{eq:mainthm_Left_version_lower_path}
    \coint^r 
    &\mapsto 
    \langle 
        \coint^l \mid
        S^{-2}(\betaQ) \placeholder S(\hat{\elX})
    \rangle
    \notag \\
    &\oversetEq[\eqref{eq:symmetryPropertiesCoint}]~
    \langle 
        \coint^l \mid
        \placeholder S((S\inv(\betaQ) \leftharpoonup \modulus) \hat{\elX})
    \rangle
    \notag \\
    &\oversetEq[(*)]
    \modulus\inv( \betaQ\sweedler{2} \Dt\inv_2 )
    \langle 
        \coint^l \mid
        \placeholder  \betaQ\sweedler{1} \Dt\inv_1 
    \rangle
    \notag \\
    &\oversetEq[\eqref{eq:coproduct_beta}]~
    \modulus\inv(\coassQ[1]_2) \modulus((\coassQ[1]_3)\sweedler{1} \pL_1)
    \langle 
        \coint^l \mid
        \placeholder  \coassQ[1]_1 \betaQ S((\coassQ[1]_3)\sweedler{2} \pL_2)
    \rangle \ ,
\end{align}
where ($*$) uses the definition of $\hat{\elX}$ as in \Cref{thm:MainThmSectionStatement}.

We have
\begin{align}
    \modulus \big(\alphaQ S(\betaQ) S(\pL_1) {\pL_2}\sweedler{1} \big)
    {\pL_2}\sweedler{2}
    &\oversetEq[\eqref{eq:q,p^L}]~
    \modulus\big(\alphaQ S(\betaQ) \invCoassQ[1]_1 \betaQ 
    S(\invCoassQ[1]_2) {\invCoassQ[1]_3}\sweedler{1}\big)
    {\invCoassQ[1]_3}\sweedler{2}
    \notag \\
    &\oversetEq[\eqref{eq:pentagon-qHopf},(*)]\quad
    \modulus\big(
        \alphaQ S(\betaQ)
        \invCoassQ[1]_1 \betaQ S(\invCoassQ[2]_1 \invCoassQ[1]_2)
        \invCoassQ[2]_2 \invCoassQ[1]_3
    \big)
    \invCoassQ[2]_3
    \notag \\
    &\oversetEq[\eqref{eq:zigzag-qHopf}]~
    \modulus\big(
        S(\invCoassQ[2]_1 \betaQ)
        \invCoassQ[2]_2 
    \big)
    \invCoassQ[2]_3
    \notag \\
    &\oversetEq[(**)]\quad
    \modulus(\pL_1) \pL_2
\end{align}
where $(*)$ uses $(\modulus \otimes \modulus\inv)\circ \Delta = \counit$, and $(**)$
uses $\modulus \circ S = \modulus \circ S\inv$ and \eqref{eq:q,p^L}.
Therefore the two expressions \eqref{eq:mainthm_Left_version_upper_path} and
\eqref{eq:mainthm_Left_version_lower_path} are equal.

\subsection{Proof of the pivotal case}
The proof of \Cref{thm:MainThmPivotalCase} is similar to the proof of the second
part above.

First, define the spaces
\begin{align}
    \gammaSSymRS &= 
    \{ 
        f\in H^* \mid f(ab) = f((b \leftharpoonup \modulus) a) 
    \} \ , 
    \notag \\
    \gammaSSymLS &= 
    \{ 
        f\in H^* \mid f(ab) = f((\modulus \rightharpoonup b) a) 
    \} \ .
    \label{eq:Def_gammaSSym_pivotal}
\end{align}
By \eqref{eq:symmetryPropertiesSymmetrizedCoint} we have $\symRightCoint\in \gammaSSymRS$
and $\symLeftCoint\in \gammaSSymLS$, and similarly to \Cref{prop:X2=C2} one may
show that 
\begin{align}
    \isoXC_1: \gSSSymbol_1 &\to \cat( \tensUnit, \hopfMonad[1][\dualL{\modulus}] )
    \ ,\quad 
    f \mapsto \langle f \mid S\inv(\betaQ) \placeholder S(\elTh) \rangle \ ,
    \notag \\ \label{eq:isoXC-pivotal}
    \isoXC_4: \gSSSymbol_4 &\to \cat( \tensUnit, \hopfMonad[4][\dualL{\modulus}] )
    \ ,\quad 
    f \mapsto \langle f \mid \betaQ \placeholder S\inv(\hat{\elTh}) \rangle
	\ ,
\end{align}
with $\elTh = (\modulus\inv \otimes S\inv) (\pL)$ and $\hat{\elTh} = \elTh\cop$, are
linear isomorphisms.

Then a simple computation shows that the diagram 
\begin{equation}
    \begin{tikzcd}[row sep=large, column sep=large]
        \spaceRightCoint
        \ar[r, hook]
        \ar[d,"(*)",swap]
        &\gammaSSymR
        \ar[r,"{\isoXC_2}"]
        &
        \cat(\tensUnit, \hopfMonad[2][\dualL{\modulus}])
        \ar[d,"{\sim}"]
        &
        \ar[l,hook']
        \spaceRightMonCoint
        \ar[d,"{\sim}"]
        \\
        \spaceRightSymCoint
        \ar[r,hook]
        &\gammaSSymRS
        \ar[r,"{\isoXC_1}",swap]
        &
        \cat(\tensUnit, \hopfMonad[1][\dualL{\modulus}])
        &
        \ar[l,hook']
        \spaceRightSymMonCoint
    \end{tikzcd}
\end{equation}
commutes, with ($*$) sending a right cointegral $\coint^r$ to the right symmetrised
cointegral $\coint^r \leftharpoonup \elu \pivotQ$, and $\sim$ is induced by the
isomorphism of Hopf monads from \Cref{prop:iso_as_HC_pivotal}.

Indeed, a right cointegral $\coint^r$ gets mapped to the right
$\distInvObj$-symmetrised monadic cointegral
\begin{align}
    \coint^{r,\distInvObj-\textup{sym}}
    = \langle \coint^r \mid S(\betaQ) \pivotQ \placeholder S\inv(\elX) \rangle
\end{align}
by the upper path, and to
\begin{align}
    (\coint^{r,\distInvObj-\textup{sym}})'
    = \langle \coint^r \mid \elu \pivotQ S\inv(\betaQ) \placeholder S(\elTh) \rangle
\end{align}
by the lower path.

The upper path, evaluated on $S\inv(h)$, $h\in H$, yields
\begin{align}
    \coint^{r,\distInvObj-\textup{sym}} (S\inv(h))
    &=~ \langle \coint^r \mid S(\betaQ) \pivotQ S\inv(\elX h) \rangle
    \notag \\
    &=~ \langle \coint^r \mid S(\elX h \betaQ) \pivotQ \rangle
    \notag \\
    &\oversetEq[\eqref{eq:relating_left_and_right_cointegrals}] ~~ 
    \langle \coint^l \mid \elu\cop \pivotQ\inv \elX h \betaQ \rangle
    \ .
\end{align}
Evaluating the lower path on $S\inv(h)$, $h\in H$, we get
\begin{samepage}
\begin{align}
    (\coint^{r,\distInvObj-\textup{sym}})'(S\inv(h))
    &=~ \langle \coint^r \mid \elu \pivotQ S\inv(h \betaQ) S(\elTh) \rangle
    \notag \\
    &\oversetEq[\eqref{eq:relating_left_and_right_cointegrals}] ~~ 
    \langle \coint^l \circ S\inv \mid S(h \betaQ) \pivotQ S(\elTh) \rangle
    \notag \\
    &\oversetEq ~~ 
    \langle \coint^l \mid \elTh \pivotQ\inv h \betaQ \rangle
    \ .
\end{align}
\end{samepage}
The claim then follows from
\begin{align}
    \elu\cop \pivotQ\inv \elX \pivotQ 
    ~ &\oversetEq[\eqref{eq:elu-right-left-coint-defined}] ~~
    \modulus(\elV\cop_1 \Dt\inv_2) S^{-2}(\elV\cop_2) \pivotQ\inv \Dt\inv_1 \pivotQ
    \notag \\ 
    &= \modulus(\elV\cop_1 \Dt\inv_2) \pivotQ\inv \elV\cop_2 \Dt\inv_1 \pivotQ
    \notag \\ 
    &\oversetEq[\eqref{eq:Definition_UV}] ~~
    \modulus \big(S(\pL_1) \tilde{\Dt}_2 \Dt\inv_2 \big) 
    \pivotQ\inv S(\pL_2) \tilde{\Dt}_1 \Dt\inv_1 \pivotQ
    \notag \\ 
    &= \modulus\inv( \pL_1 ) S\inv(\pL_2)
    \notag \\
    &= \elTh
\end{align}

A similar diagram involving left cointegrals and their symmetrised version then finishes
the proof of the theorem.
\hfill $\qed$

\bigskip

\subsection*{Acknowledgements}

We thank Alain Brugui\`eres, Jonas Haferkamp, Vincent Koppen and Christoph Schweigert for
helpful discussions.
JB is supported by the Research Training Group RTG\,1670 of the Deutsche
Forschungsgemeinschaft.
AMG is supported by CNRS, and thanks Humboldt Foundation and ANR grant JCJC
ANR-18-CE40-0001 for a partial financial support. 
IR is partially supported by the RTG\,1670 and the Cluster of Excellence EXC 2121.


\definecolor{antiquebrass}{rgb}{0.7, 0.38, 0.46}

\newcommand{\arxiv}[2]
    {[arXiv: \href{http://arXiv.org/abs/#1}{ \color{antiquebrass}#1 [#2]}]}
\newcommand{\doi}[2]{\href{http://dx.doi.org/#1}{#2}}


\begin{thebibliography}{XXX1}
    \bibitem[AAGTV]{AAGTV} N.~Andruskiewitsch, I.~Angiono, 
    A.~Garci­a Iglesias, B.~Torrecillas, C.~Vay, 
    {\it From Hopf algebras to tensor categories}, 
    \doi{10.1007/978-3-642-39383-9_1}{Conformal field theories and tensor categories,
        1--31, Math.\ Lect.\ Peking Univ., Springer, 2014},
    \arxiv{1204.5807}{math.QA}.

    \bibitem[BBGa]{BBGa}
    A.~Beliakova, C.~Blanchet, A.M.~Gainutdinov,
    {\it Modified trace is a symmetrised integral},
    {to apear in Select.\ Math.},
    \arxiv{1801.00321}{math.QA}.

    \bibitem[BBGe]{BBGe-LogHenning}
    A.~Beliakova, C.~Blanchet, N.~Geer,
    {\it Logarithmic Hennings invariant for restricted quantum
    \texorpdfstring{$\mathfrak{sl}_2$}{sl2}},
    \doi{10.2140/agt.2018.18.4329}{Algebr.\ Geom.\ Topol.\ {\bf 18} (2018), 4329--4358}
    \arxiv{1705.03083}{math.GT}.

    \bibitem[BC]{BC2-2011}
	D.~Bulacu, S.~Caenepeel,
	{\it On integrals and cointegrals for quasi-Hopf algebras},
	\doi{10.1016/j.jalgebra.2011.11.006}{J.\ Algebra {\bf 351} (2012), 390--425},
	\arxiv{1103.2263}{math.QA}.

    \bibitem[BCT]{BCT-involutory}
    D.~Bulacu, S.~Caenepeel, B.~Torrecillas,
    {\it Involutory quasi-Hopf algebras},
    \doi{10.1007/s10468-009-9142-9}{Algebr.\ Represent.\ Theory {\bf 12} (2009),
    257--285},
    \arxiv{0704.3036}{math.QA}.

    \bibitem[BGR]{BGR1}
    J.~Berger, A.~M.~Gainutdinov, I.~Runkel,
    {\it Modified traces for quasi-Hopf algebras},
	\doi{10.1016/j.jalgebra.2019.12.006}{J.\ Algebra {\bf 548} (2020) 96--119},
    \arxiv{1812.10445}{math.QA}.

    \bibitem[BT1]{BT-Factorizable}
	D.~Bulacu, B.~Torrecillas,
    {\it Factorizable quasi-Hopf algebras --- applications},
    \doi{10.1016/j.jpaa.2004.04.010}{J.\ Pure Appl.\ Algebra {\bf 194} (2004) 39--84},
    \arxiv{math/03.2276}{math.QA}.

    \bibitem[BT2]{BT-sovereign}
    D.~Bulacu, B.~Torrecillas,
    {\it On sovereign, balanced and ribbon quasi-Hopf algebras},
    \doi{10.1016/j.jpaa.2019.07.003}{J.\ Pure Appl.\ Alg.\ {\bf 224} (2020) 1064--1091},
    \arxiv{1811.11628}{math.QA}.

    \bibitem[BV1]{BV-hopfmonads}
    A.~Brugui\`eres, A.~Virelizier,
    {\it Hopf monads},
    \doi{10.1016/j.aim.2007.04.011}{Adv.\ in Math.\ {\bf 215} (2007), 679--733},
    \arxiv{math/0604180}{math.QA}.

    \bibitem[BV2]{BV-Double}
    A.~Brugui\`eres, A.~Virelizier,
    {\it Quantum doubles of Hopf monads and categorical centers},
    \doi{10.1090/S0002-9947-2011-05342-0}{Trans.~Amer.~Math.~Soc.~ {\bf 364}
    (2012), 1225--1279},
    \arxiv{0812.2443}{math.QA}.

    \bibitem[CGR]{CGR}
    T.~Creutzig, A.M.~Gainutdinov, I.~Runkel,
    {\it A quasi-Hopf algebra for the triplet vertex operator algebra},
    \doi{10.1142/S021919971950024X}{Commun.\ Contemp.\ Math.\ {\bf 22}, No.\ 03, 1950024 (2020)},
    \arxiv{1712.07260}{math.QA}.

    \bibitem[DGGPR]{DeRenzi-GR}
    M.~De~Renzi, A.M.~Gainutdinov, N.~Geer, B.~Patureau-Mirand, I.~Runkel,
    {\it 3-dimensional TQFTs from non-semisimple modular categories},
	\arxiv{1912.02063}{math.GT}.

    \bibitem[DGP]{DeRenzi-GP}
    M.~De~Renzi, N.~Geer, B.~Patureau-Mirand,
    {\it Renormalized Hennings invariants and 2+1-TQFTs},
    \doi{10.1007/s00220-018-3187-8}{Commun.\ Math.\ Phys.\ {\bf 362} (2020), 855--907},
    \arxiv{1707.08044}{math.GT}.

	\bibitem[Dr]{Dr}
    V.~Drinfeld,
    {\it Quasi-Hopf algebras},
    {Leningrad Math.\ J.\ {\bf 1} (1990), 1419--1457}.

    \bibitem[EGNO]{EGNO}
    P.~Etingof, S.~Gelaki, D.~Nikshych, V.~Ostrik,
    {\it Tensor categories},
    \doi{10.1090/surv/205/04}{Math.\ Surveys Monogr.\ {\bf 205} (2015)}.

    \bibitem[ENO]{ENO-S4}
    P.~Etingof, D.~Nikshych, V.~Ostrik,
    {\it An analogue of Radford's \texorpdfstring{$S^4$}{S4} formula for finite tensor
    categories},
    \doi{10.1155/S1073792804141445}{Int.\ Math.\ Res.\ Not.\ IMRN {\bf 54} (2004),
    2915--2933},
    \arxiv{math/0404504}{math.QA}.

    \bibitem[EO]{EO-finiteTensor}
    P.~Etingof, V.~Ostrik,
    {\it Finite tensor categories},
    {Mosc.\ Math.\ J.\ {\bf 4.3} (2004), 627--654},
    \arxiv{math/03.2227}{math.QA}.

	\bibitem[FGR1]{FGR1}
	V.~Farsad, A.M.~Gainutdinov, I.~Runkel,
	{\it $SL(2,\mathbb{Z})$-action for ribbon quasi-Hopf algebras},
    \doi{10.1016/j.jalgebra.2018.12.012}{J.\ Algebra {\bf 522} (2019), 243--308},
	\arxiv{1702.01086}{math.QA}.

	\bibitem[FGR2]{FGR2}
	V.~Farsad, A.M.~Gainutdinov, I.~Runkel,
    {\it The symplectic fermion ribbon quasi-Hopf algebra and the
        $SL(2,\mathbb{Z})$-action on its centre},
	\arxiv{1706.08164}{math.QA}.

    \bibitem[FOG]{FOG}
    A.~Fontalvo Orozoco, A.M.~Gainutdinov,
    {\it Module traces and Hopf group-coalgebras},
    \arxiv{1809.01122}{math.QA}.

    \bibitem[FS]{Fuchs-Schweigert}
    J.~Fuchs, C.~Schweigert
    \textit{Hopf Algebras and Finite Tensor Categories in Conformal Field Theory},
    \href{http://inmabb.criba.edu.ar/revuma/revuma.php?p=toc/vol51}{Rev.\ Un.\ Mat.\
    Argentina {\bf 51} (2010), 43--90},
    \arxiv{1004.3405}{hep-th}.

    \bibitem[GLO]{GLO18}
    A.M.~Gainutdinov, 
    S.~Lentner, T.~Ohrmann, 
    \textit{Modularization of Small Quantum Groups},
    \arxiv{1809.02116}{math.QA}.

    \bibitem[GR]{Gainutdinov:2015lja}
    A.M.~Gainutdinov, I.~Runkel,
    {\it Symplectic fermions and a quasi-Hopf algebra structure on $\overline{U}_i
        s\ell(2)$},
    \doi{10.1016/j.jalgebra.2016.11.026}{J.\ Algebra {\bf 476} (2017) 415--458},
    \arxiv{1503.07695}{math.QA}.

    \bibitem[HN1]{HN-doubles}
    F.~Hausser, F.~Nill,
    {\it Doubles of quasi-quantum groups},
    \doi{10.1007/s002200050512}{Comm.\ Math.\ Phys.\ {\bf 199} (1999), 547--589}.

    \bibitem[HN2]{HN-integrals}
    F.~Hausser, F.~Nill,
    {\it Integral theory for quasi-Hopf algebras},
    \arxiv{math/9904164}{math.QA}.

    \bibitem[KL]{KerlerLyubashenko}
    T.~Kerler, V.~V.~Lyubashenko,
    {\it Non-semisimple topological quantum field theories for 3-manifolds with corners},
    \doi{10.1007/b82618}{Lecture Notes in Math.\ {\bf 1765} (2001)}.

    \bibitem[Lo]{fosco}
    F.~Loregian,
    {\it Coend calculus},
    \arxiv{1501.02503}{math.CT}.

    \bibitem[LM]{LyuMaj}
    V.~Lyubashenko, S.~Majid,
    {\it Braided groups and quantum Fourier transform},
    \doi{10.1006/jabr.1994.1165}{J.\ Algebra {\bf 166} (1994), 506--528}.

    \bibitem[Ly1]{Lyubashenko-mod-transfs}
    V.~Lyubashenko,
    {\it Modular transformations for tensor categories}
    \doi{10.1016/0022-4049(94)00045-K}{J.\ Pure Appl.\ Algebra {\bf 98} (1995), 279--327}.

    \bibitem[Ly2]{Lyubashenko-mapClGroups}
    V.~Lyubashenko,
    {\it Invariants of 3-manifolds and projective representations of mapping class groups
    via quantum groups at roots of unity}
    \doi{10.1007/BF02101805}{Commun.\ Math.\ Phys.\ {\bf 172} (1995), 467--516},
    \arxiv{hep-th/9405167}{hep-th}.

    \bibitem[Mac]{catWorkMath}
    S.~Mac~Lane,
    {\it Categories for the working mathematician},
    \doi{10.1007/978-1-4757-4721-8}{Grad.\ Texts in Math.\ {\bf 5} (1998)}.

    \bibitem[Mo]{Moerdijk}
    I.~Moerdijk,
    {\it Monads on tensor categories},
    \doi{10.1016/S0022-4049(01)00096-2}{J.\ Pure Appl.\ Algebra {\bf 168} (2002),
    189--208}.

    \bibitem[Ne]{N18}
    C.~Negron, 
    {\it Log-Modular Quantum Groups at Even Roots of Unity and the Quantum Frobenius I}, 
    \arxiv{1812.02277}{math.QA}.

    \bibitem[Ra1]{Radford-integrals}
    D.E.~Radford,
    {\it Finiteness conditions for a Hopf algebra with a nonzero integral},
    \doi{10.1016/0021-8693(77)90400-8}{J.\ Algebra {\bf 46} (1977) 189--195}.

    \bibitem[Ra2]{Radford-Hopf-algebras}
    D.E.~Radford,
    {\it Hopf algebras},
    \doi{10.1142/8055}{Ser.~ Knots Everything {\bf 49} (2011)}.

    \bibitem[Sh1]{Sh-unimodFinTens}
    K.~Shimizu,
    {\it On unimodular finite tensor categories},
    \doi{10.1093/imrn/rnv3.22}{Int.\ Math.\ Res.\ Not.\ IMRN {\bf 2017} (2017), 277--322},
    \arxiv{1402.3.222}{math.QA}.

    \bibitem[Sh2]{Sh-nonDegen}
    K.~Shimizu,
    {\it Non-degeneracy conditions for braided finite tensor categories},
    \doi{10.1016/j.aim.2019.106778}{Adv.\ Math.\ {\bf 355} (2019)},
    \arxiv{1602.06534}{math.QA}.

    \bibitem[Sh3]{Sh-integrals}
    K.~Shimizu,
    {\it Integrals for finite tensor categories},
    \doi{10.1007/s10468-018-9777-5}{Algebr.\ Represent.\ Theor.\ {\bf 22} (2019)
    459--493},
    \arxiv{1702.02425}{math.CT}.

    \bibitem[SS]{Shi-Shi}
    T.~Shibata, K.~Shimizu,
    {\it Categorical aspects of cointegrals on quasi-Hopf algebras},
    \doi{10.1016/j.jalgebra.2020.08.012}{J.\ Algebra {\bf 564} (2020), 353--411},
    \arxiv{1812.03439}{math.QA}.
\end{thebibliography}
\end{document}